\theoremstyle{plain}
\newtheorem{thm}{Theorem}
\newtheorem{cor}[thm]{Corollary}
\newtheorem{lem}[thm]{Lemma}
\newtheorem{prop}[thm]{Proposition}
\newtheorem{question}[thm]{Question}
\newtheorem{conjecture}[thm]{Conjecture}
\newtheorem{definition}[thm]{Definition}
\newenvironment{thm'}
{\addtocounter{thm}{-1}%
	\begin{thm}}
	{\end{thm}}
\newenvironment{cor'}
{\addtocounter{thm}{-1}%
	\begin{cor}}
	{\end{cor}}
\newenvironment{lem'}
{\addtocounter{thm}{-1}%
	\begin{lem}}
	{\end{lem}}
\newenvironment{prop'}
{\addtocounter{thm}{-1}%
	\begin{prop}}
	{\end{prop}}
\DeclareMathAlphabet{\mathbbold}{U}{bbold}{m}{n}
\def\bb1{\mathbbold{1}}
\def\bbz{\mathbb{Z}}
\def\bbq{\mathbb{Q}}
\def\bbf{\mathbb{F}}
\def\bbr{\mathbb{R}}
\def\bba{\mathbb{A}}
\def\bbb{\mathbb{B}}
\def\bbc{\mathbb{C}}
\def\bbh{\mathbb{H}}
\def\bbg{\mathbb{G}}
\def\bbt{\mathbb{T}}
\def\bbv{\mathbb{V}}
\def\bbp{\mathbb{P}}
\def\bbs{\mathbb{S}}
\def\bbw{\mathbb{W}}
\def\bbx{\mathbb{X}}
\def\bbgl{\mathbb{GL}}
\def\gcal{\mathcal{G}}
\def\ocal{\mathcal{O}}
\def\ocal{\mathcal{O}}
\def\kcal{\mathcal{K}}
\def\ccal{\mathcal{C}}
\def\tcal{\mathcal{T}}
\def\cal{\mathcal{H}}
\def\lcal{\mathcal{L}}
\def\pcal{\mathcal{P}}
\def\xcal{\mathcal{X}}
\def\hfr{\mathfrak{h}}
\def\afr{\mathfrak{a}}
\def\pfr{\mathfrak{p}}
\def\zfr{\mathfrak{z}}
\def\lfr{\mathfrak{l}}
\def\gfr{\mathfrak{g}}
\def\mfr{\mathfrak{m}}
\def\tfr{\mathfrak{t}}
\DeclareMathOperator\id{id}
\DeclareMathOperator\Spec{Spec}
\DeclareMathOperator\Hom{Hom}
\DeclareMathOperator\Aut{Aut}
\DeclareMathOperator\Cl{Cl}
\DeclareMathOperator\SL{SL}
\DeclareMathOperator\GL{GL}
\DeclareMathOperator\Ad{Ad}
\DeclareMathOperator\Lie{Lie}
\DeclareMathOperator\Gal{Gal}
\DeclareMathOperator\pr{pr}
\DeclareMathOperator\Tr{Tr}
\newcommand{\inv}[1]{{#1}^{-1}}
\newcommand{\grpgen}[1]{\langle{#1}\rangle}
\newcommand{\convolve}[2]{{#1}^{(#2)}}
\def\ugfr{\underline{\mathfrak{g}}}
\def\uzfr{\underline{\mathfrak{z}}}
\def\utfr{\underline{\mathfrak{t}}}
\def\uhfr{\underline{\hfr}}
\def\h{\hspace{1mm}}
\def\vare{\varepsilon}
\def\be{\begin{equation}}
\def\ee{\end{equation}}
\def\one{\mathds{1}}
\newcommand{\wt}[1]{\widetilde{#1}}
\newcommand{\wh}[1]{\widehat{#1}}
\def\wtx{\widetilde{X}}
\def\acts{\curvearrowright}
\def\vecy{\overrightarrow{y}}
\begin{document}
\title{Towards super-approximation in positive characteristic}
\author{Brian Longo and Alireza Salehi Golsefidy}
\address{Mathematics Dept, University of California, San Diego, CA 92093-0112}
\email{blongo@ucsd.edu}
\email{golsefidy@ucsd.edu}
\thanks{A.S-G. was partially supported by the NSF grants  DMS-1303121, DMS-1602137, DMS-1902090, and A. P. Sloan Research Fellowship. A.S-G. would like to thank the IAS for its hospitality; part of this work was done while A.S-G. was visiting the IAS. This article contains the main results proved in B.L.'s Ph.D. thesis which was done in the UCSD}
\subjclass{Primary: 22E40, Secondary: 20G30, 05C81}
\date{08/19/2019}
\begin{abstract}
In this note we show that the family of Cayley graphs of a finitely generated subgroup of $\GL_{n_0}(\bbf_p(t))$ modulo some {\em admissible} square-free polynomials is a family of expanders under certain algebraic conditions. 

Here is a more precise formulation of our main result. For a positive integer $c_0$, we say a square-free polynomial is $c_0$-admissible if degree of irreducible factors of $f$ are distinct integers with prime factors at least $c_0$. Suppose $\Omega$ is a finite symmetric subset of $\GL_{n_0}(\bbf_p(t))$, where $p$ is a prime more than $5$. Let $\Gamma$ be the group generated by $\Omega$. Suppose the Zariski-closure of $\Gamma$ is connected, simply-connected, and absolutely almost simple; further assume that the field generated by the traces of $\Ad(\Gamma)$ is $\bbf_p(t)$. Then for some positive integer $c_0$ the family of Cayley graphs 
${\rm Cay}(\pi_{f(x)}(\Gamma),\pi_{f(x)}(\Omega))$ as $f$ ranges in the set of $c_0$-admissible polynomials is a family of expanders, where  $\pi_{f(t)}$ is the quotient map for the congruence modulo $f(t)$. 
\end{abstract}

\maketitle
\tableofcontents
\section{Introduction}\label{sectionintro}
\subsection{Statement of the main results}
Let $\Gamma$ be a subgroup of a compact group $G$. Suppose $\Omega$ is a finite symmetric (that means $\omega\in \Omega$ implies $\omega^{-1}\in \Omega$) generating set of $\Gamma$. Suppose $\overline{\Gamma}$ is the closure of $\Gamma$ in $G$ and $\pcal_{\Omega}$ is the probability counting measure on $\Omega$. Let 
\[
T_{\Omega}:L^2(\overline{\Gamma})\rightarrow L^2(\overline{\Gamma}), \h T_{\Omega}(f):=\pcal_{\Omega}\ast f:=\frac{1}{|\Omega|} \sum_{\omega\in \Omega} L_{\omega}(f),
\]
where $L_{\omega}(f)(g):=f(\omega^{-1}g)$.
Then it is well-known that $T_{\Omega}$ is a self-adjoint operator, $T_{\Omega}(\one_{\overline{\Gamma}})=\one_{\overline{\Gamma}}$ where $\one_{\overline{\Gamma}}$ is the constant function on $\overline{\Gamma}$, and the operator norm $\|T_{\Omega}\|$ is 1. So the spectrum of $T_{\Omega}$ is a subset of $[-1,1]$ and $T_{\Omega}$ sends the space $L^2(\overline{\Gamma})^{\circ}$ orthogonal to the constant functions to itself. Let $T_{\Omega}^{\circ}$ be the restriction of $T_{\Omega}$ to $L^2(\overline{\Gamma})^{\circ}$. Let
\[
\lambda(\pcal_{\Omega};G):=\sup\{|c||\h c \text{ is in the spectrum of } T_{\Omega}^{\circ}\}.
\]
If $\lambda(\pcal_{\Omega};G)<1$, we say the left action $\Gamma\acts G$ of $\Gamma$ on $G$ has {\em spectral gap}.

It is worth mentioning that, if $\Omega_1$ and $\Omega_2$ are two generating sets of $\Gamma$ and $\lambda(\pcal_{\Omega_1};G)<1$, then $\lambda(\pcal_{\Omega_2};G)<1$. So having spectral gap is a property of the action $\Gamma\acts G$, and it is independent of the choice of a generating set for $\Gamma$.

The following is the main result of this article:
\begin{thm}\label{t:SpectralGap}
	Let $\Omega$ be a finite symmetric subset of $\GL_{n_0}(\bbf_p[t,1/r_0(t)])$ where $p>5$ is prime and $r_0(t)\in \bbf_p[t]\setminus \{0\}$. Let $\Gamma$ be the group generated by $\Omega$. Suppose the Zariski-closure $\bbg$ of $\Gamma$ in $(\GL_{n_0})_{\bbf_p(t)}$ is a connected, simply-connected, absolutely almost simple group. Suppose the field generated by $\Tr(\Ad(\Gamma))$ is $\bbf_p(t)$. Then there is a positive integer $c_0$ such that 
	\[
		\sup_{\{\ell_i(t)\}_i\in I_{r_0,c_0}}\lambda(\pcal_{\Omega};\prod_{i=1}^{\infty} \GL_{n_0}(\bbf_p[t]/\langle \ell_i(t)\rangle))<1,
	\]
 where $\{\ell_i(t)\}_{i=1}^{\infty}\in I_{r_0,c_0}$ if and only if $\ell_i(t)$ are irreducible, $\ell_i(t)\nmid r_0(t)$, and $\{\deg \ell_i\}_{i=1}^{\infty}$ is a strictly increasing sequence consisting of integers more than $1$ with no prime factors less than $c_0$. 
\end{thm}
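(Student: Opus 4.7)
The plan is to run a positive-characteristic, admissible-modulus version of the Bourgain-Gamburd machine. The first step is a strong-approximation type result for Zariski-dense subgroups of simply connected absolutely almost simple groups over function fields, in the spirit of Pink and Weisfeiler; together with the hypothesis that the trace field of $\Ad(\Gamma)$ equals $\bbf_p(t)$, this implies that for $c_0$ large enough and every sequence $\{\ell_i\}\in I_{r_0,c_0}$, the image of $\Gamma$ in $\prod_i \bbg(\bbf_p[t]/\langle\ell_i\rangle)$ is, up to bounded index, the whole product. This reduces the problem to proving a uniform spectral gap on $\prod_i G_i$, where $G_i:=\bbg(\bbf_p[t]/\langle\ell_i\rangle)$ is a finite (quasi-)simple group of Lie type over $\bbf_{p^{\deg \ell_i}}$.

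Next, following the standard template, the desired gap decomposes into three ingredients: quasi-randomness of $\prod_i G_i$ (a polynomial lower bound $|G|^{\delta}$ on the dimension of every nontrivial irreducible representation, supplied by Landazuri-Seitz), a product theorem in each factor (supplied by Pyber-Szab\'o and Breuillard-Green-Tao for finite simple groups of Lie type), and an $L^2$-flattening / non-concentration estimate for convolution powers of $\pcal_\Omega$. The latter requires that for any proper subgroup $H\leq \prod_i G_i$ and $n\sim \log|\prod_i G_i|$,
\[
\pcal_\Omega^{(n)}(gH)\leq [\textstyle\prod_i G_i:H]^{-\delta}.
\]
By Goursat-type considerations, such $H$ split into two essentially distinct families: (a) those pulled back from a proper subgroup of a single factor $G_i$, and (b) \emph{diagonal} subgroups cut out by an isomorphism between quotients of distinct factors $G_i$ and $G_j$.

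Family (a) is handled by an escape-from-subvarieties argument for $\Gamma\subset\bbg(\bbf_p(t))$ combined with a Nori-Larsen-Pink style classification of subgroups of $\bbg(\bbf_{p^m})$, made effective and uniform in $\ell_i$, which produces the required polynomial decay once translated to the congruence quotient. Family (b) is exactly what the admissibility condition eliminates: because the $\deg\ell_i$ are pairwise distinct and all coprime to every prime less than $c_0$, the residue fields $\bbf_p[t]/\langle\ell_i\rangle$ and $\bbf_p[t]/\langle\ell_j\rangle$ share no subfield larger than $\bbf_p$, and a nontrivial quotient-isomorphism between $G_i$ and $G_j$ would require a common subfield of small degree. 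Combined with the Bourgain-Gamburd iteration applied to the nested subproducts $\prod_{j\leq k}G_{i_j}$ (ordered by the strictly increasing $\deg\ell_{i_j}$), this yields the claimed uniform spectral gap.

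The hard part is clearly (a): producing an effective non-concentration estimate in positive characteristic that is uniform in $\ell$, while controlling potential inseparability pathologies. The hypotheses $p>5$ and $\bbg$ simply connected, absolutely almost simple are exactly what delivers the clean Larsen-Pink dichotomy needed to convert algebraic escape into non-concentration inside each $G_i$. Once this is established, the admissibility assumption performs the role played by genuine coprimality in the characteristic-zero square-free super-approximation theorems of Salehi Golsefidy-Varj\'u, and the remainder of the argument is the standard Bourgain-Gamburd bootstrap.
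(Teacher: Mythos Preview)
Your outline captures the Bourgain--Gamburd template, but it misidentifies where the obstruction lies and hence misattributes the role of the admissibility hypothesis. The diagonals in your family (b) are already eliminated by the mere \emph{distinctness} of the $\deg\ell_i$: the simple quotients $G_i/Z(G_i)$ and $G_j/Z(G_j)$ are non-isomorphic once the underlying fields have different sizes, so Goursat produces nothing new there. The ``no small prime factors'' clause of admissibility is not what handles (b).

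The genuine difficulty sits inside your family (a). The Larsen--Pink dichotomy for a single factor $G_i=\gcal_{\ell_i}(K(\ell_i))$ yields two kinds of proper subgroups: \emph{structural} ones, contained in $\bbh(K(\ell_i))$ for a proper algebraic $\bbh\le\gcal_{\ell_i}$ of bounded complexity, and \emph{subfield type} ones, essentially $\bbg_0(F)$ for a proper subfield $F\subsetneq K(\ell_i)$. Your escape-from-subvarieties argument handles only the structural branch; for subfield type there is no bounded-complexity variety to escape from, and the paper explicitly does not know how to produce a non-concentration estimate for them directly. The ``no small prime factors'' condition enters \emph{here}: if every prime divisor of $\deg\ell_i$ is at least $c_0$, then any proper subfield $F$ has $[K(\ell_i):F]\ge c_0$, so $|\bbg_0(F)|\le|G_i|^{1/c_0}$ and a crude Kesten $\ell^{\infty}$ bound suffices in place of genuine escape. (The same condition is also needed to guarantee that the auxiliary free subgroup produced by the ping-pong argument still surjects onto $\pi_f(\Gamma)$, since its trace field may be a proper subfield of $\bbf_p(t)$.) Finally, Varj\'{u}'s multi-scale machine assumes an $O(1)$-bounded subgroup hierarchy in each factor; subfield type subgroups give up to $\log|G_i|$ levels, so one must modify his axioms---introducing a second tier $\cal_j'$ of subgroup classes and using that intersections of distinct subfield-type conjugates drop back to structural type---before gluing across factors. ``Standard bootstrap'' hides a nontrivial amount of work.
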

 It is well-known that Theorem~\ref{t:SpectralGap} has immediate application in the explicit construction of expanders. Let us quickly recall that a family of $d$-regular graphs $X_i$ is called a family of expanders if the size $|V(X_i)|$ of the set of vertices goes to infinity and there is a positive number $\delta_0$ such that for any subset $B$ of $V(X_i)$ we have
 \[
 \frac{|e(B,V(X_i)\setminus B)|}{\min(|B|,|V(X_i)\setminus B|)}>\delta_0,
 \]
 where $e(B,C)$ is the set of edges that connect a vertex in $B$ to a vertex in $C$. Expanders have a lot of applications in theoretical computer science (see \cite{HLW} for a survey on such applications).
 
 Now we can give the equivalent formulation of Theorem~\ref{t:SpectralGap} in terms of expander graphs (see \cite[Remark 15]{SG:SAI} or \cite[Section 4.3]{Lub}). 
 
\begin{thm'}\label{thmmainthm} 
	Let $\Omega$ be a finite symmetric subset of $\GL_{n_0}(\bbf_p[t,1/r_0(t)])$ where $p>5$ is prime and $r_0(t)\in \bbf_p[t]\setminus\{0\}$. Let $\Gamma$ be the subgroup generated by $\Omega$. Suppose the Zariski-closure $\bbg$ of $\Gamma$ in $(\GL_{n_0})_{\bbf_p(t)}$ is a connected, simply-connected, absolutely almost simple group. Suppose the field generated by $\Tr(\Ad(\Gamma))$ is $\bbf_p(t)$. Then there is a positive integer $c_0$ such that the family of Cayley graphs \[\{{\rm Cay}(\pi_{f(t)}(\Gamma),\pi_{f(t)}(\Omega))|\h f(t)\in S_{r_0,c_0}\}\] is a family of expanders where $S_{r_0,c_0}$ consists of square-free polynomials  $f(t)\in \bbf_p[t]$ with prime factors $\ell_i(t)$ such that (1) $\ell_i(t)\nmid r_0(t)$, (2) $\deg \ell_i>1$, (3) $\deg \ell_i \neq \deg \ell_j$ if $i\neq j$, and (4) $\deg \ell_i$ does not have a prime factor less than $c_0$ and $\pi_{f(t)}$ is induced by the quotient map $\pi_{f(t)}:\bbf_p[t,1/r_0(t)]\rightarrow \bbf_p[t]/\langle f(t)\rangle$. 
\end{thm'} 
 
 \subsection{What super-approximation is and an ultimate speculation}
 
 In order to put Theorem~\ref{t:SpectralGap} in the perspective of previous works, let us say what {\em super-approximation} is in a very general setting.  
 
 \begin{definition}
	Suppose $A$ is an integral domain, and $\Omega$ is a finite symmetric subset of $\GL_{n_0}(A)$. Let $\Gamma$ be the group generated by $\Omega$. Suppose $\ccal$ is a family of finite index ideals of $A$. We say $\Gamma$ has {\em super-approximation with respect to $\ccal$} if $\sup_{\afr\in \ccal}\lambda(\pi_{\afr}[\pcal_{\Omega}];\GL_{n_0}(A/\afr))<1$, where $\pi_{\afr}$ is the group homomorphism induced by the quotient map $A\rightarrow A/\afr$ and $\pi_{\afr}[\pcal_{\Omega}]$ is the push-forward of $\pcal_{\Omega}$ under $\pi_{\afr}$. We simply say $\Gamma$ has {\em super-approximation} if it has super-approximation with respect to the set of all the finite index ideals of  $A$.
\end{definition}

Because of several ground breaking results in the past decade (see~\cite{Hel1,Hel2,BGT,PS}, \cite{BG1}-\cite{BV}, \cite{Var}, \cite{SG:SAI}-\cite{SGV}), we have a very good understanding of super-approximation property for finitely generated subgroups of linear groups over $A:=\bbz[1/q_0]$ (a finitely generated subring of $\bbq$). In this case, it is proved that $\Gamma$ has super-approximation with respect to fixed powers of square-free ideals \cite{SGV,SG:SAII} and powers of prime ideals \cite{SG:SAI,SG:SAII} if and only if the connected component $\bbg^{\circ}$ of the Zariski-closure of $\Gamma$ in $(\GL_{n_0})_{\bbq}$ has trivial abelianization. Based on these results we have the following conjecture. 
\begin{conjecture}[Super-approximation conjecture over $\bbq$]
Suppose $\Omega$ is a finite symmetric subset of $\GL_{n_0}(\bbz[1/q_0])$, $\Gamma=\langle\Omega\rangle$, and $\bbg^{\circ}$ is the connected component of the Zariski-closure of $\Gamma$ in $(\GL_{n_0})_{\bbq}$. Then $\Gamma$ has super-approximation if and only if $\bbg^{\circ}$ has trivial abelianization.	
\end{conjecture}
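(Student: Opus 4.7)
The plan is to follow the Bourgain-Gamburd strategy. Writing a general finite index ideal $\afr\subset A:=\bbz[1/q_0]$ in its prime decomposition $\afr=\pfr_1^{k_1}\cdots\pfr_r^{k_r}$ and applying the Chinese Remainder Theorem, the task reduces to controlling the random walk driven by $\pcal_{\Omega}$ in the product $G_{\afr}:=\prod_i \bbg(A/\pfr_i^{k_i})$, where by the trivial-abelianization hypothesis we may assume $\bbg=\bbg^{\circ}$ is connected and perfect. The two extreme regimes are already in hand: the square-free case (with exponents $k_i$ bounded) from \cite{SGV,SG:SAII} and the pure prime-power case ($r=1$, $k_1$ arbitrary) from \cite{SG:SAI,SG:SAII}. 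The objective is to interpolate uniformly between them.

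First I would set up the three Bourgain-Gamburd ingredients at the level of $G_{\afr}$. The product theorem is available factor by factor, so the task would be to promote it to $G_{\afr}$ via a Goursat-type structure theorem: every proper subgroup of $G_{\afr}$ either fails to project surjectively to some factor or is defined by a bounded number of diagonal identifications between factors. Non-concentration of the random walk in subgroups of the first type is inherited from the corresponding factor; for subgroups of the second type one would combine a trace-field and eigenvalue argument with Nori-Weisfeiler style strong approximation to show that generic elements of $\Gamma$ cannot satisfy such identifications simultaneously across different primes.

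The main obstacle I anticipate is quasi-randomness uniform in $\afr$. The $\ell^2$-flattening step requires a lower bound $\min_{\rho}\dim\rho \geq |G_{\afr}|^{\epsilon}$ over nontrivial irreducible $\rho$. For a single factor this follows from Landazuri-Seitz type bounds, but for a product an irreducible representation may be trivial on all but one factor, in which case the naive quasi-randomness constant depends only on the smallest factor rather than on $|G_{\afr}|$. Since each dyadic step of the flattening scheme loses a multiplicative constant and the number of steps is $O(\log|G_{\afr}|)$, obtaining a spectral gap genuinely uniform in both $r$ and the $k_i$ is the technical bottleneck separating the existing partial results from the full conjecture. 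The natural strategy would be an induction on $r$ that feeds the prime-power estimate into an $L^2$ tensor decomposition along the CRT splitting, but controlling the induction constants, and in particular ruling out representations concentrated on a bounded subset of factors, is the decisive difficulty.
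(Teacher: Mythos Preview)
The statement you are addressing is a \emph{conjecture}, not a theorem: the paper explicitly presents it as the ``Super-approximation conjecture over $\bbq$'' and does not prove it. What the paper does establish (citing \cite{SGV,SG:SAI,SG:SAII}) are exactly the two extreme regimes you mention---fixed powers of square-free ideals and powers of prime ideals---and then \emph{formulates} the conjecture as the natural common generalization. There is no proof in the paper to compare your proposal against.

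Your write-up is therefore not a proof but a reasonable reconnaissance of where the difficulty lies, and you seem to be aware of this: you correctly identify that the obstruction to interpolating between the known cases is the failure of uniform quasi-randomness for the product $G_{\afr}$ when the number of factors and the exponents are both unbounded, and you note that the induction-on-$r$ scheme has uncontrolled constants. That diagnosis is accurate and matches the current state of the problem. But you should be clear that what you have written is an outline of obstacles, not a proof; the ``decisive difficulty'' you name at the end is genuinely open, and nothing in your proposal resolves it. If this were submitted as a proof it would have a gap precisely at the step you flag: no mechanism is given to control the loss of constants across an unbounded number of CRT factors with unbounded exponents.
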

 
In the beautiful survey by Lubotzky~\cite{Lub:survey}, he goes further and make an analogues conjecture (see~\cite[Conjecture 2.25]{Lub:survey}) for an arbitrary finitely generated integral domain $A$. Notice that such a conjecture implies that {\em super-approximation is a Zariski-topological property}; that means if two groups have equal Zariski-closures, then either both of them have super-approximation or neither have this property. It turns out that this conjecture is false in this generality (see~\cite[Example 5]{SGV}); there are two finitely generated subgroups of $\GL_{n_0}(\bbz[i])$ such that (1) they have equal Zariski-closures in $(\GL_{n_0})_{\bbq[i]}$, and (2) one of them has super-approximation and the other one does not. This shows that for an arbitrary integral domain $A$ one needs a refiner understanding of $\Gamma$ to determine if it has super-approximation. The key point is that super-approximation is about how well $\Gamma$ is distributed in its closure $\overline{\Gamma}$ in the compact group $\GL_{n_0}(\wh{A})$ where $\wh{A}:=\varprojlim_{|A/\afr|<\infty}A/\afr$ is the profinite closure of the ring $A$. When the field of fractions $Q(A)$ has a subfield $F$ such that $[Q(A):F]<\infty$, $\Gamma$ might satisfy some {\em hidden} polynomial relations over $F$ which {\em disappear} over $Q(A)$. Of course such polynomial relations are still satisfied in $\overline{\Gamma}$; and so these are vital in understanding the group structure of $\overline{\Gamma}$. To detect the mentioned hidden polynomial relations, one has to use Weil's restriction of scalars and view $\GL_{n_0}(Q(A))$ as the $F$-points of $R_{Q(A)/F}((\GL_{n_0})_{Q(A)})$. More or less what we are hoping for is to have a finitely generated ring $A_0$ and a group scheme $\gcal_0$ over $A_0$ such that $\overline{\Gamma}$ can be realized as an open subgroup of $\gcal_0(\wh{A_0})$ where $\wh{A_0}$ is the profinite closure of $A_0$. Strong approximation (see~\cite{MVW,Wei,Nor,PinkStrongApproximation}) gives us such a result under various extra algebraic conditions. This is partially responsible for some of the extra technical conditions in Theorem~\ref{t:SpectralGap} compared to the mentioned results over $\bbz[1/q_0]$; it will be explained later why we need some additional technical conditions. 

In light of this discussion, it makes sense to formulate a conjecture for super-approximation property of a finitely generated subgroup $\Gamma$ of $\GL_{n_0}(A)$ {\em based on group theoretic properties of its closure $\overline{\Gamma}$ in $\GL_{n_0}(\wh{A})$}. As Lubotzky says in his survey~\cite[Conjecture 2.25]{Lub:survey} this conjecture is quite a {\em fantasy} at this point.

\begin{conjecture}[Super-approximation conjecture for a finitely generated integral domain]\label{conj:ultimate}
Suppose $A$ is a finitely generated integral domain, $\Omega$ is a finite symmetric subset of $\GL_{n_0}(A)$, and $\Gamma$ is the subgroup generated by $\Omega$. Let $\wh{A}:=\varprojlim_{|A/\afr|<\infty} A/\afr$ be the profinite closure of $A$ and $\overline{\Gamma}$ is the closure of $\Gamma$ in $\GL_{n_0}(\wh{A})$. Then $\Gamma$ has super-approximation if and only if any open subgroup $\overline{\Lambda}$ of $\overline{\Gamma}$ has finite abelianization; that means $|\overline{\Lambda}/[\overline{\Lambda},\overline{\Lambda}]|<\infty$. 
\end{conjecture}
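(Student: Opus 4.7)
The conjecture splits into two implications, and I would separate them accordingly: the forward direction (super-approximation $\Rightarrow$ every open subgroup of $\overline{\Gamma}$ has finite abelianization) is the tractable one; the reverse direction is the genuinely difficult content, since it effectively packages a general theory of super-approximation that does not yet exist.

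For the forward direction, I argue by contrapositive. Any open subgroup $\overline{\Lambda}\subseteq \overline{\Gamma}$ has the form $\pi_{\afr_0}^{-1}(H)$ for some finite-index ideal $\afr_0$ and $H\subseteq \pi_{\afr_0}(\Gamma)$, because the topology on $\overline{\Gamma}$ is induced by the congruence filtration on $\GL_{n_0}(\wh{A})$. If $\overline{\Lambda}/[\overline{\Lambda},\overline{\Lambda}]$ is infinite, the profinite abelian group $\overline{\Lambda}^{ab}$ admits continuous surjections onto arbitrarily large finite abelian groups $Q_n$, and each factors through a congruence quotient $\pi_{\afr_n}(\Gamma)$ with $\afr_n\subseteq \afr_0$. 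Pulling back, one obtains finite quotients of $\Gamma$ each of which has $Q_n$ as a quotient modulo a fixed finite-index normal subgroup. Combining the standard fact that abelian Cayley graphs of unbounded size cannot expand with the fact that spectral gap is preserved under passing to (and projecting through) finite-index subgroups of $\Gamma$ (with the corresponding reweighted walk), one concludes $\lambda(\pi_{\afr_n}[\pcal_{\Omega}];\GL_{n_0}(A/\afr_n))\to 1$, contradicting super-approximation.

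For the reverse direction, the plan is to carry out the Bourgain--Gamburd template, together with the Salehi Golsefidy--Varj\'u architecture, in the generality of an arbitrary finitely generated integral domain. Step 1 (algebraic extraction): use strong approximation in the style of Nori, Weisfeiler, Pink, and Matthews--Vaserstein--Weisfeiler, combined with Weil restriction of scalars, to identify $\overline{\Gamma}$ up to finite index with an open subgroup of $\gcal(\wh{A_0})$ for a smooth affine group scheme $\gcal$ defined over a suitable subring $A_0\subseteq A$; the FAb hypothesis on open subgroups should force the generic fiber of $\gcal$ to have semisimple identity component (any torus or unipotent quotient would survive to give an open subgroup with infinite abelianization). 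Step 2 (local spectral gap): for each maximal ideal $\pfr$ of $A_0$, establish spectral gap for the quotients modulo $\pfr^n$ by proving (i) a product theorem in $\gcal(A_0/\pfr)$ uniform in the residue field, in the spirit of Helfgott--Pyber--Szab\'o--Breuillard--Green--Tao, and (ii) the lifting argument from modulo $\pfr$ to modulo $\pfr^n$ of Salehi Golsefidy--Varj\'u, which requires handling Lie-algebra scaling phenomena carefully in small residue characteristic. Step 3 (globalization): combine the local gaps into a gap modulo arbitrary finite-index ideals by decomposing $A/\afr$ into primary components, analyzing the Jordan--H\"older factors of $\pi_{\afr}(\Gamma)$, and applying quasi-randomness estimates of Gowers and Nikolov--Pyber, using the semisimple structure from Step 1 to control the multiplicities.

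The principal obstacle will be Step 1 together with the first half of Step 2. Translating the purely group-theoretic FAb hypothesis on $\overline{\Gamma}$ into the quantitative algebraic input required by the product theorem, namely a smooth group scheme with uniformly controlled fibers, is precisely what is missing in the current literature; every known proof instead starts from explicit Zariski-density and algebraic hypotheses on $\bbg$. In positive characteristic this is aggravated by inseparable morphisms, wild ramification, and the hidden fields visible only after restriction of scalars, which is already why Theorem~\ref{t:SpectralGap} must require the auxiliary condition that the trace field equals $\bbf_p(t)$. A plausible first attempt is to reduce to the case where $\overline{\Gamma}$ is topologically perfect using the FAb hypothesis, then exploit the structure theory of just-infinite profinite groups to extract $\gcal$; but obtaining a product theorem that is uniform across an infinite family of residue fields of possibly varying characteristic, without any a priori semisimplicity input, looks out of reach without a genuinely new idea, consistent with Lubotzky's description of this conjecture as a \emph{fantasy}.
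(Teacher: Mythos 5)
You are addressing Conjecture~\ref{conj:ultimate}, and the first thing to say is that the paper does not prove it: it is stated as an open conjecture (Lubotzky's ``fantasy''), and the only content the authors supply is the remark that the ``only if'' direction can be obtained by the argument of \cite[Proposition 8]{SG:SAII}. Your forward direction is essentially that argument: open subgroups of $\overline{\Gamma}$ are preimages of subgroups of congruence quotients because the congruence kernels form a neighborhood basis of the identity, an infinite abelianization then produces congruence quotients of $\Gamma$ with arbitrarily large abelian quotients at bounded index, and such quotients are incompatible with a uniform spectral gap since abelian (or bounded-index-over-abelian) Cayley/Schreier graphs of unbounded size do not expand. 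That half is correct in substance and matches what the paper points to, modulo the routine bookkeeping of passing the gap through a fixed finite-index subgroup of $\Gamma$.

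The genuine gap is the ``if'' direction, and your Steps 1--3 are a research program rather than a proof, as you yourself say. To be concrete about why it is not just unfinished but currently out of reach: even with the full algebraic input your Step 1 hopes to extract (connected, simply connected, absolutely almost simple Zariski closure, trace field exactly $\bbf_p(t)$), the present paper still cannot prove super-approximation for all square-free moduli, because it cannot escape subfield-type subgroups; this is precisely why Theorem~\ref{t:SpectralGap} carries the admissibility restrictions on $\deg \ell_i$ and why Proposition~\ref{propmainescape} only handles purely structural subgroups. So your Steps 2--3 would already break in the special case treated here, before one ever reaches the harder problem you correctly flag, namely converting the purely group-theoretic FAb hypothesis on $\overline{\Gamma}$ into uniform scheme-theoretic data. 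Moreover, your Step 1 as written leans on the generic fiber of a group scheme, but the counterexample recalled in the introduction (\cite[Example 5]{SGV}, two subgroups of $\GL_{n_0}(\bbz[i])$ with the same Zariski closure, one with and one without super-approximation) shows that no criterion factoring through the Zariski closure alone can work; the closure $\overline{\Gamma}$ in $\GL_{n_0}(\wh{A})$ must enter in an essential and as-yet-unformalized way. In short: your assessment of the status is accurate, your forward direction is fine, but the statement as a whole is not proved by your proposal, and it is not proved in the paper either.
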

Let us make two remarks: (1) since $A$ is a finitely generated ring, for any maximal ideal $\mfr$ we have that $A/\mfr$ is a finitely generated ring and a field; and so $|A/\mfr|<\infty$ if $\mfr$ is a maximal ideal. Moreover, since $A$ is a finitely generated integral domain, it is a Jacobson ring which means intersection of its maximal ideals is zero. Hence $A$ can be (naturally) embedded into $\wh{A}$. Therefore it does make sense to talk about the closure of $\Gamma$ in $\GL_{n_0}(\wh{A})$. (2) Using the argument given in \cite[Proposition 8]{SG:SAII} one can get the ``only if" part of Conjecture~\ref{conj:ultimate}. It is worth mentioning that {\em super-approximation} (also known as {\em superstrong approximation}) has been found to be extremely instrumental in a wide range of problems; see \cite{ThingrpsandSSA} for a collection of its applications.
 
\subsection{Best related result prior to this work}  
 
The best known result on super-approximation for linear groups over a global function field, prior to this work, is due to Bradford~\cite{Bra}. In \cite{Bra}, under the extra assumption that the degree $\deg \ell_i$ of irreducible factors $\ell_i$ are prime, a version of Theorem~\ref{thmmainthm} for subgroups of $\SL_2(\bbf_p[t])$ is proved. Bradford also highlights  many of the subtleties involved in the positive characteristic case.

\subsection{Notation}\label{sectionnotation}
Throughout this paper for any group $G$ and a subgroup $H$, $Z(G)$ is the center of $G$, $C_G(H)$ is the centralizer of $H$ in $G$, and $N_G(H)$ is the normalizer of $H$ in $G$ as usual.  If $G$ and $H$ are algebraic groups, then these notions are considered in the category of algebraic groups.

For a finite subset $S$ of a group $G$, we denote by $\pcal_S$ the uniform probability measure supported on $S$; that means 
\begin{displaymath}
   \pcal_S(A) = |A\cap S|/|S|
\end{displaymath} 
for any $A\subseteq G$.

For any measure $\mu$ with finite support on $G$ and $g\in G$, we let $\mu(g):=\mu(\{g\})$.

For any two measures with finite support $\mu,\nu$ on $G$, the convolution of $\mu$ and $\nu$ is denoted by
$\mu\ast\nu$; and so 
\[(\mu\ast\nu)(g)=\sum_{h\in G} \mu(h)\nu(h^{-1}g).\]
The $l$-fold convolution of $\mu$ with itself is denoted by $\mu^{(l)}$, and $\widetilde{\mu}$ denotes the measure such that
$\widetilde{\mu}(g)=\mu(g^{-1}).$

For a measure $\mu$ with finite support on a group $G$ and a group homomorphism $\pi:G\rightarrow H$, we denote by $\pi[\mu]$ the push-forward of $\mu$ under $\pi$; that means $\pi[\mu](\overline{A}):=\mu(\pi^{-1}(\overline{A}))$ for any subset $\overline{A}$ of $H$.

For subsets $A,A_1,\dots, A_n$ of a group $G$, we write 
\[\textstyle \prod_{i=1}^n A_i:=\{a_1a_2\dots a_n|a_i\in  A_i\}\] for the product set of $A_1,\dots, A_n$ and we write
\[\textstyle{\prod_k} A:=\{a_1a_2\dots a_k|a_i\in A,\ 1\le i \le k\}\] for the set consisting of products of $k$ elements of $A$.  We denote by $\bigoplus_{i=1}^k G_i,$ the direct sum of the groups $G_1,\dots, G_k$.  

We use Vinogradov's notation $x\ll_A y$ to mean $|x|<Cy$ for some positive constant $C$ depending on the parameter $A$.  For any constant $\delta$, $K=\Theta_A(\delta)$ means $\delta\ll_A K\ll_A \delta$.  The subscript will be omitted from the above notation if either the constant is universal, or if the dependencies are clear from context.

For any positive integer $n$, $[1..n]$ denotes the set of integers that are at least $1$ and at most $n$.

We use $\pr_i:\bigoplus_{j\in I}  G_j\rightarrow G_i$ to denote the projection to the $i^{\rm th}$ factor.  For $J\subseteq I$, we identify the group $\bigoplus_{i\in J}G_i$ with its natural inclusion in $\bigoplus_{i\in I}G_i$.

For any prime $p$ we let $\overline{\bbf}_p$ be an algebraic closure of a finite field $\bbf_p$ of order $p$. For any prime $p$ and positive integer $n$, $\bbf_{p^n}$ denotes the unique finite subfield of $\overline{\bbf}_p$ that has order $p^n$.

For a field $F$, we let $F^{\times}:=F\setminus \{0\}$. For  $f(t)\in \bbf_q[t]\setminus\{0\}$, we let $N(f):=|\bbf_q[t]/\langle f(t)\rangle|$. For an irreducible polynomial $\ell(t)\in \bbf_q[t]$ we let $v_\ell:\bbf_q(t)\rightarrow \bbz\cup\{\infty\}$ be the $\ell$-valuation; that means for $r\in\bbf_q[t]\setminus \{0\}$ we let $v_\ell(r):=m$ if $\ell^m|r$ and $\ell^{m+1}\nmid r$, $v_{\ell}$ induces a group homomorphism from $\bbf_q(t)^{\times}$ to $\bbz$, and $v_\ell(r)=\infty$ if and only if $r=0$. We let $v_{\infty}$ be the valuation associated to $1/t$; that means $v_{\infty}(r/s):=\deg s-\deg r$ for any $r,s\in \bbf_q[t]\setminus\{0\}$. The set of valuations of $\bbf_q(t)$ is denoted by $V_{\bbf_q(t)}$. For any valuation $v$, the $v$-adic norm of $r\in \bbf_q(t)$ is defined as 
\[
|r|_v:=
\begin{cases}
	N(\ell)^{-v(r)} &\text{ if } v=v_\ell \text{ for some irreducible polynomial } \ell, \\
	q^{-v(r)} &\text{ if } v=v_{\infty}. 
\end{cases}
\] 
For any valuation $v$, the $v$-adic completion of $K:=\bbf_q(t)$ is denoted by $K_v$. The ring of $v$-adic integers is denoted by $\ocal_v$, and the residue field of $K_v$ is denoted by $K(v)$. For an irreducible polynomial $\ell$, we let $K(\ell):=\bbf_q[t]/\langle \ell\rangle\simeq K(v_\ell)$. For any valuation $v$ of $\bbf_q$, we let $\deg v:=[K(v):\bbf_q]$; and so we have $\log_q N(f)=\sum_{v\in V_{\bbf_q(t)}} v(f) \deg v$ for any $f\in \bbf_q[t]$.  For $r_0(t)\in \bbf_q[t]$, we let $D(r_0)$ to be either the set of irreducible factors of $r_0$ or $\{v_\ell\in V_{\bbf_q(t)}|\h \ell \text{ is irreducible, } \ell|r_0\}$. For a finite subset $S$ of valuations of $\bbf_q(t)$, we let $\|r\|_S:=\max_{v\in S}|r|_v$. In this note for $h\in \GL_{n_0}(\bbf_q[t,1/r_0(t)])$, we let $\|h\|:=\max_{i,j} \|h_{ij}\|_{D(r_0)\cup\{v_{\infty}\}}$ where $h_{ij}$ is the $ij$-entry of $h$. We notice that this norm depends on $r_0(t)$, and $r_0(t)$ should be understood from the context.

For a polynomial $r_0\in \bbf_p[t]\setminus\{0\}$ and positive integer $c_0$, we let $S_{r_0,c_0}$ be the set of all square-free polynomials $f(t)\in \bbf_p[t]$ with prime factors $\ell_i(t)$ such that (1) $\ell_i(t)\nmid r_0(t)$, (2) $\deg \ell_i>1$, (3) $\deg \ell_i \neq \deg \ell_j$ if $i\neq j$, and (4) $\deg \ell_i$ does not have a prime factor less than $c_0$.

For a ring $A$, $A^{\times}$ is the group of units of $A$ and $\Spec (A)$ denotes the associated affine scheme; that means the points of this space are prime ideals of $A$. If $\cal$ is a group scheme defined over a ring $A$ and $B$ is an $A$-algebra, then $\cal\otimes_A B$ denotes the group scheme on the fiber product $\cal\times_{\Spec A}\Spec B$.  For a group scheme $\cal$ defined over $A$ and $\ell\in A\setminus A^{\times}$, we let $\cal_{\ell}:=\cal\otimes_A A/\langle \ell\rangle$. For a ring $A$, $(\GL_n)_A$ denotes the $A$-group scheme given by the $n$-by-$n$ general linear group; so $(\GL_n)_A=(\GL_n)_{\bbz}\otimes_{\bbz} A$.

For an algebraic group $\bbg$, $R_u(\bbg)$ denotes its unipotent radical, and $\Lie \bbg$ is its Lie algebra.

\subsection{Outline of proof and the key differences with the characteristic zero case}\label{sectionoutline}
The general architecture of this article is as in Salehi Golsefidy-Varj\'{u}'s work \cite{SGV} where Bourgain-Gamburd's method~\cite{BG1} has been combined with Varj\'{u}'s multi-scale argument~\cite{Var}. By now there are many excellent surveys and lecture notes that explain the key ideas of the ground breaking result of Bourgain and Gamburd (see~\cite{Bre-survey,Hel-survey,Kow-survey,Tao-book}); so here we will be very brief on that part and focus on the main difficulties that were needed to be addressed. 

As in the characteristic zero case, we start with understanding the group structure of $\pi_{f}(\Gamma)$ for a square-free polynomial $f(t)\in \bbf_p(t)$. By Weisfeiler's strong approximation theorem~\cite{Wei}, we have that, if irreducible factors $\ell_i(t)$ of $f(t)$ have large degrees, then \[\pi_f(\Gamma)\simeq \bigoplus_{i=1}^n \bbg_{\ell_i}(\bbf_{N(\ell_i)})\]
 for some absolutely almost simple $\bbf_{N(\ell_i)}$-group $\bbg_{\ell_i}$ of dimension bounded by $n_0^2$. Notice that, since $\bbf_p(t)$ has many subfields, it is inevitable to have an assumption on the trace field of $\Gamma$ to get such a result; this is why we assume that $\bbf_p(t)$ is the field generated by $\Tr(\Ad(\Gamma))$. 
 
 There is a positive number $c_0$ depending on $n_0$ such that all the factors $\bbg_{\ell_i}(\bbf_{N(\ell_i)})$ are $c_0$-quasirandom in the sense of Gowers \cite{Gow}; this implies that for any irreducible representation $\rho$ of $\pi_f(\Gamma)$ we have that $\dim \rho\ge |{\rm Im}\h \rho|^{c_0}$. Based on Sarnak-Xue trick~\cite{SX1} (see~\cite{Gow,NP}), it would be enough to find a good upper bound for the trace of $(T_{\pi_f(\Omega)}^{\circ})^l$ for some positive integer $l=\Theta_{n_0}(\log |\pi_f(\Gamma)|)$. This trace can be controlled in terms of the $L^2$-norm of $\pi_f[\pcal_{\Omega}]^{(l)}$. Following Bourgain-Gamburd's treatment we look at the sequence of $\{\|\pi_f[\pcal_{\Omega}]^{(2^m)}\|_2\}_{m=1}^{\infty}$. It is easy to see that it is a decreasing sequence with a lower bound $\|\pcal_{\pi_f(\Gamma)}\|_2$ (the $L^2$-norm of the probability counting measure on $\pi_f(\Gamma)$). Roughly what Bourgain and Gamburd showed is that, if at some step $\|\pi_f[\pcal_{\Omega}]^{(2^m)}\|_2$ is still not close enough to the lower bound $\|\pcal_{\pi_f(\Gamma)}\|_2$ and does not get significantly smaller in the next step, there should be an algebraic reason: $\pi_f[\pcal_{\Omega}]^{(2^m)}$ should be concentrated on an approximate subgroup $X$; this roughly means $X$ is symmetric and almost close under multiplication.    (we refer the reader to the above cited surveys and lecture notes and \cite{Tao-approximate-subgroup} for a more thorough treatment of this subject; in this note we do not define approximate subgroups as they play an important role only at the background of our arguments). Breakthrough results of Breuillard-Green-Tao~\cite{BGT} and Pyber-Szab\'{o}~\cite{PS} (these generalize works of Helfgott~\cite{Hel1,Hel2}) say that an approximate subgroup of a finite simple group of Lie type with bounded rank is very close to being a subgroup. The multi-scale argument of Varj\'{u}~\cite{Var} gives us an axiomatic way to reduce understanding of approximate subgroups of a finite product of finite groups to the same question for each one of the factors (see~\cite[Section 3]{Var}). One of Varj\'{u}'s assumptions on the factors (see \cite[Condition (A5), section 3]{Var}) demands a type of {\em bounded hierarchy} for the subgroups of factors. The main idea of existence of such bounded hierarchy of subgroups relies on Nori's result~\cite{Nor} which roughly says a subgroup of $\GL_{n_0}(\bbf_p)$ is more or less the $\bbf_p$-points of an algebraic subgroup of $(\GL_{n_0})_{\bbf_p}$; and the mentioned hierarchy comes from the dimension of the associated algebraic subgroup. Clearly this type of statement is not true for subgroups of $\GL_{n_0}(\bbf_{p^d})$ when $d$ gets arbitrarily large; consider $\GL_{n_0}(\bbf_p)\subseteq \GL_{n_0}(\bbf_{p^2})\subseteq \cdots \subseteq \GL_{n_0}(\bbf_{p^d})$. So an important part of this work is to modify Varj\'{u}'s argument to work in our setting (notice that we are presenting the overview of the proof in a backward fashion; and so this part of proof appears towards the end of the article in Section~\ref{s:VarjuProductTheorem}). 
 
 So far under the contrary assumption we have that $\pi_f[\pcal_{\Omega}^{(l_0)}]$ is concentrated on a proper subgroup $H$ of $\pi_f(\Gamma)$ for some positive integer $l_0=\Theta_{n_0}(\log |\pi_f(\Gamma)|)$. Hence we need to have a good understanding of proper subgroups of $\pi_f(\Gamma)$ and escape them in logarithmic number of steps. Here is another important difference with the case of $A=\bbz[1/q_0]$ that we only partially address and is responsible for some of the additional technical assumptions in Theorem~\ref{t:SpectralGap}. For the case of $A=\bbz[1/q_0]$, we have to understand proper subgroups of $\GL_{n_0}(\bbf_\ell)$ where $\ell$ is a prime integer; and as it has been pointed out earlier, by a result of Nori~\cite{Nor} such groups are more or less $\bbf_\ell$-points of an algebraic subgroup. When $A=\bbf_p[t,1/r_0(t)]$, we need to understand subgroups of $\GL_{n_0}(\bbf_{N(\ell)})$ where $\ell(t)\in \bbf_p[t]$ is an irreducible polynomial that does not divide $r_0(t)$. Using work of Larsen and Pink~\cite{LP}, we prove (see Section~\ref{sectiondesubgroupdich}) that if $\bbg_0$ is an absolutely almost simple group of adjoint type defined over a finite field $\bbf_q$ and $H\subseteq\bbg_0(\bbf_q)$ is a proper subgroup, then either there exists a proper algebraic subgroup $\bbh$ (with controlled complexity) of $\bbg_0$ with $H\subseteq \bbh({\bbf}_q)$, or there exists a subfield $\bbf_{q'}$ and a model $\bbg_1$ of $\bbg_0$ defined over $\bbf_{q'}$ (that means we can and will identify $\bbg_1\otimes_{\bbf_{q'}}\bbf_q$ with $\bbg_0$) such that
\[[\bbg_1(\bbf_{q'}):\bbg_1(\bbf_{q'})]\subseteq H\subseteq \bbg_1(\bbf_{q'}).\]  
Subgroups of the former type are called {\em structural subgroups} while subgroups of the latter type are called {\em subfield type subgroups}. Currently we do not know how to escape subfield type subgroups, and this is why we need to add the extra technical assumptions on the largeness of prime factors of the degree of irreducible factors $\ell_i$ of $f$ in Theorem~\ref{t:SpectralGap}.

 In order to escape structural subgroups, we use similar ideas as in Salehi Golsefidy-Varj\'{u} \cite{SGV}; but since representations of a simple group over a positive characteristic field are not necessarily completely reducible, we face extra difficulties that need to be resolved. 

To be more precise we show that there is a polynomial $r_1(t)$ depending on $\Omega$ such that, if  $f(t)\in\bbf_p[t]$ is a square-free polynomial and $\gcd(f,r_1)=1$, then (1) $\pi_f(\Gamma)=\prod_{i=1}^n \pi_{\ell_i}(\Gamma)$ where $\ell_i$'s are irreducible factors of $q$ and $\pi_{\ell_i}(\Gamma)\simeq \bbg_{\ell_i}(\bbf_{N(\ell_i)})$ for an absolutely almost simple $\bbf_{N(\ell_i)}$-groups $\bbg_{\ell_i}$; (2) if $H\subseteq \pi_f(\Gamma)$ is a proper subgroup such that $\pi_{\ell_i}(H)$ is a structural subgroup of $\bbg_{\ell_i}(\bbf_{N(\ell_i)})$ for any $i$, then the set of {\em small lifts} of $H$,
\[\lcal_\delta(H):=\{h\in \bbg(\bbf_{p}[t,1/r_0(t)])\mid \pi_f(h)\in H\mbox{ and }\|h\|<[G:H]^\delta\}\]
 is contained in a proper algebraic subgroup of $\bbg$, where $\|h\|:=\max_{ij} \|h_{ij}\|_{D(r_1)\cup \{v_{\infty}\}}$ (when $\delta$ is small enough depending on $\Omega$). So we can escape a proper subgroup $H$ of $\pi_f(\Gamma)$ where $\pi_{\ell_i}(H)$ are structural subgroups if we manage to escape proper algebraic subgroups of $\bbg$. Following \cite{SGV}, we show that there are finitely many non-trivial irreducible representations $\{\rho_i:\bbg\rightarrow \GL(\bbv_i)\}_{i=1}^m$ and affine representations $\{\rho_j':\bbg\rightarrow {\rm Aff}(\bbw_j)\}_{j=1}^{m'}$ of $\bbg$ such that (1) the linear part of $\rho_j'$ is non-trivial and irreducible, (2) $\bbg(\overline{\bbf_p(t)})$ does not fix any point of $\bbw_j(\overline{\bbf_p(t)})$, (3) for any proper algebraic subgroup $\bbh$ of $\bbg$ there are either $i$ and $v\in \bbv_i(\overline{\bbf_p(t)})$ such that $\rho_i(\bbh(\overline{\bbf_p(t)}))[v]=[v]$ where $[v]$ is the line in $\bbv_i(\overline{\bbf_p(t)})$ that is spanned by $v$ or $j$ and $w\in \bbw_j(\overline{\bbf_p(t)})$ which is fixed by $\bbh(\overline{\bbf_p(t)})$ (see Proposition~\ref{propreps}). Notice that, since the representation $\wedge^{\dim\bbh}\Ad$ is not necessarily completely reducible, we had to use affine representations even for the case where $\bbg$ is (semi)simple;  this is an issue that can occur only in the positive characteristic case. Having this result we can apply the same {\em ping-pong} type argument as in~\cite[Proposition 21]{SGV} and find a finite symmetric subset $\Omega'$ of $\Gamma$ such that very few words in terms of $\Omega'$ fix a line in one of the irreducible representations $\rho_i$; and then we deduce that $\pcal_{\Omega'}^{(l)}(\bbh(\bbf_p(t)))\le e^{-O_{\Omega}(l)}$ for any proper algebraic subgroup $\bbh$ of $\bbg$. In order to be able to use $\Omega'$ instead of $\Omega$, we have to make sure that $\pi_f(\langle \Omega'\rangle)=\pi_f(\Gamma)$ when irreducible factors of $f$ have large degree. Unfortunately at this point, we cannot do this; and here is another place that the technical assumption on the largeness of prime divisors of the degree of irreducible factors of $f$ is needed. We suspect that this condition should not be needed here and the answer to the following question should be affirmative. 

 \begin{question}\label{ques:AdelicToplogicalTitsAlternative}
 	Let $\Omega$, $\Gamma$, and $\bbg$ be as in the hypotheses of Theorem~\ref{t:SpectralGap}. Let $K:=\bbf_{p}(t)$, $V_K$ be the set of valuations of $K$, $\ocal_v$ be the ring of integers of the completion $K_v$ of $K$ with respect to a valuation $v$, and $D(r_0):=\{v_{\ell}|\h \ell \text{ is an irreducible factor of } r_0\}$. Let $\overline{\Gamma}$ be the closure of $\Gamma$ in $\prod_{v\in V_K\setminus (D(r_0)\cup\{v_{\infty}\})}\GL_{n_0}(\ocal_v)$. Then there is a finite subset $\Omega'_0$ of $\Gamma$ such that
 	\begin{enumerate}
 	\item $\Omega'_0$ freely generates a subgroup $\Gamma'$ of $\Gamma$.
 	\item The closure $\overline{\Gamma'}$ of $\Gamma'$ in $\overline{\Gamma}$ is open.
 	\item For any proper algebraic subgroup $\bbh$ of $\bbg$ we have $\pcal_{\Omega'}^{(l)}(\bbh(\bbf_{p}(t)))\le e^{-O_{\Omega}(l)}$  where $\Omega':=\Omega'_0\sqcup \Omega_0'^{-1}$.	
 	\end{enumerate}
 \end{question}
It is worth mentioning that we do find $\Omega_0'$ that satisfies (1) and (3); but we cannot make sure that the trace field of $\Gamma'$ would be still $\bbf_{p}(t)$. Hence strong approximation does not imply (2). This issue does not occur over $\bbq$ as it does not have any non-trivial subfield. 

Overall we get the following result.

\begin{prop}[Escape from proper subgroups]\label{propmainescape} Let $\Omega$, $\Gamma$, and $\bbg$ be as in the hypotheses of Theorem \ref{t:SpectralGap}.  Then there is a symmetric set $\Omega'\subset \Gamma$, a square free polynomial $r_1$ divisible by $r_0$, and constants $c_0$ and $\delta_0$ depending only on $\Omega$ such that the following holds:\newline
	\indent For $f\in S_{r_1,c_0}$, suppose $H\le \pi_f(\Gamma)$ is a proper subgroup with the property that $\pi_\ell(H)$ is a structural subgroup of $\pi_\ell(\Gamma)$ for every irreducible factor $\ell$ of $f$.  Then for $l\gg_{\Omega} \deg f$ we have
	\[\pi_f[\convolve{\pcal}{l}_{\Omega'}](H)\le [\pi_f(\Gamma):H]^{-\delta_0}\text{, and } \pi_f(\langle \Omega'\rangle)=\pi_f(\Gamma).\]
\end{prop}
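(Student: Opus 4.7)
The plan is to combine three ingredients sketched in the outline — the family of escaping representations of $\bbg$ from Proposition \ref{propreps}, a ping-pong construction of $\Omega'$ inside $\Gamma$, and the small-lifts lemma for structural subgroups — to obtain the escape bound, and then to prove the surjectivity $\pi_f(\langle\Omega'\rangle)=\pi_f(\Gamma)$ separately, exploiting the $c_0$-admissibility of $f$. This last step is where the main difficulty lies and is the sole reason the admissibility hypothesis (and the constant $c_0$) enter at all.

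\emph{Construction of $\Omega'$ and escape from algebraic subgroups.} I begin by invoking Proposition \ref{propreps} to produce the finite list of nontrivial irreducible linear representations $\rho_i:\bbg\to\GL(\bbv_i)$ and affine representations $\rho'_j:\bbg\to\mathrm{Aff}(\bbw_j)$ for which every proper algebraic subgroup $\bbh\subseteq\bbg$ fixes either a line in some $\bbv_i$ or a point in some $\bbw_j$; the affine representations are indispensable in positive characteristic since $\wedge^{\dim\bbh}\Ad$ need not be completely reducible. Then, running the Tits-style ping-pong argument of \cite[Proposition~21]{SGV} applied simultaneously to the $\Gamma$-actions on each $\bbp(\bbv_i)$ and each $\bbw_j$, I select loxodromic elements with north/south dynamics on every factor and package them into a finite symmetric $\Omega'\subset\Gamma$ that Zariski-generates $\bbg$ and satisfies
\[
\pcal_{\Omega'}^{(l)}\bigl(\bbh(\bbf_p(t))\bigr)\le e^{-c_\Omega\, l}
\]
uniformly over proper algebraic subgroups $\bbh\subseteq\bbg$.

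\emph{Small lifts convert algebraic escape into escape from $H$ modulo $f$.} Let $r_1$ be the square-free multiple of $r_0$ supplied by the paragraph preceding this proposition, chosen so that (i) $\pi_f(\Gamma)\simeq\bigoplus_{\ell\mid f}\bbg_\ell(\bbf_{N(\ell)})$ for every $f\in S_{r_1,c_0}$, and (ii) the small-lifts lemma applies: whenever each $\pi_\ell(H)$ is a structural subgroup, $\lcal_\delta(H)$ lies inside $\bbh(\bbf_p[t,1/r_0])$ for some proper algebraic subgroup $\bbh$ of $\bbg$, provided $\delta$ is small in terms of $\Omega$. A length-$l$ word $w$ in $\Omega'$ satisfies $\|w\|\le C^l$ for $C=C(\Omega')$, while the complexity bound on structural subgroups (Section \ref{sectiondesubgroupdich}) gives $[\pi_f(\Gamma):H]\ge N(f)^{\alpha}$ for some $\alpha=\alpha(n_0)>0$. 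Pick $\delta_0>0$ so small that $C^l\le[\pi_f(\Gamma):H]^{\delta_0}$ in the range $l\asymp_\Omega\deg f$; then any $w$ with $\pi_f(w)\in H$ lies in $\lcal_{\delta_0}(H)\subset\bbh(\bbf_p(t))$, and the ping-pong estimate yields $\pi_f[\pcal_{\Omega'}^{(l)}](H)\le e^{-c_\Omega l}\le[\pi_f(\Gamma):H]^{-\delta_0}$ (after a final readjustment of $\delta_0$) at that length. A standard splitting of longer walks into a short escape-phase followed by an arbitrary tail extends the bound to all $l\gg_\Omega\deg f$.

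\emph{Generation $\pi_f(\langle\Omega'\rangle)=\pi_f(\Gamma)$ is the main obstacle.} The danger, flagged explicitly in the outline and in Question \ref{ques:AdelicToplogicalTitsAlternative}, is that the ping-pong construction may shrink the trace field of $\Omega'$ to a proper subfield $F\subsetneq\bbf_p(t)$; in that case Weisfeiler/Nori strong approximation places $\pi_\ell(\langle\Omega'\rangle)$ only inside a subfield-type subgroup $\bbg_\ell(\bbf_{N(\ell)'})$, where $\bbf_{N(\ell)'}$ is the image of $F$ in $\bbf_{N(\ell)}$. To rule this out I would use that $\Omega'$ is built from a bounded pool of ping-pong words in $\Omega$, so the candidate trace fields form a finite list depending only on $\Omega$, each with some index $d_F=[\bbf_p(t):F]>1$ having a finite set of prime divisors. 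Choose $c_0$ larger than the maximum of those primes across the list. For $f\in S_{r_1,c_0}$ every $\deg\ell$ then avoids those primes, so no proper subfield $\bbf_{N(\ell)'}\subsetneq\bbf_{N(\ell)}$ can be compatible with any candidate $F$, forcing $\pi_\ell(\langle\Omega'\rangle)=\pi_\ell(\Gamma)$ for every $\ell\mid f$. A Goursat-type argument, using quasisimplicity of the factors $\bbg_\ell(\bbf_{N(\ell)})$ together with the distinctness of their orders forced by condition (3) of $S_{r_1,c_0}$, then upgrades factorwise surjectivity to surjectivity onto the full product $\pi_f(\Gamma)$.
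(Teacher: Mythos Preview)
Your approach is essentially the paper's, and the three-ingredient architecture (Proposition~\ref{propreps}, ping-pong, small lifts) matches Section~\ref{sectionproofofmainescape} closely. One detail in your surjectivity argument needs correction: the residue-field extension degree $[K(\ell):\bbf_{N(\ell)'}]$ divides $\deg\ell$ and is \emph{bounded by} $d_F=[\bbf_p(t):F]$, but it need not divide $d_F$ (the extension $\bbf_p(t)/F$ is typically not Galois), so choosing $c_0$ larger than the prime factors of $d_F$ is insufficient. The correct choice, and the paper's, is $c_0>d_F$ itself: then any divisor of $\deg\ell$ exceeding $1$ has a prime factor $\ge c_0>d_F$, hence exceeds $d_F$, forcing the residue index to be $1$. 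Relatedly, there is no ``finite list'' of trace fields to worry about---once $\Omega'$ is fixed, its trace field is a single determined subfield $F=\bbf_p(s(t)/r(t))$, and the paper simply takes $c_0:=\max(\deg s,\deg r)$. The paper then applies strong approximation (Pink/Weisfeiler) directly to $\Gamma'=\langle\Omega'\rangle$ to obtain the full product structure $\pi_f(\Gamma')\simeq\prod_{\ell\mid f}\gcal_\ell(K(\ell))$ in one stroke, rather than arguing factorwise and invoking Goursat, though your route also works given the distinct-degrees condition in $S_{r_1,c_0}$.
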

As you can see using Proposition~\ref{propmainescape} we can only show escape from proper subgroups with {\em structural factors}. 
On the other hand, roughly speaking an arbitrary proper subgroup $H$ can be embedded into a product of two groups, one with structural factors and the other with subfield subgroup factors. The extra technical condition on the largeness of prime factors of degrees of irreducible factors of $f$ implies that the subgroup with subfield factors is relatively small; so it can be disregarded, and we get the desired result.

\section*{Acknowledgments}
We would like to thank P. Varj\'{u} and M. Larsen for their quick replies to our questions in regard to their works. The second author is thankful to A. Mohammadi for many mathematical discussions related (and unrelated) to random walks in compact groups.     

\section{A refinement of a theorem by Larsen and Pink}\label{s:RefinementOfLarsenPink}
In this section, we point out how Larsen and Pink's work~\cite{LP} gives us a concrete understanding of proper subgroups of $\pi_f(\Gamma)$ where $\Gamma\subseteq\GL_{n_0}(\bbf_p[t,1/r_0(t)])$ is as in Theorem~\ref{t:SpectralGap} (see Theorem~\ref{thm:FinalRefinementOfLP}). To avoid referring reader to the {\em ideas} in that article, we present an argument that uses only a couple of results from \cite{LP} as a black-box. That said it is worth pointing out that most of the results in this section are hidden in the mentioned Larsen-Pink work. 

\subsection{General setting and strong approximation}\label{ss:storngapproximation} 
Let $\Omega\subset\GL_{n_0}(\bbf_{q_0}(t))$ be a finite symmetric set, and let $\Gamma=\grpgen{\Omega}$.  Since $\Omega$ is finite, there exists a square-free polynomial $r_0\in\bbf_{q_0}[t]$ such that $\Omega\subset\GL_{n_0}(\bbf_{q_0}[t,1/r_0(t)])$.  The set of polynomials in $n_0^2$ variables with coefficients in $\bbf_{q_0}(t)$ which vanish on $\Gamma$ define a flat group scheme $\gcal$ of finite type over $\bbf_{q_0}[t,1/r_0]$.  The Zariski closure $\bbg$ of $\Gamma$ in $(\bbgl_{n_0})_{\bbf_{q_0}(t)}$ can be viewed as the generic fiber 
\begin{equation}
\label{grpsch}\gcal\otimes_{\bbf_{q_0}[t,1/r_0]}\bbf_{q_0}(t)\end{equation}
of $\gcal$. After possibly passing to a multiple of $r_0$, we may assume $\gcal$ is a smooth group scheme over $\bbf_{q_0}[t,1/r_0]$ and that all of its fibers are of constant dimension.
For any polynomial $f\in\bbf_{q_0}[t]$ that is coprime to $r_0$, we let $\gcal_f:=\gcal\otimes_{\bbf_{q_0}[t,1/r_0]}\bbf_{q_0}[t]/\langle f\rangle$; and the {\em reduction modulo $f$ homomorphism} is denoted by $\pi_f:\gcal(\bbf_{q_0}[t,1/r_0])\rightarrow \gcal_f(\bbf_{q_0}[t]/\langle f\rangle).$

For an irreducible polynomial $\ell$ which does not divide $r_0$, let $K(\ell):=\bbf_{q_0}[t]/\langle \ell\rangle$. Then $\gcal_{\ell}$ is an absolutely almost simple $K(\ell)$-group; and possibly after passing to a multiple of $r_0$, we can and will assume that all $\gcal_{\ell}\otimes_{K(\ell)} \overline{K(\ell)}$ are of the same type $\Phi$ as $\ell$ ranges through irreducible polynomials in $\bbf_{q_0}[t]$ that do not divide $r_0$; this means there is an adjoint Chevalley $\bbz$-group scheme $\gcal^{\rm Che}$ (we refer the reader to \cite{Ste} for a thorough treatment of Chevalley group schemes) such that for any irreducible $\ell$ that does not divide $r_0$ we have a central isogeny 
\[
\gcal_{\ell}\otimes_{K(\ell)} \overline{K(\ell)}\rightarrow 
\gcal^{\rm Che}\otimes_{\bbz} \overline{K(\ell)}.
\]
By Weisfeiler's strong approximation theorem \cite[Theorem 1.1]{Wei}, after possibly passing to a multiple of $r_0$, we have that if $f$ is a square-free polynomial coprime to $r_0$, then
\begin{equation}
\label{eqnSAsurj}
\pi_f(\Gamma)=\gcal_f(\bbf_{q_0}[t]/\langle f\rangle);\end{equation}
and by the Chinese Remainder Theorem $\bbf_{q_0}[t]/\langle f\rangle\simeq \bigoplus_{\ell\mid f, \ell \text{ irred.}} K(\ell)$, which implies
\begin{equation}
\label{eqnSAprod}
\gcal_f(\bbf_{q_0}[t]/\langle f\rangle)\simeq \prod_{\ell\mid f, \ell \text{ irred.}}\gcal_{\ell}(K(\ell)).
\end{equation}
Throughout this paper, we may replace $r_0$ by the product of all irreducible polynomials of degree at most $C$ in $\bbf_{q_0}[t]$ for some $C\ll_\Omega 1$ as necessary.  For the remainder of this section, $f$ is a fixed square-free polynomial coprime to $r_0$.

In order to prove Proposition~\ref{propmainescape} we must understand proper subgroups of $\pi_f(\Gamma)$.  In light of (\ref{eqnSAprod}) and (\ref{eqnSAsurj}), we must study proper subgroups of $\gcal_\ell(K(\ell))$ as $\ell$ ranges through all irreducible factors of $f$.

\subsection{The dichotomy of proper subgroups of $\gcal_{\ell}(K(\ell))$}\label{sectiondesubgroupdich} In this section the mentioned theorem of Larsen-Pink is stated and based on that we define structure type and subfield type subgroups.

Let $\bbt$ be a maximal torus of $\bbg$ and let $L$ be a minimal splitting field of $\bbt$.  Then $L $ is a finite extension of $\bbf_{q_0}(t)$ of degree say $D'$.  Let $\gcal^{Che}$ be the adjoint Chevalley $\bbz$-group scheme of the same type $\Phi$ as $\bbg\otimes_{K} L$, where $K:=\bbf_{q_0}(t)$.  Then there exists a central $L$-isogeny
\begin{equation*} \bbg\otimes_{K}L\rightarrow \gcal^{\rm Che}\otimes_\bbz L.
\end{equation*}
After passing to a multiple of $r_0$, if needed, we can extend this isogeny to a central  $\ocal_L[1/r_0]$-isogeny
\begin{equation*}\label{eqnglobliso}
\phi:\gcal\otimes_{\bbf_{q_0}[t,1/r_0]}\ocal_L[1/r_0]\rightarrow \gcal^{\rm Che}\otimes_\bbz\ocal_L[1/r_0]
\end{equation*}
where $\ocal_L$ is the integral closure of $\bbf_{q_0}[t]$ in $L$. For an irreducible polynomial $\ell$ coprime to $r_0$, let $\lfr\in \Spec(\ocal_L)$ be in the fiber over $\langle \ell\rangle$; that means $\lfr\cap \bbf_{q_0}[t]=\langle \ell\rangle$. Then $K(\ell):=\bbf_{q_0}[t]/\langle \ell\rangle$ can be embedded into $L(\lfr):=\ocal_L/\lfr$, and 
 \[[L(\lfr):K(\ell)]\le [L:K]\ll_\bbg 1.\]  Hence, we obtain an induced central $L(\lfr)$-isogeny
\begin{equation*}\label{phip}\phi_\ell:\gcal_\ell\otimes_{K(\ell)} L(\lfr)\rightarrow \gcal^{\rm Che}\otimes_{\bbz} L(\lfr).
\end{equation*} 

With this preparation, we mention a theorem of Larsen and Pink which is key in understanding proper subgroups of $\gcal_\ell(K(\ell))$. 

\begin{thm}{\cite[Theorem 0.6]{LP}}\label{thmLP}  Let $\gcal_0^{\rm Che}$ be an adjoint Chevalley $\bbz$-group scheme with simple root system $\Phi_0$.  Then there exists a representation
\[\rho:\gcal_0^{\rm Che}\rightarrow (\GL_{n'_0})_{\bbz}\] with the following property: Let $H$ be a finite subgroup of $\gcal_{0,p}^{\rm Che}(\overline{\bbf}_p)$ where $\gcal_{0,p}^{\rm Che}=\gcal_0^{\rm Che}\otimes_\bbz \overline{\bbf}_p$ is the geometric fibre of $\gcal_0^{\rm Che}$ over $p$ where $p$ is a prime more than 3.  Then either there exists a proper subspace $W\subset (\overline{\bbf}_p)^{n_0'}$ that is stable under $\rho(H)$ but not $\rho(\gcal_{0,p}^{\rm Che}(\overline{\bbf}_p))$, or there exists a finite field $\bbf_q\subset\overline{\bbf}_p$ and a model $\bbg_0$ of $\gcal_{0,p}^{\rm Che}$ over $\bbf_q$ (that means an $\bbf_q$-group $\bbg_0$ such that $\bbg_0\otimes_{\bbf_q}\overline{\bbf}_{p}\cong \gcal_{0,p}^{\rm Che}$) such that the commutator subgroup of $\bbg_0(\bbf_q)$ is simple and 
\begin{equation}\label{eqnsubfieldsubs}[\bbg_0(\bbf_q):\bbg_0(\bbf_q)]\subseteq H\subseteq \bbg_0(\bbf_q).\end{equation}
\end{thm}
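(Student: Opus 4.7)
My plan is to establish the dichotomy by analyzing the invariant-subspace structure of $\rho(H)$ on $\overline{\bbf}_p^{n_0'}$. First, I would choose $\rho$ to be a faithful representation of $\gcal_0^{\rm Che}$ whose action on each Chevalley group $\gcal_{0,p}^{\rm Che}(\overline{\bbf}_p)$ has a well-understood invariant-subspace lattice; a concrete candidate is a direct sum of fundamental highest-weight representations, chosen so that the kernel on each geometric fiber is at worst central (which the adjoint hypothesis makes trivial). The dichotomy then reduces to the question of whether $\rho(H)$ creates extra invariant subspaces beyond those that $\rho(\gcal_{0,p}^{\rm Che}(\overline{\bbf}_p))$ already fixes.

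If such an extra proper invariant subspace $W$ exists, we are immediately in the first case of the theorem and stop. Otherwise $\rho(H)$ and $\rho(\gcal_{0,p}^{\rm Che}(\overline{\bbf}_p))$ have the same invariant-subspace lattice, so $\rho(H)$ inherits irreducibility on each irreducible block. In this rigid situation I let $\bbf_q \subseteq \overline{\bbf}_p$ be the subfield generated by $\{\tr \rho(h)\mid h\in H\}$; since $H$ is finite, $\bbf_q$ is finite. The goal is then twofold: (i) Galois-descend $H$ to the $\bbf_q$-points of an $\bbf_q$-model $\bbg_0$ of $\gcal_{0,p}^{\rm Che}$; and (ii) show that $H$ is large enough inside $\bbg_0(\bbf_q)$ to contain the commutator subgroup $[\bbg_0(\bbf_q),\bbg_0(\bbf_q)]$. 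For (ii), if $H$ had small index in $\bbg_0(\bbf_q)$, then by invoking the bounded-rank subgroup theory of finite simple groups of Lie type -- or more elementarily by pulling back an $H$-invariant but not $\bbg_0(\bbf_q)$-invariant subspace through the central isogeny connecting $\bbg_0$ to $\gcal_{0,p}^{\rm Che}$ -- one produces an extra invariant subspace over $\overline{\bbf}_p$, contradicting the standing assumption.

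The hard part is step (i), the descent of $H$ to the finite field $\bbf_q$. The natural strategy is to consider the $\mathrm{Frob}_p$-orbit of $H$ inside $\gcal_{0,p}^{\rm Che}(\overline{\bbf}_p)$ and show that it is $\mathrm{Frob}_q$-stable, yielding a Galois 1-cocycle which one then trivializes via a Lang-type argument to produce the model $\bbg_0$ together with an explicit conjugator taking $H$ into $\bbg_0(\bbf_q)$. Making this argument uniform in $H$ requires a Larsen-Pink style dimension bound on the scheme parametrizing closed subgroups of $\gcal_0^{\rm Che}$ of fixed isomorphism type, which is the central technical input of \cite{LP}. The hypothesis $p > 3$ enters precisely here, to rule out inseparability in the isogenies that appear and the small-characteristic pathologies in Chevalley-group cohomology that would otherwise obstruct the descent.
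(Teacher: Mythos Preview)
The paper does not prove this statement; Theorem~\ref{thmLP} is quoted verbatim from Larsen--Pink \cite[Theorem~0.6]{LP} and used as a black box. Indeed, the authors explicitly say in the introduction to Section~\ref{s:RefinementOfLarsenPink} that they ``present an argument that uses only a couple of results from \cite{LP} as a black-box,'' and this theorem is one of those black boxes. So there is no ``paper's own proof'' to compare against.

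That said, your sketch is a reasonable caricature of the Larsen--Pink strategy, but it is not a proof, and it has a circularity problem. You correctly identify that the crux is the descent step (i), and you correctly say that it ``requires a Larsen--Pink style dimension bound on the scheme parametrizing closed subgroups of $\gcal_0^{\rm Che}$ of fixed isomorphism type, which is the central technical input of \cite{LP}.'' But that boundedness theorem is essentially the whole content of \cite{LP}; Theorem~0.6 is a corollary of it. So your argument, as written, assumes the main theorem of the paper whose Theorem~0.6 you are trying to prove. Also, your choice of $\rho$ as ``a direct sum of fundamental highest-weight representations'' is not what Larsen--Pink do: their $\rho$ is engineered (via their Proposition~2.3, which the present paper also cites in the proof of Proposition~\ref{prop:structural-subgroups}) precisely so that having no extra invariant subspace forces $H$ off every fiber of their universal family $\cal\to\tcal$ of proper subgroups. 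A generic bundle of fundamental representations would not have this property. Finally, your step (ii) is too vague: ``small index'' is not the right dichotomy, and the argument you gesture at does not obviously produce an invariant subspace over $\overline{\bbf}_p$.
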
  
\begin{definition}
Subgroups that satisfy the first condition are said to be of {\em structural type} while subgroups that satisfy the latter condition are said to be of {\em subfield type}.  

If for an irreducible polynomial $\ell$ that does not divide $r_0$, $H\subseteq \pi_\ell(\Gamma)\simeq\gcal_\ell(K(\ell))$ is a subgroup such that $\phi_\ell(H)$ is a subfield type subgroup (resp. structural type subgroup) of $\gcal_p^{\rm Che}(\overline{K(\ell)})$, then we call $H$ a {\em subfield} (resp. {\em structural}) {\em type subgroup} of $\pi_\ell(\Gamma)$.\end{definition}  

\subsection{Refiner description of subfield type subgroups of $\gcal_{\ell}(K(\ell))$}

In this section, we focus on subfield type subgroups of $\gcal_{\ell}(K(\ell))$; and we get a connection between the model $\bbg_0$ given in Theorem \ref{thmLP} and $\gcal_{\ell}$.  We prove a stronger result (see Proposition~\ref{propLPsubfields}) which is of independent interest. 
 
A subfield type subgroup $H$ of $\gcal^{\rm Che}(\overline{\bbf}_p)$ gives us a finite field $F_H$ and a model $\bbg_H$ of $\gcal^{\rm Che}\otimes_{\bbz} \overline{\bbf}_p$ over $F_H$. Proposition~\ref{propLPsubfields} implies that if $H_1\subseteq H_2$ are two subfield subgroups of $\gcal^{\rm  Che}(\overline{\bbf}_p)$ and $p$ is large enough, then $F_{H_1}\subseteq F_{H_2}$ and $\bbg_{H_1}$ is a model of $\bbg_{H_2}$ over $F_{H_1}$. This statement can be proved by the virtue of the argument given by Larsen and Pink. Here we give an independent self-contained proof. 

\begin{prop}\label{propLPsubfields}  For $i=1,2$, let $\bbg_i$ be an absolutely almost simple group defined over a finite field $\bbf_{q_i}\subseteq \overline{\bbf}_p$.  Suppose $\bbf_{q_i}$'s are of characteristic $p>5$, $q_1>9$, and that $\bbg_2$ is of adjoint type.  Suppose $\widetilde{\theta}:\bbg_{1}\otimes_{\bbf_{q_1}}\overline{\bbf}_{p}\rightarrow \bbg_{2}\otimes_{\bbf_{q_2}}\overline{\bbf}_{p}$ is an isogeny such that 
\[\widetilde{\theta}(\bbg_{1}(\bbf_{q_1}))\subseteq \bbg_2(\bbf_{q_2}).\]
Then $\bbf_{q_1}\subseteq \bbf_{q_2}$ and there exists an isogeny $\theta:\bbg_1\otimes_{\bbf_{q_1}} \bbf_{q_2}\rightarrow \bbg_2$ such that $\theta\otimes \id_{\overline{\bbf}_{q_1}}=\widetilde{\theta}$.
\end{prop}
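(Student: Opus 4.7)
The plan is to analyze $\wt{\theta}$ one root subgroup at a time: torus–equivariance will pin the restriction of $\wt{\theta}$ to each root subgroup to the very rigid form $x\mapsto c\,x^{p^k}$, and the hypothesis $\wt{\theta}(\bbg_1(\bbf_{q_1}))\subseteq\bbg_2(\bbf_{q_2})$ will then simultaneously force $\bbf_{q_1}\subseteq\bbf_{q_2}$ and supply enough rationality to descend $\wt{\theta}$. To set up, I would first reduce (possibly after a bounded base-field extension, using Lang's theorem on quasi-splitness) to the case where $\bbg_1,\bbg_2$ are split over their base fields. Fix a maximal torus $T_1\subseteq\bbg_1$ over $\bbf_{q_1}$ with root-subgroup parameterizations $\phi_\alpha:\bbg_a\xrightarrow{\sim}U_\alpha$ over $\bbf_{q_1}$, and similarly $T_2\subseteq\bbg_2$ over $\bbf_{q_2}$ with $\phi_\beta:\bbg_a\xrightarrow{\sim}U_\beta$ over $\bbf_{q_2}$. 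Since $\wt{\theta}$ is an isogeny of absolutely almost simple groups it carries maximal tori to maximal tori and root subgroups to root subgroups, so after a (necessarily $\bbf_{q_2}$-rational, by the analysis below) inner conjugation I may assume $\wt{\theta}(T_1\otimes\overline{\bbf}_p)=T_2\otimes\overline{\bbf}_p$ and $\wt{\theta}(U_\alpha)=U_\beta$ for some bijection on roots.

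The heart of the proof is the identification of $\wt{\theta}|_{U_\alpha}$. Writing $\wt{\theta}\circ\phi_\alpha=\phi_\beta\circ f_\alpha$, the intertwining of $T_1$-conjugation (with character $\alpha$) and $T_2$-conjugation (with character $\beta\circ\wt{\theta}|_{T_1}$) yields the functional equation
\[
f_\alpha(\alpha(t)\lambda)=\beta(\wt{\theta}(t))\,f_\alpha(\lambda),\qquad t\in T_1(\overline{\bbf}_p),\ \lambda\in\overline{\bbf}_p.
\]
Setting $\lambda=1$ and letting $s=\alpha(t)$ range over $\overline{\bbf}_p^{\times}$ shows that $s\mapsto f_\alpha(s)/f_\alpha(1)$ is a character of $\bbg_m$, hence of the form $s\mapsto s^{n_\alpha}$; additivity of $f_\alpha$ combined with Lucas's theorem then forces $n_\alpha=p^{k_\alpha}$ for some $k_\alpha\ge 0$, so $f_\alpha(x)=c_\alpha x^{p^{k_\alpha}}$ for some $c_\alpha\in\overline{\bbf}_p^{\times}$ (the possibility $f_\alpha\equiv 0$ being excluded because $\wt\theta$ is an isogeny). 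Now plug in the hypothesis: for every $\lambda\in\bbf_{q_1}$ we have $\wt{\theta}(\phi_\alpha(\lambda))=\phi_\beta(c_\alpha\lambda^{p^{k_\alpha}})\in\bbg_2(\bbf_{q_2})$, so $\bbf_{q_2}$-rationality of $\phi_\beta$ yields $c_\alpha\lambda^{p^{k_\alpha}}\in\bbf_{q_2}$; taking $\lambda=1$ gives $c_\alpha\in\bbf_{q_2}$, and hence $\lambda^{p^{k_\alpha}}\in\bbf_{q_2}$ for every $\lambda\in\bbf_{q_1}$. Because $x\mapsto x^{p^{k_\alpha}}$ is a bijection of $\bbf_{q_1}$, this is exactly the statement $\bbf_{q_1}\subseteq\bbf_{q_2}$.

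Once $\bbf_{q_1}\subseteq\bbf_{q_2}$ is in hand, the group $\bbg_1':=\bbg_1\otimes_{\bbf_{q_1}}\bbf_{q_2}$ is split over $\bbf_{q_2}$; each $f_\alpha$ is $\bbf_{q_2}$-rational (since $c_\alpha\in\bbf_{q_2}$), and $\wt{\theta}|_{T_1\otimes\overline{\bbf}_p}$ is an isogeny of split tori, hence automatically $\bbf_{q_2}$-rational (any morphism of split tori over $\bbf_{q_2}$ is determined by its map of character lattices, which is Galois-equivariant for free). Using the Chevalley–Steinberg presentation of $\bbg_1'$ by its tori, root subgroups, and Weyl-group representatives, these compatible local data assemble into the desired $\bbf_{q_2}$-isogeny $\theta:\bbg_1'\to\bbg_2$ with $\theta\otimes\id_{\overline{\bbf}_p}=\wt{\theta}$. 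The main obstacle is the technical book-keeping in the first paragraph: the hypothesis on $\bbf_{q_1}$-points is rigid and does not extend transparently after a scalar extension, so in the non-split case one must work with Frobenius-orbits of root subgroups (equivalently, Weil restrictions from the splitting field) and verify that the analysis above still pins down the structure. The bound $q_1>9$ enters exactly here to rule out small-field pathologies — in particular to guarantee that $|\bbf_{q_1}|$ is large enough to force the image $\wt{\theta}(U_\alpha)$, and with it the torus $\wt{\theta}(T_1\otimes\overline{\bbf}_p)$, to be $\bbf_{q_2}$-defined so that the inner conjugation in paragraph one can be arranged over $\bbf_{q_2}$.
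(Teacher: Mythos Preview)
Your route via root-subgroup analysis is genuinely different from the paper's, which works entirely at the Lie-algebra level and never attempts to align tori or identify $\wt\theta$ on individual root subgroups. For the inclusion $\bbf_{q_1}\subseteq\bbf_{q_2}$ the paper observes that for $t\in T_1(\bbf_{q_1})$ one has $\Tr(\Ad(\wt\theta(t)))=\Tr(\Ad(t))\in\bbf_{q_2}$; writing $t=\prod_i\alpha_i^\vee(t_i)$ for simple coroots, this trace is a Laurent polynomial in the $t_i$ whose exponents are Cartan integers and whose coefficients are root-space dimensions, and an elementary argument (comparing $q_1$ against $(\deg_{t_i}+\deg_{t_i^{-1}})^2$) forces $\bbf_{q_1}\subseteq\bbf_{q_2}$ once $q_1>9$. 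For the descent, the paper shows $d\wt\theta^{-1}(\ugfr_2(\bbf_{q_2}))=\ugfr_1(\bbf_{q_2})$: the left side is an $\bbf_{q_2}$-subspace of $\ugfr_1(\overline{\bbf}_p)$ that is $\Ad(\bbg_1(\bbf_{q_1}))$-stable of the correct dimension, simplicity of $\ugfr_1/\uzfr_1$ as a $\bbg_1(\bbf_{q_1})$-module pins it down to $\lambda\,\ugfr_1(\bbf_{q_2})+\uzfr_1(\overline{\bbf}_p)$, and perfectness of $\ugfr_1$ kills the scalar. One then checks directly that $\wt\theta$ commutes with $\Gal(\overline{\bbf}_p/\bbf_{q_2})$, using that $\bbg_2$ is adjoint.

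Your argument has a real gap at the alignment step. You assert that ``after a (necessarily $\bbf_{q_2}$-rational, by the analysis below) inner conjugation'' one may take $\wt\theta(T_1\otimes\overline{\bbf}_p)=T_2\otimes\overline{\bbf}_p$, but the analysis that follows computes $f_\alpha$ and extracts $c_\alpha\in\bbf_{q_2}$ only \emph{after} this alignment, so it cannot retroactively justify that the conjugating element lies in $\bbg_2(\bbf_{q_2})$ --- and conjugating by anything outside $\bbg_2(\bbf_{q_2})$ destroys the hypothesis $\wt\theta(\bbg_1(\bbf_{q_1}))\subseteq\bbg_2(\bbf_{q_2})$. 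One can indeed show $\wt\theta(T_1\otimes\overline{\bbf}_p)$ is $\bbf_{q_2}$-defined (it is the centralizer of the finite set $\wt\theta(T_1(\bbf_{q_1}))\subseteq\bbg_2(\bbf_{q_2})$), but this does not place it in the $\bbg_2(\bbf_{q_2})$-conjugacy class of your chosen split torus $T_2$: an $\bbf_{q_2}$-defined maximal torus of $\bbg_2$ need not be $\bbf_{q_2}$-split. If it is not, the parametrizations $\phi_\beta$ are unavailable over $\bbf_{q_2}$ and the key step $c_\alpha\lambda^{p^{k_\alpha}}\in\bbf_{q_2}$ fails. Your final paragraph acknowledges this as ``book-keeping,'' but it is the crux of the descent, and you have not supplied an argument; note too that your deduction of $\bbf_{q_1}\subseteq\bbf_{q_2}$ itself rests on the same unproved alignment. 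The paper's Lie-algebra route sidesteps all of this: it never compares $\wt\theta(T_1)$ with any preferred torus on the $\bbg_2$ side, and the bound $q_1>9$ enters only in the trace-polynomial lemma, not in any torus-rationality statement.
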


 By a theorem of Lang \cite[Thm. 35.2]{Hum}, $\bbg_1$ is quasisplit; that means $\bbg_1$ has a Borel subgroup $\bbb_1$ defined over $\bbf_{q_1}$. By \cite[\textsection 6.5 (3)]{Bor}, there is a maximal $\bbf_{q_1}$-split torus $\bbs_1$ such that \[\bbb_1=C_{\bbg_1}(\bbs_1)\cdot R_u(\bbb_1),\] where $R_u(\bbb_1)$ is the unipotent radical of $\bbb_1$. Since $\bbb_1$ is a Borel subgroup, $\bbt_1=C_{\bbg_1}(\bbs_1)$ is a maximal torus. Since $\bbs_1$ is defined over $\bbf_{q_1}$, $\bbt_1$ is defined over $\bbf_{q_1}$.

Let $\widetilde{\bbg}_i=\bbg_i\otimes_{\bbf_{q_i}}\overline{\bbf}_p$ for $i=1,2$, $\widetilde{\bbs}_2=\widetilde{\theta}(\bbs_1\otimes_{\bbf_{q_1}} \overline{\bbf}_p)$, $\widetilde{\bbt}_2=\widetilde{\theta}(\bbt_1\otimes_{\bbf_{q_1}} \overline{\bbf}_p)$, and $\widetilde{\bbb}_2=\widetilde{\theta}(\bbb_1\otimes_{\bbf_{q_1}} \overline{\bbf}_p)$.  Let $\ugfr_i=\Lie(\bbg_i)$ for $i=1,2$; it is worth pointing out that we view $\ugfr_i$'s as functors from $\bbf_{q_i}$-algebras to Lie $\bbf_{q_i}$-algebras, and since $\bbg_i$'s are smooth $\bbf_{q_i}$-group schemes, $\ugfr_i(A)$ is naturally isomorphic to $\gfr_i\otimes_{\bbf_{q_i}} A$ where $\gfr_i:=\ugfr_i(\bbf_{q_i})$.  Notice that since $\widetilde{\theta}$ is an isogeny, we have an isomorphism

\[d\widetilde{\theta}: \ugfr_1( \overline{\bbf}_p)\rightarrow \ugfr_2( \overline{\bbf}_p)\] which satisfies the identity
\begin{equation}\label{adjaction}d\widetilde{\theta}(\Ad(g_1)(x_1))=\Ad(\widetilde{\theta}(g_1))(d\widetilde{\theta}(x_1)),\end{equation} for all $g_1\in \bbg_1(\overline{\bbf}_p)$ and  $x_1\in \ugfr_1(\overline{\bbf}_p)$.
By \cite[Cors. 9.2, 11.12]{Bor} and \cite[A.2.8]{CGP}, we have $\widetilde{\theta}(C_{\widetilde{\bbg}_1}(\bbs_1\otimes_{\bbf_{q_1}} \overline{\bbf}_p))=C_{\widetilde{\bbg}_2}(\widetilde{\bbs}_2)$ is an $\overline{\bbf}_p$-torus, $\widetilde{\bbt}_2=C_{\widetilde{\bbg}_2}(\wt{\bbs}_2)$ is a maximal $\overline{\bbf}_p$-torus, and $\Lie(\bbt_1)=C_{\ugfr_1}(\bbs_1)$.

For a torus $\bbs$ defined over a perfect field $F$, let $X^*(\bbs)$ be the group of characters of $\bbs$; that means $\Hom(\bbs\otimes_F \overline{F},(\GL_1)_{\overline{F}})$. It is well-known that $X^*(\bbs)$ is isomorphic to $\bbz^{\dim \bbs}$ as an abelian group and the absolute Galois group ${\rm Gal}(\overline{F}/F)$ acts linearly on $X^*(\bbs)$ (see \cite[Chapter III, \textsection 8]{Bor}). Suppose $\bbs$ is a subgroup of an algebraic group $\bbh$; then $\Phi(\bbh,\bbs)\subseteq X^*(\bbs)$ denotes the set of roots of $\bbh$ relative to $\bbs$. For $\alpha\in \Phi(\bbh,\bbs)$, let \[\underline{\hfr}_{\alpha}(A):=\{x\in \underline{\hfr}(A)|\h \forall s\in \bbs(A), \Ad(s)(x)=\alpha(s) x\}\] be the root space associated with $\alpha$. We notice that if $\alpha$ is defined over $F$ (this is equivalent to saying $\alpha$ is invariant under the action of the absolute Galois group ${\rm Gal}(\overline{F}/F)$), then $\underline{\hfr}_{\alpha}$ is defined over $F$.

Let us also recall that, $\wt{\theta}$ induces injective group homomorphism from $\wt{\theta}^*:X^*(\wt{\bbs}_2)\rightarrow X^*(\wt{\bbs}_1)$ and $\wt{\theta}^*:X^*(\wt{\bbt}_2)\rightarrow X^*(\wt{\bbt}_1)$.
\begin{lem}  $\widetilde{\theta}^\ast$ induces bijections
$\Phi(\widetilde{\bbg}_2,\widetilde{\bbs}_2)\rightarrow \Phi(\widetilde{\bbg}_1,\widetilde{\bbs}_1)$
and
$\Phi(\widetilde{\bbg}_2,\widetilde{\bbt}_2)\rightarrow \Phi(\widetilde{\bbg}_1,\widetilde{\bbt}_1).$
Moreover, $d\widetilde{\theta}$ induces isomorphisms
$\gfr_{1,\widetilde{\theta}^\ast\alpha}(\overline{\bbf}_{p})\rightarrow \gfr_{2,\alpha}(\overline{\bbf}_{p})$ for $\alpha\in \Phi(\widetilde{\bbg}_2,\widetilde{\bbs}_2)$ or $\Phi(\widetilde{\bbg}_2,\widetilde{\bbt}_2)$.

\end{lem}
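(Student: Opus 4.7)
The plan is to deduce both assertions from the intertwining identity (\ref{adjaction}) together with two structural inputs: first, $d\widetilde{\theta}$ is an isomorphism of Lie algebras over $\overline{\bbf}_p$ (already noted in the paragraph preceding the lemma); second, $\widetilde{\theta}$ restricts to a surjection $\widetilde{\bbs}_1\twoheadrightarrow \widetilde{\bbs}_2$ (and $\widetilde{\bbt}_1\twoheadrightarrow\widetilde{\bbt}_2$) with finite kernel, so the pullback $\widetilde{\theta}^*$ is an injective homomorphism on the corresponding character groups.

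The key move will be to read $d\widetilde{\theta}$ as an isomorphism of $\widetilde{\bbs}_1$-modules, where $\widetilde{\bbs}_1$ acts on $\ugfr_2(\overline{\bbf}_p)$ through $s_1\mapsto \Ad(\widetilde{\theta}(s_1))$. Indeed, specializing (\ref{adjaction}) to $g_1\in \widetilde{\bbs}_1(\overline{\bbf}_p)$ says exactly that $d\widetilde{\theta}$ is $\widetilde{\bbs}_1$-equivariant for these two actions. The root-space decomposition on the target,
\[
\ugfr_2(\overline{\bbf}_p)=C_{\ugfr_2}(\widetilde{\bbs}_2)(\overline{\bbf}_p)\oplus\bigoplus_{\alpha\in\Phi(\widetilde{\bbg}_2,\widetilde{\bbs}_2)}\gfr_{2,\alpha}(\overline{\bbf}_p),
\]
is simultaneously the $\widetilde{\bbs}_1$-weight decomposition of $\ugfr_2(\overline{\bbf}_p)$, with $\gfr_{2,\alpha}(\overline{\bbf}_p)$ carrying weight $\widetilde{\theta}^*\alpha$; meanwhile the $\widetilde{\bbs}_1$-weight decomposition of $\ugfr_1(\overline{\bbf}_p)$ has nonzero weights exactly $\Phi(\widetilde{\bbg}_1,\widetilde{\bbs}_1)$.

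Since $d\widetilde{\theta}$ is an $\widetilde{\bbs}_1$-equivariant linear isomorphism, it must send weight spaces isomorphically to weight spaces, and the multisets of nonzero weights on the two sides must coincide. Combined with the injectivity of $\widetilde{\theta}^*$, this yields
\[
\widetilde{\theta}^*\bigl(\Phi(\widetilde{\bbg}_2,\widetilde{\bbs}_2)\bigr)=\Phi(\widetilde{\bbg}_1,\widetilde{\bbs}_1),
\]
i.e. the claimed bijection, and at the same time shows that $d\widetilde{\theta}$ restricts to an isomorphism $\gfr_{1,\widetilde{\theta}^*\alpha}(\overline{\bbf}_p)\to\gfr_{2,\alpha}(\overline{\bbf}_p)$ for every $\alpha\in\Phi(\widetilde{\bbg}_2,\widetilde{\bbs}_2)$. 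The identical argument, applied to $\widetilde{\bbt}_1\twoheadrightarrow\widetilde{\bbt}_2$ in place of $\widetilde{\bbs}_1\twoheadrightarrow\widetilde{\bbs}_2$, settles the torus version. There is no serious obstacle; the only point that deserves a single line of justification is the matching of weight decompositions, which reduces to the linear independence of distinct characters of a torus on $\overline{\bbf}_p$-points, a standard fact.
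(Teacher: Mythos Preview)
Your proof is correct and follows essentially the same approach as the paper's: both use the intertwining identity (\ref{adjaction}) to show that $d\widetilde{\theta}$ is $\widetilde{\bbs}_1$-equivariant and then match the resulting weight-space decompositions, with the paper carrying this out element-by-element and finishing by a dimension count, while you package the same computation as ``an equivariant linear isomorphism of torus representations sends weight spaces isomorphically to weight spaces.'' The injectivity of $\widetilde{\theta}^*$ that you invoke is exactly what the paper records just before the lemma, so nothing is missing.
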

\begin{proof}  We notice that the root space decomposition of $\underline{\gfr}_2$ relative to $\wt{\bbs}_2$ gives us
\[\underline{\gfr}_2(\overline{\bbf}_{q_2})=\underline{\tfr}_2(\overline{\bbf}_{q_2})\oplus\left(\oplus_{\beta\in\Phi(\widetilde{\bbg}_2,\widetilde{\bbs}_2)} \underline{\gfr}_{2,\beta}(\overline{\bbf}_{q_2})\right).\]  
Suppose $x_{2,\alpha}\in \underline{\gfr}_{2,\alpha}(\overline{\bbf}_p)$, and $x_1\in \underline{\gfr}_1(\overline{\bbf}_p)$ such that $d\widetilde{\theta}(x_1)=x_{2,\alpha}$.  By \eqref{adjaction}
 for every $s_1\in \widetilde{\bbs}_1(\overline{\bbf}_p)$ and $\alpha\in \Phi(\widetilde{\bbg}_2,\widetilde{\bbs}_2)$, we have \[d\widetilde{\theta}(\Ad(s)(x_1))=\Ad(\widetilde{\theta}(s_1))(d\widetilde{\theta}(x_1))=(\widetilde{\theta}^\ast\alpha)(s_1)d\widetilde{\theta}(x_1)=d\widetilde{\theta}((\widetilde{\theta}^\ast\alpha)(s_1)x_1);\] this implies $(\widetilde{\theta}^\ast\alpha)(s)x_1=\Ad(s)x_1$ as $d\widetilde{\theta}$ is an isomorphism.  Therefore, $\widetilde{\theta}^\ast(\alpha)\in\Phi(\widetilde{\bbg}_1,\widetilde{\bbs}_1)$ and $d\widetilde{\theta}(\ugfr_{q,\widetilde{\theta}^\ast(\alpha)}(\overline{\bbf}_p))\subseteq\ugfr_{2,\alpha}(\overline{\bbf}_p)$.  By comparing dimensions, we see that $\widetilde{\theta}^\ast$ induces a bijection from $\Phi(\widetilde{\bbg}_2,\widetilde{\bbs}_2)$ to $\Phi(\widetilde{\bbg}_1,\widetilde{\bbs}_1)$ and $d\widetilde{\theta}$ induces an isomorphism from $\ugfr_{1,\widetilde{\theta}^\ast(\alpha)}(\overline{\bbf}_p)$ to $\ugfr_{2,\alpha}(\overline{\bbf}_p)$.  The argument is similar for the second assertion.
\end{proof}

\begin{lem}\label{l:DimRootSpace}  For every $\alpha\in \Phi(\widetilde{\bbg}_1,\widetilde{\bbs}_1)$, $\dim\ugfr_{1,\alpha}\le 3$.
\end{lem}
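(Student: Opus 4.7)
The plan is to identify $\dim\ugfr_{1,\alpha}$ with the cardinality of a Galois orbit on the absolute root system of $\widetilde{\bbg}_1$ and then bound that cardinality via Dynkin diagram symmetries.

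First, since $\widetilde{\bbs}_1\subseteq\widetilde{\bbt}_1$, each $\widetilde{\bbt}_1$-root space $\ugfr_{1,\beta}(\overline{\bbf}_p)$ is contained in the $\widetilde{\bbs}_1$-weight space of weight $\beta|_{\widetilde{\bbs}_1}$. Intersecting the $\widetilde{\bbt}_1$-root space decomposition of $\ugfr_1(\overline{\bbf}_p)$ with the $\widetilde{\bbs}_1$-weight space for $\alpha$ gives
\[
\ugfr_{1,\alpha}(\overline{\bbf}_p)=\bigoplus_{\beta\in\Phi(\widetilde{\bbg}_1,\widetilde{\bbt}_1),\ \beta|_{\widetilde{\bbs}_1}=\alpha}\ugfr_{1,\beta}(\overline{\bbf}_p).
\]
Because $\widetilde{\bbg}_1$ is absolutely almost simple and $p>5$, every absolute root space on the right is one-dimensional; hence $\dim\ugfr_{1,\alpha}$ equals the number of absolute roots restricting to $\alpha$.

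Next, I would recognize that set of absolute roots as a single Galois orbit. Let $\sigma$ be a topological generator of $\Gal(\overline{\bbf}_{q_1}/\bbf_{q_1})$. Since $\widetilde{\bbs}_1$ is $\bbf_{q_1}$-split, $\sigma$ acts trivially on $X^*(\widetilde{\bbs}_1)$, so the restriction map $X^*(\widetilde{\bbt}_1)\to X^*(\widetilde{\bbs}_1)$ is constant on $\sigma$-orbits. Because $\bbg_1$ is quasisplit (by Lang's theorem, as already noted in this subsection), the standard theory of relative root systems gives the converse: two absolute roots share a nonzero restriction to $\widetilde{\bbs}_1$ if and only if they are $\sigma$-conjugate. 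Thus the index set of the display is a single $\sigma$-orbit in $\Phi(\widetilde{\bbg}_1,\widetilde{\bbt}_1)$.

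Finally, I would bound the size of this orbit. The $\sigma$-action preserves a Borel subgroup of $\widetilde{\bbg}_1$ containing $\widetilde{\bbt}_1$ (again by quasisplitness), hence factors through the automorphism group of the Dynkin diagram of the irreducible root system $\Phi(\widetilde{\bbg}_1,\widetilde{\bbt}_1)$. Its image is cyclic, because $\Gal(\overline{\bbf}_{q_1}/\bbf_{q_1})$ is topologically cyclic. For every irreducible Dynkin diagram, the largest cyclic subgroup of its automorphism group has order at most $3$, the bound $3$ being realized only by triality in type $D_4$; in types $A_n$, $D_n$ ($n\ge 5$), and $E_6$ one has cyclic order at most $2$, and in types $B_n,C_n,F_4,G_2,E_7,E_8$ the diagram has no nontrivial symmetry. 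A direct inspection of triality on $\Phi(D_4)$ confirms orbit sizes $1$ and $3$. Combining with the first paragraph yields $\dim\ugfr_{1,\alpha}\le 3$.

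The main obstacle I anticipate is the clean justification in the second paragraph that $\sigma$-orbits on absolute roots biject with nonzero restrictions to $\widetilde{\bbs}_1$ rather than merely mapping into them. I would argue this directly: the identity $\widetilde{\bbt}_1=C_{\widetilde{\bbg}_1}(\widetilde{\bbs}_1)$ lets one lift any $\widetilde{\bbs}_1$-weight on $\ugfr_1$ to a $\widetilde{\bbt}_1$-weight, and then $\sigma$-equivariance of the restriction together with trivial $\sigma$-action on $X^*(\widetilde{\bbs}_1)$ forces each fiber of the restriction map over a relative root to be precisely a single $\sigma$-orbit.
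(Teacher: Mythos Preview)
Your proposal is correct and follows essentially the same route as the paper's proof: both identify $\dim\ugfr_{1,\alpha}$ with the cardinality of the fiber of the restriction map $\Phi(\widetilde{\bbg}_1,\widetilde{\bbt}_1)\to\Phi(\widetilde{\bbg}_1,\widetilde{\bbs}_1)$, recognize that fiber as a single Galois orbit (the paper cites \cite[Proposition 15.5.3]{Spr} for this), and then bound the orbit size by $3$ using that the Galois group is cyclic and acts through Dynkin diagram automorphisms. Your write-up is somewhat more explicit about the intermediate steps, but the argument is the same.
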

\begin{proof}
Let's recall that if $\bbh$ is a quasi-split absolutely almost simple $k$-group, then there is a Galois extension $l$ of $k$ such that $\bbh\otimes_k l$ is a split group and ${\rm Gal}(l/k)$ can be embedded into the group of symmetries of the Dynkin diagram of $\bbh\otimes_k l$; in particular, $\Gal(l/k)$ is isomorphic to $\{1\}, \bbz/2\bbz, \bbz/3\bbz$, or $S_3$. By Lang's theorem \cite[Thm 35.2]{Hum}, $\bbg_i$ is quasisplit over $\bbf_{q_i}$ for $i=1,2$. Therefore by the above discussion and the fact that finite extensions of $\bbf_{q_1}$ are cyclic, we have that there is a Galois extension $F_1$ of $\bbf_{q_1}$ such that $\bbg_1\otimes_{\bbf_{q_1}} F_1$ splits and $|\Gal(F_1/\bbf_{q_1})|\le 3$. For each $\alpha\in \Phi(\widetilde{\bbg}_1,\widetilde{\bbs}_1)$, we have that
\[\dim\ugfr_{1,\alpha}=|\{\widetilde{\alpha}\in \Phi(\widetilde{\bbg}_1,\widetilde{\bbt}_1)|\h\widetilde{\alpha}\big|_{\widetilde{\bbs}_1}=\alpha\}|\] and $\Gal(F_1/\bbf_{q_1})$ acts transitively on the set \[\{\widetilde{\alpha}\in \Phi(\widetilde{\bbg}_1,\widetilde{\bbt}_1)|\h\widetilde{\alpha}\big|_{\widetilde{\bbs}_1}=\alpha\}\] which implies the lemma (see \cite[Proposition 15.5.3]{Spr}).
\end{proof}

\begin{prop}\label{prop:subfield} In the above setting, if $q_1>9$, then $\bbf_{q_1}\subseteq\bbf_{q_2}$.
\end{prop}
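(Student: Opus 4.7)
The plan is to show that $\widetilde{\theta}$ is Frobenius-equivariant, descend the split torus $\bbs_1$ through $\widetilde{\theta}$ to an $\bbf_{q_2}$-split torus of $\bbg_2$, and then read off the field containment by evaluating characters on it.

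Writing $q_i=p^{a_i}$ and letting $F_i^{a_i}\colon\widetilde{\bbg}_i\to\widetilde{\bbg}_i$ denote the geometric Frobenius attached to the $\bbf_{q_i}$-structure, the hypothesis $\widetilde{\theta}(\bbg_1(\bbf_{q_1}))\subseteq\bbg_2(\bbf_{q_2})$ gives the pointwise identity $F_2^{a_2}\widetilde{\theta}(g)=\widetilde{\theta}F_1^{a_1}(g)$ for every $g\in\bbg_1(\bbf_{q_1})$. I would propagate this to the Zariski-dense union $\bigcup_n\bbg_1(\bbf_{q_1^n})\subseteq\widetilde{\bbg}_1(\overline{\bbf}_p)$, thereby upgrading it to the scheme-level identity $F_2^{a_2}\circ\widetilde{\theta}=\widetilde{\theta}\circ F_1^{a_1}$; this step is the technical heart of the proof and uses $p>5$, $q_1>9$, and the hypothesis that $\bbg_2$ is adjoint (so that $\widetilde{\theta}$ has central kernel and any ambiguity in the extension is controlled). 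Once this equivariance is in hand, $\widetilde{\bbs}_2=\widetilde{\theta}(\bbs_1\otimes_{\bbf_{q_1}}\overline{\bbf}_p)$ is $F_2^{a_2}$-stable and descends to an $\bbf_{q_2}$-torus $\bbs_2$ of $\bbg_2$; the induced pullback $\widetilde{\theta}^*\colon X^*(\widetilde{\bbs}_2)\to X^*(\widetilde{\bbs}_1)$ intertwines the two Frobenius actions on character lattices, and because $\bbs_1$ is $\bbf_{q_1}$-split the action of $F_1^{a_1}$ on $X^*(\widetilde{\bbs}_1)$ is trivial, which via injectivity of $\widetilde{\theta}^*$ forces $F_2^{a_2}$ to act trivially on $X^*(\widetilde{\bbs}_2)$ as well, so $\bbs_2$ is $\bbf_{q_2}$-split.

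For any $\alpha\in\Phi(\widetilde{\bbg}_1,\widetilde{\bbs}_1)$ and any $t\in\bbs_1(\bbf_{q_1})$ one has $\alpha(t)\in\bbf_{q_1}^{\times}$; writing $\beta=(\widetilde{\theta}^*)^{-1}(\alpha)\in X^*(\widetilde{\bbs}_2)$, the identity $\beta(\widetilde{\theta}(t))=\alpha(t)$ together with the facts that $\widetilde{\theta}(t)\in\bbs_2(\bbf_{q_2})$ and $\beta$ is $\bbf_{q_2}$-rational (as $\bbs_2$ is $\bbf_{q_2}$-split) forces $\alpha(t)\in\bbf_{q_2}^{\times}$ as well. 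Choosing $\alpha$ to be a simple relative root yields a surjection $\alpha\colon\bbs_1(\bbf_{q_1})\twoheadrightarrow\bbf_{q_1}^{\times}$, whence $\bbf_{q_1}^{\times}\subseteq\bbf_{q_2}^{\times}$, i.e. $\bbf_{q_1}\subseteq\bbf_{q_2}$. The main obstacle is the propagation step in the first paragraph: passing from the pointwise identity on the finite set $\bbg_1(\bbf_{q_1})$ to a scheme-level identity on $\widetilde{\bbg}_1$ is not provided by naive Zariski density of $\bbg_1(\bbf_{q_1})$ (which is finite), and it is precisely in extracting it from $\bigcup_n\bbg_1(\bbf_{q_1^n})$, after controlling residual ambiguities that could spoil equivariance off the base level, that the assumption $q_1>9$ plays its role.
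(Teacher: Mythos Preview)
Your approach has a genuine gap that cannot be repaired: the scheme-level identity $F_2^{a_2}\circ\widetilde{\theta}=\widetilde{\theta}\circ F_1^{a_1}$ is simply false in general, so no amount of ``propagation'' will establish it. For a concrete counterexample, take $\bbg_1=\SL_2$ over $\bbf_p$, $\bbg_2=\PGL_2$ over $\bbf_{p^2}$, and $\widetilde{\theta}$ the standard quotient $\SL_2\to\PGL_2$ (over $\overline{\bbf}_p$). Certainly $\widetilde{\theta}(\SL_2(\bbf_p))\subseteq\PGL_2(\bbf_p)\subseteq\PGL_2(\bbf_{p^2})$, so the hypothesis holds. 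But since $\widetilde{\theta}$ is already $\bbf_p$-rational, your identity reads $\widetilde{\theta}\circ F^{2}=\widetilde{\theta}\circ F$ (with $F$ the $p$-power Frobenius), and evaluating both sides on the unipotent $\left(\begin{smallmatrix}1&a\\0&1\end{smallmatrix}\right)$ for $a\notin\bbf_p$ gives distinct images in $\PGL_2$. The pointwise agreement on $\bbg_1(\bbf_{q_1})$ is vacuous---both sides reduce to $\widetilde{\theta}(g)$ because $g$ and $\widetilde{\theta}(g)$ are fixed by their respective Frobenii---and the set $\bbg_1(\bbf_{q_1})$ is finite, hence not Zariski dense, so it carries no global information. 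There is also a secondary error: for a \emph{split} torus the geometric Frobenius $F_1^{a_1}$ acts on $X^*(\widetilde{\bbs}_1)$ by multiplication by $q_1$, not trivially; it is the \emph{Galois} action that is trivial.

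The paper's proof is entirely different and much more elementary. One never touches Frobenius equivariance. Instead, using \eqref{adjaction} one has $\Tr(\Ad(g))=\Tr(\Ad(\widetilde{\theta}(g)))$ for $g\in\bbg_1(\bbf_{q_1})$, and the right-hand side lies in $\bbf_{q_2}$ since $\widetilde{\theta}(g)\in\bbg_2(\bbf_{q_2})$. Writing $g=\prod_i\alpha_i^\vee(t_i)$ for $t_i\in\bbf_{q_1}^\times$ and simple coroots $\alpha_i^\vee$, the left-hand side is the explicit Laurent polynomial $\sum_{\beta}\dim\ugfr_{1,\beta}\,\prod_i t_i^{\langle\alpha_i^\vee,\beta\rangle}$, whose exponents are Cartan integers (bounded by $3$ in absolute value) and whose coefficients are at most $3$ by Lemma~\ref{l:DimRootSpace}. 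An elementary fibre-counting lemma (if a nonconstant Laurent polynomial of bounded bidegree sends $\bbf_{q_1}^\times$ into $\bbf_{q_2}$ and $q_1$ exceeds a fixed bound depending only on those degrees, then $\bbf_{q_1}\subseteq\bbf_{q_2}$) then finishes the argument. This is where the hypothesis $q_1>9$ actually enters.
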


\begin{proof}Let $\{\alpha_1,\alpha_2,\dots, \alpha_r\}$ be a set of simple roots of $\bbs_1$, and $\{\alpha_1^\vee,\dots,\alpha_r^\vee\}$ the corresponding coroots.  Then for any $t_1,t_2,\dots, t_r\in \bbf_{q_1}$ , \[\Tr(\Ad(\wt{\theta}(\Pi_{i=1}^r\alpha_i^\vee(t_i))))\in \bbf_{q_2}\] since $\widetilde{\theta}(\bbg_1(\bbf_{q_1}))\subseteq \bbg_2(\bbf_{q_2})$.  On the other hand, by \eqref{adjaction}, we have
\[
\Tr(\Ad(\Pi_{i=1}^r\alpha_i^\vee(t_i)))=
\Tr(\Ad(\wt{\theta}(\Pi_{i=1}^r\alpha_i^\vee(t_i))));
\]
and so
\be\label{eq:polynomial}
\sum_{\beta\in\Phi(\bbg_1,\bbs_1)}\dim\ugfr_{1,\beta}\Pi_{i=1}^rt_1^{\langle\alpha_i^\vee,\beta\rangle}\in \bbf_{q_2}.
\ee
Notice that for each $i=1,2,\dots, r$, and any root $\beta$,  $\langle\alpha_i^\vee,\beta\rangle$ is a Cartan integer and hence is at most $3$ in absolute value.  By Lemma~\ref{l:DimRootSpace}, $\dim\ugfr_{1,\beta}\le3$.  The proposition will be proved with the following series of lemmas:

\begin{lem}\label{lem12} Suppose $f(t)\in \bbf_p[t^{\pm 1}]$ is a nonconstant polynomial, $(\deg_{t}f+\deg_{t^{-1}}f)^2<q$, and $f(\bbf_q)\subseteq \bbf_{q'}$; then $\bbf_q\subseteq \bbf_{q'}$.
\end{lem}
\begin{proof}  For each $a\in \bbf_{q'}$, there are at most $(\deg_tf+\deg_{t^{-1}}f)$ elements $b\in \bbf_q$ such that $f(b)=a$.  Hence, $|f(\bbf_q)|\ge q/(\deg_tf+\deg_{t^{-1}}f)$. Suppose $F$ is the field generated by $f(\bbf_q)$; then $\log_p|F|$ divides $\log_pq$ and $\log_pq\le \log_p|F|+\log_p(\deg_tf+\deg_{t^{-1}}f)$.  If $F\neq \bbf_q$, then the above argument implies $(1/2)\log_pq\le \log_p(\deg_tf+\deg_{t^{-1}}f)$.  This contradicts the assumption that $q>(\deg_tf+\deg_{t^{-1}}f)^2$.
\end{proof}

\begin{lem}\label{l:nonzero-value}
Suppose $f\in \bbf_p[t_1^{\pm1},\dots, t_r^{\pm1}]$ is a nonzero polynomial and \[\max_i(\deg_{t_i}f+\deg_{t_i^{-1}}f)+1<q;\] then $f(\bbf_{q}^\times,\dots, \bbf_q^\times)\neq 0$.
\end{lem}

\begin{proof}This can easily be proved by induction on $r$.
\end{proof}

\begin{lem}\label{l:ImageMultivariablePolynomial}  Suppose $f\in \bbf_p[t_1^{\pm1},\dots, t_r^{\pm1}]$ is a nonzero polynomial such that $f(\bbf_q^\times,\dots, \bbf_q^\times)$ is contained in $\bbf_{q'}$, and $\max_i(\deg_{t_i}f+\deg_{t_i^{-1}}f)^2<q$; then $\bbf_q\subseteq\bbf_{q'}$.
\end{lem}
\begin{proof}  Since $f$ is nonconstant, there exists some index $i_0$ where $\deg_{t_{i_0}^{\pm1}}f\neq 0$.  Without loss of generality we can and will assume that $i_0=r$. By Lemma~\ref{l:nonzero-value}, there is a choice of constants $a_1,\dots, a_{r-1}\in \bbf_q^{\times}$ such that $f(a_1,\dots, a_{r-1},t_r)$ is a nonconstant polynomial in $t_r$.  By Lemma \ref{lem12}, we are done.
\end{proof}
Proposition~\ref{prop:subfield} follows from \eqref{eq:polynomial} and Lemma~\ref{l:ImageMultivariablePolynomial}.
\end{proof}

We must now prove the existence of the isogeny $\theta$. 

\begin{prop}\label{propliedescent}  If $q_1>7$ and $p>5$, then $d\widetilde{\theta}$ induces an isomorphism between $\ugfr_1(\bbf_{q_2})$ and $\ugfr_2(\bbf_{q_2})$.
\end{prop}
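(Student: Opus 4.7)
The plan is to show that the $\overline{\bbf}_p$-linear Lie algebra isomorphism $d\widetilde{\theta}\colon \ugfr_1(\overline{\bbf}_p)\to \ugfr_2(\overline{\bbf}_p)$ maps $\ugfr_1(\bbf_{q_2})$ onto $\ugfr_2(\bbf_{q_2})=\gfr_2$. Since $\bbf_{q_1}\subseteq \bbf_{q_2}$ by Proposition~\ref{prop:subfield}, the space $\ugfr_1(\bbf_{q_2})=\gfr_1\otimes_{\bbf_{q_1}}\bbf_{q_2}$ is well defined, and I would set $V:=d\widetilde{\theta}(\ugfr_1(\bbf_{q_2}))\subseteq \ugfr_2(\overline{\bbf}_p)$. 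Then $V$ is an $\bbf_{q_2}$-Lie subalgebra of $\ugfr_2(\overline{\bbf}_p)$ with the same $\bbf_{q_2}$-dimension as $\gfr_2$, and both $V$ and $\gfr_2$ span $\ugfr_2(\overline{\bbf}_p)$ over $\overline{\bbf}_p$. The task reduces to showing the equality $V=\gfr_2$ of $\bbf_{q_2}$-subspaces of $\ugfr_2(\overline{\bbf}_p)$.

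First I would set $H:=\widetilde{\theta}(\bbg_1(\bbf_{q_1}))\subseteq \bbg_2(\bbf_{q_2})$ and observe, via identity~\eqref{adjaction} together with $\widetilde{\theta}(\bbg_1(\bbf_{q_1}))\subseteq \bbg_2(\bbf_{q_2})$, that both $V$ and $\gfr_2$ are $\Ad(H)$-stable $\bbf_{q_2}$-subspaces. The hypothesis $q_1>7$ combined with the absolute almost simpleness of $\bbg_1$ is enough to make $\bbg_1(\bbf_{q_1})$ Zariski dense in $\widetilde{\bbg}_1$, so $H$ is Zariski dense in $\widetilde{\bbg}_2$; combined with $p>5$ and $\bbg_2$ being adjoint absolutely almost simple, the adjoint $\widetilde{\bbg}_2$-module $\ugfr_2(\overline{\bbf}_p)$ is absolutely irreducible, and hence so is its restriction to $H$. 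Consequently $\gfr_2$ is absolutely irreducible as an $\bbf_{q_2}[H]$-module, $\End_{\bbf_{q_2}[H]}(\gfr_2)=\bbf_{q_2}$, and tensor--Hom adjunction yields \[\Hom_{\bbf_{q_2}[H]}(\gfr_2,\ugfr_2(\overline{\bbf}_p))=\End_{\bbf_{q_2}[H]}(\gfr_2)\otimes_{\bbf_{q_2}}\overline{\bbf}_p=\overline{\bbf}_p,\] so that every nonzero $\bbf_{q_2}[H]$-linear map $\gfr_2\to \ugfr_2(\overline{\bbf}_p)$ is an $\overline{\bbf}_p^\times$-scalar multiple of the inclusion. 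Since $V$ and $\gfr_2$ become isomorphic as $\overline{\bbf}_p[H]$-modules after extension of scalars and both are absolutely irreducible $\bbf_{q_2}$-forms (which are unique by Hilbert~90), composing an $\bbf_{q_2}[H]$-isomorphism $\gfr_2\to V$ with the inclusion $V\hookrightarrow \ugfr_2(\overline{\bbf}_p)$ produces such a scalar map, giving $V=\lambda\gfr_2$ for some $\lambda\in \overline{\bbf}_p^\times$.

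To close, I would invoke that $V$ is a Lie subalgebra, so $[V,V]\subseteq V$. Since $\gfr_2$ is perfect in characteristic $p>5$, $[V,V]=[\lambda\gfr_2,\lambda\gfr_2]=\lambda^2\gfr_2$, so $\lambda^2\gfr_2\subseteq \lambda\gfr_2$, i.e.\ $\lambda\gfr_2\subseteq \gfr_2$ inside $\ugfr_2(\overline{\bbf}_p)$; comparing coefficients on any $\bbf_{q_2}$-basis of $\gfr_2$ forces $\lambda\in \bbf_{q_2}^\times$, whence $V=\gfr_2$. The main obstacle is the absolute irreducibility and perfectness of $\gfr_2$ as a $\widetilde{\bbg}_2$-module in the bad-characteristic type-$A_n$ cases ($p\mid n+1$), where $\mathfrak{pgl}_{n+1}$ contains the codimension-one trace hyperplane $\mathfrak{psl}_{n+1}$; in those cases the descent argument must be run on the absolutely irreducible perfect submodule $[\gfr_2,\gfr_2]$ with the one-dimensional quotient handled separately. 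The lower bounds $q_1>7$ and $p>5$ are calibrated precisely to accommodate the Zariski density step and this mild case analysis.
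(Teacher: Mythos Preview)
Your approach is essentially the paper's: exploit $\bbg_1(\bbf_{q_1})$-invariance together with (absolute) simplicity of the adjoint module to show the unknown $\bbf_{q_2}$-form is $\lambda\cdot(\text{standard form})$ for some $\lambda\in\overline{\bbf}_p^\times$, and then use the Lie bracket and perfectness to force $\lambda\in\bbf_{q_2}^\times$.  The main cosmetic difference is the direction: you push forward to $\ugfr_2$ and study $V=d\widetilde{\theta}(\ugfr_1(\bbf_{q_2}))$, whereas the paper pulls back to $\ugfr_1$ and studies $M=d\widetilde{\theta}^{-1}(\ugfr_2(\bbf_{q_2}))$.  Where you invoke Zariski density, tensor--Hom adjunction, and uniqueness of rational forms, the paper argues more concretely: it writes $\ugfr(\overline{\bbf}_p)=\bigoplus_i \alpha_i\,\ugfr(\bbf_{q_2})$ for an $\bbf_{q_2}$-basis $\{\alpha_i\}$ of $\overline{\bbf}_p$, observes each projection $\pr_i\colon M\to\alpha_i\,\ugfr(\bbf_{q_2})$ is a $\bbg_1(\bbf_{q_1})$-map, and applies Schur (via \cite[Cor.~4.6]{Wei}) to get $M=\lambda\,\ugfr_1(\bbf_{q_2})$.

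The one place your write-up is genuinely thinner than the paper's is the bad type-$A_{np-1}$ case.  The paper's choice to pull back to $\ugfr_1$ pays off here: the failure of simplicity on the $\ugfr_1$ side is a nontrivial \emph{center} $\uzfr_1$, so one can pass to $\ugfr_1/\uzfr_1$ (Lemma~\ref{lem-typeA}) and obtain $M+\uzfr_1(\overline{\bbf}_p)=\lambda\,\ugfr_1(\bbf_{q_2})+\uzfr_1(\overline{\bbf}_p)$; then perfectness of both $\ugfr_i(\bbf_{q_2})$ gives $[M,M]=M$ and $[M,M]=\lambda^2\ugfr_1(\bbf_{q_2})$, whence $M=\lambda^2\ugfr_1(\bbf_{q_2})=\lambda^4\ugfr_1(\bbf_{q_2})$ and $M=\ugfr_1(\bbf_{q_2})$.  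On the adjoint $\ugfr_2$ side, by contrast, the obstruction is a codimension-one \emph{submodule} rather than a central quotient, so your suggested fix (``run the argument on $[\gfr_2,\gfr_2]$ and handle the one-dimensional quotient separately'') requires you to control how $V$ sits relative to $[\gfr_2,\gfr_2](\overline{\bbf}_p)$ before you can even invoke irreducibility---this is doable, but it is real work that your sketch does not supply.  If you want to keep the push-forward direction, the cleanest route is to note that $V$ is an $\bbf_{q_2}$-Lie subalgebra, so $[V,V]$ lands in $[\ugfr_2,\ugfr_2](\overline{\bbf}_p)$ with the correct $\bbf_{q_2}$-dimension, run your scalar argument there to get $[V,V]=\lambda\,[\gfr_2,\gfr_2]$, and then recover $V$ from $[V,V]$ using that $d\widetilde{\theta}$ intertwines the derived subalgebras.
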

We distinguish two cases depending on whether or not $\ugfr_1$ has a nontrivial center.
\begin{lem}\label{lem11}  Let $p>5$.  Suppose $\bbg$ is an absolutely almost simple $\bbf_q$-group and that $\bbg$ is not of type $A_{np-1}$ for some positive integer $n$.  Assume:
\begin{enumerate}
\item $M\subseteq\ugfr(\overline{\bbf}_p)$ is an $\bbf_{q'}$-subspace where $\bbf_q\subseteq \bbf_{q'}$,
\item $\dim_{\bbf_{q'}}M=\dim_{\overline{\bbf}_p}\ugfr(\overline{\bbf}_p)$, and
\item $M$ is $\bbg(\bbf_q)$-invariant.
\end{enumerate}
Then there exists $0\neq\lambda\in\overline{\bbf}_p$ such that $M=\lambda\ugfr(\bbf_{q'})$.
\end{lem}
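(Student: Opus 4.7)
The plan is to reduce the lemma to an irreducibility statement for the adjoint representation and then apply a short module-theoretic argument. The key claim is that $\ugfr(\overline{\bbf}_p)$ is irreducible as a $\bbg(\bbf_q)$-module under the stated hypotheses; granting this, $\ugfr(\bbf_{q'}) = \ugfr(\bbf_q) \otimes_{\bbf_q} \bbf_{q'}$ is absolutely irreducible as an $\bbf_{q'}[\bbg(\bbf_q)]$-module, and Schur's lemma yields $\End_{\bbf_{q'}[\bbg(\bbf_q)]}(\ugfr(\bbf_{q'})) = \bbf_{q'}$.

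The module-theoretic step then proceeds as follows. Fix an $\bbf_{q'}$-basis $(b_i)_{i \in I}$ of $\overline{\bbf}_p$ and decompose
\[
\ugfr(\overline{\bbf}_p) = \ugfr(\bbf_{q'}) \otimes_{\bbf_{q'}} \overline{\bbf}_p = \bigoplus_{i \in I} \ugfr(\bbf_{q'})\, b_i
\]
as $\bbf_{q'}[\bbg(\bbf_q)]$-modules, the action being through the first factor. This exhibits $\ugfr(\overline{\bbf}_p)$ as an isotypic semisimple module whose simple summands are all isomorphic to $\ugfr(\bbf_{q'})$. Consequently $M$, being an $\bbf_{q'}[\bbg(\bbf_q)]$-submodule of an isotypic semisimple module, is itself a sum of copies of $\ugfr(\bbf_{q'})$; the dimension hypothesis $\dim_{\bbf_{q'}} M = \dim_{\bbf_{q'}} \ugfr(\bbf_{q'})$ forces exactly one copy, so $M \cong \ugfr(\bbf_{q'})$. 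The inclusion $M \hookrightarrow \ugfr(\overline{\bbf}_p)$ therefore corresponds to an element of
\[
\Hom_{\bbf_{q'}[\bbg(\bbf_q)]}(\ugfr(\bbf_{q'}), \ugfr(\overline{\bbf}_p)) = \End_{\bbf_{q'}[\bbg(\bbf_q)]}(\ugfr(\bbf_{q'})) \otimes_{\bbf_{q'}} \overline{\bbf}_p = \overline{\bbf}_p,
\]
where the first identification uses that $\ugfr(\bbf_{q'})$ is finite-dimensional over $\bbf_{q'}$. This shows $M = \lambda\, \ugfr(\bbf_{q'})$ for a unique nonzero $\lambda \in \overline{\bbf}_p$.

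The main obstacle is thus establishing irreducibility of the adjoint representation of $\bbg(\bbf_q)$ on $\ugfr(\overline{\bbf}_p)$, and it is here that every hypothesis enters. The exclusion of type $A_{np-1}$ is essential: in that case $\bbg$ is isogenous to $\SL_{np}$, the identity matrix $I$ lies in $\Lie(\SL_{np})$ because its trace $np$ vanishes modulo $p$, and it spans a nonzero $\bbg(\bbf_q)$-invariant line on which the adjoint action is trivial; so irreducibility fails. For the remaining types I would proceed via the root-space decomposition: by Lang's theorem $\bbg$ is quasi-split over $\bbf_q$, so fixing a rational maximal torus $\bbt$ gives a weight-space decomposition of $\ugfr(\overline{\bbf}_p)$. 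The conjugation action of $\bbt(\bbf_q)$ first breaks any nonzero invariant subspace into weight components, and then $\bbf_q$-rational unipotent one-parameter subgroups together with representatives of Weyl elements drawn from $N_{\bbg}(\bbt)(\bbf_q)$ connect the different root spaces and the Cartan part. The numerical assumptions $p > 5$ and $q > 7$ are used to ensure that enough torus characters separate the roots, that enough unipotent elements are available to produce the needed relations among root spaces, and that the structure constants of $\ugfr$ do not degenerate in the remaining small characteristics.
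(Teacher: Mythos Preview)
Your module-theoretic reduction is correct and is exactly the argument the paper gives: decompose $\ugfr(\overline{\bbf}_p)$ along an $\bbf_{q'}$-basis of $\overline{\bbf}_p$, use irreducibility of $\ugfr(\bbf_{q'})$ to see each projection $\pr_i\colon M\to \alpha_i\ugfr(\bbf_{q'})$ is zero or an isomorphism, and compare any two nonzero projections via Schur's lemma. The paper works explicitly with the projections rather than invoking the isotypic/Hom formalism, but the content is identical.

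The one place you diverge is the irreducibility input. The paper does not attempt to prove that $\ugfr(\overline{\bbf}_p)$ is a simple $\bbg(\bbf_q)$-module by hand; it simply quotes \cite[Corollary~4.6]{Wei}, which says that (for $p>5$ and $\bbg$ not of type $A_{np-1}$) the adjoint module $\ugfr(\overline{\bbf}_p)$ is already simple as a $\bbg(\bbf_q)$-module, and likewise $\ugfr(\bbf_{q'})$ is simple over $\bbf_{q'}[\Ad(\bbg(\bbf_q))]$. So your root-space sketch is unnecessary here, and in particular you should not import the extra hypothesis $q>7$: it is not part of the lemma, and the cited result of Weisfeiler does the work under $p>5$ alone.
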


\begin{proof} Since $\bbg$ is not of type $A_{np-1}$, $\ugfr(\overline{\bbf}_p)$ is a simple $\bbg(\overline{\bbf}_p)$-module.  By \cite[Corollary 4.6]{Wei}, $\ugfr(\overline{\bbf}_p)$ is a simple $\bbg(\bbf_q)$-module.  Let $\{\alpha_i\}$ be an $\bbf_{q'}$-basis of $\overline{\bbf}_p$; so we have
\[\ugfr(\overline{\bbf}_p)=\oplus_{i\ge0}\alpha_i\ugfr(\bbf_{q'}).\]  Let $\pr_i:M\rightarrow \alpha_i\ugfr (\bbf_{q'})$ be the projection morphism onto the $i^{th}$ component.  Since $M$ and $\ugfr(\bbf_{q'})$ are both $\bbg(\bbf_q)$-invariant, $\pr_i$ is an $\bbf_{q'}$-linear $\bbg(\bbf_{q})$-module homomorphism.  Again by \cite[Cor. 4.6]{Wei}, $\ugfr(\bbf_{q'})$ is a simple $\bbf_{q'}[\Ad(\bbg(\bbf_q))]$-module and hence $\pr_i$ is either trivial or surjective for each $i$.  Since $\dim_{\bbf_{q'}}M=\dim_{\bbf_{q'}}\ugfr(\bbf_{q'})$, either $\pr_i=0$ or $\pr_i$ is an isomorphism.

If $\pr_i$ and $\pr_j$ are isomorphisms, then $\pr_i\circ\inv{\pr_j}\in\Aut_{\bbg(\bbf_q)\text{-Mod}}(\ugfr(\bbf_{q'}))$.  Then there exists a nonzero element $\alpha_{i,j}\in\bbf_{q'}$ such that $\pr_i\circ\inv{\pr_j}(x)=\lambda_{i,j}x$ for all $x\in \ugfr(\bbf_{q'})$.  Hence if $j_0$ is a fixed index for which $\pr_{j_0}$ is an isomorphism, we have \[M=\left(\sum_i\alpha_i\lambda_{i,j_0}\right)\ugfr(\bbf_{q'}).\]
\end{proof}
In the case when $\bbg$ is of type $A_{np-1}$ we have the following:
\begin{lem}\label{lem-typeA}  Suppose $p>5$ and $\bbg$ is of type $A_{np-1}$ for some positive integer $n$.  Suppose $\bbf_q\subseteq \bbf_{q'}$ and suppose:

\begin{enumerate}
\item $M\subseteq \ugfr(\overline{\bbf}_p)$ is an $\bbf_{q'}$-subspace,
\item $M$ is $\bbg(\bbf_q)$-invariant, and
\item $\dim_{\bbf_{q'}}(M+\uzfr(\overline{\bbf}_p))/\uzfr(\overline{\bbf}_p)=\dim_{\overline{\bbf}_p}\ugfr(\overline{\bbf}_p)/\uzfr(\overline{\bbf}_p)$ where $\uzfr$ is the center of $\ugfr$.
\end{enumerate}
Then there exists $0\ne\lambda\in\overline{\bbf}_p$, such that $M+\uzfr(\overline{\bbf}_p)=\lambda\ugfr(\bbf_{q'})+\uzfr(\overline{\bbf}_p)$.
\end{lem}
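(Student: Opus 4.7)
My plan is to adapt the argument of Lemma~\ref{lem11} after quotienting by the center $\uzfr$. The obstruction to Lemma~\ref{lem11}'s proof in type $A_{np-1}$ is precisely that $\ugfr(\overline{\bbf}_p)$ fails to be a simple $\bbg(\overline{\bbf}_p)$-module: for $\bbg$ of type $A_{np-1}$, the scalar direction survives in $\mathfrak{sl}_{np}$ in characteristic $p$ and gives a nontrivial central submodule. However, the quotient $\bar{\ugfr}:=\ugfr/\uzfr$ \emph{is} a simple $\bbg(\overline{\bbf}_p)$-module for $p>5$, and this is the only thing needed to revive the original argument.

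First I would set $\bar{M}:=(M+\uzfr(\overline{\bbf}_p))/\uzfr(\overline{\bbf}_p)\subseteq\bar{\ugfr}(\overline{\bbf}_p)$. Hypotheses (1)--(3) immediately give that $\bar{M}$ is an $\bbf_{q'}$-subspace of $\bar{\ugfr}(\overline{\bbf}_p)$, stable under $\Ad(\bbg(\bbf_q))$, with $\dim_{\bbf_{q'}}\bar{M}=\dim_{\overline{\bbf}_p}\bar{\ugfr}(\overline{\bbf}_p)$. Then I would rerun the proof of Lemma~\ref{lem11} verbatim on the quotient: applying \cite[Corollary 4.6]{Wei} to the simple $\bbg(\overline{\bbf}_p)$-module $\bar{\ugfr}(\overline{\bbf}_p)$ yields that $\bar{\ugfr}(\bbf_{q'})$ is a simple $\bbf_{q'}[\Ad(\bbg(\bbf_q))]$-module. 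Choosing an $\bbf_{q'}$-basis $\{\alpha_i\}$ of $\overline{\bbf}_p$ gives a decomposition
\[
\bar{\ugfr}(\overline{\bbf}_p)=\bigoplus_i\alpha_i\bar{\ugfr}(\bbf_{q'}),
\]
so the projections $\pr_i:\bar{M}\to\alpha_i\bar{\ugfr}(\bbf_{q'})$ are $\bbf_{q'}$-linear and $\bbg(\bbf_q)$-equivariant. By Schur's lemma applied to the simple module $\bar{\ugfr}(\bbf_{q'})$, each $\pr_i$ is either zero or an isomorphism; fixing one index $j_0$ with $\pr_{j_0}$ an isomorphism, the composites $\pr_i\circ\pr_{j_0}^{-1}$ are scalar multiplications by some $\lambda_{i,j_0}\in\bbf_{q'}$. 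Setting $\lambda:=\sum_i\alpha_i\lambda_{i,j_0}\in\overline{\bbf}_p^\times$ gives $\bar{M}=\lambda\,\bar{\ugfr}(\bbf_{q'})$. Lifting back to $\ugfr(\overline{\bbf}_p)$ is exactly the desired identity $M+\uzfr(\overline{\bbf}_p)=\lambda\ugfr(\bbf_{q'})+\uzfr(\overline{\bbf}_p)$.

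The main obstacle—not a deep one, but the essential point to check—is that \cite[Corollary 4.6]{Wei} transfers as claimed to the quotient module $\bar{\ugfr}$, and that $\bar{\ugfr}(\overline{\bbf}_p)$ is indeed a simple $\bbg(\overline{\bbf}_p)$-module in type $A_{np-1}$ for $p>5$. Since $\bar{\ugfr}$ is, up to isogeny, the Lie algebra of the adjoint form of $\bbg$, Weisfeiler's transfer of simplicity applies without modification once $p>5$; the rest of the proof is then a direct mirror of Lemma~\ref{lem11}.
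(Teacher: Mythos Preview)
Your proposal is correct and matches the paper's own proof essentially verbatim: the paper simply asserts that $\ugfr(\overline{\bbf}_p)/\uzfr(\overline{\bbf}_p)$ is a simple $\bbg(\overline{\bbf}_p)$-module in type $A_{np-1}$, invokes \cite[Cor.~4.6]{Wei} to descend simplicity to $\bbg(\bbf_q)$, and then says ``an argument similar to the proof of Lemma~\ref{lem11} establishes the claim'' --- precisely the argument you wrote out. One small caveat: your justification that ``$\bar{\ugfr}$ is, up to isogeny, the Lie algebra of the adjoint form'' is not quite accurate (in type $A_{np-1}$ with $p\mid np$ the quotient $\mathfrak{sl}_{np}/\uzfr$ has dimension one less than $\mathfrak{pgl}_{np}$), but this is immaterial, since \cite[Cor.~4.6]{Wei} applies to any simple $\bbg(\overline{\bbf}_p)$-module of the relevant kind and does not require $\bar{\ugfr}$ to be a Lie algebra of anything.
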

\begin{proof}
In this case, $\ugfr(\overline{\bbf}_p)/\uzfr(\overline{\bbf}_p)$ is a simple $\bbg(\overline{\bbf}_p)$-module.  Again by \cite[Cor. 4.6]{Wei}, $\ugfr(\overline{\bbf}_p)/\uzfr(\overline{\bbf}_p)$ is a simple $\bbg(\bbf_q)$-module.  An argument similar to the proof of Lemma~\ref{lem11} establishes the claim.
\end{proof}
\begin{proof}[Proof of Proposition \ref{propliedescent}]
  Let $M=d\inv{\widetilde{\theta}}(\ugfr_2(\bbf_{q_2}))\subseteq \ugfr_1(\overline{\bbf}_p)$.  If $\bbg_1$ and $\bbg_2$ are not of type $A_{np-1}$, then Lemma \ref{lem11} finishes the proof.  So assume $\bbg_1$ is of type $A_{np-1}$.  Then $\dim_{\bbf_{q_2}}M=\dim_{\overline{\bbf}_p}\ugfr_1(\overline{\bbf}_p)$.  Notice $d\widetilde{\theta}$ induces an isomorphism between $\uzfr_1(\overline{\bbf}_p)$ and $\uzfr_2(\overline{\bbf}_p)$, and
\[\dim_{\bbf_{q_2}}(\ugfr_2(\bbf_{q_2})+\uzfr_2(\overline{\bbf}_p))/\uzfr_2(\overline{\bbf}_p)=\dim_{\overline{\bbf}_p}\ugfr_1(\overline{\bbf}_p)-1\] and hence
\[\dim_{\bbf_{q_2}}(M+\uzfr_1(\overline{\bbf}_p))/\uzfr_1(\overline{\bbf}_p)=\dim_{\overline{\bbf}_p}\ugfr_1(\overline{\bbf}_p)/\uzfr_1(\overline{\bbf}_p).\]  By Lemma~\ref{lem-typeA}, there exists $0\neq\lambda\in \bbf_{q_2}$ such that $M+\uzfr_1(\overline{\bbf}_p)=\lambda\ugfr_1(\bbf_{q_2})+\uzfr_1(\overline{\bbf}_p)$.  Since $[\ugfr_i(\bbf_{q_2}),\ugfr_i(\bbf_{q_2})]=\ugfr_i(\bbf_{q_2})$ for $i=1,2$, we have $[M,M]=\lambda^2\ugfr_1(\bbf_{q_2})$ and
\[[M,M]=\inv{d\widetilde{\theta}}([\ugfr_2(\bbf_{q_2}),\ugfr_2(\bbf_{q_2})])=\inv{d\widetilde{\theta}}(\ugfr_2(\bbf_{q_2}))=M.\]  Hence $M=[M,M]=\lambda^4\ugfr_1(\bbf_{q_2})=\lambda^2\ugfr_1(\bbf_{q_2}).$  This shows $\ugfr_1(\bbf_{q_2})=\lambda^2\ugfr_1(\bbf_{q_2})$ and hence $M=\ugfr_1(\bbf_{q_2})$.

\end{proof}
\begin{cor} $d\widetilde{\theta}$ induces isomorphisms between
\[\utfr_1(\bbf_{q_2})\mbox{ and } \ugfr_2(\bbf_{q_2})\cap \wt{\utfr}_2(\overline{\bbf}_p),\]
and
\[\ugfr_{1,\widetilde{\theta}^\ast(\beta)}(\bbf_{q_2})\mbox{ and } \ugfr_2(\bbf_{q_2})\cap \widetilde{\ugfr}_{2,\beta}(\overline{\bbf}_p),\ \forall \beta\in \Phi(\widetilde{\bbg}_2,\widetilde{\bbs}_2).\]
\end{cor}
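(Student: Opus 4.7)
The plan is to deduce the corollary directly from Proposition \ref{propliedescent} combined with the compatibility of $d\widetilde{\theta}$ with the weight-space decomposition that was already established in this section: the lemma immediately following \eqref{adjaction} shows that $d\widetilde{\theta}$ sends $\ugfr_{1,\widetilde{\theta}^{\ast}(\beta)}(\overline{\bbf}_p)$ isomorphically onto $\wt{\ugfr}_{2,\beta}(\overline{\bbf}_p)$ for every $\beta \in \Phi(\wt{\bbg}_2,\wt{\bbs}_2)$, while $d\widetilde{\theta}$ automatically sends $\utfr_1(\overline{\bbf}_p)$ isomorphically onto $\wt{\utfr}_2(\overline{\bbf}_p)$ because $\widetilde{\theta}$ restricts to an isogeny $\bbt_1 \otimes_{\bbf_{q_1}} \overline{\bbf}_p \to \wt{\bbt}_2$.

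First I would observe that because $\bbs_1$ is $\bbf_{q_1}$-split and $\bbt_1 = C_{\bbg_1}(\bbs_1)$ is $\bbf_{q_1}$-defined, every $\alpha \in \Phi(\wt{\bbg}_1,\wt{\bbs}_1)$ is defined over $\bbf_{q_1}$ and hence both $\utfr_1$ and each root space $\ugfr_{1,\alpha}$ are defined over $\bbf_{q_1}$. Since $\bbf_{q_1} \subseteq \bbf_{q_2}$ by Proposition \ref{prop:subfield}, the weight-space decomposition of $\ugfr_1$ descends to the internal direct-sum identity
\[
\ugfr_1(\bbf_{q_2}) = \utfr_1(\bbf_{q_2}) \oplus \bigoplus_{\alpha \in \Phi(\wt{\bbg}_1,\wt{\bbs}_1)} \ugfr_{1,\alpha}(\bbf_{q_2})
\]
of $\bbf_{q_2}$-subspaces.

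Next I would apply $d\widetilde{\theta}$ to this decomposition. By Proposition \ref{propliedescent} the left-hand side is mapped isomorphically onto $\ugfr_2(\bbf_{q_2})$, and by the compatibility recalled above each summand on the right is sent into a prescribed weight subspace of $\ugfr_2(\overline{\bbf}_p)$. This yields
\[
\ugfr_2(\bbf_{q_2}) = d\widetilde{\theta}(\utfr_1(\bbf_{q_2})) \oplus \bigoplus_{\beta \in \Phi(\wt{\bbg}_2,\wt{\bbs}_2)} d\widetilde{\theta}(\ugfr_{1,\widetilde{\theta}^{\ast}(\beta)}(\bbf_{q_2}))
\]
as an $\bbf_{q_2}$-direct sum whose summands sit inside pairwise distinct weight subspaces of $\ugfr_2(\overline{\bbf}_p)$ relative to $\wt{\bbs}_2$.

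Finally, the two claimed identifications follow from uniqueness of the weight decomposition in $\ugfr_2(\overline{\bbf}_p)$: for any $y \in \ugfr_2(\bbf_{q_2}) \cap \wt{\utfr}_2(\overline{\bbf}_p)$, expanding $y$ according to the display above forces every root-space summand to vanish, so $y \in d\widetilde{\theta}(\utfr_1(\bbf_{q_2}))$, and the reverse containment is immediate. Applying the same argument at a fixed $\beta \in \Phi(\wt{\bbg}_2,\wt{\bbs}_2)$ produces the second identification. There is no serious obstacle here — the corollary is essentially a bookkeeping consequence of Proposition \ref{propliedescent}, and the only point worth explicit verification is that $\bbs_1$ being $\bbf_{q_1}$-split makes the weight-space decomposition of $\ugfr_1(\bbf_{q_2})$ available in the first place.
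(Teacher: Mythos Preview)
Your argument is correct and follows essentially the same route as the paper: both proofs use Proposition~\ref{propliedescent} together with the earlier lemma on compatibility of $d\widetilde{\theta}$ with weight spaces to get the forward inclusions, then conclude equality --- the paper by a dimension count, you by uniqueness of the weight decomposition, which are equivalent here. Your explicit remark that $\bbs_1$ being $\bbf_{q_1}$-split makes the decomposition of $\ugfr_1(\bbf_{q_2})$ available is a useful clarification the paper leaves implicit.
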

\begin{proof}  By Proposition \ref{propliedescent} we have,
\[d\widetilde{\theta}(\ugfr_{1,\widetilde{\theta}^\ast(\beta)}(\bbf_{q_2}))\subseteq\ugfr_2(\bbf_{q_2})\cap \widetilde{\ugfr}_{2,\beta}(\overline{\bbf}_p),\] and similarly
\[d\widetilde{\theta}(\utfr_1(\bbf_{q_2}))\subseteq\ugfr_2(\bbf_{q_2})\cap \widetilde{\utfr}_2(\overline{\bbf}_p).\]
By comparing dimensions of $\ugfr_1(\bbf_{q_2})$ and
\[(\ugfr_2(\bbf_{q_2})\cap\widetilde{\tfr}_2(\overline{\bbf}_p))\oplus\left(\oplus_{\beta\in\Phi(\widetilde{\bbg}_2,\widetilde{\bbt}_2)}(\ugfr_2(\bbf_{q_2})\cap\widetilde{\ugfr}_{2,\beta}(\overline{\bbf}_p))\right)\] the result follows easily.
\end{proof}
\begin{proof}[Proof of Proposition \ref{propLPsubfields}] Notice that the Galois group $\Gal(\overline{\bbf}_p/\bbf_{q_2})$ acts naturally on $\bbg_1$, $\bbg_2$, and their Lie algebras.
The existence of such an isogeny \[\theta:\bbg_1\otimes\bbf_{q_2}\rightarrow \bbg_2\] is equivalent to $\widetilde{\theta}$ commuting with the action of  $\Gal(\overline{\bbf}_p/\bbf_{q_2})$.  More precisely, it suffices to show that for any $g_1\in\bbg_1(\overline{\bbf}_p)$ and $\sigma\in \Gal(\overline{\bbf}_p/\bbf_{q_2})$, $\sigma(\widetilde{\theta}(g_1))=\widetilde{\theta}(\sigma(g_1))$.    

Recall that by \eqref{adjaction} we have 
\be\label{eq:adj}
d\widetilde{\theta}(\Ad(g_1)(x_1))=\Ad(\widetilde{\theta}(g_1))(d\widetilde{\theta}(x_1))
\ee
for every $g_1\in\bbg_1(\overline{\bbf}_p)$ and $x_1\in \ugfr_1(\overline{\bbf}_p)$.
Since $d\widetilde{\theta}$ restricts to an isomorphism from  $\ugfr_1(\bbf_{q_2})$ to $\ugfr_2(\bbf_{q_2})$ by Proposition \ref{propliedescent}, we have
\be\label{eq:prop}
\sigma(d\widetilde{\theta}(\Ad(g_1)(x_1)))=d\widetilde{\theta}(\sigma(\Ad(g_1)(x_1))).
\ee
Since the adjoint representation of $\bbg_1$ is defined over $\bbf_{q_1}\subseteq\bbf_{q_2}$, we have
\be\label{eq:adj-field-defined}
\sigma(\Ad(g_1)(x_1))=\Ad(\sigma(g_1))(\sigma(x_1)).
\ee 
By \eqref{eq:adj}, \eqref{eq:prop}, and \eqref{eq:adj-field-defined}, we deduce that
\be\label{eq:galois-adj}
\begin{array}{rl}\sigma(d\widetilde{\theta}(\Ad(g_1)(x_1)))&=d\widetilde{\theta}(\Ad(\sigma(g_1))(\sigma(x_1)))\\
&=\Ad(\widetilde{\theta}(\sigma(g_1)))(d\widetilde{\theta}(\sigma(x_1))).
\end{array}\ee
Since $\bbg_2$ is defined over $\bbf_{q_2}$, by Proposition~\ref{propliedescent}  
\be\label{eq:inG-2}
\sigma(\Ad(\widetilde{\theta}(g_1))(d\widetilde{\theta}(x_1)))=\Ad(\sigma(\widetilde{\theta}(g_1)))(d\widetilde{\theta}(\sigma(x_1))).
\ee
Therefore by \eqref{eq:galois-adj} and \eqref{eq:inG-2}, we have
\[\Ad(\widetilde{\theta}(\sigma(g_1)))(d\widetilde{\theta}(\sigma(x_1)))=\Ad(\sigma(\widetilde{\theta}(g_1)))(d\widetilde{\theta}(\sigma(x_1)))\] and hence \[\Ad(\widetilde{\theta}(\sigma(g_1)))=\Ad(\sigma(\widetilde{\theta}(g_1))).\] Since $\bbg_2$ is an adjoint group, $\widetilde{\theta}(\sigma(g_1))=\sigma(\widetilde{\theta}(g_1))$ which proves the claim.  
\end{proof}

\subsection{Refiner description of structure type subgroups of $\gcal_\ell(K(\ell))$}
Suppose $H$ is a structural subgroup of $\gcal_{\ell}(K(\ell))$; it means there is a proper subgroup $\bbh_{\ell}$ of $\gcal_{\ell}\otimes_{K(\ell)} \overline{K(\ell)}$ such that $H\subseteq \bbh_{\ell}(\overline{K(\ell)})$. In this section, we use almost the full strength of Larsen and Pink's result to give a control on the {\em complexity} of $\bbh_{\ell}$ and its {\em field of definition}. 

\begin{definition}
Suppose $F$ is an algebraically closed field and $(\bba^n)_F$ is the affine space over $F$. The {\em complexity} of a Zariski closed subset $X$ of $F^n$ is the minimum of positive integers $D$ such that there are at most $D$ polynomials $p_i$ of degree at most $D$ in $F[x_1,\ldots,x_n]$ such that $X$ is the set of common zeros of $p_i$'s. 
\end{definition}
It is worth pointing out that one can use the language of algebraic geometry and use degree of the closure of $X$ in the projective space $\bbp^n$ to capture the above mentioned complexity of $X$; but we find it easier for the reader to work with the above mentioned quantity. 

\begin{prop}\label{prop:structural-subgroups} Suppose $\Gamma$, $\gcal$, $\gcal_{\ell}$, and $K(\ell)$ are as above; that means $\Gamma$ is a finitely generated subgroup of $\GL_{n_0}(\bbf_{q_0}[t,1/r_0(t)])$ where $q_0$ is a power of a prime $p>3$ and the field generated by $\Tr(\Gamma)$ is $\bbf_{q_0}(t)$, $\gcal$ is the Zariski-closure of $\Gamma$ in $(\GL_{n_0})_{\bbf_{q_0}[t,1/r_0(t)]}$, for any irreducible polynomial $\ell\in \bbf_{q_0}[t]$ that does not divide $r_0(t)$, let $K(\ell):=\bbf_{q_0}[t]/\langle \ell\rangle$ and $\gcal_{\ell}:=\gcal\otimes_{\bbf_{q_0}[t,1/r_0(t)]} K(\ell)$. Suppose $\bbg:=\gcal\otimes_{\bbf_{q_0}[t,1/r_0(t)]} \bbf_{q_0}(t)$ is an absolutely almost simple group, connected, simply connected group. Then if $H\subseteq \pi_{\ell}(\Gamma)$ is a proper structural subgroup for some irreducible polynomial $\ell$ with $\deg \ell\gg_{\Gamma} 1$, then there is a proper algebraic subgroup $\bbh$ of $\gcal_{\ell}$ such that 
 \begin{enumerate}
 \item 	the complexity of $\bbh$ is bounded by a function of $\Gamma$,
 \item $H\subseteq \bbh(K(\ell))\subsetneq \gcal_{\ell}(K(\ell))$.
 \end{enumerate} 
\end{prop}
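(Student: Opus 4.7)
The plan is to apply Larsen--Pink's Theorem~\ref{thmLP} to $\phi_\ell(H)\subseteq \gcal^{\rm Che}(\overline{\bbf}_p)$, extract the proper subgroup of $\gcal^{\rm Che}$ that stabilizes the subspace $W$ produced by that theorem, pull it back along the central isogeny $\phi_\ell$ to a proper algebraic subgroup of $\gcal_\ell\otimes_{K(\ell)}\overline{\bbf}_p$, and then Galois--descend that subgroup to $K(\ell)$. All complexity bounds come from the fact that the input data are attached to the root system $\Phi$ of $\bbg$ and to the fixed isogeny $\phi:\gcal\otimes\ocal_L[1/r_0]\to\gcal^{\rm Che}\otimes\ocal_L[1/r_0]$ of \S\ref{ss:storngapproximation}, neither of which depends on $\ell$.

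First I would unpack the structural hypothesis: by definition, $\phi_\ell(H)$, viewed in $\gcal^{\rm Che}(\overline{\bbf}_p)$, admits, via the Larsen--Pink representation $\rho:\gcal^{\rm Che}\to (\GL_{n_0'})_{\bbz}$, a proper $\overline{\bbf}_p$-subspace $W\subset(\overline{\bbf}_p)^{n_0'}$ with $\rho(\phi_\ell(H))W\subseteq W$ but $\rho(\gcal^{\rm Che}(\overline{\bbf}_p))W\not\subseteq W$. The stabilizer
\[
    \bbh^{\rm Che}_W:=\Stab_{\gcal^{\rm Che}\otimes\overline{\bbf}_p}(W)
\]
is therefore a proper algebraic subgroup of $\gcal^{\rm Che}\otimes\overline{\bbf}_p$. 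Both $n_0'$ and $\dim W\le n_0'$ depend only on $\Phi$, so $\bbh^{\rm Che}_W$ is cut out by boundedly many equations of bounded degree. Pulling back along the central isogeny $\phi_\ell:\gcal_\ell\otimes_{K(\ell)} L(\lfr)\to\gcal^{\rm Che}\otimes_{\bbz} L(\lfr)$ (which has degree bounded in terms of $\bbg$ since it extends the global isogeny $\phi$), I obtain
\[
    \bbh^{\ast}:=\phi_\ell^{-1}(\bbh^{\rm Che}_W)\cdot\ker\phi_\ell=\phi_\ell^{-1}(\bbh^{\rm Che}_W),
\]
a proper algebraic subgroup of $\gcal_\ell\otimes_{K(\ell)}\overline{\bbf}_p$ containing $H$, still of complexity bounded by a function of $\Gamma$.

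Next I would descend $\bbh^{\ast}$ to a $K(\ell)$-subgroup. The equations defining $\bbh^\ast$ have bounded degree, so $\bbh^\ast$ is defined over some finite extension $F$ of $K(\ell)$ with $[F:K(\ell)]$ bounded in terms of $\bbg$ (and we may enlarge $F$ to contain $L(\lfr)$, since $[L(\lfr):K(\ell)]\le[L:K]\ll_{\bbg}1$). Take the scheme-theoretic intersection over Galois conjugates
\[
    \bbh_{\overline{\bbf}_p}:=\bigcap_{\sigma\in\Gal(F/K(\ell))}\sigma(\bbh^\ast),
\]
which is Galois-invariant and therefore descends to an algebraic $K(\ell)$-subgroup $\bbh$ of $\gcal_\ell$. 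Since every element of $H\subseteq\gcal_\ell(K(\ell))$ is fixed by $\Gal(F/K(\ell))$, the inclusion $H\subseteq\bbh^\ast$ forces $H\subseteq\sigma(\bbh^\ast)$ for every $\sigma$, hence $H\subseteq\bbh(K(\ell))$. Properness of $\bbh$ follows from properness of any one conjugate $\sigma(\bbh^\ast)$, which in turn follows from properness of $\bbh^{\rm Che}_W$ via the surjectivity of $\phi_\ell$. The complexity of $\bbh$ is controlled by $|\Gal(F/K(\ell))|$ and the complexity of $\bbh^\ast$, both uniform in $\ell$.

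The main obstacle I expect is the uniformity of the complexity bound: I must ensure that the Larsen--Pink representation $\rho$, the degree of the isogeny $\phi_\ell$, the extension $[L:K]$, and the field of definition of $W$ inside $\overline{\bbf}_p$ all contribute only $\Gamma$-dependent (not $\ell$-dependent) bounds. The first two are pinned down once one fixes the common type $\Phi$ of all $\gcal_\ell$ and the global isogeny $\phi$ of \S\ref{ss:storngapproximation}; the third is bounded by $[L:K]$; and the fourth is controlled by the bounded degree of the equations cutting out $\bbh^{\rm Che}_W$ in the Grassmannian. The hypothesis $\deg\ell\gg_{\Gamma}1$ is precisely what lets us stay in the regime of \S\ref{ss:storngapproximation} so that $\gcal_\ell$ is absolutely almost simple and the fixed global isogeny specializes to the required $\phi_\ell$.
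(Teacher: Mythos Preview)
Your approach is the same as the paper's—take the stabilizer of the Larsen--Pink subspace, pull it back along $\phi_\ell$, and Galois-descend—but there is one genuine gap and one step that is not justified as written.

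\textbf{The gap.} You show that $\bbh$ is a proper \emph{algebraic} subgroup of $\gcal_\ell$, but you never argue that $\bbh(K(\ell))\subsetneq\gcal_\ell(K(\ell))$, which is part (2) of the claim. These are not the same: a proper closed subgroup of a connected group over a finite field can in principle have the same set of rational points. The paper closes this with a point count: since the complexity of $\bbh$ is $O_\Gamma(1)$, it has $O_\Gamma(1)$ components and hence $|\bbh(K(\ell))|\ll_\Gamma |K(\ell)|^{\dim\bbh}$, while by Lang--Weil (or the explicit formulas) $|\gcal_\ell(K(\ell))|\gg_\Gamma |K(\ell)|^{\dim\gcal_\ell}$; since $\dim\bbh<\dim\gcal_\ell$, for $|K(\ell)|\gg_\Gamma 1$ the inclusion is strict. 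This is the actual use of the hypothesis $\deg\ell\gg_\Gamma 1$. Your final paragraph attributes that hypothesis only to making \S\ref{ss:storngapproximation} work, which is not correct: the construction of $\phi_\ell$ only requires $\ell\nmid r_0$ (after enlarging $r_0$), not that $\deg\ell$ be large.

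\textbf{The unjustified step.} You assert that ``the equations defining $\bbh^\ast$ have bounded degree, so $\bbh^\ast$ is defined over some finite extension $F$ of $K(\ell)$ with $[F:K(\ell)]$ bounded in terms of $\bbg$.'' Bounded degree of the defining equations says nothing about the field generated by their \emph{coefficients}, and those coefficients come from a basis of $W\subset\overline{\bbf}_p^{\,n_0'}$, which may lie in an arbitrarily large finite field. The paper handles this by intersecting over \emph{all} of $\Gal(\overline{L(\lfr)}/L(\lfr))$ first—an a priori infinite intersection, but one that stabilizes after $O_\Gamma(1)$ steps since each step drops dimension or component count, both bounded—to obtain a subgroup $\widetilde{\bbh}$ over $L(\lfr)$ of bounded complexity; only then does it take the (now finite, since $[L(\lfr):K(\ell)]\le[L:K]\ll_\bbg 1$) intersection over $\Gal(L(\lfr)/K(\ell))$ to descend to $K(\ell)$. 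You should either do the same two-stage descent, or argue directly that the intersection over $\Gal(\overline{\bbf}_p/K(\ell))$ stabilizes after $O_\Gamma(1)$ steps.
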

\begin{proof}
As it has been mentioned earlier (see Section~\ref{ss:storngapproximation}), by Weisfeiler's strong approximation theorem there is a multiple $r_1$ of $r_0$ such that for any irreducible polynomial $\ell\in \bbf_{q_0}[t]$ that does not divide $r_1$, $\pi_{\ell}(\Gamma)=\gcal_\ell(K(\ell))$.  By the discussion at the beginning of Section~\ref{sectiondesubgroupdich}, there are a finite separable extension $L$ of $\bbf_{q_0}(t)$, a multiple $r_2$ of $r_1$, and a central $\ocal_L[1/r_2(t)]$-isogeny 
\[
\phi:\gcal\otimes_{\bbf_{q_0}[t,1/r_0(t)]} \ocal_L[1/r_2(t)]\rightarrow \gcal^{\rm Che}\otimes_{\bbz} \ocal_L[1/r_2(t)]
\] 	
where $\gcal^{\rm Che}$ is an adjoint Chevalley $\bbz$-group scheme and $\ocal_L$ is the integral closure of $\bbf_{q_0}[t]$ in $L$. By \cite[Theorem 0.5]{LP}, there is a scheme $\tcal$ of finite type over $\Spec \bbz$ and a closed group scheme $\cal$ of $\gcal^{\rm Che}\times_{\Spec \bbz} \tcal$ such that 
\begin{enumerate}
\item for any geometric point $s'$ of $\tcal$ over a geometric point $s$ of $\Spec \bbz$, the geometric fiber $\cal_{s'}$ is a proper subgroup of the geometric fiber $\gcal^{\rm Che}_s$ (here is the only place that we use the concept of geometric fiber; and so we do not give a precise definition of this concept. To illustrate what kind of objects these are, we only consider the example of a scheme $\xcal$ over $\Spec A$ where $A$ is a ring; for any $\pfr\in \Spec A$, we let $k(\pfr):=Q(A/\pfr)$ be the field of fractions of the integral domain $A/\pfr$, and then $\xcal \times_{\Spec A} \Spec(\overline{k(\pfr)})$ is a geometric fiber of $\xcal$. Vaguely if $\xcal$ is affine and given by polynomial equations with coefficients in $A$, we are looking at those polynomials modulo $\pfr\in \Spec A$ and then view them over the algebraic closure of the field of fractions of $A/\pfr$.) 
\item If $\overline{H}$ is a finite subgroup of $\gcal^{\rm Che}(\overline{\bbf}_p)$ and $s'\in \tcal$ is a point over $p\bbz$, then either $\overline{H}\subseteq \cal_{s'}(\overline{k(s')})$ where $k(s')$ is the residue field of $s'$ or there are a finite field $F_{\overline{H}}$ and a model $\bbg_{\overline{H}}$ of $\gcal^{\rm Che}\otimes_{\bbz} \overline{\bbf}_p$ over $F_{\overline{H}}$ such that 
\[
[\bbg_{\overline{H}}(F_{\overline{H}}),\bbg_{\overline{H}}(F_{\overline{H}})]\subseteq {\overline{H}} \subseteq \bbg_{\overline{H}}(F_{\overline{H}}).
\]
\end{enumerate}

By \cite[Proposition 2.3]{LP}, there is a representation $\rho:\gcal^{\rm Che}\rightarrow (\GL_{n_0})_{\bbz}$ with the following property: 

suppose $\overline{H}$ is a finite subgroup of $\gcal^{\rm Che}(\overline{\bbf}_p)$ such that a subspace of $\overline{\bbf}_p^{n_0}$ which is invariant under $\overline{H}$ should also be invariant under $\gcal^{\rm Che}(\overline{\bbf}_p)$; then $\overline{H}\not\subseteq \cal_{s'}(\overline{k(s')})$ if $s'$ is a geometric point over $p\bbz$.

For an irreducible polynomial $\ell$ that does not divide $r_2$, let $\lfr\in \Spec(\ocal_L)$ be in the fiber over $\langle \ell\rangle$. Set $L(\lfr):=\ocal_L/\lfr$. Let $\phi_{\ell}$ be the representation induced by the composite of $\rho$ and $\phi$ over $\lfr$:
\[
\phi_{\lfr}: \gcal_{\ell}\otimes_{K(\ell)} L(\lfr) \rightarrow (\GL_{n_0})_{L(\lfr)}.
\]

If $H\subseteq \gcal_{\ell}(K(\ell))$ is a proper structural subgroup, then by the above  mentioned results of Larsen-Pink there is a subspace $\wt{W}$ of $\overline{\bbf}_p^{n_0}=\overline{L(\lfr)}^{n_0}$ which is invariant under $H$ but not under $\gcal_{\ell}(\overline{L(\lfr)})$ (via the representation $\phi_{\lfr}$).  Since $\wt{W}$ is not invariant under $\gcal_{\ell}(\overline{L(\lfr)})$, the intersection of $\gcal_{\ell}\otimes_{K(\ell)}\overline{K(\ell)}$ with the stabilizer of $\wt{W}$ is a proper algebraic subgroup of $\gcal_{\ell}\otimes_{K(\ell)}\overline{K(\ell)}$. Hence the intersection of $\gcal_{\ell}\otimes_{K(\ell)}\overline{K(\ell)}$ with all the ${\rm Gal}(\overline{L(\lfr)}/L(\lfr))$-conjugates of the stabilizer of $\wt{W}$ has a descent to a proper subgroup $\wt\bbh$ of $\gcal_{\ell}\otimes_{K(\ell)}L(\lfr)$; and since $\phi_{\lfr}$ is defined over $L(\lfr)$ and $H$ leaves $\wt{W}$ invariant, $H\subseteq \wt\bbh(L(\lfr))$. For any $\sigma\in {\rm Gal}(L(\lfr)/K(\ell))$, let $\wt\bbh^{\sigma}$ be the corresponding subgroup of $\gcal_{\ell}\otimes_{K(\ell)} L(\lfr)$; and let $\bbh$ be the subgroup $\gcal_{\ell}$ that is the descent of $\bigcap_{\sigma\in {\rm Gal}(L(\lfr)/K(\ell))} \wt\bbh^{\sigma}$. Since $H\subseteq \gcal_{\ell}(K(\ell))\cap \wt{\bbh}(L(\lfr))$, we have that $H\subseteq \bbh(K(\ell))$. We notice that the complexity of the stabilizer of a subspace via $\phi_{\lfr}$ has a uniform upper bound which depends on $\rho$ and $\phi$ and it is independent of $\lfr$. Hence the complexity of $\wt{\bbh}$ is bounded as a function of $\Gamma$; moreover complexity does not change under the Galois action, which means the complexity of $\wt{\bbh}^{\sigma}$ is bounded by the same function of $\Gamma$. As $[L(\lfr):K(\ell)]\le [L:K]\ll_{\Gamma} 1$, we deduce that the complexity of $\bbh$ is bounded by a function of $\Gamma$.

Proposition 3.2 in \cite{LP} implies that, if $\deg \ell \gg_{\Gamma} 1$, then $\bbh(K(\ell))$ is a proper subgroup of $\gcal_{\ell}(K(\ell))$. For convenience sake we include its short proof here.  Since the complexity of $\bbh$ is bounded by a function of $\Gamma$, the number of its irreducible components is $O_{\Gamma}(1)$. Hence $|\bbh(K(\ell))|\ll_{\Gamma} |K(\ell)|^{\dim \bbh}$. On the other hand, since the geometric fiber of $\gcal_{\ell}$ is connected, by Lang-Weil \cite[Theorem 1]{LW}, $|\gcal_{\ell}(K(\ell))|\gg_{\Gamma} |K(\ell)|^{\dim \gcal_{\ell}}$ (It is worth pointing out that an explicit formula for $|\gcal_{\ell}(K(\ell))|$ based on invariant factors and $|K(\ell)|$ is known. So the mentioned result of Lang-Weil is not really needed; but it is more conceptual). Hence for $|K(\ell)|\gg_{\Gamma} 1$, $\bbh(K(\ell))$ is a proper subgroup of $\gcal_{\ell}(K(\ell))$.  
\end{proof}
\subsection{Refine version of the dichotomy of subgroups of $\gcal_{\ell}(K(\ell))$} Here we summarize what we have proved in the previous sections in regard to subgroups of $\pi_{\ell}(\Gamma)$.

\begin{thm}\label{thm:FinalRefinementOfLP}
Suppose $\Omega$, $\Gamma$, $\gcal$, $\gcal_{\ell}$, and $K(\ell)$ are as above; that means $\Gamma$ is a finitely generated subgroup of $\GL_{n_0}(\bbf_{q_0}[t,1/r_0(t)])$ where $q_0>7$ is a power of a prime $p>5$ and the field generated by $\Tr(\Gamma)$ is $\bbf_{q_0}(t)$, $\gcal$ is the Zariski-closure of $\Gamma$ in $(\GL_{n_0})_{\bbf_{q_0}[t,1/r_0(t)]}$, $\ell$ is an irreducible polynomial in $\bbf_{q_0}[t]$ that does not divide $r_0$, $K(\ell):=\bbf_{q_0}[t]/\langle \ell\rangle$, and $\gcal_{\ell}:=\gcal\otimes_{\bbf_{q_0}[t,1/r_0(t)]} K(\ell)$. Suppose $\bbg:=\gcal\otimes_{\bbf_{q_0}[t,1/r_0(t)]} \bbf_{q_0}(t)$ is an absolutely almost simple group, connected, simply connected group. Suppose $\deg \ell\gg_{\Gamma} 1$; then for a subgroup $H$ of $\pi_{\ell}(\Gamma)$ we have that either
\begin{enumerate}
\item {\em $H$ is a structural type subgroup}: there are a proper subgroup $\bbh$ of $\gcal_{\ell}$ and a polynomial $f_{H}\in K(\ell)[x_{11},\cdots,x_{n_0n_0}]$ such that
	\begin{enumerate}
	\item the complexity of $\bbh$ is bounded by a function of $\Gamma$, and $H\subseteq \bbh(K(\ell))\subsetneq \gcal_{\ell}(K(\ell))$.
	\item $\deg f\ll_{\Gamma} 1$,  $f_H(H)=0$, and for some $\gamma\in \Omega$, $f_H(\pi_{\ell}(\gamma))\neq 0$.
	\end{enumerate}
\item {\em $H$ is a subfield type subgroup}:
there are a subfield $F_{H}$ of $K(\ell)$ and an algebraic group $\bbg_H$ defined over $F_H$ such that 
\begin{enumerate}
\item $\bbg_H\otimes_{F_H} K(\ell)=\Ad(\gcal_{\ell})$,
\item $[\bbg_H(F_H),\bbg_H(F_H)]\subseteq \Ad H \subseteq \bbg_H(F_H)$.
\end{enumerate}	
\end{enumerate}	
\end{thm}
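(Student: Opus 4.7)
The strategy is to transport the problem into $\gcal^{\rm Che}(\overline{\bbf}_p)$ via the central isogeny
\[\phi_\ell:\gcal_\ell\otimes_{K(\ell)} L(\lfr)\longrightarrow \gcal^{\rm Che}\otimes_{\bbz} L(\lfr)\]
(for a chosen prime $\lfr\in\Spec(\ocal_L)$ above $\langle\ell\rangle$), invoke Theorem~\ref{thmLP} on $\phi_\ell(H)\subseteq \gcal^{\rm Che}(\overline{\bbf}_p)$ with the representation $\rho$ of \cite[Proposition 2.3]{LP}, and then in each of the two resulting branches invoke one of the already-established refinements. The dichotomy of Theorem~\ref{thmLP} reads: either $\rho(\phi_\ell(H))$ stabilizes a proper subspace not stabilized by $\rho(\gcal^{\rm Che}(\overline{\bbf}_p))$ (the \emph{structural} branch), or there exist $\bbf_q\subseteq\overline{\bbf}_p$ and a model $\bbg_0$ of $\gcal^{\rm Che}\otimes_{\bbz}\overline{\bbf}_p$ over $\bbf_q$ with $[\bbg_0(\bbf_q),\bbg_0(\bbf_q)]\subseteq \phi_\ell(H)\subseteq \bbg_0(\bbf_q)$ (the \emph{subfield} branch).

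In the structural branch, Proposition~\ref{prop:structural-subgroups} directly produces a proper subgroup $\bbh\subsetneq \gcal_\ell$ with complexity bounded by a function of $\Gamma$ such that $H\subseteq \bbh(K(\ell))$. To produce the polynomial $f_H$, I would invoke Weisfeiler's strong approximation: since $\pi_\ell(\Gamma)=\gcal_\ell(K(\ell))$, the proper subgroup $\bbh(K(\ell))$ cannot contain every $\pi_\ell(\gamma)$ for $\gamma\in\Omega$ (otherwise the subgroup generated by $\pi_\ell(\Omega)$ would be a proper subgroup of $\gcal_\ell(K(\ell))$). Pick $\gamma_0\in\Omega$ with $\pi_\ell(\gamma_0)\notin\bbh(K(\ell))$ and take $f_H$ to be any member of the defining ideal of $\bbh$---generated in degree $O_\Gamma(1)$ by the complexity bound---with $f_H(\pi_\ell(\gamma_0))\neq 0$. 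Then $f_H$ vanishes on $\bbh(K(\ell))\supseteq H$ while failing at $\pi_\ell(\gamma_0)$.

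In the subfield branch, $\phi_\ell$ factors through the adjoint quotient, identifying $\Ad(\gcal_\ell)\otimes_{K(\ell)}\overline{\bbf}_p\cong \gcal^{\rm Che}\otimes_{\bbz}\overline{\bbf}_p$, and so converts the inclusion $\bbg_0(\bbf_q)\hookrightarrow \gcal^{\rm Che}(\overline{\bbf}_p)$ into an isogeny
\[\widetilde\theta:\bbg_0\otimes_{\bbf_q}\overline{\bbf}_p\longrightarrow \Ad(\gcal_\ell)\otimes_{K(\ell)}\overline{\bbf}_p\]
under which $\phi_\ell(H)$ corresponds to $\Ad H\subseteq \Ad(\gcal_\ell)(K(\ell))$. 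For $\deg\ell\gg_\Gamma 1$, which forces $|\bbf_q|$ to be large in this branch, $\bbg_0(\bbf_q)$ is perfect (its commutator subgroup being simple by Theorem~\ref{thmLP}), so $\widetilde\theta(\bbg_0(\bbf_q))=\Ad H\subseteq \Ad(\gcal_\ell)(K(\ell))$. Proposition~\ref{propLPsubfields}, applied with $\bbg_1=\bbg_0$ and $\bbg_2=\Ad(\gcal_\ell)$, then yields both $\bbf_q\subseteq K(\ell)$ and a descent of $\widetilde\theta$ to an isogeny $\bbg_0\otimes_{\bbf_q} K(\ell)\to \Ad(\gcal_\ell)$ defined over $K(\ell)$; being an isogeny of adjoint groups, it is an isomorphism. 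Setting $F_H:=\bbf_q$ and $\bbg_H:=\bbg_0$ yields the data required by condition~(2).

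The main obstacle is the descent in the subfield branch: ensuring that the abstract subfield $\bbf_q$ actually embeds in $K(\ell)$ and that $\bbg_0$ is a model of $\Ad(\gcal_\ell)$ \emph{over this subfield}. Proposition~\ref{propLPsubfields} is precisely engineered for this, but applying it requires perfectness of $\bbg_0(\bbf_q)$ (to upgrade the containment $\widetilde\theta([\bbg_0(\bbf_q),\bbg_0(\bbf_q)])\subseteq \Ad(\gcal_\ell)(K(\ell))$ to the full $\widetilde\theta(\bbg_0(\bbf_q))\subseteq \Ad(\gcal_\ell)(K(\ell))$) and the hypothesis $q_1>9$ there; both are guaranteed by the qualitative largeness assumption $\deg\ell\gg_\Gamma 1$ appearing in the statement.
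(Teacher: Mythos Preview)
Your structural branch is essentially the paper's argument. The gap is in the subfield branch.

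You assert that $\deg\ell\gg_\Gamma 1$ forces $|\bbf_q|$ to be large and hence that $\bbg_0(\bbf_q)$ is perfect. Neither claim is correct. The subfield $\bbf_q$ produced by Theorem~\ref{thmLP} can be arbitrarily small relative to $K(\ell)$---indeed $\bbf_q=\bbf_p$ is perfectly allowed no matter how large $\deg\ell$ is---so the size of $\deg\ell$ says nothing about $q$. More seriously, $\bbg_0$ is a model of the \emph{adjoint} Chevalley group $\gcal^{\rm Che}$, and adjoint groups over finite fields are typically \emph{not} perfect: for example $\PGL_n(\bbf_q)/[\PGL_n(\bbf_q),\PGL_n(\bbf_q)]\cong \bbf_q^\times/(\bbf_q^\times)^n$, which is nontrivial whenever $\gcd(n,q-1)>1$. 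That $[\bbg_0(\bbf_q),\bbg_0(\bbf_q)]$ is simple (as given by Theorem~\ref{thmLP}) in no way implies $\bbg_0(\bbf_q)=[\bbg_0(\bbf_q),\bbg_0(\bbf_q)]$. Consequently you only know $\widetilde\theta\bigl([\bbg_0(\bbf_q),\bbg_0(\bbf_q)]\bigr)\subseteq\Ad(\gcal_\ell)(K(\ell))$, not $\widetilde\theta(\bbg_0(\bbf_q))\subseteq\Ad(\gcal_\ell)(K(\ell))$, so the hypothesis of Proposition~\ref{propLPsubfields} with $\bbg_1=\bbg_0$ is not available.

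The paper's remedy is to pass to the simply-connected cover $\wt\bbg_H$ of $\bbg_H$ (your $\bbg_0$). For $p>5$ the group $\wt\bbg_H(F_H)$ \emph{is} perfect, hence its image under the adjoint map is perfect and therefore lies inside $[\bbg_H(F_H),\bbg_H(F_H)]\subseteq\Ad H\subseteq\Ad(\gcal_\ell)(K(\ell))$. One then applies Proposition~\ref{propLPsubfields} with $\bbg_1=\wt\bbg_H$ and the central isogeny $\Ad:\wt\bbg_H\otimes_{F_H}\overline{\bbf}_p\to\Ad(\gcal_\ell)\otimes_{K(\ell)}\overline{\bbf}_p$ to obtain $F_H\subseteq K(\ell)$ together with a descent of $\Ad$ over $K(\ell)$; setting $\bbg_H:=\Ad\wt\bbg_H$ then gives the required $F_H$-model of $\Ad(\gcal_\ell)$.
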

\begin{proof}
By Proposition~\ref{prop:structural-subgroups}, if $\deg \ell\gg_{\Gamma}1$ and $H$ is a structural type subgroup, there is a proper subgroup $\bbh$ of $\gcal_{\ell}$ such that the complexity of $\bbh$ is $O_{\Gamma}(1)$, $H\subseteq \bbh(K(\ell))\subsetneq \gcal_{\ell}(K(\ell))$. Suppose $\bbh$ is defined by polynomials $\{f_i\in K(\ell)[x_{11},\ldots,x_{n_0n_0}]|1\le i\ll_{\Gamma}1\}$, where $\deg f_i\ll_{\Gamma} 1$. Since $\gcal_{\ell}(K(\ell))\neq \bbh(K(\ell))$ and by strong approximation $\gcal_{\ell}(K(\ell))$ is generated by $\pi_{\ell}(\Omega)$, there is $\gamma\in \Omega$ and $f_i$ such that $f_i(\pi_{\ell}(\gamma))\neq 0$. This implies the claim if $H$ is a structure type subgroup.

If $H$ is subfield type subgroup, then there are a finite field $F_H\subseteq \overline{K(\ell)}$ and a model $\bbg_H$ of $\Ad\gcal_{\ell}\otimes_{K(\ell)} \overline{K(\ell)}$ over $F$ such that
\[
[\bbg_H(F_H),\bbg_H(F_H)]\subseteq \Ad H\subseteq \bbg_H(F_H).
\]  
Let $\wt\bbg_H$ be the simply connected cover of $\bbg_H$. Then $\wt\bbg_H$ is a model of $\gcal_{\ell}\otimes_{K(\ell)} \overline{K(\ell)}$; and so the adjoint homomorphism is a central isogeny \[\Ad: \wt\bbg_H\otimes_{F_H} \overline{K(\ell)}\rightarrow \Ad \gcal_{\ell} \otimes_{K(\ell)} \overline{K(\ell)}
\text{ and } 
\Ad(\wt{\bbg}_H(F_H))\subseteq \Ad H\subseteq \Ad(\gcal_{\ell})(K(\ell)).
\]
Hence by Proposition~\ref{propLPsubfields}, $F_H\subseteq K(\ell)$ and the adjoint homomorphism has a descent to $K(\ell)$, $\Ad:\wt{\bbg}_H\otimes_{F_H} K(\ell) \rightarrow \Ad \gcal_{\ell}$; and so $\bbg_H:=\Ad \wt\bbg_H$ satisfies the claim. 
\end{proof}

\subsection{A note on subfield type subgroups} In this section, we prove Proposition~\ref{prop:conjugates-subfield-type-subgroups} which will be used later in modifying Varj\'{u}'s multi-scale argument. 

\begin{prop}\label{prop:conjugates-subfield-type-subgroups}
Let $q$ be a power of a prime $p>5$, and $n\in \bbz^+$. Suppose $\bbh$ is an absolutely almost simple, connected, adjoint type $\bbf_q$-group. Then 
\[
T([\bbh(\bbf_q),\bbh(\bbf_q)],\bbh(\bbf_{q^n})):=\{g\in \bbh(\overline{\bbf}_p)|\h g^{-1} [\bbh(\bbf_q),\bbh(\bbf_q)] g\subseteq \bbh(\bbf_{q^n})\}=\bbh(\bbf_{q^n}).
\]	
\end{prop}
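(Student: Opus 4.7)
The inclusion $\bbh(\bbf_{q^n})\subseteq T([\bbh(\bbf_q),\bbh(\bbf_q)],\bbh(\bbf_{q^n}))$ is immediate, since $[\bbh(\bbf_q),\bbh(\bbf_q)]\subseteq \bbh(\bbf_q)\subseteq \bbh(\bbf_{q^n})$ and conjugation by an element of $\bbh(\bbf_{q^n})$ stabilizes $\bbh(\bbf_{q^n})$. So the plan is to prove the reverse inclusion via a Galois-descent argument.

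Let $g\in \bbh(\overline{\bbf}_p)$ satisfy $g^{-1}H_0 g\subseteq \bbh(\bbf_{q^n})$, where $H_0:=[\bbh(\bbf_q),\bbh(\bbf_q)]$. To show $g\in\bbh(\bbf_{q^n})$ I would verify that $\sigma(g)=g$ for every $\sigma\in\Gal(\overline{\bbf}_p/\bbf_{q^n})$. Fix such a $\sigma$. For any $h\in H_0\subseteq \bbh(\bbf_q)\subseteq \bbh(\bbf_{q^n})$ we have $\sigma(h)=h$, and by hypothesis $g^{-1}hg\in \bbh(\bbf_{q^n})$, so $\sigma(g^{-1}hg)=g^{-1}hg$. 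Since $\bbh$ is defined over $\bbf_q\subseteq \bbf_{q^n}$, $\sigma$ acts as a group automorphism of $\bbh(\overline{\bbf}_p)$, and expanding gives $\sigma(g)^{-1}h\sigma(g)=g^{-1}hg$. Rearranging, $g\sigma(g)^{-1}$ centralizes every $h\in H_0$. Thus $g\sigma(g)^{-1}\in C_{\bbh(\overline{\bbf}_p)}(H_0)$.

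The remaining step, which I expect to be the main obstacle, is to show $C_{\bbh(\overline{\bbf}_p)}(H_0)=\{1\}$; combined with the above this forces $\sigma(g)=g$. I would argue that $H_0$ is Zariski dense in $\bbh$; then $C_{\bbh(\overline{\bbf}_p)}(H_0)=C_{\bbh}(\bbh)(\overline{\bbf}_p)=Z(\bbh)(\overline{\bbf}_p)$, which is trivial because $\bbh$ is of adjoint type. For the Zariski density: by Lang's theorem $\bbh$ is quasi-split over $\bbf_q$, so there is a Borel subgroup defined over $\bbf_q$, a maximal $\bbf_q$-torus therein, and root subgroups $\bbu_\alpha$ (defined over some $\bbf_{q^d}$ with $d\le 3$) whose Galois-invariant subgroups jointly generate a Zariski-dense subgroup of $\bbh$. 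For $p>5$ (hence $q\ge 7$), the standard Chevalley commutator identity $[h_\alpha(t),u_\alpha(s)]=u_\alpha((t^2-1)s)$ together with the existence of $t\in\bbf_q^\times$ with $t^2\ne 1$ shows that every such root-subgroup element of $\bbh(\bbf_q)$ lies in $H_0$. Thus $H_0$ contains a Zariski-dense subset of $\bbh$, which completes the proof.

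The delicate point is arranging Zariski density for the smallest allowed $q$; the condition $p>5$ ensures $q^2-1$ has a unit other than $\pm1$, which is exactly what the commutator identity needs, so the hypothesis in the statement is tight for this approach.
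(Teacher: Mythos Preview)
Your Galois-descent reduction is correct and elegant: the computation showing $g\sigma(g)^{-1}\in C_{\bbh(\overline{\bbf}_p)}(H_0)$ for every $\sigma\in\Gal(\overline{\bbf}_p/\bbf_{q^n})$ is fine, and if that centralizer is trivial you are done. This is a genuinely different route from the paper's, which instead shows $\Ad(g)$ preserves the $\bbf_{q^n}$-form of the Lie algebra via a module-theoretic lemma (Lemma~\ref{lem:absoultely-simple-modules}) combined with perfectness of $\uhfr(\bbf_{q^n})$, and then invokes adjointness.

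However, your justification for $C_{\bbh(\overline{\bbf}_p)}(H_0)=\{1\}$ has a real gap: the assertion that $H_0$ is Zariski dense in $\bbh$ is false. The group $H_0=[\bbh(\bbf_q),\bbh(\bbf_q)]$ is finite, and a finite subset of a positive-dimensional irreducible variety over $\overline{\bbf}_p$ is closed, hence never dense. Your root-subgroup argument conflates two statements: it is true that the $\bbu_\alpha$ generate $\bbh$ as an algebraic group, and your commutator identity correctly shows $\bbu_\alpha(\bbf_q)\subseteq H_0$; but the $\bbf_q$-points $\bbu_\alpha(\bbf_q)$ are finite sets, and the group they generate is just the finite group $H_0$, not a dense subset of $\bbh_{\overline{\bbf}_p}$. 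The centralizer \emph{is} trivial, but establishing it needs representation-theoretic input: for instance, absolute irreducibility of $\uhfr(\overline{\bbf}_p)/\uzfr(\overline{\bbf}_p)$ as an $H_0$-module (this is \cite[Lemma~4.6]{Wei}, valid for $p>5$) together with Schur's lemma forces any $c$ centralizing $H_0$ to act as a scalar on $\uhfr/\uzfr$, and then perfectness of $\uhfr$ plus $Z(\bbh)=1$ gives $c=1$. Note that this irreducibility statement is precisely the engine behind the paper's own proof, so the ``main obstacle'' you flagged is indeed where the substance lies, and filling it draws on the same circle of ideas.
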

The main idea of the proof is similar to the proof of Proposition~\ref{propliedescent}; but as the proof is fairly short  we reproduce it here.
\begin{lem}\label{lem:absoultely-simple-modules}
Suppose $F$ is a field, $V$ is a finite-dimensional $F$-vector space, $H$ is a subgroup of ${\rm End}_F(V)$, and $V$ is an absolutely simple $H$-module; that means $V\otimes_F \overline{F}$ is a simple $\overline{F}[H]$-module where $\overline{F}$ is an algebraic closure of $F$ and $\overline{F}[H]$ is the $\overline{F}$-span of $H$ in ${\rm End}_{\overline{F}}(V\otimes_F \overline{F})$. Suppose $F\subseteq E\subseteq \overline{F}$ is an intermediate subfield. Let $F[H]$ be the $F$-span of $H$ in 
\[
{\rm End}_F(V)\subseteq {\rm End}_E(V\otimes_F E)\subseteq {\rm End}_{\overline{F}}(V\otimes_F \overline{F}).
\] 
If $W\subseteq V\otimes_F E$ is an $F[H]$-module and $\dim_F W=\dim_F V$, then there is $\lambda\in E$ such that $W=V\otimes \lambda$.
\end{lem}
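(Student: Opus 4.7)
The plan is to exploit absolute simplicity to reduce the problem to a clean Schur's lemma computation.

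First, I would record the key consequence of absolute simplicity: since $V\otimes_F\overline{F}$ is a simple $\overline{F}[H]$-module, Schur's lemma gives $\mathrm{End}_{\overline{F}[H]}(V\otimes_F\overline{F})=\overline{F}$. Descending to $F$, one obtains $\mathrm{End}_{F[H]}(V)=F$, so $V$ is in particular a simple $F[H]$-module with trivial endomorphism algebra. By the Jacobson density theorem (equivalently, by Wedderburn applied to the image of $F[H]$ inside the finite-dimensional algebra $\mathrm{End}_F(V)$), the image of $F[H]$ in $\mathrm{End}_F(V)$ equals all of $\mathrm{End}_F(V)$. This is the only place where the absolute simplicity hypothesis is used.

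Next I would analyze the ambient module $V\otimes_F E$ as an $F[H]$-module. Fix an $F$-basis $\{\alpha_i\}_{i\in I}$ of $E$; then
\[
V\otimes_F E \;=\; \bigoplus_{i\in I} V\otimes\alpha_i,
\]
and each summand is $F[H]$-isomorphic to $V$. Hence $V\otimes_F E$ is $V$-isotypic as an $F[H]$-module, in particular semisimple. Since $W\subseteq V\otimes_F E$ is an $F[H]$-submodule with $\dim_F W=\dim_F V$, and $V$ is simple, $W$ itself must be a simple $F[H]$-module isomorphic to $V$.

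The final step is to identify which submodules of $V\otimes_F E$ are isomorphic to $V$. Because $V$ is finite-dimensional over $F$ and $E/F$ is flat (being a field extension), Hom commutes with the base change:
\[
\mathrm{Hom}_{F[H]}\bigl(V,\,V\otimes_F E\bigr) \;\cong\; \mathrm{Hom}_{F[H]}(V,V)\otimes_F E \;=\; F\otimes_F E \;=\; E,
\]
where the isomorphism sends $\lambda\in E$ to the map $v\mapsto v\otimes\lambda$. If one prefers to avoid this identification, the same conclusion follows concretely: project $W$ onto each summand $V\otimes\alpha_i$; by Schur's lemma each projection is either zero or a scalar $c_i\in F$, and only finitely many are nonzero, so the inclusion $W\hookrightarrow V\otimes_F E$ has the form $v\mapsto \sum_i c_i (v\otimes\alpha_i)=v\otimes\lambda$ with $\lambda:=\sum_i c_i\alpha_i\in E$. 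Either way, $W=V\otimes\lambda$, as claimed. The argument is essentially formal once absolute simplicity has been converted into density of $F[H]$, so there is no serious obstacle; the only point requiring a little care is the Schur/flat base change identification, which is why I would present it in both of the forms above.
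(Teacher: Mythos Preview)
Your proof is correct and follows essentially the same route as the paper's: both deduce $F[H]=\mathrm{End}_F(V)$ from absolute simplicity, decompose $V\otimes_F E=\bigoplus_i V\otimes\alpha_i$ along an $F$-basis of $E$, and then use Schur's lemma on the projections $W\to V\otimes\alpha_i$ to conclude $W=V\otimes\lambda$. Your additional packaging via $\mathrm{Hom}_{F[H]}(V,V\otimes_F E)\cong E$ is a nice conceptual gloss, but the concrete ``alternative'' you give is exactly the paper's argument.
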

\begin{proof}
First we notice that since $V$ is an absolutely simple $H$-module, by \cite[Theorem 7.5]{Lam} $F[H]={\rm End}_F(V)$; and so 
\be\label{eq:H-isomorphism}
{\rm End}_{F[H]}(V)=F. 
\ee
Suppose $\{\alpha_i\}_{i=1}^{\infty}$ is an $F$-basis of $E$. Then $V\otimes_F E=\bigoplus_{i=1}^{\infty} V\otimes \alpha_i$. For any $i$, let \[{\rm pr}_i:W\rightarrow V\otimes \alpha_i\] be the projection to the $i$-th summand according to this decomposition. We notice that, since  $l_{\alpha_i}:V\rightarrow V\otimes \alpha_i, l_{\alpha_i}(v):=v\otimes \alpha_i$ is an $F[H]$-module isomorphism, $V\otimes \alpha_i$ is a simple $F[H]$-module. Hence either ${\rm pr}_i(W)=0$ or ${\rm pr}_i:W\rightarrow V\otimes \alpha_i$ is a surjective $F[H]$-module homomorphism. As $\dim_F W=\dim_F V$, in the latter case ${\rm pr}_i$ is an $F[H]$-module isomorphism. Let $I:=\{i\in \bbz^+|\h {\rm pr}_i(W)\neq 0\}$. Then, for $i,j\in I$, 
\[
l_{\alpha_j}^{-1} \circ {\rm pr}_j \circ {\rm pr}_i^{-1}\circ l_{\alpha_i}: V\rightarrow V
\]
is an $F[H]$-module isomorphism. Therefore by \eqref{eq:H-isomorphism},  for $i,j\in I$, there is $a_{ij}\in F^{\times}$ such that 
\be\label{eq:various-components}
{\rm pr}_j\circ {\rm pr}_i^{-1}(v \otimes \alpha_i)=v \otimes a_{ij}\alpha_j.
\ee
Since $\dim_F W=\dim_F V<\infty$, by \eqref{eq:various-components} $I$ is finite. Let $i_0\in I$; then by \eqref{eq:various-components} we have
\[
\textstyle
W=\{\sum_{j\in I} v\otimes a_{i_0j} \alpha_j|\h v\in V\}=V\otimes (\sum_{j\in I} a_{i_0j}\alpha_j);
\]
and claim follows.
\end{proof}
\begin{proof}[Proof of Proposition~\ref{prop:conjugates-subfield-type-subgroups}]
	Since $p>5$, by \cite[Lemma 4.6]{Wei} 
	$\uhfr (\overline{\bbf}_{q})/\uzfr(\overline{\bbf}_q)$ 
	is a simple $H$-module, where $H=[\bbh(\bbf_q),\bbh(\bbf_q)]$, $\uhfr={\rm Lie}(\bbh)$, and $\uzfr$ is the center of $\uhfr$. Hence 
	\be\label{eq:finite-Lie-alg}
	(\uhfr(\bbf_{q^n})+\uzfr(\overline{\bbf}_q))/\uzfr(\overline{\bbf}_q)\subseteq \uhfr(\overline{\bbf}_{q})/\uzfr(\overline{\bbf}_q)
	\text{ is an absolutely simple $H$-module.}
	\ee 
  For $g\in T(H,\bbh(\bbf_{q^n}))$,  $\Ad(g) \uhfr(\bbf_{q^n})$ is $H$-invariant as we have $H\subseteq g\bbh(\bbf_{q^n})g^{-1}$. Since $\dim_{\bbf_{q^n}} \Ad(g)\uhfr (\bbf_{q^n})=\dim_{\bbf_{q^n}} \uhfr (\bbf_{q^n})$, by Lemma~\ref{lem:absoultely-simple-modules} there is $\lambda(g)\in \overline{\bbf}_{q}$ such that 
  \be\label{eq:Adg-scalar}
  \Ad(g) \uhfr (\bbf_{q^n})+\uzfr (\overline{\bbf}_q)=\lambda(g) \uhfr (\bbf_{q^n})+\uzfr (\overline{\bbf}_q).
  \ee
  Since $p>5$, $\uhfr (\bbf_{q^n})$ is a perfect Lie algebra. Therefore by \eqref{eq:Adg-scalar} we get that for any integer $m\ge 2$ we have
  \be\label{eq:Adg-scalar-2}
  \Ad(g) \uhfr (\bbf_{q^n})=\lambda(g)^m \uhfr (\bbf_{q^n}).
  \ee
  Notice that $\uhfr (\bbf_{q^n})$ and $\uhfr (\overline{\bbf}_q)$ are naturally isomorphic to $\hfr\otimes_{\bbf_q} \bbf_{q^n}$ and $\hfr\otimes_{\bbf_q} \overline{\bbf}_q$, respectively, where $\hfr=\uhfr (\bbf_{q})$; and so $\lambda(g)^m \uhfr (\bbf_{q^n})$ can be identified with $\hfr\otimes \lambda(g)^m \bbf_{q^n}$. Thus \eqref{eq:Adg-scalar-2} implies that $\lambda(g)\in \bbf_{q^n}$. Therefore $\Ad(g)\uhfr (\bbf_{q^n})=\uhfr (\bbf_{q^n})$, which means $g\in \bbh(\bbf_{q^n})$ as $\bbh$ is of adjoint form.
\end{proof}
\begin{cor}\label{cor:intersection-conjugate-subfield-type}
Let $q$ be a power of a prime $p>5$. Let $\bbh$ be a connected, almost simple, adjoint type $\bbf_q$-group.	 Suppose $n$ is a positive integer and $m$ is a positive divisor of $n$. Then, for any $g\in \bbh(\bbf_{q^n})\setminus \bbh(\bbf_{q^m})$, $g\bbh(\bbf_{q^m})g^{-1}\cap \bbh(\bbf_{q^m})$ is a structural subgroup of $\bbh(\bbf_{p^n})$.
\end{cor}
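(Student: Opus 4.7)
The plan is to argue by contradiction. Set $H:=g\bbh(\bbf_{q^m})g^{-1}\cap \bbh(\bbf_{q^m})$ and suppose $H$ is \emph{not} a structural subgroup of $\bbh(\bbf_{q^n})$; I will derive $g\in \bbh(\bbf_{q^m})$, which contradicts the hypothesis $g\in\bbh(\bbf_{q^n})\setminus \bbh(\bbf_{q^m})$.

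First I would invoke the Larsen--Pink dichotomy (Theorem~\ref{thmLP}): since $H$ is a finite subgroup of $\bbh(\overline{\bbf}_p)$ that is not of structural type, the subfield alternative must hold, so there exist a finite subfield $\bbf_{q''}\subseteq\overline{\bbf}_p$ and a model $\bbg_0$ of $\bbh\otimes_{\bbf_q}\overline{\bbf}_p$ over $\bbf_{q''}$ such that $N:=[\bbg_0(\bbf_{q''}),\bbg_0(\bbf_{q''})]\subseteq H$. The definition of $H$ then yields both $N\subseteq \bbh(\bbf_{q^m})$ and $g^{-1}Ng\subseteq \bbh(\bbf_{q^m})$. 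The key point is that I would \emph{not} try to compare $\bbf_{q''}$ with $\bbf_{q^m}$ at this stage (which would require an extension of Proposition~\ref{propLPsubfields} to commutator subgroups); instead I would rerun the proof of Proposition~\ref{prop:conjugates-subfield-type-subgroups} verbatim with $N$ in place of $[\bbh(\bbf_q),\bbh(\bbf_q)]$, observing that that proof uses its distinguished subgroup only through the absolute simplicity of the adjoint-quotient module.

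Concretely, \cite[Corollary 4.6]{Wei} applied to the absolutely almost simple $\bbf_{q''}$-group $\bbg_0$ shows that $\uhfr(\overline{\bbf}_p)/\uzfr(\overline{\bbf}_p)$ is a simple $N$-module, so its $\bbf_{q^m}$-form $\uhfr(\bbf_{q^m})/\uzfr(\bbf_{q^m})$ is absolutely simple over $\bbf_{q^m}$. Since $g^{-1}Ng\subseteq\bbh(\bbf_{q^m})$, $\Ad(g)\uhfr(\bbf_{q^m})$ is an $N$-stable $\bbf_{q^m}$-subspace of $\uhfr(\overline{\bbf}_p)$ of the correct dimension, and Lemma~\ref{lem:absoultely-simple-modules} then yields $\lambda\in\overline{\bbf}_p$ with
\[
\Ad(g)\uhfr(\bbf_{q^m})+\uzfr(\overline{\bbf}_p)=\lambda\,\uhfr(\bbf_{q^m})+\uzfr(\overline{\bbf}_p).
\]
Because $p>5$, $\uhfr(\bbf_{q^m})$ is a perfect Lie algebra, so iterating brackets promotes this identity to $\Ad(g)\uhfr(\bbf_{q^m})=\lambda^{k}\uhfr(\bbf_{q^m})$ for every $k\ge 2$; this forces $\lambda\in\bbf_{q^m}^{\times}$, hence $\Ad(g)\uhfr(\bbf_{q^m})=\uhfr(\bbf_{q^m})$, and since $\bbh$ is of adjoint type we conclude $g\in\bbh(\bbf_{q^m})$, as required. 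The only delicate point is to verify that Weisfeiler's corollary applies inside $\bbh(\overline{\bbf}_p)$ via the identification $\bbg_0\otimes_{\bbf_{q''}}\overline{\bbf}_p\cong \bbh\otimes_{\bbf_q}\overline{\bbf}_p$, but this is immediate from the definition of a model, which is why the corollary follows directly from Proposition~\ref{prop:conjugates-subfield-type-subgroups} without any further invocation of Proposition~\ref{propLPsubfields}.
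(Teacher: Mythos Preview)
Your argument is correct, and it differs from the paper's route in one clean way. The paper does \emph{not} rerun the Lie-algebra argument: it first invokes Proposition~\ref{propLPsubfields} (applied to the simply-connected cover of the Larsen--Pink model $\bbg_0$, whose $\bbf_{q''}$-points map into $[\bbg_0(\bbf_{q''}),\bbg_0(\bbf_{q''})]\subseteq H\subseteq\bbh(\bbf_{q^m})$) to conclude that $\bbf_{q''}\subseteq\bbf_{q^m}$ and that $\bbg_0$ is already an $\bbf_{q''}$-model $\overline\bbh$ of $\bbh\otimes\bbf_{q^m}$. That places one exactly in the hypotheses of Proposition~\ref{prop:conjugates-subfield-type-subgroups} (with $\overline\bbh$ in the role of $\bbh$, $\bbf_{q''}$ in the role of $\bbf_q$, and $\bbf_{q^m}$ in the role of $\bbf_{q^n}$), which is then applied as a black box to force $g\in\bbh(\bbf_{q^m})$.

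You instead bypass Proposition~\ref{propLPsubfields} altogether, observing that the proof of Proposition~\ref{prop:conjugates-subfield-type-subgroups} uses its distinguished subgroup only through the absolute irreducibility of $\uhfr(\overline{\bbf}_p)/\uzfr(\overline{\bbf}_p)$, and that this irreducibility for $N=[\bbg_0(\bbf_{q''}),\bbg_0(\bbf_{q''})]$ already follows from Weisfeiler applied to the model $\bbg_0$ (no subfield containment needed). This is a legitimate shortcut and arguably more economical in its logical dependencies; the paper's version has the advantage of using both propositions as black boxes rather than reopening a proof. One small phrasing issue: your closing sentence says the corollary ``follows directly from Proposition~\ref{prop:conjugates-subfield-type-subgroups}'', but strictly speaking you have not applied that proposition as stated (which would require $\bbf_{q''}\subseteq\bbf_{q^m}$); you have rerun its \emph{proof} with a different irreducibly-acting subgroup.
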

\begin{proof}
Suppose to the contrary that it is a subfield type subgroup. Then by Proposition~\ref{propLPsubfields} there is a subfield $F'$ of $\bbf_{q^m}$ and a model $\overline\bbh$ of $\bbh\otimes_{\bbf_q}\bbf_{q^m}$ over $F'$ such that 
\[
[\overline\bbh(F'),\overline\bbh(F')]\subseteq g\overline\bbh(\bbf_{q^m})g^{-1}\cap \overline\bbh(\bbf_{q^m}) \subseteq \overline\bbh(F').
\]
Therefore $g\in T([\overline\bbh(F'),\overline\bbh(F')], \overline\bbh(\bbf_{q^m}))$; and so by Proposition~\ref{prop:conjugates-subfield-type-subgroups} we have that $g$ is in $\overline\bbh(\bbf_{q^m})=\bbh(\bbf_{q^m})$, which is a contradiction. 
\end{proof}

\section{Escaping from the direct sum of structure type subgroups}\label{sectionproofofmainescape}

For a square-free polynomial $f$ (with large degree irreducible factors), we say a proper subgroup $H$ of $\pi_f(\Gamma)$ is {\em purely structural} if $\pi_{\ell}(H)$ is a structure type subgroup of $\pi_{\ell}(\Gamma)=\gcal_{\ell}(K(\ell))$ for any irreducible factor $\ell$ of $f$. The goal of this section is to prove Proposition~\ref{propmainescape}; that roughly means we show that there exists a symmetric set $\Omega'\subseteq \Gamma$ with the following property:  For any square-free polynomial $f\in \bbf_{q_0}[t]$ with large degree irreducible factors and for any purely structural subgroup $H$ of $\pi_f(\Gamma)$, the probability that an $l\sim \deg f$-step random walk lands in $H$ is small. 

\subsection{Small lifts of elements of a purely structural subgroup are in a proper algebraic subgroup}\label{sectionsmalllifts}

Let us recall that for any $h\in \GL_{n_0}(\bbf_{q_0}[t,1/r_0])$, 
\[
\|h\|:=\max_{v\in D(r_0)\cup \{v_{\infty}\},i,j} |h_{ij}|_{v},
\]
 where $h_{ij}$ is the $i,j$-entry of $h$ and $|\cdot|_{\ell}$ is the $\ell$-adic norm (see section~\ref{sectionnotation} for the definition of all the undefined symbols). For a subgroup $H$ of $\pi_f(\Gamma)$, let 
 \[
 \lcal_\delta(H):=\{h=(h_{ij})\in \Gamma|\pi_f(h)\in H\mbox{ and }\|h\|<[\pi_f(\Gamma):H]^\delta\},
 \]
In this section we show that, if $H$ is purely structural, then for some $\delta\ll_{\bbg} 1$, $\lcal_\delta(H)$ lies in a proper algebraic subgroup of $\bbg$. In light of Theorem~\ref{thm:FinalRefinementOfLP}, we follow the proof of \cite[Proposition 16]{SGV}. 

{\bf Standing assumptions.} In this section, we will be working with $\Omega$, $\Gamma$, $\gcal$, $\gcal_{\ell}$, $\bbg$, and $K(\ell)$ are as before; that means $\Gamma$ is a finitely generated subgroup of $\GL_{n_0}(\bbf_{q_0}[t,1/r_0(t)])$ where $q_0>7$ is a power of a prime $p>5$ and the field generated by $\Tr(\Gamma)$ is $\bbf_{q_0}(t)$, $\gcal$ is the Zariski-closure of $\Gamma$ in $(\GL_{n_0})_{\bbf_{q_0}[t,1/r_0(t)]}$, $\bbg$ is the generic fiber of $\gcal$, $\bbg$ is a connected, simply-connected, absolutely almost simple group, for an irreducible polynomial $\ell$, $K(\ell)$ is $\bbf_{q_0}[t]/\langle \ell\rangle$, and $\gcal_{\ell}$ is the fiber of $\gcal$ over $\langle \ell\rangle$. Here $f$ denotes a square free polynomial with the property that the dichotomy mentioned in Theorem~\ref{thm:FinalRefinementOfLP} holds for any of its irreducible factors. In particular, for any irreducible factor $\ell$ of $f$ and any proper subgroup $H_{\ell}$ of $\pi_{\ell}(\Gamma)$, we have that
\[
[\pi_{\ell}(\Gamma):H_{\ell}]\gg_{\Gamma} 
\begin{cases}
 |K(\ell)|^{\dim \bbg-\dim \bbh}\ge |K(\ell)| &\text{ if } H_{\ell} \text{ is a structure type subgroup,}\\
 |K(\ell)/F_H|^{\dim \bbg}\ge |K(\ell)| &\text{ if } H_{\ell} \text{ is a subfield type subgroup.}	
\end{cases}
\]
This implies that 
\be\label{eq:LowerboundIndex}
[\pi_{\ell}(\Gamma):H_{\ell}]\gg_{\Gamma} |\pi_{\ell}(\Gamma)|^{c_0}
\ee
for some positive number $c_0$ which depends only on $\bbg$. Moreover we assume, if $\ell$ and $\ell'$ are two different irreducible factors of $f$, then $\deg \ell\neq \deg \ell'$. This last condition is very restrictive and in a desired result it has to be removed. Removing this condition is in the spirit of Open Problem 1.4 in \cite{LinVar}.

We first start with approximating a proper subgroup $H$ of 
\[
\pi_f(\Gamma)\simeq \bigoplus_{\ell|f, \ell \text{ irred.}} \pi_{\ell}(\Gamma)
= \bigoplus_{\ell|f, \ell \text{ irred.}} \gcal_{\ell}(K(\ell)) 
\] 
with a subgroup in product form. This is done by a variant of \cite[Lemma 15]{SGV}. 
\begin{lem}\label{lemsubgroupproductform}  
Suppose $\{G_i\}_{i\in I}$ is a finite collection of finite groups with the following properties:
\begin{enumerate}
\item $G_i=\bigoplus_{j\in J_i} L_{ij}$ where $L_{ij}/Z(L_{ij})$ is simple.
\item $G_i$ is perfect; that means $G_i=[G_i,G_i]$.
\item For $i\neq j$, simple factors of $G_i/Z(G_i)$ and $G_j/Z(G_j)$ are not isomorphic.
\item There is a positive integer $c$ such that for any proper subgroup $H_i$ of $G_i$ we have $[G_i:H_i]\ge |G_i|^{c}$.	
\end{enumerate}
Then for any subgroup $H$ of $G_I:=\bigoplus_{i\in I}G_i$ we have 
\[
\prod_{i\in I} [G_i:\pr_i(H)]\ge[G_I:H]^c,
\]
where $\pr_i:G_I\rightarrow G_i$ is the projection to the $i$-th component. 
\end{lem}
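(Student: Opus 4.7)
I would prove the inequality by induction on $|I|$, with the engine being a Goursat-type analysis that leverages conditions~(2) and~(3) to rule out any significant ``twisting'' between a chosen factor $G_{i_0}$ and the product $G_{I'} := \bigoplus_{i \in I \setminus \{i_0\}} G_i$ of the remaining factors. For the base case $|I|=1$, condition~(4) applied to the trivial subgroup gives $c \le 1$, and the inequality $[G_1:H] \ge [G_1:H]^c$ is then immediate since $\pr_1(H)=H$.

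For the inductive step, I fix $i_0 \in I$, let $I' := I \setminus \{i_0\}$, and apply Goursat's lemma to $H \le G_{i_0} \oplus G_{I'}$. Writing $H_{i_0} := \pr_{i_0}(H)$, $K' := \pr_{I'}(H)$, $N_{i_0} := H \cap G_{i_0}$, and $N' := H \cap G_{I'}$, Goursat yields an isomorphism $H_{i_0}/N_{i_0} \cong K'/N'$; call this common quotient $Q$. The crucial point is that $Q$ is a subquotient of both $G_{i_0}$ and $G_{I'}$: by condition~(3) the nonabelian simple factors of $G_{i_0}/Z(G_{i_0})$ (which are exactly the nonabelian $L_{i_0,j}/Z(L_{i_0,j})$) are disjoint from those of $G_i/Z(G_i)$ for $i \ne i_0$, so by Jordan--H\"older every composition factor of $Q$ must be abelian, i.e.\ $Q$ is solvable.

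I would then split into two cases. If $H_{i_0}=G_{i_0}$, then $Q = G_{i_0}/N_{i_0}$ is a solvable quotient of the perfect group $G_{i_0}$; since every quotient of a perfect group is perfect and a perfect solvable group is trivial, this forces $N_{i_0}=G_{i_0}$, hence $G_{i_0} \le H$ and $H = G_{i_0} \oplus K'$, reducing the desired bound directly to the induction hypothesis applied to $K' \le G_{I'}$ (using $\pr_i(K')=H_i$ for $i \in I'$). If instead $H_{i_0}$ is a proper subgroup of $G_{i_0}$, then condition~(4) gives $[G_{i_0}:H_{i_0}] \ge |G_{i_0}|^c$, so $|Q| \le |H_{i_0}| \le |G_{i_0}|^{1-c}$ and consequently $|Q|^c \le |G_{i_0}|^{c(1-c)} \le [G_{i_0}:H_{i_0}]^{1-c}$. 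Combined with the identity $[G_I:H] = [G_{i_0}:H_{i_0}] \cdot [G_{I'}:K'] \cdot |Q|$ and the induction hypothesis $[G_{I'}:K']^c \le \prod_{i \in I'}[G_i:H_i]$, a short calculation gives
\[
[G_I:H]^c \le [G_{i_0}:H_{i_0}]^c \cdot [G_{I'}:K']^c \cdot |Q|^c \le [G_{i_0}:H_{i_0}] \prod_{i \in I'}[G_i:H_i] = \prod_{i \in I}[G_i:H_i].
\]

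The main conceptual obstacle is showing that the Goursat twist $Q$ is solvable (hence trivial when $H_{i_0}$ absorbs all of $G_{i_0}$, and otherwise small enough to be swallowed by the index bound from~(4)); this rests on careful bookkeeping of the nonabelian composition factors of direct products of quasisimple groups, using that the centers contribute only abelian factors. Once that structural point is in place, the rest is algebraic manipulation, and the perfectness hypothesis~(2) is what lets ``solvable'' be upgraded to ``trivial'' in Case~1.
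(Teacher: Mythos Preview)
Your proof is correct. Both your argument and the paper's rest on the same structural fact---that conditions~(2) and~(3) kill any Goursat twist between a factor with surjective projection and the remaining ones---but you organize the induction differently. The paper partitions $I$ into $I_1=\{i:\pr_i(H)=G_i\}$ and $I_2=I\setminus I_1$, proves in one shot that $\pr_{I_1}(H)=G_{I_1}$ (their Claim~2, an inductive Goursat argument over $I_1$), and then reads off $[G:H]\le |G_{I_2}|$ and $\prod_i[G_i:\pr_i(H)]\ge |G_{I_2}|^c$ directly from condition~(4), never needing to bound the size of a twist. You instead peel off a single coordinate $i_0$, produce the Goursat quotient $Q$ explicitly, and in the non-surjective case carry the estimate $|Q|^c\le[G_{i_0}:H_{i_0}]^{1-c}$ through the index identity $[G_I:H]=[G_{i_0}:H_{i_0}]\cdot[G_{I'}:K']\cdot|Q|$. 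Your route is slightly longer but makes the interplay between~(4) and the twist size more transparent; the paper's is cleaner because it avoids ever quantifying $|Q|$.
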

\begin{proof} We proceed by strong induction on $|G_I|$. Let
\[
I_1:=\{i\in I|\h \pr_i(H)=G_i\}\text{, and } 
I_2:=\{i\in I|\h \pr_i(H)\neq G_i\}.
\]
{\bf Claim 1.} {\em We can assume that $I_1\neq \varnothing$.}

{\em Proof of Claim 1.} If $I_1=\varnothing$, then
\[
\prod_{i\in I} [G_i:\pr_i(H)]\ge \prod_{i\in I} |G_i|^c\ge |G_I|^c\ge [G_I:H]^c;
\]
and claim follows. So without loss of generality we can and will assume that $I_1\neq \varnothing$. 

{\bf Claim 2.} {\em The restriction to $H$ of the projection map $\pr_{I_1}$ to $G_{I_1}:=\bigoplus_{i\in I_1} G_i$ is surjective.}

{\em Proof of Claim 2.} We proceed by induction on $|I_1|$. The base of induction is clear. Suppose $\pr_{I'}(H)=G_{I'}$ for some subset $I'$ of $I$ and $\pr_i(H)=G_i$ for some $i\in I\setminus I'$. Let $\overline{H}:=\pr_{I'\cup\{i\}}(H)$. Then $\pr_i(\overline{H})=G_i$ and $\pr_{I'}(\overline{H})=G_{I'}$. Let $\overline{H}(I'):=\overline{H}\cap G_{I'}$ and $\overline{H}(i):=\overline{H}\cap G_i$. Then projections induce isomorphisms $\overline{H}/\overline{H}(I')\rightarrow G_i$ and $\overline{H}/\overline{H}(i)\rightarrow G_{I'}$. Hence we get the following commuting diagram
\be\label{eq:graph-hom}
\begin{tikzcd}
&\overline{H}/(\overline{H}(i)\oplus \overline{H}(I'))  \arrow[rd,"\simeq"]\arrow[ld,"\simeq"] &	
\\
G_i/\overline{H}(i) \arrow[rr,dashed,"\simeq"]&&
G_{I'}/\overline{H}(I').
\end{tikzcd}
\ee
If $\overline{H}(i)$ is a proper subgroup of $G_i$, then $\overline{H}(i)Z(G_i)$ is also a proper subgroup of $G_i$; this is because $[\overline{H}(i)Z(G_i),\overline{H}(i)Z(G_i)]=[\overline{H}(i),\overline{H}(i)]$ and $G_i$ is perfect. Therefore by \eqref{eq:graph-hom} a simple factor of $G_i/Z(G_i)$ is isomorphic to a simple factor of $G_j/Z(G_j)$ for some $j\in I'$; this contradicts our assumption. Hence $\overline{H}(i)=G_i$ and $\overline{H}(I')=G_{I'}$, which implies that $\overline{H}=G_{I'\cup\{i\}}$; and claim follows.

{\bf Claim 3.} {\em $[G:H]\le |G_{I_2}|$ and $\prod_{i\in I} [G_i:\pr_i(H)]\ge |G_{I_2}|^c$.} 

{\em Proof of Claim 3.} Let $H(I_2):=H\cap \ker \pr_{I_1}$ where $\pr_{I_1}:G\rightarrow G_{I_1}$ is the projection to $G_{I_1}$. Then by Claim 2, we have $|G_{I_1}|=[H:H(I_2)]$; and so 
\[
[G:H]=\frac{|G_{I_1}||G_{I_2}|}{|G_{I_1}||H(I_2)|}=\frac{|G_{I_2}|}{|H(I_2)|}\le |G_{I_2}|.
\]
We also have
\[
\prod_{i\in I} [G_i:\pr_i(H)]=\prod_{i\in I_2} [G_i:\pr_i(H)]\ge \prod_{i\in I_2} |G_i|^c=|G_{I_2}|^c,
\]
where we have the last inequality because of our assumption and $\pr_i(H)$ being a proper subgroup of $G_i$ for any $i\in I_2$.

Claim 3 implies that 
\[
\prod_{i\in I}[G_i:\pr_i(H)]\ge |G_{I_2}|^c \ge [G:H]^c;
\]
and claim follows.
\end{proof}

\begin{prop}\label{propsmalllifts}  Under the {\bf Standing assumptions} of this section, there exists a constant $\delta$ depending on $\Gamma$ such that the following holds:  Let $H\subseteq \pi_f(\Gamma)$ be a purely structural subgroup; that means $\pi_\ell(H)$ is a structural subgroup of $\pi_\ell(\Gamma)$ for each irreducible factor $\ell$ of $f$.  Then $\lcal_\delta(H)$ lies in a proper algebraic subgroup $\bbh$ of $\bbg$.
\end{prop}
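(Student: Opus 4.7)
The plan is to adapt the characteristic-zero argument of \cite[Proposition 16]{SGV} to the function-field setting, using Theorem~\ref{thm:FinalRefinementOfLP} in place of Nori's structure theorem. The strategy has three parts: (i) extract low-complexity defining polynomials of the algebraic subgroups witnessing structural type of each local factor $\pi_\ell(H)$; (ii) glue these local data into global polynomials over $\bbf_{q_0}[t]$ via the Chinese Remainder Theorem; and (iii) run a size-versus-degree argument to upgrade ``vanishing modulo $f$'' to ``vanishing exactly,'' producing a proper algebraic subgroup of $\bbg$ containing $\lcal_\delta(H)$.

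For (i), applying Theorem~\ref{thm:FinalRefinementOfLP} to each (necessarily proper) structural $\pi_\ell(H)\subseteq \gcal_\ell(K(\ell))$ yields a proper algebraic subgroup $\bbh_\ell\subsetneq \gcal_\ell$ of complexity $O_\Gamma(1)$, cut out by polynomials $p_{\ell,1},\dots,p_{\ell,M}\in K(\ell)[x_{11},\dots,x_{n_0n_0}]$ of degree $\le D$ with $M,D = O_\Gamma(1)$. Clause (b) of that theorem also supplies, for each $\ell$, some $\gamma_\ell\in\Omega$ and some $i$ with $p_{\ell,i}(\pi_\ell(\gamma_\ell))\neq 0$. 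For (ii), I lift each coefficient of each $p_{\ell,i}$ to its canonical representative in $\bbf_{q_0}[t]$ of $t$-degree $<\deg\ell$, and apply CRT across the distinct irreducible factors of $f$ to obtain polynomials $\tilde P_i \in \bbf_{q_0}[t][x_{11},\dots,x_{n_0n_0}]$ of matrix-degree $\le D$, whose coefficients have $t$-degree $<\deg f$, with $\tilde P_i \equiv p_{\ell,i}\pmod\ell$ for every $\ell\mid f$. At this stage Lemma~\ref{lemsubgroupproductform} enters: the hypotheses are met because the distinct values of $\deg\ell$ force the simple factors of the $\pi_\ell(\Gamma)$ to be pairwise non-isomorphic, each $\gcal_\ell(K(\ell))$ is perfect, and the bound \eqref{eq:LowerboundIndex} applies. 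This gives $\prod_{\ell\mid f}[\pi_\ell(\Gamma):\pi_\ell(H)] \gg_\Gamma [\pi_f(\Gamma):H]^{c}$ for a positive $c=c(\Gamma)$, hence a linear control of $\log_{q_0}[\pi_f(\Gamma):H]$ by $\deg f$.

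For (iii), any $h\in \lcal_\delta(H)$ satisfies $\tilde P_i(h) \equiv 0 \pmod f$, since $\pi_\ell(h) \in \pi_\ell(H) \subseteq \bbh_\ell(K(\ell))$ for each $\ell\mid f$. After clearing $r_0$-denominators using the control on $\|h\|_{D(r_0)\cup\{v_\infty\}}$, the value $\tilde P_i(h)$ becomes an element of $\bbf_{q_0}[t]$ divisible by $f$, whose $t$-degree is bounded (up to constants depending on $\Gamma$) by $\deg f + D\log_{q_0}\|h\|$. Inserting $\|h\| < [\pi_f(\Gamma):H]^\delta \le q_0^{O_\Gamma(\delta\cdot \deg f)}$ shows this $t$-degree is at most $\deg f \cdot (1+O_\Gamma(\delta))$, and choosing $\delta \ll_\Gamma 1$ sufficiently small keeps it strictly below $\deg f$, so divisibility by $f$ forces $\tilde P_i(h)=0$ identically in $\bbf_{q_0}[t]$.

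The common zero set $\bbh := V(\tilde P_1,\dots,\tilde P_M)\cap \bbg$ is then a Zariski-closed subvariety of $\bbg$ containing $\lcal_\delta(H)$; it is proper because, for any fixed $\ell\mid f$, the witness $\gamma_\ell$ from (i) lies outside $\bbh_\ell$, and its fibre at each $\ell\mid f$ recovers the algebraic subgroup $\bbh_\ell$. To finish, I would upgrade $\bbh$ from a subvariety to an algebraic subgroup by encoding the group-scheme closure axioms (closure under multiplication and inversion, containment of the identity) as further polynomial identities of $O_\Gamma(1)$-degree in the $\tilde P_i$; these identities are satisfied modulo every $\ell\mid f$ since each $\bbh_\ell$ is an honest subgroup, hence satisfied modulo $f$, and the same size-versus-degree trick promotes them to exact identities in $\bbf_{q_0}(t)$. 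I expect this last step to be the main obstacle: both uniformly formulating the closure axioms as bounded-degree identities, and verifying that the multi-polynomial degree bookkeeping (in particular, the interaction with the $r_0$-denominators that appear in the definition of $\|\cdot\|$) remains strong enough to force exact identities. A subsidiary technical point is ensuring the CRT lifts from coefficients in $K(\ell)$ do not enlarge the effective degrees beyond what the $(1+O_\Gamma(\delta))\deg f$ bound can absorb.
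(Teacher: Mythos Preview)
Your size-versus-degree step (iii) fails, and this is not a bookkeeping issue but a structural one. The CRT lift forces the coefficients of each $\tilde P_i$ to have $t$-degree up to $\deg f - 1$; this contribution is present already in the constant term and cannot be shrunk. Evaluating at $h\in\lcal_\delta(H)$ and clearing $r_0$-denominators therefore produces a numerator of $t$-degree at least $\deg f - 1$, plus an additional $O_\Gamma(\delta\cdot\deg f)$ from the monomials in $h$. The total is $(\deg f)(1+O_\Gamma(\delta))-1$, and no choice of a fixed $\delta>0$ makes this strictly less than $\deg f$ once $\deg f$ is large. Divisibility by $f$ thus never forces $\tilde P_i(h)=0$; it only says $\tilde P_i(h)=f\cdot(\text{something of degree }O_\Gamma(\delta\deg f))$, which is useless. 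Your proposed upgrade to a subgroup via ``group-axiom identities mod $f$'' would hit the same wall, since those identities again involve the CRT-lifted coefficients of $t$-degree $\approx\deg f$.

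The paper's proof avoids this by never lifting the local defining polynomials at all. Instead it works entirely with the small objects: the vectors $\Psi(h_i)$ for $h_i\in\lcal_\delta(H)$, where $\Psi$ is the degree-$d_0$ monomial map. These have entries of $t$-height $O_\Gamma(\delta\deg f)$ with \emph{no} $\deg f$ contribution. If $\Psi(\lcal_\delta(H))$ spanned the full $d$-dimensional span of $\Psi(\bbg)$, a nonzero $d\times d$ subdeterminant $s(t)$ would have numerator of degree $O_\Gamma(\delta\deg f)<\deg f$, so $f\nmid s$ and there exists a single $\ell_0\mid f$ with $\pi_{\ell_0}(s)\neq 0$. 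Over $K(\ell_0)$ the reduced vectors then span the full $d$-dimensional space containing $\Psi(\gcal_{\ell_0}(K(\ell_0)))$, so any linear functional vanishing on them (in particular the one given by $f_{\ell_0}$ from Theorem~\ref{thm:FinalRefinementOfLP}) must vanish on $\Psi(\pi_{\ell_0}(\gamma_{\ell_0}))$, contradicting $f_{\ell_0}(\pi_{\ell_0}(\gamma_{\ell_0}))\neq 0$. This yields a proper algebraic \emph{subset} of bounded complexity containing $\lcal_\delta(H)$; the passage to a proper \emph{subgroup} is then done not by encoding group axioms but by the escape-from-subvarieties argument of \cite[Proposition~3.2]{EMO}: since $\prod_N\lcal_{\delta/N}(H)\subseteq\lcal_\delta(H)$ stays in the subvariety for $N=O_\Gamma(1)$, the group generated by $\lcal_{\delta/N}(H)$ cannot be Zariski-dense. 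The moral is that the only objects whose $t$-degree you control uniformly in $\delta$ are the lifts $h$ themselves, so the argument must be organized around polynomials in the $h_i$ rather than around lifts of the local equations.
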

\begin{proof}
 By Lemma \ref{lemsubgroupproductform} and \eqref{eq:LowerboundIndex}, there exists a positive constant $c_0$ which depends only on $\bbg$ such that \[[\pi_f(\Gamma):\bigoplus_{\ell\in D(f)}\pi_\ell(H)]\ge [\pi_f(\Gamma):H]^{c_0}.\]  
 
 If $\lcal_\delta(\bigoplus_{\ell\in D(f)}\pi_\ell(H))$ lies in a proper algebraic subgroup of $\bbg$, then so does $\lcal_{\delta/c_0}(H)$.  Therefore we can and will replace $H$ with $\bigoplus_{\ell\in D(f)}\pi_\ell(H)$.  Similarly, after replacing $f$ with the product of those irreducible factors satisfying $\pi_\ell(H)\neq\pi_\ell(\Gamma)$, we may assume $\pi_\ell(H)$ is a proper subgroup for each irreducible factor $\ell$ of $f$. By Theorem~\ref{thm:FinalRefinementOfLP}, there exists a constant $d_0:=d_0(\Gamma)$ such that for any $\ell\in D(f)$, there is a polynomial of degree at most $d_0$ and $\gamma_{\ell}\in \Omega$ such that $f_\ell(\pi_{\ell}(H))=0$ and $f_{\ell}(\pi_{\ell}(\gamma_{\ell}))=1$.  
 
First we show that $\lcal_{\delta}(H)$ lies in a {\em low complexity} proper algebraic {\em subset} of $\bbg$. To this end, we consider the degree $d_0$ monomial map

\[\Psi:\bbgl_{n_0}\rightarrow \bba_{d_1},\]
where \[d_1=\begin{small}\left(\begin{array}{c}
n_0^2+d_0 \\ 
d_0
\end{array} \right)\end{small}.\]

Let $d$ be the dimension of the linear span of $\Psi(\bbg(\bbf_{q_0}(t)))$. To show $\lcal_{\delta}(H)$ lies in a proper algebraic subgroup of $\bbg$, it suffices to prove that $\Psi(\lcal_\delta(H))$ spans a subspace of dimension less than $d$ if $\delta$ is sufficiently small. 

Suppose to the contrary that the linear span of $\Psi(\lcal_\delta(H))$ is $d$ dimensional.  Hence there is  a set of $d$ linearly independent elements $h_1,h_2,\dots, h_d$ of $\Psi(\lcal_\delta(H))$. 

Looking at the explicit formula for the number of elements of finite simple groups of Lie type~\cite[\textsection 11.1, \textsection 14.4]{Car}, we have $|\gcal_\ell(K(\ell))|\le |K(\ell)|^{\dim \bbg}=q_0^{\dim \bbg\cdot \deg \ell}$. Hence
\[
\pi_f(\Gamma)\le q_0^{\dim \bbg \cdot \deg f}.
\]
Thus for $h\in \lcal_{\delta}(H)$ we have
\[
\|h\|<[\pi_f(\Gamma):H]^{\delta}\le |\pi_f(\Gamma)|^{\delta}\le q_0^{\delta \dim \bbg\cdot \deg f}.
\]
This implies that the entries of the vectors $h_1,\ldots,h_d\in \bbf_{q_0}(t)^{d_1}$ are of the form $\frac{a}{\prod_{\ell\in D(r_0)} \ell^{e_{\ell}}}$ with $a\in \bbf_{q_0}[t]$, $\gcd(a, \prod_{\ell\in D(r_0)}\ell^{e_{\ell}})=1$, 
\begin{equation}\label{eqnsubdet1}
\deg a-\sum_{\ell\in D(r_0)}e_\ell \deg \ell< d_0\delta \dim\bbg \cdot\deg f,
\end{equation} 
and for each $\ell\in D(r_0)$
\begin{equation}\label{eqnsubdet2}
{e_\ell}\deg \ell< d_0\delta\dim\bbg\cdot \deg f.
\end{equation}
By the contrary assumption, the determinant $s(t)\in \bbf_{q_0}(t)$ of a $d$-by-$d$ submatrix of the matrix $X$ that has the vectors $h_1,\ldots,h_d$ in its rows is non-zero. By \eqref{eqnsubdet1} and \eqref{eqnsubdet2}, we have that $s(t)=\frac{a'}{\prod_{\ell\in D(r_0)} \ell^{e'_{\ell}}}$ for some $a'\in \bbf_{q_0}[t]$ and $e_{\ell}\in \bbz^{\ge 0}$ such that
\[
\deg a'\le \delta ((|D(r_0)|+1)dd_0\dim \bbg) \deg f\le \delta ((|D(r_0)|+1)|D(r_0)| dd_0\dim \bbg) \max_{\ell\in D(f)}\deg \ell.
\]
Hence for $\delta< ((|D(r_0)|+1)|D(r_0)| dd_0\dim \bbg)^{-1}$, there is an irreducible factor $\ell_0$ of $f$ such that $\deg a'<\deg \ell_0$; in particular, $\pi_{\ell_0}(s(t))\neq 0$. This implies that $\pi_{\ell_0}(h_1),\ldots,\pi_{\ell_0}(h_d)$ are $K(\ell_0)$-linearly independent in $K(\ell_0)^{d_1}$; and so the right kernel of $\pi_{\ell_0}(X)$ is zero. By the definition of $\lcal_{\delta}(H)$, we have that $\pi_{\ell_0}(h_i)\in \Psi(\pi_{\ell_0}(H))$. Since $f_{\ell_0}(\pi_{\ell_0}(H))=0$ and $\deg f\le d_0$, we have that the coefficients of $f_{\ell_0}$ form a column vector in the right kernel of $\pi_{\ell_0}(X)$, which is a contradiction.  

Therefore there is a proper algebraic subset $\bbx$ of $\bbg$ whose complexity is $O_{\Gamma}(1)$, and $\lcal_\delta(H)$ is a subset of $\bbx(\bbf_{q_0}(t))$. 
  By \cite[Proposition 3.2]{EMO} if $A\subseteq \bbg(\bbf_{q_0}(t))$ is a generating set of a Zariski-dense subgroup of $\bbg$, then there exists a positive integer $N$ depending on the complexity of $\bbx$ such that $\prod_N A\not\subseteq \bbx(\bbf_{q_0}(t))$.  It should be pointed out that the statement of \cite[Proposition 3.2]{EMO} is written for algebraic varieties and groups over $\bbc$.  Its proof, however, is based on a generalized B\'{e}zout theorem that has a positive characteristic counter part (see \cite[Pg. 519]{Sch}, \cite[Ex. 12.3.1]{Fu}, and \cite[III. Thm 2.2]{Da}). Altogether one can see that the proof of \cite[Proposition 3.2]{EMO} is valid over any algebraically closed field. Since 
  \[
  \textstyle \prod_N\lcal_{\delta/N}(H)\subseteq \lcal_\delta(H)\subseteq \bbx(\bbf_{q_0}(t)),
  \]
  we deduce that the group generated by $\lcal_{\delta/N}(H)$ is not Zariski-dense in $\bbg$; that means that $\lcal_{\delta/N}(H)$ lies in a proper algebraic subgroup of $\bbg$; and claim follows as $N=O_{\Gamma}(1)$.  
 \end{proof}

\subsection{Invariant theoretic description of proper positive dimensional subgroups of a simple group: the positive characteristic case}
In this section, we provide an {\em invariant theoretic} (or one can say a {\em geometric}) description of proper positive dimensional algebraic subgroups of an absolutely almost simple group over a field of {\em positive characteristic}. This is the positive characteristic counter part of \cite[Proposition 17, part (1)]{SGV}; and later it plays an important role in the proof of Proposition~\ref{propmainescape}. 

In this section we slightly deviate from our {\bf Standing assumptions}, and let $\bbg$ be a simply connected absolutely almost simple algebraic group defined over a positive characteristic {\em algebraically closed field $k$}. 

\begin{prop}\label{propreps}  Let $\bbg$ be an absolutely almost simple group defined over an algebraically closed field $k$ of positive characteristic.  Then there are finitely many group homomorphisms $\{\rho_i:\bbg\rightarrow (\bbgl)_{\bbv_i}\}_ {i=1}^d$ and $\{\rho'_j:\bbg\rightarrow {\rm Aff}(\bbw_j)\}_{j=1}^{d'}$ such that 
\begin{enumerate}
\item for any $i$, $\rho_i$ is irreducible and non-trivial.
\item for any $j$, $\rho'_j(g)(v):=\rho'_{{\rm lin},j}(g)(v)+w_j(g)$ where $\rho'_{{\rm lin},j}:\bbg\rightarrow \bbgl(\bbw_j)$ is irreducible and non-trivial, and $w_j(g)\in \bbw_j(k)$; and no point of $\bbw_j(k)$ is fixed by $\bbg(k)$ under the affine action given by $\rho_j'$.
\item for every positive dimensional closed subgroup $\bbh$ of $\bbg$, either there is an index $i$ and a non-zero vector $v\in \bbv_i(k)$ such that $\rho_i(\bbh(k))[v]=[v]$ where $[v]$ is the line in $\bbv_i(k)$ spanned by $v$, or there is an index $j$ and a point $w$ in $\bbw_j(k)$ such that $\rho_j'(\bbh(k))(w)=w$.
\end{enumerate}
\end{prop}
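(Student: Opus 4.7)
The plan is to adapt the characteristic-zero construction of Salehi Golsefidy--Varj\'u~\cite[Prop.~17]{SGV}, which uses exterior powers of the adjoint representation: for each integer $d$ with $1\le d\le\dim\mathfrak{g}-1$, consider $\wedge^d\Ad:\bbg\to\GL(\wedge^d\mathfrak{g})$. Given any closed subgroup $\bbh\subseteq\bbg$ of dimension $d$, the vector $v_\mathfrak{h}=e_1\wedge\cdots\wedge e_d$ (for any basis of $\Lie(\bbh^0)$) spans a line stable under $\bbh(k)$, since $\bbh$ normalizes $\Lie(\bbh^0)$. In characteristic zero one uses complete reducibility of $\wedge^d\Ad$ to project $v_\mathfrak{h}$ onto a non-trivial irreducible summand and obtain the required line; in positive characteristic $\wedge^d\Ad$ is only built from irreducible subquotients via possibly non-split extensions, which is precisely why affine representations must also appear in the conclusion.

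Fix for each $d$ a composition series
\[
0=U^{(d)}_0\subsetneq U^{(d)}_1\subsetneq\cdots\subsetneq U^{(d)}_{n_d}=\wedge^d\mathfrak{g}
\]
with simple $\bbg$-quotients $W^{(d)}_i=U^{(d)}_i/U^{(d)}_{i-1}$. The list $\{\rho_i\}$ consists of the (finitely many) $\bbg$-actions on the non-trivial $W^{(d)}_i$. To build $\{\rho'_j\}$: for every pair $(d,i)$ with $W^{(d)}_i$ the trivial one-dimensional representation and every maximal $\bbg$-submodule $B\subseteq U^{(d)}_{i-1}$ whose quotient $M:=U^{(d)}_{i-1}/B$ is non-trivial, consider the short exact sequence
\[
0\to M\to U^{(d)}_i/B\to W^{(d)}_i\to 0 .
\]
Whenever it is non-split, a set-theoretic section yields a cocycle $c:\bbg\to M$, and I include the affine action $g\cdot m=\rho(g)m+c(g)$ on $M$. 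Its linear part is irreducible and non-trivial; non-splitting prevents any global $\bbg(k)$-fixed point; and the total list is finite because $\wedge^d\mathfrak{g}$ has finitely many composition factors and $H^1(\bbg,M)$ is finite-dimensional for each simple $M$.

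To verify condition~(3), let $\bbh$ be a proper positive-dimensional closed subgroup, $d=\dim\bbh$, and let $i$ be the smallest index with $v_\mathfrak{h}\in U^{(d)}_i$. Its image $\bar v_\mathfrak{h}\in W^{(d)}_i$ is non-zero and spans a $\bbh(k)$-stable line. If $W^{(d)}_i$ is non-trivial, the corresponding $\rho_i$ gives the linear alternative. Otherwise $W^{(d)}_i$ is trivial one-dimensional; choose any maximal submodule $B\subseteq U^{(d)}_{i-1}$ as above and write the image of $v_\mathfrak{h}$ in $U^{(d)}_i/B$ as $u+\alpha v$ with $\alpha\neq 0$, $u\in M$, and $v$ a chosen lift of a generator of $W^{(d)}_i$. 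When the extension splits one takes $v$ to be $\bbg$-fixed, and $\rho(h)(u+\alpha v)=u+\alpha v$ for $h\in\bbh(k)$ forces $u$ to be $\bbh(k)$-fixed; if $u\neq 0$ this yields a $\bbh(k)$-fixed line in $M$ via the linear $\rho_i$ attached to $M$, while the case $u=0$ is handled by descending into $B$ and iterating. When the extension does not split, the same computation gives $c(h)=(1-\rho(h))(u/\alpha)$, so $u/\alpha\in M$ is a $\bbh(k)$-fixed point under the affine action $\rho'_j$, while $\bbg(k)$ has no fixed point by non-splitting.

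The main obstacle is running this case-analysis and recursion robustly: one must show that for every $\bbh$ the descent through the composition series locates either a non-trivial simple subquotient containing a $\bbh(k)$-fixed line or a non-split extension drawn from the pre-defined finite list, without needing $\bbh$-dependent choices of cohomology classes. A secondary subtlety is the degenerate case $v_\mathfrak{h}\in(\wedge^d\mathfrak{g})^{\bbg}$, which forces $\Lie(\bbh^0)$ to be a $\bbg$-ideal of $\mathfrak{g}$; for $\bbg$ absolutely almost simple and $\bbh$ a proper positive-dimensional reduced closed subgroup this is impossible, as the only candidate ideal in small characteristics is the one-dimensional center, whose corresponding reduced connected subgroup sits inside the finite center of $\bbg$ and hence is trivial.
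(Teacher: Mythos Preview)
Your overall strategy---track $v_{\hfr}=\wedge^{\dim\hfr}\hfr$ through a fixed composition series of $\wedge^{\dim\hfr}\gfr$ and extract either a non-trivial simple subquotient carrying an $H$-stable line or an affine action with an $H$-fixed point---is exactly the paper's. The difference lies in how you build the affine family $\{\rho'_j\}$, and this is where your argument has genuine gaps. The paper does \emph{not} quotient by arbitrary maximal submodules $B\subseteq U^{(d)}_{i-1}$. When $V_{m'}/V_{m'-1}$ is trivial it simply takes $V:=V_{m'-1}/V_{m'-2}$ and $W:=V_{m'}/V_{m'-2}$, fixes once and for all some $w\in W\setminus V$, and sets $c_w(g):=\rho_W(g)(w)-w\in V$. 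This cocycle depends only on the composition series and on $w$, \emph{not on $\bbh$}, so there is at most one affine action per trivial composition factor and finiteness is immediate. Writing the image of $x_H$ in $W$ as $c_0 w+v_0$ gives $c_w(h)=c_0^{-1}(v_0-\rho_V(h)v_0)$ for $h\in H$, so $c_0^{-1}v_0$ is an $H$-fixed point of the affine action; and if $c_w$ is a $G$-coboundary with witness $v_1$, then $c_0^{-1}v_0-v_1$ is an $H$-fixed vector in the linear module $V$, non-zero because the image of $x_H$ in $W$ is not $G$-fixed. No recursion is needed.

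Your maximal-submodule construction, by contrast, introduces infinitely many candidates $B$ (whenever the head of $U^{(d)}_{i-1}$ contains a repeated simple summand), and ``$H^1(\bbg,M)$ is finite-dimensional'' does not cut this down to a finite list since $k$ is infinite; nor is it guaranteed that any $B$ with non-trivial quotient $M$ exists. The ``$u=0$, descend into $B$'' step---which you rightly flag as the main obstacle---does not restart cleanly: for any lift $\tilde v\in U^{(d)}_i$ of the $G$-fixed $v$, the difference $v_{\hfr}-\alpha\tilde v$ lies in $B$ but is only $H$-fixed \emph{modulo} $B$, so the inductive hypothesis is lost. Finally, your degenerate-case argument is incorrect: in $\SL_p$ over a field of characteristic $p$ the one-dimensional torus $t\mapsto\diag(t,\ldots,t,t^{1-p})$ has Lie algebra $k\cdot I=\zfr$, so such $\bbh^0$ do \emph{not} sit inside the finite center of $\bbg$. (The paper's own opening line ``$(\hfr+\zfr)/\zfr$ is not $G$-invariant'' also tacitly assumes $\hfr\not\subseteq\zfr$; these one-dimensional $\bbh^0$ must be treated separately.)
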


Let us remark that in the characteristic zero case any affine representation $\bbv$ of a semisimple group has a fixed point; here is a quick argument: suppose $g\cdot v:=\rho(g)(v)+c(g)$. We identify the affine space of $\bbv(k)$ with the hyperplane $\{(v,1)| v\in\bbv(k)\}$ of $W:=\bbv(k)\oplus k$; and so 
$
\wh\rho(g):=
\begin{pmatrix} 
\rho(g) & c(g) \\
0 & 1	 
\end{pmatrix}
$ 
is a group homomorphism and $(g\cdot v,1)=\wh{\rho}(g)(v,1)$. In the characteristic zero case any module is completely reducible; and so there is a line $[v]$ which is invariant under $\bbg(k)$ and $W=\bbv(k)\oplus [v]$. As $\bbg$ is semisimple, it does not have a non-trivial character. Hence any point on $[v]$ is a fixed point of $\bbg(k)$. As $[v]\not\subseteq \bbv(k)$, after rescaling, if needed, we can and will assume that $v=(v_0,1)$ for some $v_0\in \bbv(k)$. Therefore $\wh\rho(g)(v)=v$ implies that $g\cdot v_0=v_0$. 

In the positive characteristic case, however, there are affine transformations of $\bbg(k)$ that have no fixed points: there are irreducible representations $\bbv$ of $\bbg$ such that $H^1(\bbg(k),\bbv(k))\neq 0$. Hence there is a non-trivial cocycle $c:\bbg(k)\rightarrow \bbv(k)$. Since $c$ is a cocycle, $g\cdot v:=\rho(g)(v)+c(g)$ is a group action. If $g\cdot v_0=v_0$ for some $v_0$, then $c(g)=v_0-\rho(g)(v_0)$ which means $c$ is a trivial cocycle; and this contradicts our assumption. 

This said it is not clear to the authors if the mentioned affine representations are needed in Proposition~\ref{propreps} or not. 
 
\begin{question}\label{ques:subgroups-affine}
Suppose $\bbg$ is a connected, absolutely almost simple group and $\bbh$ is a positive dimensional proper subgroup	of $\bbg$. Is there a non-trivial irreducible representation $\rho:\bbg\rightarrow \bbgl(\bbv)$ of $\bbg$ and a non-zero vector $v\in \bbv(k)\setminus \{0\}$ such that $\rho(\bbh(k))([v])=[v]$?
\end{question}

As we will see in the proof of Proposition ~\ref{propreps}, the mentioned affine representations arise as submodules of wedge powers of the adjoint representation of $\bbg(k)$. When the characteristic of the field $k$ is large compared to the dimension of $\bbg$, all these representations are completely reducible; and so by a similar argument as in the characteristic zero case, one can see that such affine representations do not occur. Hence one gets a positive affirmative answer to Question~\ref{ques:subgroups-affine}.

\begin{proof}[Proof Proposition~\ref{propreps}] Since $\bbh$ is a proper positive dimensional subgroup, $\hfr:=\Lie(\bbh)(k)$ is a non-trivial proper subspace of $\gfr:=\Lie(\bbg)(k)$. Since $\bbg$ is an absolutely almost simple group, $\gfr/\zfr$ is a simple $G:=\bbg(k)$-module where $\zfr:=Z(\gfr)$ is the center of $\gfr$ and $\gfr$ is a perfect Lie algebra; that means $\gfr=[\gfr,\gfr]$. Therefore $(\hfr+\zfr)/\zfr$ is a proper subspace of $\gfr/\zfr$ and it is not $G$-invariant. Thus $\hfr$ is not invariant under $G$. From here we deduce that $l_H:=\wedge^{\dim_k \hfr}\hfr$ is not invariant under $G$, where $G$ acts on $\wedge^{\dim_k \hfr} \gfr$ via the representation $\wedge^{\dim_k \hfr} \Ad$. Suppose 
\[
0:=V_0\subset V_1 \subset \cdots \subset V_m:=\wedge^{\dim_k \hfr}\gfr
\]
is a composition factor of $\wedge^{\dim_k \hfr}\gfr$. Let $m'$ be the smallest index such that $l_H\subseteq V_{m'}$ as a $G$-module. Hence $l_H\not\subseteq V_{m'-1}$, which implies $l_H\oplus V_{m'-1}\subseteq V_{m'}$. 

{\bf Step 1.} (Composition factor is non-trivial) If $\dim_k V_{m'}/V_{m'-1}>1$, then $V_{m'}/V_{m'-1}$ is a non-trivial simple $G$-module that has a line which is $H$-invariant; here $H:=\bbh(k)$. 

{\bf Step 2.} (Triviality of the composition factor gives us an affine action whose linear part is irreducible) If $\dim_k V_{m'}/V_{m'-1}=1$, then $l_H\oplus V_{m'-1}=V_{m'}$. Let $V:=V_{m'-1}/V_{m'-2}$ and $W:=V_{m'}/V_{m'-2}$; and so $W/V$ is a one dimensional $G$-module. Since $\bbg$ has no non-trivial character, $G$ acts trivially on $W/V$. Suppose $w\in W\setminus V$; then for any $g\in G$, $c_w(g):=\rho_W(g)(w)-w\in V$. For $v\in V$ and $g\in G$, we let $g\cdot v:=\rho_V(g)(v)+c_w(g)$; then 
\begin{align*}
	g_1\cdot(g_2\cdot v)= & \rho_V(g_1)((g_2\cdot v))+c_w(g_1)\\
	= & \rho_V(g_1)(\rho_V(g_2)(v)+c_w(g_2))+c_w(g_1) \\
	= & \rho_V(g_1g_2)(v)+\rho_W(g_1)(\rho_W(g_2)(w)-w)+(\rho_W(g_1)(w)-w)\\
	= & \rho_V(g_1g_2)(v)+\rho_W(g_1g_2)(w)-\rho_W(g_1)(w)+\rho_W(g_1)(w)-w \\
	= & \rho_V(g_1g_2)(v)+(\rho_W(g_1g_2)(w)-w)\\
	= & \rho_V(g_1g_2)(v)+c(g_1g_2)=(g_1g_2)\cdot v.
\end{align*}
So $g\cdot v$ defines an affine action of $G$ on $V$. Suppose $x_H\in l_H\setminus\{0\}$; then $x_H=c_0 w+v_0$ for some $c_0\in k^{\times}$ and $v_0\in V$. For any $h\in H$, we have $\rho_W(h)(x_H)=x_H$, which implies that $c_0 (\rho_W(h)(w)-w)=v_0-\rho_V(h)(v_0)$. Therefore for any $h\in H$, 
\be\label{eq:coboundary}
c_w(h)=c_0^{-1}(v_0-\rho_V(h)(v_0)).
\ee 
Since $x_H$ is not fixed by $G$, there is $g_0\in G$ such that $\rho_W(g_0)(x_H)\neq x_H$, which implies 
\be\label{eq:not-equal}
c_w(g_0)\neq c_0^{-1}(v_0-\rho_V(g_0)(v_0)).
\ee
{\bf Step 3.} (Affine action has a fixed point) If the above affine action has a fixed point $v_1\in V$, then for any $g\in G$,
\be\label{eq:fixedpoint}
v_1=\rho_V(g)(v_1)+c_w(g). 
\ee
By \eqref{eq:coboundary} and \eqref{eq:fixedpoint}, for any $h\in H$, we have $v_1-\rho_V(h)(v_1)=c_0^{-1}(v_0-\rho_V(h)(v_0)),$ which implies
\be\label{eq:existence-of-fixed-point}
\rho_V(h)(c_0^{-1}v_0-v_1)=c_0^{-1}v_0-v_1.
\ee
By \eqref{eq:not-equal} and \eqref{eq:fixedpoint}, we have $\rho_V(g_0)(c_0^{-1}v_0-v_1)\neq (c_0^{-1}v_0-v_1)$. Therefore $\rho_V$ is a non-trivial irreducible representation of $G$ that has a non-zero vector fixed by $H$. 

{\bf Step 4.} (Affine action does not have a fixed point) Now suppose that the above affine action does not have a $G$-fixed point; then by \eqref{eq:coboundary} for any $h\in H$, 
\[
h\cdot (c_0^{-1}v_0)=\rho_V(c_0)^{-1}v_0)+c_w(h)=\rho_V(h)((c_0)^{-1}v_0))+c_0^{-1}(v_0-\rho_V(h)(v_0))=c_0^{-1}v_0,
\]
which means $H$ has a fixed point; and so claim follows.
\end{proof}

\subsection{Invariant theoretic description of small lifts of purely structural subgroups} In this section based on Proposition~\ref{propsmalllifts}  and Proposition~\ref{propreps}, we give an invariant theoretic understanding of small lifts of purely structural subgroups of $\pi_f(\Gamma)$ under the {\bf Standing assumption} (see the 2nd paragraph of Section \ref{sectionsmalllifts}).

\begin{prop}\label{prop:SmallLiftsInvariantTheoretic}
	Let $\Gamma, \bbg, f$ be as in the {\bf Standing assumption}. Then
	\begin{enumerate}
	\item there are local fields $\kcal_i$ and $\kcal_j'$ that are field extensions of $\bbf_{q_0}(t)$.
	\item there are homomorphisms $\rho_i:\bbg\otimes_{\bbf_{q_0}(t)} \kcal_i\rightarrow \mathbb{GL}(\bbv_i)$ and $\rho_j':\bbg\otimes_{\bbf_{q_0}(t)} \kcal_j'\rightarrow {\rm Aff}(\bbw_j)$ such that 
	\begin{enumerate}
	 \item $\rho_i$'s are non-trivial irreducible representations over a geometric fiber; that means after a base change to an algebraic closure of $\kcal_i$, $\rho_i$ is non-trivial and irreducible,
	 \item the linear parts $\rho_{{\rm lin},j}'$'s of the affine representations $\rho_j'$ are non-trivial irreducible representations over a geometric fiber,
	 \item $\bbg(\kcal_j')$ does not fix any point of $\bbw_j(\kcal_j')$.
	 \item $\rho_i(\Gamma)\subseteq \GL(\bbv_i(\kcal_i))$ and $\rho_{{\rm lin},j}'(\Gamma)\subseteq \GL(\bbw_j(\kcal_j'))$ are unbounded subgroups.
	\end{enumerate}
	\item there is $\delta>0$ depending on $\Gamma$ such that for any purely structural subgroup $H$ of $\pi_f(\Gamma)$ one of the following conditions hold:
	\begin{enumerate}
	\item the group generated by $\lcal_{\delta}(H)$ is a finite subgroup of $\Gamma$.
	\item for some $i$, there is a non-zero $v\in \bbv(\kcal_i)$ such that for any $h\in \lcal_{\delta}(H)$, $\rho_i(h)([v])=[v]$.
	\item for some $j$, there is $w\in \bbw_j(\kcal_j')$ such that for any $h\in \lcal_{\delta}(H)$, $\rho_j'(h)(w)=w$.
	\end{enumerate}
	\end{enumerate}
\end{prop}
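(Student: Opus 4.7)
The plan is to combine Proposition~\ref{propsmalllifts} with the invariant-theoretic description in Proposition~\ref{propreps}, and then descend both the representations and the fixed loci to appropriate local fields. First I would apply Proposition~\ref{propsmalllifts} to fix $\delta = \delta(\Gamma) > 0$ such that $\lcal_\delta(H)$ is contained in $\bbh(\bbf_{q_0}(t))$ for some proper algebraic subgroup $\bbh$ of $\bbg$. Let $\bbh_0$ be the Zariski closure of $\langle \lcal_\delta(H)\rangle$ in $\bbg$ and $\bbh_0^{\circ}$ its identity component. If $\dim \bbh_0^{\circ} = 0$, then $\langle \lcal_\delta(H)\rangle$ is finite, giving case (3a). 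Otherwise $\bbh_0^{\circ}$ is a positive-dimensional proper algebraic subgroup of $\bbg$.

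I would then apply Proposition~\ref{propreps} over $\overline{\bbf_{q_0}(t)}$ to produce a finite list of representations $\{\rho_i\}$ and affine representations $\{\rho_j'\}$ as in its statement. Because these are built from composition factors of wedge powers of $\Ad$ (which is $\bbf_{q_0}(t)$-rational), a Galois-orbit descent---replacing each simple module by the sum over its Galois orbit if necessary---realizes them over a finite separable extension of $\bbf_{q_0}(t)$. For each such representation I would then choose a valuation as follows: since $\bbg$ is absolutely almost simple, the kernel of $\rho_i$ (resp.\ of $\rho_{{\rm lin},j}'$) lies in the finite centre, so the image of $\Gamma$ is infinite; its matrix entries then form an infinite subset of a finitely generated $\bbf_{q_0}$-subalgebra of a finite extension $F$ of $\bbf_{q_0}(t)$, and by the product formula there is a valuation of $F$ at which those entries are unbounded. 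Taking $\kcal_i$ (resp.\ $\kcal_j'$) to be the corresponding completion yields (2d). The absence of a $\bbg(\kcal_j')$-fixed point on $\bbw_j(\kcal_j')$ follows from the same statement over $\overline{\bbf_{q_0}(t)}$ in Proposition~\ref{propreps} together with Zariski-density of $\bbg(\kcal_j')$.

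For (3), when $\bbh_0^{\circ}$ is positive-dimensional Proposition~\ref{propreps} furnishes either a line $[v] \subseteq \bbv_i(\overline{\bbf_{q_0}(t)})$ stabilized by $\bbh_0^{\circ}$ or a point $w \in \bbw_j(\overline{\bbf_{q_0}(t)})$ fixed by it. The stabilized locus is a closed subvariety defined over the field of definition of $\bbh_0^{\circ}$, so after enlarging $\kcal_i$ or $\kcal_j'$ to a finite extension if needed (still a local field), one takes $[v]$ or $w$ to be rational. Since $\lcal_\delta(H) \subseteq \bbh_0(\overline{\bbf_{q_0}(t)})$ and $\bbh_0^{\circ}$ has finite index in $\bbh_0$, a finite-index subset of $\lcal_\delta(H)$ preserves $[v]$ (resp.\ $w$); absorbing this finite index into the constant $\delta$ (at the price of shrinking it) gives the full conclusion. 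The main obstacle I anticipate is the rational descent of the affine representations $\rho_j'$ and their fixed loci: the cocycle $c_w(g)$ appearing in positive characteristic (see the discussion after Proposition~\ref{propreps}) is not a priori rational, and careful bookkeeping of the fields of definition of both $w$ and $c_w$ is needed to get a clean statement---this is exactly where the positive-characteristic setting deviates most sharply from the characteristic-zero treatment in \cite[Proposition 17]{SGV}.
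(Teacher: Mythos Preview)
Your overall plan coincides with the paper's: apply Proposition~\ref{propreps} over an algebraic closure, descend the finitely many (affine) representations to a finite extension $L$ of $\bbf_{q_0}(t)$, and take for $\kcal_i,\kcal_j'$ completions of $L$ at places where the image of $\Gamma$ is unbounded; then for part~(3) invoke Proposition~\ref{propsmalllifts}. Two of the maneuvers you introduce, however, do not work as stated.

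First, the passage to the identity component $\bbh_0^\circ$ is unnecessary and your proposed fix fails. Proposition~\ref{propreps} applies to any positive-dimensional closed subgroup, and an inspection of its proof shows that the line $l_{\bbh_0}=\wedge^{\dim\hfr_0}\hfr_0$ (and hence its image in the relevant subquotient, or the affine fixed point $c_0^{-1}v_0$) is stabilized by all of $\bbh_0(k)$: indeed $\bbh_0$ normalizes its own Lie algebra, and in the one-dimensional-quotient case the triviality of the $G$-action on $V_{m'}/V_{m'-1}$ forces $\rho_W(h)(x_H)=x_H$ for every $h\in\bbh_0(k)$, not just for $h\in\bbh_0^\circ(k)$. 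Your suggestion to ``absorb the finite index $[\bbh_0:\bbh_0^\circ]$ into $\delta$'' has no content here: that index depends on $H$ and there is no mechanism by which shrinking $\delta$ forces $\lcal_{\delta}(H)$ into $\bbh_0^\circ$.

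Second, enlarging $\kcal_i$ or $\kcal_j'$ to a finite extension depending on $H$ contradicts the statement, in which these local fields are fixed in advance. The correct observation---which also dissolves the cocycle concern you raise at the end---is that $\bbh_0$, being the Zariski closure in $\bbg$ of a subset of $\bbg(\bbf_{q_0}(t))$, is already defined over $\bbf_{q_0}(t)$. Hence $\hfr_0$ and the line $l_{\bbh_0}$ are rational over $\bbf_{q_0}(t)\subseteq L$, and since the composition series of $\wedge^{\bullet}\gfr$ and the reference vector $w$ defining each $\rho_j'$ are chosen once over $L$, the resulting $[v]$ or fixed point $c_0^{-1}v_0$ lies in $\bbv_i(L)\subseteq\bbv_i(\kcal_i)$ or $\bbw_j(L)\subseteq\bbw_j(\kcal_j')$. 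No field enlargement is required.
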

\begin{proof}
Let $k$ be an algebraic closure of $\bbf_{q_0}(t)$; then by Proposition~\ref{propreps} the geometric fiber $\wt\bbg:=\bbg\otimes_{\bbf_{q_0}(t)}k$ of $\bbg$ has representations $\{\wt\rho_i\}_i$ and $\{\wt\rho_j'\}_j$ that can describe positive dimensional proper subgroups of $\wt\bbg$ (as in the statement of Proposition~\ref{propreps}). There is a finite Galois extension $L$ of $\bbf_{q_0}(t)$ such that $\wt\rho_i$ and $\wt\rho_j'$ have Galois descents $\wh\rho_i$ and $\wh\rho_j'$ to $\bbg\otimes_{\bbf_{q_0}(t)}L$. As $\Gamma$ is a discrete subgroup of $\prod_{v\in D(r_0)\cup\{v_{\infty}\}} \bbg(K_v)$ where $K_v$ is the $v$-adic completion of $\bbf_{q_0}(t)$, for any $i$ and $j$ there are some $v_i,v_j'\in D(r_0)\cup\{v_{\infty}\}$ and extensions $\nu_i,\nu_j'\in V_L$ of $v_i$ and $v_j'$, respectively, such that $\wh\rho_i(\Gamma)\subseteq \GL(\bbv_i(L_{\nu_i}))$ and $\wh\rho_{j}'(\Gamma)\subseteq \GL(\bbw_j(L_{\nu_j'}))$ are unbounded. So $\kcal_i:=L_{\nu_i}$, $\kcal_j':=L_{\nu_j'}$, $\rho_i:=\wh\rho_i\otimes {\rm id}_{\kcal_i}$, and $\rho_j':=\wh\rho_j\otimes {\rm id}_{\kcal_j'}$ satisfy parts (1) and (2). 

Let $\delta$ be as in Proposition~\ref{propsmalllifts}; then for any structural subgroup $H$ of $\pi_f(\Gamma)$, there is a proper subgroup $\bbh$ of $\bbg$ such that $\lcal_{\delta}(H)\subseteq \bbh(k)$. If $\bbh$ is zero-dimensional, then the group generated by $\lcal_{\delta}(H)$ is a finite group. If $\bbh$ is positive dimensional, then Proposition~\ref{propreps} implies that either (3.b) holds or (3.c); and claim follows. 
\end{proof}

\subsection{Ping-pong argument}
Let's recall that under the {\bf Standing assumptions} (see the 2nd paragraph in Section~\ref{sectionsmalllifts}), we want to show a random walk with respect to the probability counting measure on $\pi_f(\Omega)$ after $O(\deg f)$-many steps lands in a purely structural subgroup $H$ of $\pi_f(\Gamma)$ with {\em small probability}. Considering the lift of this random walk in $\Gamma$, we have to say that after $O(\delta_0 \deg f)$-many steps, the probability of landing in $\lcal_{\delta_0}(H)$ is small. By Proposition \ref{prop:SmallLiftsInvariantTheoretic}, it is enough to make sure that the probability of landing in a proper algebraic subgroup of $\bbg$ is small. In this section, we point out that the characteristic of the involved fields are irrelevant in the {\em ping-pong type} argument in \cite[Section 3.2]{SGV}, and we get similar statements in the global function field case. After having the needed {\em ping-pong players}, using Proposition~\ref{propreps} we end up getting a finite symmetric subset $\Omega_0$ such that a random walk with respect to the probability counting measure on $\Omega_0$ has an exponentially small chance of landing in a proper algebraic subgroup of $\bbg$. In this note, we do not repeat any of the proofs presented in \cite{Var, SGV}, and we refer the readers to those articles for the details of the arguments.

For a subset $\Omega'$ of a group and a positive integer $l$, we let
\[
B_l(\Omega'):=\{g_1\cdots g_l|\h g_i\in \Omega'\cup \Omega'^{-1}, g_i\neq g_{i+1}^{-1}\};  
\]
so the support of the $l$-step random-walk with respect to the probability counting measure on $\Omega'\cup \Omega'^{-1}$ is $\bigcup_{2k\le l} B_{l-2k}(\Omega')$. 

\begin{prop}\label{Propping1}  Let $\Gamma,\bbg$ be as in the {\bf Standing assumptions}. Let $\kcal_i$, $\kcal_j'$, $\rho_i$, and $\rho_j'$ be as in Proposition~\ref{prop:SmallLiftsInvariantTheoretic}. 
   Then there exists a subset $\Omega'\subset \Gamma$ that freely generates a subgroup $\Gamma'$ with the following properties:
\begin{enumerate}
\item For any $i$ and any non-zero vector $v\in \bbv_i(\kcal_i)$, 	
\[
|\{g\in B_\ell(\Omega')|\rho_i(g)([v])=[v]\}| < |B_\ell(\Omega')|^{1-c'}.
\]
\item For any $j$ and any point $w\in \bbw_j(\kcal_j')$
\[
|\{g\in B_\ell(\Omega')|\rho_j'(g)(w)=w\}| < |B_\ell(\Omega')|^{1-c'}.
\]
\end{enumerate}
where $c'$ is a constant depending only on $\Omega'$ and the representations.
\end{prop}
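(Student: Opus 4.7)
The strategy is a positive-characteristic translation of the ping-pong construction in \cite[Section 3.2]{SGV}, in particular \cite[Proposition 21]{SGV}. The only non-routine addition is the handling of the affine representations $\rho_j'$, which are absent in the characteristic zero case but forced upon us by Proposition \ref{propreps}.

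First, I would produce a "very proximal" element of $\Gamma$ for each representation. Fix $i$: since $\rho_i$ is absolutely irreducible over $\kcal_i$ and $\rho_i(\Gamma)$ is unbounded in $\GL(\bbv_i(\kcal_i))$, the standard contraction principle (as in Tits' proof of the Tits alternative; the argument over a non-archimedean local field is insensitive to the characteristic) yields $\gamma_i \in \Gamma$ whose $\rho_i$-image has a unique attracting fixed line $p_i^+ \in \mathbb{P}(\bbv_i(\kcal_i))$ and a complementary repelling hyperplane $H_i^- \subset \mathbb{P}(\bbv_i(\kcal_i))$. Fix $j$: the linear part $\rho_{\mathrm{lin},j}'$ is absolutely irreducible with unbounded image, so the same lemma produces a very proximal element for $\rho_{\mathrm{lin},j}'$; combining this with the hypothesis that $\bbg(\kcal_j')$ has no fixed point on $\bbw_j(\kcal_j')$, the affine dynamics also has a well-defined attracting point and a repelling affine hyperplane in $\bbw_j(\kcal_j')$.

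Second, by conjugating and passing to sufficiently high powers, I would assemble a single finite symmetric set $\Omega' \subset \Gamma$ each of whose elements is simultaneously very proximal in every $\rho_i$ and every $\rho_j'$, with attracting neighborhoods and repelling neighborhoods pairwise disjoint in each representation. This is the standard simultaneous Schottky construction: the disjointness for one representation is stable under small perturbations, so one adjusts the representations one at a time. The ping-pong lemma then ensures that $\Omega'$ freely generates a subgroup $\Gamma'$ of $\Gamma$, and yields strong dynamical control: if $g = g_{i_1}^{\varepsilon_1}\cdots g_{i_\ell}^{\varepsilon_\ell}$ is a reduced word in $\Omega' \cup \Omega'^{-1}$, then $\rho_i(g)$ maps the complement of the repelling neighborhood of $g_{i_\ell}^{\varepsilon_\ell}$ into the attracting neighborhood of $g_{i_1}^{\varepsilon_1}$, and similarly for the affine representations.

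Third, I would turn this dynamical control into the counting bound. The Schottky structure forces the stabilizer in $\Gamma'$ of any line $[v] \in \mathbb{P}(\bbv_i(\kcal_i))$ (or point $w \in \bbw_j(\kcal_j')$) to be either trivial or infinite cyclic: cyclic only when $[v]$ coincides with one of the boundary points of the ping-pong tree, in which case it is generated by (a conjugate of) a single element of $\Omega'$. Consequently, for each fixed $[v]$ the number of elements of length at most $\ell$ fixing $[v]$ is $O_{\Omega'}(\ell)$, and the analogous polynomial bound holds in the affine case. Since $|B_\ell(\Omega')| \geq (2|\Omega'|-1)^{\ell-1}$ grows exponentially in $\ell$, any polynomial bound is dominated by $|B_\ell(\Omega')|^{1-c'}$ for some fixed $c' > 0$ depending only on $|\Omega'|$, which is the desired estimate (for small $\ell$ the inequality is enforced by adjusting the implicit constant / interpreting it trivially).

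The main obstacle is the simultaneous ping-pong construction in step two: one needs a single $\Omega' \subset \Gamma$ that plays ping-pong in the finitely many local fields $\kcal_i$ and $\kcal_j'$ at once. The standard fix is to realize the proximal elements as elements of $\Gamma$ (not merely of each $\bbg(\kcal_i)$) and use the Zariski density of $\Gamma$ together with the openness of the very-proximal condition in each local field to align the attracting configurations simultaneously by conjugation within $\Gamma$. The positive-characteristic novelty, namely the affine representations, is a purely cosmetic complication: the linear part carries the entire contraction estimate, and the nonexistence of a $\bbg(\kcal_j')$-fixed point ensures that the affine attracting points are genuine points of $\bbw_j(\kcal_j')$ rather than at infinity, so the same counting of stabilizers of affine points goes through unchanged.
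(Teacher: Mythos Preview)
Your approach matches the paper's, which simply cites \cite[Proposition 20]{SGV}; the ping-pong construction and stabilizer count you outline is precisely that argument.

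One correction on the affine case: your assertion that ``the affine attracting points are genuine points of $\bbw_j(\kcal_j')$ rather than at infinity'' is backwards. Since $\bbg$ is simple, $\det\rho'_{\mathrm{lin},j}\equiv1$, so for any very proximal $g$ the eigenvalues of $\rho'_{\mathrm{lin},j}(g)$ satisfy $|\lambda_1|>1>|\lambda_d|$ and the affine map $w\mapsto\rho'_{\mathrm{lin},j}(g)w+c_j(g)$ is never a global contraction on $\bbw_j(\kcal_j')$; there is no attracting point in the affine space. The correct move is to linearize: set $\widehat{\rho}_j(g)=\begin{psmallmatrix}\rho'_{\mathrm{lin},j}(g)&c_j(g)\\0&1\end{psmallmatrix}$ on $\bbw_j\oplus\kcal_j'$ and run the projective ping-pong on $\mathbb{P}(\bbw_j\oplus\kcal_j')$. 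The attracting and repelling fixed points of the generators then both lie in the hyperplane at infinity $\mathbb{P}(\bbw_j)$, and the required genericity of these points (and of the repelling hyperplanes) is arranged using only the irreducibility of $\rho'_{\mathrm{lin},j}$. Fixing an affine point $w$ becomes fixing the line $[(w,1)]$, and the no-fixed-point hypothesis is exactly what excludes a \emph{common} eigenvalue-$1$ eigenline for all of $\Gamma'$ (which would make that single point a counterexample to the bound); it does not relocate the proximal dynamics into the affine chart. With this adjustment your counting argument goes through unchanged.
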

\begin{proof}
See proof of \cite[Proposition 20]{SGV}.
\end{proof}

\subsection{Escaping purely structural subgroups: finishing proof of Proposition \ref{propmainescape}}
This proof is almost identical to the proof of \cite[Proposition 7]{SGV}. Let $\Gamma, \gcal, \bbg,$ and $f$ be as in the {\bf Standing assumptions}. Let $\Omega'$ be the set given by Proposition \ref{Propping1}.  Suppose $H\subseteq\pi_f(\Gamma)$ is a purely structural subgroup. Let $\delta$ be as in Proposition~\ref{prop:SmallLiftsInvariantTheoretic}. 

As $\pi_f[\pcal_{\Omega'}]^{(l)}(H)^2\le \pi_f[\pcal_{\Omega'}]^{(2l)}(H)$, it is enough to prove the claim for even positive integers $l$. We notice that for any positive integer $l$
\[
\pi_f[\pcal_{\Omega'}]^{(2l)}(H)= \pcal_{\Omega'}^{(2l)}\left(\bigcup_{0\le k\le l}(\pi_f^{-1}(H)\cap B_{2l-2k}(\Omega'))\right);
\]
and for any $\gamma\in \pi_f^{-1}(H)\cap B_l(\Omega')$, $\|\gamma\|\le (\max_{w\in\Omega'} \|w\|)^l$. Hence for $l\ll_{\Omega'} \delta \log[\pi_f(\Gamma):H]$ and $\deg f\gg_{\Omega'} 1$ we have
\be\label{eq:random-walk-lift-bound}
\pcal_{\pi_f(\Omega')}^{(2l)}(H)\le \sum_{0\le k\le l} \pcal_{\Omega'}^{(2l)}(\lcal_{\delta}(H)\cap B_{2l-2k}(\Omega')).
\ee
We notice that, since $\Omega'=\Omega'_0\sqcup\Omega_0'^{-1}$ and $\Omega'_0$ freely generates a subgroup, for $\gamma, \gamma'\in B_{2r}(\Omega')$ we have $\pcal_{\Omega'}^{(2l)}(\gamma)=\pcal_{\Omega'}^{(2l)}(\gamma')$; let $P_l(r):=\pcal_{\Omega'}^{(2l)}(\gamma)$ for some $\gamma\in B_{2r}(\Omega')$. Hence by \eqref{eq:random-walk-lift-bound} we have
\be
\pcal_{\pi_f(\Omega')}^{(2l)}(H)\le \sum_{0\le r\le l} |\lcal_{\delta}(H)\cap B_{2r}(\Omega')| P_l(r).
\ee

Combining Propositions \ref{prop:SmallLiftsInvariantTheoretic} and \ref{Propping1}, we have
  \be\label{eq:upper-bound-number-of-elements-small-lifts}
  |\lcal_{\delta}(H)\cap B_{2r}(\Omega')|< |B_{2r}(\Omega')|^{1-c'}
  \ee
  where $c'$ is the constant from Proposition~\ref{Propping1}. 
  
 Let us recall a few well-known results related to random-walks (in a free group); for any $\gamma\in \langle \Omega'\rangle$, by Cauchy-Schwarz inequality, we have 
 \be\label{eq:return-to-identity}
 \pcal_{\Omega'}^{(2l)}(\gamma)=\sum_{\gamma'} \pcal_{\Omega'}^{(l)}(\gamma') \pcal_{\Omega'}^{(l)}(\gamma'^{-1}\gamma)\le \|\pcal_{\Omega'}^{(l)}\|_2^2=\sum_{\gamma'} \pcal_{\Omega'}^{(l)}(\gamma')\pcal_{\Omega'}^{(l)}(\gamma'^{-1})=\pcal_{\Omega'}^{(2l)}(I)
 \ee
 where $I$ is the identity matrix; and so $P_l(r)\le P_l(0)$ for any non-negative integer $r$. Since $P_{l_1}(0)P_{l_2}(0)\le P_{l_1+l_2}(0)$, we have that $\{\sqrt[l]{P_l(0)}\}_l$ is a non-decreasing sequence. Hence by Kesten's result~\cite[Theorem 3]{Kes}, we have
 \be\label{eq:Kesten}
 P_l(r)\le P_l(0)\le \left(\frac{2M-1}{M^2}\right)^l,
 \ee
 where $|\Omega'|=2M$. We also have $|B_{2r}(\Omega')|=2M(2M-1)^{2r-1}$. Therefore by \eqref{eq:random-walk-lift-bound}, \eqref{eq:upper-bound-number-of-elements-small-lifts}, and \eqref{eq:Kesten}, for $l=\Theta_{\Omega'}([\pi_f(\Gamma):H])$, we have
\begin{align*}
\pcal_{\pi_f(\Omega')}^{(2l)}(H)\le & \sum_{0\le r\le l/20} |\lcal_{\delta}(H) \cap B_{2r}(\Omega')| P_l(0) + \sum_{l/20<r\le l} |\lcal_{\delta}(H) \cap B_{2r}(\Omega')| P_l(r)
\\
\le &
\left( 1+ 2M\sum_{1\le r\le l/20}(2M-1)^{2r-1}\right) \left(\frac{2M-1}{M^2}\right)^l+ 	\sum_{l/20<r\le l} |B_{2r}(\Omega')|^{1-c'} P_l(r) 
\\
\le &
\frac{(2M)^{11l/10+1}}{M^{2l}}+(2M(2M-1)^{l/10})^{-c'} \sum_{l/20<r\le l} |B_{2r}(\Omega')| P_l(r) 
\\ 
\le & \frac{(2M)^{11l/10+1}}{M^{2l}}+(2M(2M-1)^{l/10})^{-c'} \le [\pi_f(\Gamma):H]^{-O_{\Omega'}(1)}.
\end{align*}
Suppose, for a positive integer $l$, the desired inequality holds for $2l$; then 
\[
\pcal_{\pi_f(\Omega')}^{(l)}(gH)^2\le \pcal_{\pi_f(\Omega')}^{(2l)}(H)\le [\pi_f(\Gamma):H]^{-\delta_0}
\]
 for any $g\in \pi_f(\Gamma)$. Hence for any $l'\ge l$ we have
\[
\pcal_{\pi_f(\Omega')}^{(l')}(H)=\sum_{g\in \pi_f(\Gamma)} \pcal_{\pi_f(\Omega')}^{(l'-l)}(g^{-1}) \pcal_{\pi_f(\Omega')}^{(l)}(gH)\le [\pi_f(\Gamma):H]^{-\delta_0/2}.
\]
So it remains to show for large enough $c_0$, if $f\in S_{r_1,c_0}$, then $\pi_f(\Gamma)=\pi_f(\langle \Omega'\rangle)$. 

By Propositions~\ref{prop:SmallLiftsInvariantTheoretic}
 and \ref{Propping1}, we have that the group $\Gamma'$ generated by $\Omega'$ is Zariski-dense in $\bbg$. Let $\bbf_p(s(t)/r(t))$ be the trace field of $\Gamma'$. Then by \cite[Theorem 0.2, Theorem 3.7, Proposition 4.2]{PinkStrongApproximation} (or \cite[Theorem 1.1]{Wei}) we have that, if $f$ is a square-free polynomial with large degree irreducible factors (in particular, we can and will assume that $\gcd(f,r)=1$), then
 \[
 \pi_f(\Gamma')\simeq \prod_{\ell|f} \gcal_{\ell}(\bbf_p[s(t)/r(t)]/\langle \ell\rangle).
 \]
 Notice that $\bbf_p[s(t)/r(t)]/\langle \ell\rangle$ can be embedded into $\bbf_p[t]/\langle \ell\rangle$, and the degree of this extension is at most $[\bbf_p(t):\bbf_p(s(t)/r(t))]=\max(\deg s,\deg t)$. Hence $[\bbf_p[t]/\langle \ell\rangle:\bbf_p[s(t)/r(t)]/\langle \ell\rangle]$ is a divisor of $\deg \ell$ that is at most $\max(\deg s,\deg r)$. So if all the prime divisors of $\deg \ell$ are more than $c_0:=\max(\deg s,\deg r)$, then 
 \[
 \bbf_p[s(t)/r(t)]/\langle \ell\rangle=\bbf_p[t]/\langle \ell\rangle;
 \]
and claim follows.


\section{A variation of Varj\'{u}'s Product Theorem}\label{s:VarjuProductTheorem}  
In \cite{Var}, Varj\'{u} introduced a technique on proving a multi-scale product result for the direct product of an infinite family of certain finite groups. He provided a series of conditions for each one of the factors for this {\em gluing} process to work. One of the important conditions is on the structure of the subgroups of each factor; it was assumed that subgroups can be divided into $O(1)$ families of different {\em dimensions}. This condition was modeled from Nori's theorem on description of subgroups of $\GL_n(\bbf_p)$, which roughly says that any such subgroup is {\em very close} to being the $\bbf_p$-points of an algebraic subgroup. As we discussed in Section~\ref{s:RefinementOfLarsenPink}, subgroups of $\GL_n(\bbf_{p^m})$ might be either {\em structural} or {\em subfield type}; and the subfield type subgroups cannot be grouped into an $O_n(1)$ family of subgroups. We, however, 
use the fact that intersection of two conjugate subgroups of subfield type is a structural subgroup (see Corollary~\ref{cor:intersection-conjugate-subfield-type}), and modify Varj\'{u}'s axioms and arguments accordingly (see Proposition~\ref{propvarjuproduct}).     

Most of Varj\'{u}'s arguments and results stay the same even after the modifications of the assumptions; but we reproduce some of those arguments. It should be pointed out that there is an error in the proof of \cite[Corollary 14]{Var}. Our modified axioms help us to resolve this issue; Varj\'{u} has also communicated to us a way to correct the proof {\em without} changing the original assumptions.    

\subsection{Modified assumptions} 
Before stating our modified assumptions, let us introduce a notation and recall the definition of {\em quasi-random} groups (this concept was introduced by Gowers~\cite{Gow}). For two subgroups $H$ and $H'$ of a finite group $G$ and a positive integer $L$, we write $H\preceq_L H'$ if $[H:H'\cap H]<L$. 

\begin{definition}\label{QRdef}  
For a positive constant $c$, we say a finite group $G$ is {\em $c$-quasi-random} if for any non-trivial irreducible representation $\rho$ of $G$ we have $\dim\rho>|G|^c$.
\end{definition}

Our set of axioms depend on two parameters $L$ and $\delta_0$, where $L$ is a positive integer and $\delta_0:\bbr^+\rightarrow \bbr^+$ is a function. 

{\bf Assumptions (V1)$_{L}$-(V3)$_{L}$ and (V4)$_{\delta_0}$}

\begin{enumerate}
\item[(V1)$_L$] $G$ is an almost simple group with $|Z(G)|<L$.
\item[(V2)$_L$] $G$ is $L^{-1}$-quasi-random (see Definition \ref{QRdef}).
\item[(V3)$_L$] There exists an integer $m<L$, and classes of proper subgroups $\cal_j$ for $1\le j\le m$ and $\cal_i'$ for $1\le i\le m'$ where $m'\le L \log |G|$ with the following properties:

\begin{enumerate}[(i)]
\item For each $i$, $\cal_i$ and $\cal_i'$ are closed under conjugation by elements in $G$.
\item $\cal_0=\{Z(G)\}$.
\item For each proper subgroup $H$ of $G$ there exist an index $i$ and a subgroup $H^\sharp\in \cal_i$ or $\cal_i'$ such that $H\preceq_L H^\sharp$.
\item For each $i$ and for each pair of distinct subgroups $H_1$, $H_2\in \cal_i$, there exists $j<i$ and a subgroup $H^\sharp\in \cal_j$ such that $H_1\cap H_2\preceq_L H^\sharp$. For any $H\in \cal_i$, there is $j$ and $H^\sharp\in \cal_j$ such that $N_G(H)\preceq_L H^\sharp$.
\item For each $i$ and for each pair of distinct subgroups $H_1'$ and $H_2' \in \cal_i'$, there exists $j$ and a subgroup $H^{\sharp}\in \cal_j$ such that $H_1'\cap H_2'\preceq_L H^\sharp$. For any $H\in \cal'_i$, $[N_G(H):H]\le L$.
\end{enumerate}
\item[(V4)$_{\delta_0}$] if $S\subseteq G$ is a generating set and $|S|<|G|^{1-\vare}$ for a positive number $\vare$, then $|S\cdot S \cdot S|\ge |S|^{1+\delta_0(\vare)}$. 
\end{enumerate}

\begin{prop}\label{propvarjuproduct} For $L\in \bbz^+$, $\delta_0:\bbr^+\rightarrow\bbr^+$, suppose $\{G_i\}_{i=1}^{\infty}$ is a family of pairwise non-isomorphic finite groups that satisfy assumptions (V1)$_{L}$-(V3)$_{L}$ and (V4)$_{\delta_0}$. Then for any $\vare>0$, there is $\delta>0$ such that for any $n\in \bbz^+$ and any symmetric subset $S$ of $G:=\bigoplus_{i=1}^nG_i$ satisfying
\begin{equation}
|S|<|G|^{1-\vare}\mbox{ and } \pcal_S(gH)<[G:H]^{-\vare}|G|^\delta \text{ for any subgroup } H \text{ of } G \text{ and } g\in G,
\end{equation}
we have 
\begin{equation*}
|\Pi_3S|\gg_\vare |S|^{1+\delta}.
\end{equation*}
\end{prop}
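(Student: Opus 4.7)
The plan is to adapt Varj\'u's axiomatic multi-scale argument from~\cite{Var} to accommodate the new subfield-type class $\cal_i'$. I will argue by contradiction and induction on the number $n$ of factors: suppose $|\Pi_3 S| \le |S|^{1+\delta}$ for some small $\delta = \delta(\vare, L)$ to be chosen. By Pl\"unnecke--Ruzsa, $S$ is then a $K$-approximate subgroup with $K = |S|^{O(\delta)}$, so every iterated product set $\Pi_k S$ for $k = O(1)$ has size $|S|^{1+O(\delta)}$. The aim is to exhibit a proper subgroup $H \le G$ on which $\pcal_S$ concentrates more than the hypothesis $\pcal_S(gH) < [G:H]^{-\vare}|G|^\delta$ allows.

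\textbf{Factorwise step and gluing.} For each $i$, examine $\pi_i(S) \subset G_i$. Quasi-randomness (V2) together with the single-factor product theorem (V4) gives the standard dichotomy: either $\pi_i[\pcal_S]$ is essentially equidistributed on $G_i$, or $\pi_i(S)$ concentrates on a proper subgroup which, by (V3)(iii), is contained up to index $L$ in some $H_i^\sharp \in \cal_{j_i} \cup \cal_{j_i}'$. In the structural case $H_i^\sharp \in \cal_{j_i}$, (V3)(iv) allows a strict descent down the hierarchy by intersecting with conjugates; after at most $m < L$ such steps one either lands in the center, contradicting the non-concentration hypothesis, or produces a global structural subgroup of $G$. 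The pairwise non-isomorphism of the factors, combined with (V1), lets one glue the factorwise pictures into a global subgroup via a Goursat-type argument, exactly as in~\cite[Section~3]{Var}.

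\textbf{Main obstacle: handling $\cal_i'$.} The crux, and the reason for the modified axioms, is that chains of subfield-type subgroups need not strictly descend in size, so Varj\'u's original recursion breaks. Two clauses of (V3)(v) resolve this. First, if $\pi_i[\pcal_S]$ spreads over two or more distinct conjugates of a subgroup $H' \in \cal_i'$, then (V3)(v) places their intersection inside a \emph{structural} class $\cal_j$, returning us to the structural hierarchy where the step above applies. Second, $[N_G(H'):H'] \le L$ means that if $\pi_i[\pcal_S]$ concentrates on a single $H' \in \cal_i'$, then $S$ is contained in $O(L)$ cosets of $\pi_i^{-1}(H')$; after translation we restrict to $\pi_i^{-1}(H')$ and recurse with the $i$-th factor replaced by (a bounded-index almost simple quotient of) $H'$, losing only a factor of $L$ in the non-concentration hypothesis.

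\textbf{Termination and bookkeeping.} Subfield descents within a single factor can occur at most $|\cal_i'| \le m' \le L \log |G|$ times by (V3), and structural descents at most $m < L$ times; across all $n$ factors the overall recursion depth is thus polylogarithmic in $|G|$. Each layer costs at most a factor $|G|^{O(\delta)}$ in the density bound and a factor $L^{O(1)}$ in the non-concentration hypothesis, so choosing $\delta$ sufficiently small in terms of $\vare$ and $L$ keeps the accumulated losses strictly smaller than the gap $[G:H]^{-\vare}$ at the final subgroup $H$, delivering the desired contradiction. The hardest point is verifying that restricting to a subfield subgroup $H'$ preserves the axioms with parameters depending only on $L$, so that the inductive hypothesis applies uniformly at every recursion depth; this relies on the fact that the adjoint almost simple group $H'/Z(H')$ is of the same Lie type as $G_i$ over a subfield, and hence inherits (V1)--(V4) with the same $L$ and $\delta_0$.
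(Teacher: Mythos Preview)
Your proposal diverges from the paper's argument in a way that introduces a genuine gap. The paper does \emph{not} recurse into subfield-type subgroups; instead, it handles all subfield classes in a single extra step on top of the bounded structural hierarchy. The mechanism is Proposition~\ref{prop:generic-Diophantine-property}: after an induction of depth $m<L$ (Lemma~\ref{lem:inductive-set-up-structural}) establishes non-concentration on every structural class $\cal_j$, one further doubling (Lemma~\ref{lem:Diophantine-subfield-type}) covers \emph{all} $\cal_i'$ simultaneously, because axiom (V3)(v) forces any two distinct members of a fixed $\cal_i'$ to intersect inside some structural $\cal_j$. The only price is a sum over $i=1,\dots,m'\le L\log|G|$, which appears as a harmless $\log|G|$ factor absorbed by a polynomial saving $|G|^{-\beta/4L}$. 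The total number of convolutions is $2^{m+2}$, bounded purely in terms of $L$.

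Your recursion, by contrast, replaces a factor $G_i$ by a subfield subgroup $H'$ and appeals to the fact that $H'/Z(H')$ is again of Lie type over a subfield. This is not available within the axiomatic framework: Proposition~\ref{propvarjuproduct} is stated for an abstract family $\{G_i\}$ satisfying (V1)$_L$--(V4)$_{\delta_0}$, and nothing in those axioms guarantees that a subgroup $H'\in\cal_i'$ again satisfies them with the same parameters. You are importing the Lie-theoretic origin of the groups, which the proposition is supposed to abstract away. Even granting that, your bookkeeping does not close: you bound the number of subfield descents per factor by $m'\le L\log|G|$ (this is the number of classes, not the length of a descending chain), and if each step costs a factor $L^{O(1)}$ in the non-concentration hypothesis, the accumulated loss is $L^{O(L\log|G|)}=|G|^{O(L\log L)}$, which is polynomial in $|G|$ and cannot be beaten by choosing $\delta$ small. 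The paper's approach sidesteps this entirely by never descending into subfield subgroups, so the number of steps stays bounded in $L$ alone.
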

Let us reiterate that there are two key differences between Proposition \ref{propvarjuproduct} and \cite[Proposition 14]{Var}: (1)
In Varj\'{u}'s setting we have only $O(L)$ families of proper subgroups, and this parameter resembles dimension of an algebraic subgroup. In our setting, however, we have two types of families of proper subgroups, and only one of the types can have at most $O(L)$ families of proper subgroups. Theses types resemble the structural and the subfield type subgroups. For the structural subgroups we more or less use dimension of the underlying algebraic groups to parametrize them, and for subfield type subgroups the order of the subfield gives us the needed parametrization. It is clear that in this case the number of such possible families can grow as $|G|$ goes to infinity; but it does not get more than $\log |G|$. (2) We are assuming a {\em product type} result for each factor (see (V4)$_{\delta_0}$) instead of an $l^2$-flattening assumption for measures with {\em large} $l^2$-norm (see (A4) in \cite[Section 3]{Var}). This modification helps us resolve the mentioned error in \cite[Corollary 14]{Var}.   

\subsection{A detailed overview of Varj\'{u}'s proof}
Before getting to the multi-scale setting of Proposition~\ref{propvarjuproduct}, we recall Bourgain-Gamburd's result which gives us a way to measure how product of two random variables gets {\em substantially more random} unless there is an algebraic obstruction (see \cite[Proposition 2]{BG1} and \cite[Lemma 15]{Var}). 

\begin{lem}\label{lemmaBGapproxsub}  Let $\mu$ and $\nu$ be two probability measures on an arbitrary finite group $G$, and let $K$ be a real number greater than $2$.  If
$\|\mu\ast \nu\|_2> \frac{1}{K}\|\mu\|_2^{1/2}\|\nu\|_2^{1/2}$,
then there is a symmetric subset $A\subseteq G$ with the following properties:
\begin{enumerate}
\item (Size of $A$ is comparable with $\|\mu\|_2^{-2}$) $K^{-R}\|\mu\|_2^{-2}\le |A|\le K^{R}\|\mu\|_2^{-2}	$.
\item (An approximate subgroup) $|A\cdot A\cdot A|\le K^R|A|$.
\item (Almost equidistribution on $A$) $\min_{a\in A} (\wt{\mu}\ast \mu)(a)\ge K^{-R}|A|^{-1}$,
\end{enumerate}
where $R$ is a universal constant and $\wt{\mu}(g):=\mu(g^{-1})$.
\end{lem}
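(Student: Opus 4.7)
The strategy is to reduce the hypothesis on $\|\mu\ast\nu\|_2$ to a lower bound on the multiplicative energy of near-uniform subsets of $G$, and then to invoke a non-commutative Balog--Szemer\'{e}di--Gowers theorem in order to extract the desired symmetric approximate subgroup, together with the pointwise lower bound on $\widetilde{\mu}\ast\mu$.

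First, I would dyadically decompose $\mu$ and $\nu$. For each integer $i$, let $A_i:=\{g\in G:2^{-i-1}<\mu(g)\le 2^{-i}\}$ and $\mu_i:=\mu\cdot\mathbf{1}_{A_i}$; define $B_j$ and $\nu_j$ analogously for $\nu$. The $\ell^2$-mass of $\mu$ is essentially concentrated on those indices $i$ with $2^{-2i}|A_i|\asymp\|\mu\|_2^2$, and likewise for $\nu$. Expanding $\|\mu\ast\nu\|_2^2=\sum_{i,j,i',j'}\langle\mu_i\ast\nu_j,\mu_{i'}\ast\nu_{j'}\rangle$ and pigeonholing against the hypothesis, I obtain indices $i_0,j_0$ such that, setting $A_0:=A_{i_0}$ and $B_0:=B_{j_0}$,
\[
\|\mu_{i_0}\ast\nu_{j_0}\|_2\ge K^{-O(1)}\|\mu_{i_0}\|_2^{1/2}\|\nu_{j_0}\|_2^{1/2},
\]
together with $|A_0|=K^{\pm O(1)}\|\mu\|_2^{-2}$ and $|B_0|=K^{\pm O(1)}\|\nu\|_2^{-2}$; the polylogarithmic losses arising from the dyadic pigeonholing are absorbed by enlarging the universal constant $R$.

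Because $\mu_{i_0}$ and $\nu_{j_0}$ are within a factor of two of the uniform measures on $A_0$ and $B_0$ respectively, the convolution lower bound translates into a lower bound on the multiplicative energy
\[
E(A_0,B_0):=|\{(a_1,a_2,b_1,b_2)\in A_0^2\times B_0^2 : a_1b_1=a_2b_2\}|\ge K^{-O(1)}|A_0|^{3/2}|B_0|^{3/2}.
\]
This is the input to a non-commutative Balog--Szemer\'{e}di--Gowers theorem (see, for instance, the version in~\cite{Tao-approximate-subgroup}), which yields a symmetric subset $A\subseteq A_0\cdot A_0^{-1}$ with $|A|=K^{\pm O(1)}|A_0|=K^{\pm O(1)}\|\mu\|_2^{-2}$ and tripling $|A\cdot A\cdot A|\le K^{O(1)}|A|$. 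This gives conclusions (1) and (2).

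For conclusion (3), I would use the refined form of BSG guaranteeing that every element of $A$ admits at least $K^{-O(1)}|A_0|$ representations $a_1^{-1}a_2$ with $a_1,a_2\in A_0$; since $\mu|_{A_0}\asymp \|\mu\|_2^2/|A_0|$ pointwise by near-uniformity, summing such representations yields $(\widetilde{\mu}\ast\mu)(a)\ge K^{-O(1)}\|\mu\|_2^2\asymp K^{-O(1)}|A|^{-1}$ for every $a\in A$. The main obstacle is obtaining this pointwise lower bound \emph{uniformly} on all of $A$ rather than on a large subset of it; this requires a weighted non-commutative BSG, or equivalently passing to a further subset of $A$ while controlling its tripling constant, which is the technical heart of~\cite[Proposition 2]{BG1} and~\cite[Lemma 15]{Var}.
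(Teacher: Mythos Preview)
The paper does not give a proof of this lemma; it is stated as a black-box result with citations to \cite[Proposition~2]{BG1} and \cite[Lemma~15]{Var}. Your sketch is essentially the standard argument underlying those references: dyadic decomposition of $\mu$ and $\nu$, pigeonholing to a pair of near-uniform level sets, conversion to a multiplicative-energy lower bound, and then an appeal to a non-commutative Balog--Szemer\'{e}di--Gowers theorem to extract the symmetric approximate subgroup $A$. You also correctly flag that the uniform pointwise lower bound on $\widetilde{\mu}\ast\mu$ over all of $A$ (not merely a large subset) is the delicate point, and that it comes from the refined/weighted form of BSG used in \cite{BG1} and \cite{Var}. So your proposal is correct and aligned with the cited sources; there is simply nothing in the present paper to compare it against beyond those citations.
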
 
One can use various forms of entropy to quantify how random a measure is. 

\begin{definition}
Suppose $X$ is a random variable on a finite set $S$ and has distribution $\mu$; then the (Shannon) {\em entropy} of $X$ is 
\[
H(X):=\sum_{s\in S}-\log (\bbp(X=s)) \bbp(X=s),
\]
where $\bbp(X=s)$ is the probability of having $X=s$. The {\em R\'{e}nyi entropy} of $X$ is 
\[
H_2(X):=-\log \left(\sum_{s\in S} \bbp(X=s)^2\right)=-\log \|\mu\|_2^2. 
\]	
We let $H_{\infty}(X):=-\log (\max_{s\in  {\rm supp}(X)} \bbp(X=s))$ and $H_0(X):=\log |{\rm supp}(X)|$, where ${\rm supp}(X)$ is the support of $X$. 

Suppose $Y$ is another random variable on $S$. Then {\em the entropy of $X$ conditioned to $Y$} is
\begin{align}
\notag H(X|Y):=&\sum_{y\in S} \bbp(Y=y) H(X|Y=y) \\
=& -\sum_{y\in S} \bbp(Y=y) \sum_{x\in S}\bbp(X=x|Y=y) \log\bbp(X=x|Y=y),
\end{align}
where $X|Y=y$ is the random variable $X$ conditioned to the random variable $Y$ taking a certain value $y$, and $\bbp(X = x|Y = y)$ is the probability of having $X = x$ conditioned to $Y = y$. The {\em R\'{e}nyi entropy of $X$ conditioned to $Y$} is 
\[
H_2(X|Y):=\sum_{y\in S} \bbp(Y=y) H_2(X|Y=y).
\] 
\end{definition}
 Here are some of the basic properties of entropy that will be used in this note.
\begin{lem}\label{lem:properties-entropy}
	Suppose $S$ is a finite set, and $X$ and $Y$ are random variables with values in $S$. Then
	\begin{enumerate}
	\item $H(X,Y)=H(X)+H(Y|X)$.
	\item $H(X)\ge H(X|Y)$.
	\item $H_0(X)\ge H(X)\ge H_2(X)\ge H_{\infty}(X)$.
	\item $H(X|f(Y))\ge H(X|Y)$ where $f$ is a function.
	\end{enumerate}
\end{lem}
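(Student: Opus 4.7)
The plan is to verify each of the four inequalities by direct manipulation of the definitions combined with Jensen's inequality for the convex function $-\log$; these are standard facts from information theory so the proofs will be brief. I would begin with (1), the chain rule, by expanding $H(X,Y) = -\sum_{x,y} \bbp((X,Y)=(x,y)) \log \bbp((X,Y)=(x,y))$, factoring $\bbp((X,Y)=(x,y)) = \bbp(X=x)\,\bbp(Y=y|X=x)$, and splitting the logarithm; rearranging the two resulting double sums yields $H(X) + H(Y|X)$.

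For (3) I would handle the three inequalities separately. The bound $H_0(X) \ge H(X)$ follows from the fact that on $\supp(X)$ the Shannon entropy is maximized by the uniform distribution, whose entropy equals $\log|\supp(X)| = H_0(X)$; this is a standard Jensen argument. For $H(X) \ge H_2(X)$, let $\mu$ denote the distribution of $X$ and write $H(X) = \bbe[-\log \mu(X)]$ and $H_2(X) = -\log\bbe[\mu(X)]$, then apply Jensen's inequality to the convex function $-\log$. Finally, $H_2(X) \ge H_{\infty}(X)$ follows from the estimate $\sum_x \mu(x)^2 \le (\max_x \mu(x))\sum_x \mu(x) = \max_x \mu(x)$ upon taking $-\log$ of both sides.

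For (2) I would observe that $H(X) - H(X|Y) = I(X;Y)$, the mutual information, and that this quantity is non-negative by Gibbs' inequality: the relative entropy of the joint distribution of $(X,Y)$ with respect to the product of its marginals is non-negative. For (4), I would deduce from (2), applied fiberwise in a third variable $Z$, the conditional form $H(X|Y,Z) \le H(X|Z)$. Taking $Z = f(Y)$ gives $H(X|Y, f(Y)) \le H(X|f(Y))$; but $f(Y)$ is a deterministic function of $Y$, so $H(X|Y, f(Y)) = H(X|Y)$, yielding the claim.

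Since every step reduces to a textbook application of Jensen's inequality or a direct manipulation of the definitions, there is no genuinely hard step here; the only care required is in the bookkeeping, specifically in tracking which sums range over which variable in each conditional entropy.
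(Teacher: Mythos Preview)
Your proposal is correct; each of the four arguments is a standard and valid derivation. The paper itself does not prove the lemma at all but simply cites Cover and Thomas's textbook \cite{CT} for these well-known facts, so your write-up is strictly more detailed than what appears in the paper.
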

\begin{proof}
	These are all well-known facts; for instance see \cite[Theorem 2.4.1, Theorem 2.5.1, Theorem 2.6.4, Lemma 2.10.1, Problem 2.1]{CT}.
\end{proof}
It is very intuitive to say that the product of two independent random variables with values in a group should be at least as random as the initial random variables. The next lemma says that this intuition is compatible with how various types of entropy measure the randomness of a distribution.

\begin{lem}\label{lem:product-entropy-trivial-bound}
	Suppose $X$ and $Y$ are two independent random variables with values in a group $H$ and finite supports. Then $H_i(XY)\ge \max(H_i(X),H_i(Y))$ for $i\in \{0,1,2,\infty\}$ where $H_1(X):=H(X)$. 
\end{lem}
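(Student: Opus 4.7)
The plan is to handle each of the four cases $i \in \{0, 1, 2, \infty\}$ separately, exploiting two facts: (a) independence of $X$ and $Y$, and (b) the fact that left- and right-multiplication by a fixed element of $H$ are bijections of $H$. In every case the argument splits into the two bounds $H_i(XY) \ge H_i(X)$ and $H_i(XY) \ge H_i(Y)$, which are established by essentially symmetric computations.

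The extreme cases $i=0$ and $i=\infty$ are immediate. For $H_0$, the support of $XY$ contains each translate $\supp(X) \cdot y_0$ (for any fixed $y_0 \in \supp(Y)$) and $x_0 \cdot \supp(Y)$, and each translate has the same cardinality as its un-translated counterpart; hence $|\supp(XY)| \ge \max(|\supp(X)|,|\supp(Y)|)$. For $H_\infty$, for every $z \in H$ independence gives
\[
\bbp(XY = z) = \sum_{y} \bbp(X = zy^{-1})\,\bbp(Y = y) \le \max_{x} \bbp(X = x),
\]
and the symmetric identity $\bbp(XY=z) = \sum_x \bbp(X=x)\,\bbp(Y = x^{-1}z) \le \max_y \bbp(Y=y)$ gives the $Y$-bound; taking $-\log$ of the max over $z$ yields the claim.

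The $H_2$ case amounts to the standard fact that convolution with a probability measure cannot increase the $L^2$-norm: writing $\mu,\nu$ for the laws of $X,Y$, an application of Cauchy--Schwarz to the convolution sum (or, equivalently, Young's inequality $\|\mu \ast \nu\|_2 \le \|\mu\|_1 \|\nu\|_2$ together with $\|\mu\|_1 = \|\nu\|_1 = 1$) yields $\|\mu \ast \nu\|_2 \le \min(\|\mu\|_2, \|\nu\|_2)$, and the definition $H_2 = -\log \|\cdot\|_2^2$ then gives the inequality in both directions.

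Finally, for the Shannon entropy, the key move is to condition on $Y$. Since $X$ and $Y$ are independent, $H(X \mid Y) = H(X)$; and since, conditional on $Y = y$, the random variable $XY$ is obtained from $X$ by the bijection $x \mapsto xy$, we have $H(XY \mid Y = y) = H(X \mid Y = y)$, and hence $H(XY \mid Y) = H(X)$. Combining this with the inequality $H(XY) \ge H(XY \mid Y)$ from Lemma~\ref{lem:properties-entropy} gives $H(XY) \ge H(X)$, and the symmetric bound $H(XY) \ge H(Y)$ follows by conditioning on $X$ instead. No step presents a real obstacle: the proof is essentially bookkeeping with the basic identities already catalogued in Lemma~\ref{lem:properties-entropy}, the only mild care being to use the bijectivity of $x \mapsto xy$ (respectively $y \mapsto xy$) to identify the relevant conditional entropies.
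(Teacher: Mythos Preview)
Your proof is correct and follows essentially the same route as the paper: the cases $i=0,\infty$ are handled identically, the Shannon case via $H(XY)\ge H(XY\mid Y)=H(X\mid Y)=H(X)$ is exactly the paper's argument, and for $i=2$ the paper uses convexity of $x\mapsto x^2$ (Jensen) where you invoke Cauchy--Schwarz or Young's inequality, which amounts to the same computation.
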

\begin{proof}
Notice that ${\rm supp}(XY)={\rm supp}(X){\rm supp}(Y)$; and so $H_0(XY)\ge \max(H_0(X),H_0(Y))$. 

For any $h\in H$, we have 
\[
\bbp(XY=h)=\sum_{x\in H} \bbp(X=x)\bbp(Y=x^{-1}h) \le \max_{y\in H} \bbp(Y=y);  
\] 
and so $H_{\infty}(XY)\ge H_{\infty}(Y)$. By symmetry we get the claim for $i=\infty$.

Since the function $x^2$ is a convex function, we have 
\begin{align*}
	\bbp(XY=h)^2= & (\sum_{x\in H} \bbp(X=x)\bbp(Y=x^{-1}h))^2 \\
	\le & \sum_{x\in H} \bbp(X=x)\bbp(Y=x^{-1}h)^2.
\end{align*}
Therefore $\sum_{h\in H}\bbp(XY=h)^2\le \sum_{h\in H} \sum_{x\in H} \bbp(X=x)\bbp(Y=x^{-1}h)^2= \sum_{y\in H} \bbp(Y=y)^2$, which implies the claim for $i=2$.

By Lemma~\ref{lem:properties-entropy}, we have
\[
H(XY)\ge H(XY|Y)=H(X|Y)=H(X);
\]
and claim follows.
\end{proof}

Lemma~\ref{lemmaBGapproxsub} says how much the R\'{e}nyi entropy of product of two independent variables increases unless there is an algebraic obstruction: if $X$ and $Y$ are two independent random variables with values in a group $G$, then we have 
\be\label{eq:RenyEntropyIncrease}
H_2(XY)\ge \frac{H_2(X)+H_2(Y)}{2}+\log K,
\ee
 unless there is a symmetric subset $A$ of $G$ such that 
 \[
|\log |A|-H_2(X)|\le R\log K,\hspace{0.5cm}  
 |A\cdot A\cdot A|\le K^R|A|,
 \]
and for any $a\in A$ 
 \[
 \bbp(X'^{-1}X=a)\ge K^{-R} |A|^{-1},
 \]
 where $X'$ is a random variable with identical distribution as $X$ and it is independent of $X$. Based on this result one can prove a meaningful increase in the R\'{e}nyi entropy of product of two independent random variables with a {\em Diophantine type} condition with values in a group that has a {\em product type property} (similar to the condition (V4)$_{\delta_0}$).

\begin{definition}
Suppose $G$ is a finite group and $X$ is a random variable with values in $G$. We say $X$ is of $(\alpha,\beta)${\em -Diophantine type} if for any proper subgroup $H$ of $G$ with $|H|\ge |G|^{\alpha}$ and for any $g\in G$, we have $\bbp(X\in gH)\le [G:H]^{-\beta}$. 	
\end{definition}

\begin{lem}\label{lem:GrowthOfRenyiEntropySingleScale}
Suppose $G$ is a finite group and $X$ and $Y$ are two independent random variables with values in $G$. Suppose $G$ satisfies the following properties:
\begin{enumerate}
\item {\em (Quasi-randomness)} It is an $L^{-1}$-quasi-random group for some positive integer $L$.
\item {\em (Product property)} For every positive number $\vare$, there is a positive number $\delta_0:=\delta_0(\vare)$ such that if $A$ is a generating set of $G$ and $|A|<|G|^{1-\vare}$, then $|A\cdot A\cdot A|\ge |A|^{1+\delta_0}$.
\end{enumerate}
Suppose the random variable $X$ satisfies the following properties:
\begin{enumerate}
\item {\em (Diophantine condition)} For some $\alpha,\beta>0$, $X$ is of $(\alpha,\beta)$-Diophantine type. 
\item {\em (Initial entropy)} $\alpha' \log |G|\le H_2(X)$ for some $\alpha'>2\alpha$.
\item {\em (Room for improvement)} $H_2(X)\le (1-\alpha'') \log |G|$ for some $\alpha''>0$.
\end{enumerate}
Then 
\[
H_2(XY)\ge \frac{H_2(X)+H_2(Y)}{2}+\gamma_0 \log |G|,
\]
where $\gamma_0$ is a positive constant that only depends on $\alpha',\alpha'',\beta$, and the function $\delta_0$. 
\end{lem}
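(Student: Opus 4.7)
The plan is a contradiction argument in the Bourgain--Gamburd style. Assume to the contrary that $H_2(XY) < \frac{H_2(X)+H_2(Y)}{2} + \gamma_0 \log |G|$ for some small $\gamma_0$ to be chosen later. Letting $\mu$ and $\nu$ denote the distributions of $X$ and $Y$, this rewrites as $\|\mu\ast\nu\|_2 > K^{-1}\|\mu\|_2^{1/2}\|\nu\|_2^{1/2}$ with $K := |G|^{\gamma_0}$. Apply Lemma~\ref{lemmaBGapproxsub} to produce a symmetric $A\subseteq G$ satisfying $K^{-R}\|\mu\|_2^{-2}\le |A|\le K^R\|\mu\|_2^{-2}$, $|A\cdot A\cdot A|\le K^R|A|$, and $(\widetilde{\mu}\ast\mu)(a)\ge K^{-R}|A|^{-1}$ for every $a\in A$. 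From the upper and lower bounds on $|A|$ and the hypotheses $\alpha'\log|G| \le H_2(X) \le (1-\alpha'')\log|G|$ we get
\[
|G|^{\alpha'-R\gamma_0}\le |A|\le |G|^{1-\alpha''+R\gamma_0}.
\]

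Next I would show $\langle A\rangle = G$ using the Diophantine condition. Let $H:=\langle A\rangle$. Summing the lower bound $(\widetilde{\mu}\ast\mu)(a)\ge K^{-R}|A|^{-1}$ over $a\in A$ gives $\Pr(X^{-1}X'\in H)\ge K^{-R}$ where $X'$ is an independent copy of $X$. On the other hand, conditioning on $X'$ and using that $X$ is of $(\alpha,\beta)$-Diophantine type yields $\Pr(X^{-1}X'\in H)\le [G:H]^{-\beta}$ whenever $|H|\ge |G|^{\alpha}$ and $H\neq G$. Combined, $H\neq G$ forces $[G:H]\le K^{R/\beta}=|G|^{R\gamma_0/\beta}$; and the alternative $|H|<|G|^{\alpha}$ is ruled out by the lower bound $|A|\ge |G|^{\alpha'-R\gamma_0}$ together with $\alpha'>2\alpha$, provided $\gamma_0$ is small. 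Hence either $H=G$ or $H$ is proper with $[G:H]\le|G|^{R\gamma_0/\beta}$; the latter, together with $\alpha'>2\alpha$, again contradicts the Diophantine bound once $\gamma_0<\beta(1-\alpha)/R$. So $\langle A\rangle=G$.

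Now I would invoke the product property (V4)$_{\delta_0}$. Set $\vare := \alpha''/2$. For $\gamma_0$ small enough that $1-\alpha''+R\gamma_0 \le 1-\vare$, the upper bound on $|A|$ gives $|A|<|G|^{1-\vare}$, and since $A$ generates $G$, the hypothesis yields $|A\cdot A\cdot A|\ge |A|^{1+\delta_0(\vare)}$. Combined with the tripling bound from the approximate subgroup, this forces $|A|^{\delta_0(\vare)}\le K^R = |G|^{R\gamma_0}$, i.e.\ $|A|\le |G|^{R\gamma_0/\delta_0(\vare)}$. But the lower bound $|A|\ge |G|^{\alpha'-R\gamma_0}$ then gives $\alpha'-R\gamma_0 \le R\gamma_0/\delta_0(\vare)$, which fails for any
\[
\gamma_0 < \frac{\alpha'\,\delta_0(\vare)}{R\bigl(\delta_0(\vare)+1\bigr)}.
\]

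The only real difficulty is bookkeeping: one must choose a single $\gamma_0$ small enough simultaneously to (i) keep $|A|$ above $|G|^{\alpha}$ so the Diophantine hypothesis applies, (ii) keep $|A|$ below $|G|^{1-\vare}$ so the product hypothesis applies, (iii) contradict the Diophantine inequality when $\langle A\rangle$ is a proper subgroup of index at most $|G|^{R\gamma_0/\beta}$, and (iv) contradict the tripling bound when $\langle A\rangle=G$. Taking $\gamma_0$ to be the minimum of the three thresholds arising in these steps yields the desired $\gamma_0=\gamma_0(\alpha',\alpha'',\beta,\delta_0)$. The quasi-randomness hypothesis is not used directly here; it plays its role indirectly, through the product property and in the multi-scale extension of this lemma that appears later in the section.
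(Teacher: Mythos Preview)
Your overall strategy matches the paper's: contradict the entropy inequality, apply Lemma~\ref{lemmaBGapproxsub} to extract the approximate subgroup $A$, show $\langle A\rangle=G$, then play the tripling bound against the product property. The bookkeeping in the final step is essentially the same as the paper's.

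There is, however, a genuine gap in your argument that $\langle A\rangle=G$. You correctly derive that if $H:=\langle A\rangle$ is proper and $|H|\ge|G|^{\alpha}$, then $[G:H]\le|G|^{R\gamma_0/\beta}$. But your sentence ``the latter, together with $\alpha'>2\alpha$, again contradicts the Diophantine bound once $\gamma_0<\beta(1-\alpha)/R$'' does not hold up: the Diophantine condition is an \emph{upper} bound on coset probabilities, and yields no lower bound on the index of a proper subgroup. Nothing you have written prevents $G$ from having a proper subgroup of index $2$, say, and then $[G:H]\le|G|^{R\gamma_0/\beta}$ is perfectly consistent with $H$ being that subgroup. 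The inequality $\gamma_0<\beta(1-\alpha)/R$ only gives $[G:H]<|G|^{1-\alpha}$, i.e.\ $|H|>|G|^{\alpha}$, which you already knew.

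This is exactly where the paper uses quasi-randomness, contrary to your final paragraph. If $G$ is $L^{-1}$-quasi-random and $H\lneq G$, the permutation representation on $G/H$ contains a nontrivial irreducible constituent of dimension at most $[G:H]-1$, so $[G:H]>|G|^{1/L}$. Combined with $[G:H]\le|G|^{R\gamma_0/\beta}$ this forces $\gamma_0\ge\beta/(RL)$, a contradiction for $\gamma_0<\beta/(RL)$. Consequently the resulting $\gamma_0$ depends on $L$ as well as on $\alpha',\alpha'',\beta,\delta_0$ (the paper's proof gives $\gamma_0=\min\bigl(\tfrac{\alpha'\delta_0(\alpha''/2)}{4R},\tfrac{\beta}{4RL}\bigr)$; the omission of $L$ from the list in the lemma statement is a minor oversight there). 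Once you insert this use of quasi-randomness, your proof is complete and agrees with the paper's.
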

\begin{proof}
Suppose $H_2(XY)<\frac{H_2(X)+H_2(Y)}{2}+\gamma \log |G|$ for some $\gamma>0$; then by Bourgain-Gamburd's result and the above discussion there is a symmetric subset $A$ of $G$ such that 
\begin{align}
\label{eq:order-of-almost-subgroup} |\log |A|-H_2(X)| &\le R \gamma \log |G| & \text{(Controlling the order)} \\
\label{eq:almost-subgroup} |A\cdot A \cdot A|&\le |G|^{R\gamma} |A| & \text{(almost subgroup)} \\
\label{eq:almost-equidistributed} \forall a\in A, \bbp(X'^{-1}X=a)&\ge |G|^{-R\gamma} |A|^{-1} &\text{(almost equidistribution)}
\end{align}
where $X'$ is a random variable with identical distribution as $X$ and it is independent of $X$ and $R$ is an absolute positive constant. Let $H$ be the group generated by $A$. Then by \eqref{eq:almost-equidistributed} 
\be\label{eq:probability-of-hitting-H}
\bbp(X\in H)\ge \bbp(X'^{-1}X\in H)^{1/2}\ge \bbp(X'^{-1}X\in A)^{1/2}\ge |G|^{-R\gamma/2},
\ee
and by the lower bound on the R\'{e}nyi entropy of $X$
\be\label{eq:order-of-subgroup}
|H|\ge |A|\ge |G|^{-R\gamma}e^{H_2(X)}\ge |G|^{\alpha'/2}
\ee
for $\gamma\le \frac{\alpha'}{2R}$. Since $X$ is of $(\alpha,\beta)$-Diophantine type and $\alpha'>2\alpha$, by \eqref{eq:order-of-subgroup} and \eqref{eq:probability-of-hitting-H} we get 
\be\label{eq:Diophantine-condition}
[G:H]^{-\beta}\ge \bbp(X\in H)\ge |G|^{-R\gamma/2}.
\ee
Since $G$ is an $L^{-1}$-quasi-random group, we $[G:H]\ge |G|^{1/L}$ if $H$ is a proper subgroup; and so by \eqref{eq:probability-of-hitting-H} and \eqref{eq:Diophantine-condition} we get
\[
\gamma \ge \frac{\beta}{2RL}.
\]
Therefore for $\gamma\le \frac{\beta}{4RL}$, we have $G=H$, which means $A$ is a generating set of $G$.

By the upper bound on the R\'{e}nyi entropy of $X$ and \eqref{eq:order-of-almost-subgroup} we have
\[
|A|\le |G|^{1-\alpha''}|G|^{R\gamma}\le |G|^{1-\frac{\alpha''}{2}}
\]
for $\gamma\le \frac{\alpha''}{2R}$. Hence by the product property of $G$ there is $\delta_0:=\delta_0(\alpha''/2)$ such that
\be\label{eq:propduct-propety-implies}
|A\cdot A\cdot A|\ge |A|^{1+\delta_0}.
\ee 
 By \eqref{eq:almost-subgroup} and \eqref{eq:propduct-propety-implies}  we deduce that
 \[
 |G|^{R\gamma}\ge |A|^{\delta_0}; 
 \]
 together with \eqref{eq:order-of-almost-subgroup} and the lower bound on the R\'{e}yi entropy of $X$ we get
 \[
 |G|^{R\gamma}\ge |A|^{\delta_0}\ge (|G|^{-R\gamma}e^{H_2(X)})^{\delta_0} \ge |G|^{\alpha'\delta_0/2}.
 \]
 Hence we deduce that for $\gamma=\gamma_0:=\min(\frac{\alpha'\delta_0(\alpha''/2)}{4R}, \frac{\beta}{4RL})$, we have
 \[
 H_2(XY)\ge \frac{H_2(X)+H_2(Y)}{2}+\gamma \log|G|;
 \]
 and claim follows.
\end{proof}

Now suppose $X_{j}:=(X_j^{(i)})_{i=1}^n$'s are i.i.d. random variables with values in $G:=\bigoplus_{i=1}^n G_i$  and distribution $\pcal_A$. We notice that (see Lemma~\ref{lem:properties-entropy})
\[
\textstyle
\log |\prod_l A|=H_0(X_{1}\cdots X_{l})\ge H(X_{1}\cdots X_{l});
\]  
and by the mentioned basic properties of entropy (see Lemma~\ref{lem:properties-entropy}) we have
\begin{align*}
	H(X_{1}\cdots X_{l})= &
	\sum_{j=1}^n H(X_1^{(j)}\cdots X_l^{(j)}| X_1^{(1)}\cdots X_l^{(1)},\ldots,X_{1}^{(j-1)}\cdots X_{l}^{(j-1)})\\
	\ge &
	\sum_{j=1}^n H(X_1^{(j)}\cdots X_l^{(j)}| X_i^{(k)}, 1\le i\le l, 1\le k\le j-1)\\
	\ge &
	\sum_{j=1}^n H_2(X_1^{(j)}\cdots X_l^{(j)}| X_i^{(k)}, 1\le i\le l, 1\le k\le j-1).	
	\end{align*}

At this point we are almost at the setting of Bourgain-Gamburd's result, and we would like to apply Lemma~\ref{lem:GrowthOfRenyiEntropySingleScale}. By (V2)$_{L}$ and (V4)$_{\delta_0}$, $G_j$ does satisfy Lemma~\ref{lem:GrowthOfRenyiEntropySingleScale}'s conditions; but the random variables $X_j^{(i)}$'s do not necessarily satisfy the required conditions. Here are the steps that we take to get the desired conditions:

{\bf Step 1}. By a regularization argument, we find a subset $A$ of $S$ such that 

(a) for any $(g_1,\ldots,g_{j-1})\in \bigoplus_{k=1}^{j-1} G_k$ the conditional random variables $X_i^{(j)}|X_i^{(k)}=g_k, 1\le k\le j-1$ are uniformly distributed in their support. 

(b) $H(X_i^{(j)}|X_i^{(k)}=g_k, 1\le k\le j-1)$ is the same for any $(g_1,\ldots,g_{j-1}) \in \pr_{[1..j-1]}(A)$ where $\pr_I:\bigoplus_{k=1}^n G_k\rightarrow \bigoplus_{k\in I} G_k$ is the projection map. 

(c) (Initial entropy) For any $(g_1,\ldots,g_{j-1}) \in \pr_{[1..j-1]}(A)$, either $H(X_i^{(j)}|X_i^{(k)}=g_k, 1\le k\le j-1)=0$ or $H(X_i^{(j)}|X_i^{(k)}=g_k, 1\le k\le j-1)\ge \alpha \log |G_j|$. 

(d) $\log |A|>\log |S|-2\alpha \log |G|$.

This process (more or less) gives us the {\em initial entropy} condition. 

{\bf Step 2}. At this step, we focus on the {\em scales} where the entropy is already large and does not have much {\em room for improvement}. In the influential work~\cite{Gow} where Gowers defined quasi-random groups, he proved the following result (see \cite[Theorem 3.3]{Gow} and also \cite[Corollary 1]{NP}).
\begin{thm}\label{thm:Gowers}
Suppose $G$ is an $L^{-1}$-quasi-random group	. Suppose $X_1,X_2,X_3$ are three independent random variables with values in $G$. If \[\frac{H_0(X_1)+H_0(X_2)+H_0(X_3)}{3}>(1-\frac{1}{3L})\log |G|,\] then $H_0(X_1X_2X_3)=\log |G|$.
\end{thm}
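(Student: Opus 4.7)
The plan is to prove the (a priori stronger) combinatorial statement $A_1A_2A_3 = G$, where $A_i := \Supp(X_i)$. Indeed $H_0(X_i) = \log|A_i|$ and $H_0(X_1X_2X_3) = \log|A_1A_2A_3|$, so the conclusion is equivalent to $|A_1A_2A_3| = |G|$, while the hypothesis rewrites as $|A_1||A_2||A_3| > |G|^{3 - 1/L}$. For each $g\in G$, the number of triples $(a_1,a_2,a_3)\in A_1\times A_2\times A_3$ with $a_1a_2a_3 = g$ equals the convolution $(\one_{A_1}\ast \one_{A_2}\ast \one_{A_3})(g)$, so I would show this quantity is strictly positive for every $g\in G$ by non-abelian Fourier analysis in the spirit of Gowers.

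For an irreducible unitary representation $\rho$ of $G$ on $V_\rho$, write $\widehat{f}(\rho) := \sum_{h\in G} f(h)\rho(h)$. Peter--Weyl inversion yields
\[
(\one_{A_1}\ast \one_{A_2}\ast \one_{A_3})(g) = \frac{1}{|G|}\sum_{\rho\in \Irr(G)} d_\rho \Tr\bigl(\widehat{\one_{A_1}}(\rho)\widehat{\one_{A_2}}(\rho)\widehat{\one_{A_3}}(\rho)\rho(g)^{\ast}\bigr),
\]
where $d_\rho := \dim V_\rho$. The trivial representation contributes the main term $|A_1||A_2||A_3|/|G|$. For a non-trivial $\rho$, H\"older's inequality in Schatten norms gives
\[
\bigl|\Tr\bigl(\widehat{\one_{A_1}}(\rho)\widehat{\one_{A_2}}(\rho)\widehat{\one_{A_3}}(\rho)\rho(g)^{\ast}\bigr)\bigr|\le \|\widehat{\one_{A_1}}(\rho)\|_{\mathrm{op}}\,\|\widehat{\one_{A_2}}(\rho)\|_{\mathrm{HS}}\,\|\widehat{\one_{A_3}}(\rho)\|_{\mathrm{HS}},
\]
and Plancherel's identity $\sum_\rho d_\rho \|\widehat{\one_A}(\rho)\|_{\mathrm{HS}}^2 = |G||A|$ yields the term-wise bound $\|\widehat{\one_{A_1}}(\rho)\|_{\mathrm{op}} \le \sqrt{|G||A_1|/d_\rho}$. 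Applying Cauchy--Schwarz to $\sum_{\rho\ne 1}d_\rho \|\widehat{\one_{A_2}}(\rho)\|_{\mathrm{HS}}\|\widehat{\one_{A_3}}(\rho)\|_{\mathrm{HS}}$ and invoking Plancherel once more bounds it by $|G|\sqrt{|A_2||A_3|}$, so the total contribution of non-trivial representations is at most $\sqrt{|G||A_1||A_2||A_3|/D}$, where $D := \min_{\rho\ne 1} d_\rho$.

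The quasi-randomness assumption gives $D > |G|^{1/L}$, so the main term $|A_1||A_2||A_3|/|G|$ dominates this error whenever $|A_1||A_2||A_3| > |G|^{3-1/L}$, which is precisely the hypothesis. Hence $(\one_{A_1}\ast \one_{A_2}\ast \one_{A_3})(g) > 0$ for every $g\in G$, and $A_1A_2A_3 = G$, as required. The only delicate point is arranging the H\"older step so that the operator norm is placed on exactly one factor (extracting the $D^{-1/2}$ saving from the quasi-random dimension bound) while the other two factors are estimated in Hilbert--Schmidt norm, so that Plancherel and Cauchy--Schwarz combine cleanly across the three factors.
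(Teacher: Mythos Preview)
Your argument is correct and is precisely the standard Gowers/Babai--Nikolov--Pyber Fourier-analytic proof. The paper itself does not supply a proof of this statement: it is quoted as a black box with references to \cite[Theorem 3.3]{Gow} and \cite[Corollary 1]{NP}, so there is no ``paper's own proof'' to compare against beyond those sources, and your write-up is essentially what one finds there.
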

We apply Theorem~\ref{thm:Gowers} for the conditional random variables $X_i^{(j)}|X_i^{(k)}=g_i^{(k)}, 1\le k\le j-1$ for 
$(g_i^{(1)},\ldots,g_{i}^{(j-1)}) \in \pr_{[1..j-1]}(A)$ and at the {\em scales} where 
\be\label{eq:large-entropy}
H(X_i^{(j)}|X_i^{(k)}=g_i^{(k)}, 1\le k\le j-1)\ge (1-\frac{1}{3L})\log |G|,
\ee
and deduce that $\bigoplus_{i\in I_{\rm l}}G_i=\pr_{I_{\rm l}}(A\cdot A\cdot A)$ where $I_{\rm l}$ consists of $j$'s such that \eqref{eq:large-entropy} holds. Next we let $I_{\rm s}:=[1..n]\setminus I_{\rm l}$; and define the following metric on $\bigoplus_{i\in I_{\rm s}}G_i$
\[
d(g,g'):=\sum_{i\in I_{\rm s}, \pr_i(g)\neq \pr_i(g')} \log |G_i|. 
\]
Let $T:=\max \{d(g_{\rm s},1)|\h g_{\rm s}\in \pr_{I_{\rm s}}(\prod_9 S\cap \{1\}\oplus \bigoplus_{i\in I_{\rm s}}G_i)\}$. Then one gets a $T$-almost group homomorphism $\psi:\bigoplus_{i\in I_{\rm l}}G_i\rightarrow \bigoplus_{i\in I_{\rm s}}G_i$. By a result of Farah~\cite{Far} on approximate homomorphisms, $\psi$ should be close to a group homomorphism. Based on this and certain Diophantine property of $S$, one can deduce that 
\[
\textstyle
\exists (1,g_{\rm s})\in \prod_9 S\cap \{1\}\oplus \bigoplus_{i\in I_{\rm s}}G_i, d(g_{\rm s},1)\gg \vare^2 \log |G|.
\]
Now considering $H:=C_G((g_{\rm s},1))$ and using the assumed upper bound of $\pcal_S(gH)$, one gets a strong lower bound for $|\prod_{14} S|$  unless almost all the {\em scales} do have {\em room for improvement}. 

{\bf Step 3}. At this step we focus on the {\em scales} where there is an {\em initial entropy} and {\em room for improvement} as required in Lemma~\ref{lem:GrowthOfRenyiEntropySingleScale}. The last condition that is needed is a {\em Diophantine type} condition. Varj\'{u} (essentially) proves the following result in order to deal with this issue.  
\begin{prop}\label{prop:generic-Diophantine-property}
	Suppose $L$ is a positive integer, $G$ is a finite group that satisfies properties (V1)$_L$-(V3)$_L$. Let $m$ be as in (V3)$_{L}$. Suppose $X_1,\ldots,X_{2^{m+1}}$ are independent random variables with values in $G$ and $H_{\infty}(X_i)\ge \alpha' \log |G|$ for some positive number $\alpha'$ and any index $i$. For $\overrightarrow{y}:=(y_1,\ldots,y_{2^{m+1}-1})\in \bigoplus_{i=1}^{2^{m+1}-1} G$, let $X_{\vecy}:=X_1 y_1 X_2y_2\cdots y_{2^{m+1}-1} X_{2^{m+1}}$. Suppose $Y_1,\ldots,Y_{2^{m+1}-1}$ are i.i.d. random variables with values in $G$. Suppose $Y_i$'s are of $(\alpha,\beta)$-Diophantine type for some positive numbers $\alpha$ and $\beta$ such that $\beta\ge 4\alpha$; further, assume that for any $g\in G$ and $H\in \bigcup_{i=1}^{m} \cal_i$, $\bbp(Y_1\in gH)\le [G:H]^{-\beta}$. Then assuming $|G|\gg_{\alpha',\beta,L} 1$, we have
	\[
	\bbp((Y_1,\ldots,Y_{2^{m+1}-1})=\vecy \text{ such that } X_{\vecy} \text{ is not of } (0,\beta'/2)\text{-Diophantine type}) \le |G|^{-\frac{\beta}{4L}} 
	\] 
	where $\beta':=\frac{1}{8^{m+1}}\min(\frac{\beta}{5L},\frac{\alpha'}{2})$.
\end{prop}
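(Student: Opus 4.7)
The plan is to combine Markov's inequality with the Diophantine hypotheses on the $Y_j$'s, and to perform a union bound over proper subgroups via the hierarchy in (V3)$_L$. The event that $X_\vecy$ fails to be $(0,\beta'/2)$-Diophantine asserts that there exist a proper subgroup $H\le G$ and a coset $gH$ with $\bbp_X(X_\vecy\in gH)>[G:H]^{-\beta'/2}$. The key reformulation is: if all $X_i$ and all $Y_j$ except a single $Y_{j_0}$ are fixed, then $X_\vecy\in gH$ is equivalent to $Y_{j_0}$ lying in one specific left coset of a $G$-conjugate of $H$. Since each $\cal_j$ and $\cal'_j$ from (V3)$_L$(i) is closed under $G$-conjugation, the Diophantine hypothesis on $Y_1$ transfers to all $G$-conjugates, so
\[
\bbe_\vecy\bbp_X(X_\vecy\in gH)\le [G:H]^{-\beta}
\]
whenever $H\in\bigcup_j\cal_j$ or $|H|\ge|G|^\alpha$.

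To gain enough slack for the union bound, rather than treating each coset separately I would work with the $\ell^2$-norm of the coset distribution. With $\pi_H\colon G\to G/H$ the left-coset projection one has $\max_g\bbp_X(X_\vecy\in gH)\le\|\pi_H(X_\vecy)\|_2$ and
\[
\bbe_\vecy\|\pi_H(X_\vecy)\|_2^2=\bbp\bigl(X_\vecy^{-1}X'_{\vecy'}\in H\bigr)
\]
for an independent copy $(X',\vecy')$. This longer alternating product contains $2(2^{m+1}-1)$ independent $Y$-factors, and applying the coset-fixing argument above at distinct $Y$-positions yields $\bbe_\vecy\|\pi_H(X_\vecy)\|_2^2\le[G:H]^{-2\beta}$. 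Jensen's inequality gives $\bbe_\vecy\|\pi_H(X_\vecy)\|_\infty\le[G:H]^{-\beta}$, and Markov then produces
\[
\bbp_\vecy\Bigl(\max_g\bbp_X(X_\vecy\in gH)>[G:H]^{-\beta'/2}\Bigr)\le [G:H]^{\beta'/2-\beta}.
\]

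Next I would handle arbitrary proper $H$ in three regimes. For very small $H$ with $|H|\le|G|^{\alpha'/2}$, the min-entropy hypothesis $H_\infty(X_i)\ge\alpha'\log|G|$ alone forces $\bbp_X(X_\vecy=g')\le|G|^{-\alpha'}$ for each $g'$, hence $\bbp_X(X_\vecy\in gH)\le|G|^{-\alpha'/2}\le[G:H]^{-\beta'/2}$ by our choice of $\beta'$. For structural $H\in\cal_j$ with $1\le j\le m$, the Diophantine hypothesis on $Y_1$ applies without any size restriction; for subfield-type $H\in\cal'_j$ of size at least $|G|^\alpha$, the $(\alpha,\beta)$-Diophantine hypothesis suffices. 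An arbitrary proper $H$ is dealt with by (V3)$_L$(iii), which supplies a cover $H^\sharp\in\bigcup_j\cal_j\cup\bigcup_j\cal'_j$ with $H\preceq_L H^\sharp$, so up to a factor of $L$ we reduce to the previous cases. Summing the Markov estimate over the (polynomially-in-$|G|$ many) subgroups in each $\cal_j,\cal'_j$ gives the desired $|G|^{-\beta/(4L)}$.

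The main obstacle will be the subfield classes $\cal'_j$: there are up to $m'\le L\log|G|$ of them, and within each one we lack a direct Diophantine bound for members of size smaller than $|G|^\alpha$. This is where (V3)$_L$(v) and Corollary~\ref{cor:intersection-conjugate-subfield-type} are essential: distinct members of a single $\cal'_j$ meet, up to $\preceq_L$, in a structural subgroup, which enables a recursion along the structural hierarchy of length $m\le L$. The unusual $2^{m+1}$ in the number of factors of $X_\vecy$ is precisely dictated by this recursion -- at each of the $m+1$ levels of structural refinement one must double the product length so as to keep a fresh $Y$-factor available on which to apply the Diophantine bound -- and the same iteration is responsible for the $8^{-(m+1)}$ in the formula for $\beta'$.
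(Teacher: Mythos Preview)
Your identification of the role of the $2^{m+1}$-fold product and of a recursion along the subgroup hierarchy is correct in spirit, but you have misplaced the difficulty: the initial strategy of Markov plus a union bound over all members of each $\cal_j,\cal'_j$ already fails for the \emph{structural} classes, not only for the subfield ones. A single conjugacy class inside $\cal_j$ contains $[G:N_G(H)]\ge |G|^{1/L}/L$ members, while your per-subgroup Markov estimate is only of order $[G:H]^{-(\beta-\beta'/2)}$; since the proposition must hold for any fixed $\beta>0$ (with $|G|\gg_\beta 1$), the sum $\sum_{H\in\cal_j}[G:H]^{-(\beta-\beta'/2)}$ is in general enormous. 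There is also an error in the $\ell^2$ step: the identity $\bbe_\vecy\|\pi_H(X_\vecy)\|_2^2=\bbp(X_\vecy^{-1}X'_{\vecy'}\in H)$ holds only when $\vecy'=\vecy$, not for an independent copy, so you do not gain an extra $2^{m+1}-1$ fresh $Y$-factors; with the correct identity the coset-fixing trick yields only $\bbe_\vecy\|\pi_H(X_\vecy)\|_2^2\le[G:H]^{-\beta}$.

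The paper avoids any global union bound. The dyadic recursion you allude to is run from the outset on the structural tower $\cal_0,\ldots,\cal_m$ (Lemma~\ref{lem:inductive-set-up-structural}), and its engine is a finiteness statement (Lemma~\ref{lem:exceptional-subgroups}): if $\nu(gH)<p$ for every $H$ at levels $\le k$, then by (V3)$_L$(iv) distinct members of $\cal_{k+1}$ meet (up to $\preceq_L$) in something of level $\le k$, and an inclusion--exclusion shows there are at most $\sqrt{2/(Lpp')}$ subgroups $H\in\cal_{k+1}$ with $\wt\nu*\nu(H)>p'$. Thus at each doubling step the middle $Y$-variable need only avoid conjugating one of this \emph{bounded} list of exceptional subgroups into another, a cost controlled by a single $[G:N_G(H)]^{-\beta}$. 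The subfield classes $\cal'_j$ are then dispatched in one further doubling (Lemma~\ref{lem:Diophantine-subfield-type}) by the same mechanism via (V3)$_L$(v), with the extra $m'\le L\log|G|$ factor absorbed by the exponential saving. So the $2^{m+1}$ and the $8^{-(m+1)}$ are indeed produced by the recursion, but that recursion is what replaces the union bound at \emph{every} level, not only at the subfield step.
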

Using Proposition~\ref{prop:generic-Diophantine-property}, Lemma~\ref{lem:product-entropy-trivial-bound}, and Lemma~\ref{lem:GrowthOfRenyiEntropySingleScale}, one gets the following result.

\begin{prop}\label{prop:VarjuMutation}
Suppose $L$ is a positive integer and $\delta_0:\bbr^+\rightarrow\bbr^+$ is a function. Suppose $G$ satisfies conditions (V1)$_{L}$-(V3)$_{L}$	 and (V4)$_{\delta_0}$. Let $m$ be as in the condition (V3)$_{L}$. Suppose random variables $X_1,\ldots,X_{2^{m+1}+1}$ satisfy the following properties:
\begin{enumerate}
\item {\em (Initial entropy)} $\alpha' \log |G|\le H_{\infty}(X_i)$ for some $\alpha'>0$ and any index $i$.
\item {\em (Room for improvement)} $H_2(X_i)\le (1-\alpha'')\log |G|$ for some $\alpha''>0$.
\end{enumerate}
Suppose the i.i.d. random variables $Y_1,\ldots,Y_{2^{m+1}-1}$ satisfy the following property:
\begin{center}
{\em (Diophantine condition)} For some $0\le \alpha<\min(\beta/4,\alpha'/2)$, $Y_1$ is of $(\alpha,\beta)$-Diophantine type; and for any $H\in \bigcup_{i=1}^m \cal_i$ and $g\in G$, $\bbp(Y_1\in gH)\le [G:H]^{-\beta}$.	
\end{center}
Then assuming $|G|\gg_{\alpha',\alpha'',\beta,L,\delta_0}1$ we have
\[
H_2(X_1Y_1X_2\cdots Y_{2^{m+1}-1}X_{2^{m+1}}X_{2^{m+1}+1}|Y_1,\ldots,Y_{2^{m+1}-1})\ge \min_i H_2(X_i)+ \gamma \log |G|
\]
where $\gamma$ is a positive constant that only depends on $\alpha',\alpha'', \beta, L$, and the function $\delta_0$.
\end{prop}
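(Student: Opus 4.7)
The plan is to reduce the proposition to a single-scale application of Lemma~\ref{lem:GrowthOfRenyiEntropySingleScale} to the pair $(X_{\vecy},X_{2^{m+1}+1})$ for a ``generic'' choice of $\vecy$, with the required Diophantine behaviour of $X_{\vecy}$ supplied by Proposition~\ref{prop:generic-Diophantine-property}, and with exceptional $\vecy$'s treated by the trivial monotonicity in Lemma~\ref{lem:product-entropy-trivial-bound}. Set $M:=\min_i H_2(X_i)$ and, for $\vecy=(y_1,\dots,y_{2^{m+1}-1})\in G^{2^{m+1}-1}$, let $X_{\vecy}:=X_1y_1X_2y_2\cdots y_{2^{m+1}-1}X_{2^{m+1}}$. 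By definition of conditional R\'enyi entropy and the independence of the $X_i$'s from the $Y_i$'s,
\[
H_2(X_1Y_1\cdots Y_{2^{m+1}-1}X_{2^{m+1}}X_{2^{m+1}+1}\mid Y_1,\dots,Y_{2^{m+1}-1})=\sum_{\vecy}\bbp(Y=\vecy)\,H_2(X_{\vecy}X_{2^{m+1}+1}).
\]
The bounds $H_\infty(X_i)\ge\alpha'\log|G|$ combined with Lemma~\ref{lem:product-entropy-trivial-bound} give $H_2(X_{\vecy})\ge\alpha'\log|G|$ and $H_2(X_{\vecy}X_{2^{m+1}+1})\ge M$ for every $\vecy$, which will serve as the trivial fallback on atypical $\vecy$.

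The first substantive step will be to invoke Proposition~\ref{prop:generic-Diophantine-property}: the hypothesis $\alpha<\beta/4$ of the present statement gives $\beta\ge 4\alpha$, and the control $\bbp(Y_1\in gH)\le[G:H]^{-\beta}$ on $H\in\bigcup_i\cal_i$ is assumed directly. It outputs an exceptional set $B\subseteq G^{2^{m+1}-1}$ with $\bbp(Y\in B)\le|G|^{-\beta/(4L)}$, such that for every $\vecy\notin B$ the random variable $X_{\vecy}$ is of $(0,\beta'/2)$-Diophantine type, where $\beta':=\tfrac{1}{8^{m+1}}\min(\tfrac{\beta}{5L},\tfrac{\alpha'}{2})$.

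For each good $\vecy\notin B$, I case split on whether $X_{\vecy}$ retains room for improvement. If $H_2(X_{\vecy})\le(1-\alpha''/2)\log|G|$, then Lemma~\ref{lem:GrowthOfRenyiEntropySingleScale} applied to the independent pair $(X_{\vecy},X_{2^{m+1}+1})$ with Diophantine parameters $(0,\beta'/2)$, initial entropy parameter $\alpha'$ (which trivially satisfies $\alpha'>2\cdot 0$), and room-for-improvement parameter $\alpha''/2$, produces a constant $\gamma_0=\gamma_0(\alpha',\alpha'',\beta',L,\delta_0)>0$ and the bound
\[
H_2(X_{\vecy}X_{2^{m+1}+1})\ge\tfrac{1}{2}\bigl(H_2(X_{\vecy})+H_2(X_{2^{m+1}+1})\bigr)+\gamma_0\log|G|\ge M+\gamma_0\log|G|.
\]
In the complementary sub-case, $H_2(X_{\vecy})>(1-\alpha''/2)\log|G|\ge M+(\alpha''/2)\log|G|$, where the second inequality uses $M\le(1-\alpha'')\log|G|$ from the room-for-improvement hypothesis on each $X_i$; combining with $H_2(X_{\vecy}X_{2^{m+1}+1})\ge H_2(X_{\vecy})$ produces the analogous bound.

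Setting $\gamma:=\tfrac{1}{2}\min(\gamma_0,\alpha''/2)$, every $\vecy\notin B$ contributes at least $M+2\gamma\log|G|$ to the conditional entropy while every $\vecy\in B$ contributes at least $M$. A final averaging using $\bbp(B)\le |G|^{-\beta/(4L)}$ gives
\[
H_2(\cdot\mid Y_1,\dots,Y_{2^{m+1}-1})\ge(1-|G|^{-\beta/(4L)})(M+2\gamma\log|G|)+|G|^{-\beta/(4L)}M\ge M+\gamma\log|G|
\]
for $|G|\gg_{\beta,L,\gamma}1$, as required. The main technical content is Proposition~\ref{prop:generic-Diophantine-property}, which is the only place where the detailed subgroup hierarchy of (V3) is used; once it is in hand, the rest is bookkeeping that combines trivial entropy monotonicity with a single application of the single-scale growth lemma, and the main thing to be careful about is the second sub-case above, where the product $X_{\vecy}$ may \emph{already} have exhausted the room for improvement before $X_{2^{m+1}+1}$ is multiplied in.
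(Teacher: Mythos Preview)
Your proposal is correct and follows essentially the same route as the paper: expand the conditional $H_2$ as an average over $\vecy$, invoke Proposition~\ref{prop:generic-Diophantine-property} to control the exceptional set, use the trivial bound from Lemma~\ref{lem:product-entropy-trivial-bound} on $\mathcal{E}''$, and on the complement apply Lemma~\ref{lem:GrowthOfRenyiEntropySingleScale} after the same case split on whether $H_2(X_{\vecy})$ still has room for improvement. If anything, you are slightly more explicit than the paper in setting $\gamma=\tfrac12\min(\gamma_0,\alpha''/2)$ to merge the two sub-cases, whereas the paper writes $\gamma_0$ for both and absorbs the discrepancy into the final constant.
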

Finally Varj\'{u} finds a subset $B$ of $S$ such that, if $Y=(Y^{(1)},\ldots,Y^{(n)})$ is a random variable with distribution $\pcal_B$, then for {\em lots of} $i$'s $Y^{(i)}$ is of $(0,\vare')$-Diophantine type where $\vare'\gg_{\vare,L} 1$ ($\vare$ and $L$ are given in Proposition~\ref{propvarjuproduct}); overall one gets 
\[\textstyle \log |\prod_{2^{m+2}} S|-\log |S|\gg_{\vare,L} \log|S|.\] One can finish the proof of Proposition~\ref{propvarjuproduct} using \cite[Lemma 2.2]{Hel1} which says 
\[\textstyle (k-2)(\log |\prod_3 S|-\log |S|)\ge \log|\prod_{k} S|-\log |S|\]
for any integer $k\ge 3$. 
\subsection{Regularization and a needed inequality}\label{ss:regular-subset} Let $L$, $\delta_0$, and $\{G_i\}_{i=1}^{\infty}$ be as in the statement of Proposition~\ref{propvarjuproduct}. Since $G_i$'s are pairwise non-isomorphic, $\lim_{i\rightarrow \infty}|G_i|=\infty$.

\begin{lem}\label{lem:required-inequality}
 Suppose $m:\bbr^+\rightarrow \bbz^+$ is a function. If claim of Proposition~\ref{propvarjuproduct} holds for $\vare$, $\delta(\vare)$, and the subfamily $\{G_i|\h 1\le i, |G_i|> m(\vare)\}$, then Proposition~\ref{propvarjuproduct} holds with $\delta(\vare)/2$ for $\delta$ and a possibly larger implied constant in the final claimed inequality.
\end{lem}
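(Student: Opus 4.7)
The plan is to reduce the proposition for the full family to the hypothesized claim for the subfamily of large factors by ``forgetting'' the small-factor contribution. Let $I_s:=\{i\in[1..n]:|G_i|\le m(\vare)\}$ and $I_l:=[1..n]\setminus I_s$, and write $G=G_s\oplus G_l$ with $G_s:=\bigoplus_{i\in I_s}G_i$ and $G_l:=\bigoplus_{i\in I_l}G_i$. Since $\{G_i\}_i$ are pairwise non-isomorphic and there are only finitely many isomorphism classes of finite groups of order at most $m(\vare)$, one has $|I_s|=O_\vare(1)$ and hence $K:=|G_s|=O_\vare(1)$ depends only on $\vare$. The subfamily appearing in the hypothesis of the lemma is precisely $\{G_i:i\in I_l\}$.

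I would first dispose of the trivial case where $|G_l|$ itself is bounded in terms of $\vare$: then $|G|$ and $|S|$ are $O_\vare(1)$, so the conclusion holds for free after enlarging the implied constant. Thus we may assume $|G_l|$ exceeds any prescribed function of $\vare$. Next, set $T:=\pi_l(S)\subseteq G_l$, where $\pi_l:G\to G_l$ is the natural projection. The set $T$ is symmetric, $|T|\ge|S|/K$, and $\pi_l(\prod_3 S)=\prod_3 T$. For $H'\le G_l$ and $g\in G_l$, the preimage $\pi_l^{-1}(gH')=g(G_s\oplus H')$ is a coset of a subgroup of $G$ of index $[G_l:H']$, so the Diophantine hypothesis on $S$ yields
\[
\pcal_T(gH')\le\frac{|S|}{|T|}[G_l:H']^{-\vare}|G|^{\delta(\vare)/2}\le K^{1+\delta(\vare)/2}[G_l:H']^{-\vare}|G_l|^{\delta(\vare)/2},
\]
which is at most $[G_l:H']^{-\vare}|G_l|^{\delta(\vare)}$ once $|G_l|\ge K^{(2+\delta(\vare))/\delta(\vare)}$. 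If moreover $|T|<|G_l|^{1-\vare}$, applying the subfamily claim to $T$ yields $|\prod_3 T|\gg_\vare|T|^{1+\delta(\vare)}$, and hence
\[
|\prod_3 S|\ge|\prod_3 T|\gg_\vare(|S|/K)^{1+\delta(\vare)}\gg_\vare|S|^{1+\delta(\vare)/2},
\]
absorbing $K^{-(1+\delta(\vare))}$ and the factor $|S|^{\delta(\vare)/2}\ge 1$ into the implied constant.

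The main obstacle is the borderline subcase $|G_l|^{1-\vare}\le|T|\le K^{1-\vare}|G_l|^{1-\vare}$, where the size hypothesis of the subfamily claim fails by the $O_\vare(1)$ factor $K^{1-\vare}$ coming from the bound $|S|<|G|^{1-\vare}=K^{1-\vare}|G_l|^{1-\vare}$. Here $S$ is essentially of maximal admissible size in $G$, and the tightened Diophantine bound on $S$ (with exponent $\delta(\vare)/2$ rather than $\delta(\vare)$) forces $S$ to be near-equidistributed on cosets of small-index subgroups. Combined with the $L^{-1}$-quasirandomness of each factor $G_i$ (condition (V2)$_L$) and Gowers-type product arguments, this equidistribution should force $|\prod_3 S|$ to be close to $|G|$, which greatly exceeds $|S|^{1+\delta(\vare)/2}$ since $|S|<|G|^{1-\vare}$. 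Justifying this last step carefully is the technical core of the lemma.
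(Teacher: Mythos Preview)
Your overall approach matches the paper's: split off the bounded-order factors, project $S$ to $T=\pi_l(S)\subseteq G_l$, verify the Diophantine hypothesis for $T$ by absorbing the $O_\vare(1)$ factors coming from $K=|G_s|$ into the $|G_l|^{\delta(\vare)/2}\to|G_l|^{\delta(\vare)}$ slack, and then apply the subfamily claim. The paper's proof does exactly this and nothing more; in particular it does not explicitly check the size hypothesis $|T|<|G_l|^{1-\vare}$, which as you note can fail by the bounded factor $K^{1-\vare}$.

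Where you go wrong is in your assessment of this borderline case. You call it ``the technical core of the lemma'' and propose to handle it by Gowers-type quasirandomness, aiming to show that $|\prod_3 S|$ is close to $|G|$. This is both unnecessary and unsound as stated: a direct sum $\bigoplus_i G_i$ of $L^{-1}$-quasirandom groups is \emph{not} itself quasirandom (its smallest nontrivial irreducible representation has dimension $\min_i|G_i|^{1/L}$, not $|G|^{1/L}$), so Gowers' three-set lemma does not apply to $G$ as a whole without the scale-by-scale machinery developed later in the section --- which would be circular here.

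The actual fix is elementary. In the borderline case, pass to a symmetric subset $T'\subseteq T$ with $|T|/(2K)\le|T'|<|G_l|^{1-\vare}$ (remove inverse pairs from $T$ until the size drops into this window). The Diophantine bound for $T'$ picks up an extra factor at most $2K=O_\vare(1)$, which is absorbed by the same $|G_l|^{\delta(\vare)/2}\to|G_l|^{\delta(\vare)}$ slack after possibly enlarging the threshold on $|G_l|$. Now the subfamily claim applies to $T'$, giving
\[
|\textstyle\prod_3 T|\ \ge\ |\textstyle\prod_3 T'|\ \gg_\vare\ |T'|^{1+\delta(\vare)}\ \gg_\vare\ |T|^{1+\delta(\vare)},
\]
and you finish exactly as in the non-borderline case. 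So the lemma really is a routine reduction; there is no hidden technical core.
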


Let us remark that $\delta$ also depends on $L$ and $\delta_0$; but we are assuming that those are fixed in the entire section.

\begin{proof}[Proof of Lemma~\ref{lem:required-inequality}]
Suppose $S$ is a symmetric subset of $G:=\bigoplus_{i=1}^n G_i$ such that 
\be\label{eq:assumption}
|S|<|G|^{1-\vare} \text{ and } \pcal_S(gH)<[G:H]^{-\vare}|G|^{\delta(\vare)/2}
\ee
for any subgroup $H$ of $G$ and $g\in G$. Let 
\[
N:=\bigoplus_{|G_i|\le m(\vare), 1\le i\le n} G_i.
\]
Since $G_i$'s are pairwise non-isomorphic, $|N|<f(\vare)$ for some function $f:\bbr^+\rightarrow \bbz^+$. Let $\overline{S}:=\pi_N(S)$ where 
\[
\pi_N:G\rightarrow \overline{G}:=\bigoplus_{|G_i|> m(\vare), 1\le i\le n} G_i
\]
 is the natural projection. For any subgroup $\overline{H}$ of $\overline{G}$ and $\overline{g}\in \overline{G}$, by \eqref{eq:assumption} we have
 \[
 \pcal_S((\overline{g},1)\overline{H}\oplus N)<[G:\overline{H}\oplus N]^{-\vare/2}|G|^{\delta(\vare)/2};
 \]
 and so 
 \[
 \frac{1}{|N|}\pcal_{\overline{S}}(\overline{g}\overline{H})\le 
 \frac{|\overline{S}|}{|S|}\pcal_{\overline{S}}(\overline{g}\overline{H})
 <
 [\overline{G}:\overline{H}]^{-\vare}
 |\overline{G}|^{\delta(\vare)/2}|N|^{\delta(\vare)/2}.
 \]
This implies that 
\[
\pcal_{\overline{S}}(\overline{g}\overline{H})\le [\overline{G}:\overline{H}]^{-\vare}
 |\overline{G}|^{\delta(\vare)/2}f(\vare)^{1+\delta(\vare)/2}.
\]
If $|\overline{G}|>f(\vare)^{\frac{1+\delta(\vare)/2}{\delta(\vare)/2}}$, then we get
$
\pcal_{\overline{S}}(\overline{g}\overline{H})\le [\overline{G}:\overline{H}]^{-\vare}
 |\overline{G}|^{\delta(\vare)}.
$ 
Therefore by our assumption 
\[
|\overline{S}|^{1+\delta(\vare)} \le C(\vare)|\overline{S}\cdot \overline{S}\cdot \overline{S}|.
\]
Hence
\[
|S|^{1+\delta(\vare)}\le f(\vare)^{1+\delta(\vare)} C(\vare) |S\cdot S\cdot S|.
\]
If $|\overline{G}|<f(\vare)^{\frac{1+\delta(\vare)/2}{\delta(\vare)/2}}$, then $|S|\le |G|<f(\vare)^{1+\frac{1+\delta(\vare)/2}{\delta(\vare)/2}}$. Overall we get
\[
|S|^{1+\delta(\vare)}\le C'(\vare) |S\cdot S\cdot S|,
\]
where $C'(\vare):=\max\{f(\vare)^{2+\frac{2+\delta(\vare)}{\delta(\vare)/2}}, f(\vare)^{1+\delta(\vare)} C(\vare)\}$; and claim follows.
\end{proof}
We show that for small enough $\vare$ we can take \be\label{eq:initial-delta}
\delta(\vare):=\min\{\vare^5,1\}/8L.
\ee
For the given $\delta(\vare)$ and a positive valued function $C''(\vare)$, we let 
\[
m(\vare):=\sup\{x\in \bbr^+|\h C''(\vare) \log x\ge x^{\delta(\vare)^2}\}.
\] 
By Lemma~\ref{lem:required-inequality}, we can and will assume that 
\be\label{eq:required-inequality}
C''(\vare) \log |G_i|< |G_i|^{\delta(\vare)^2}
\ee
for any $i$. Throughout the proof of proposition~\ref{propvarjuproduct} we will be assuming inequalities of the type given in \eqref{eq:required-inequality}. 

As it is discussed in the beginning of \cite[Section 3.2]{Var}, passing to the groups $G_i/Z(G_i)$, using an argument similar to Lemma~\ref{lem:required-inequality} and based on an inequality of type \eqref{eq:required-inequality}, we can and will assume that $G_i$'s are simple groups.

For any non-empty subset $I$ of $[1..n]$, we let $G_I:=\bigoplus_{i\in I}G_i$; sometimes we view $G_I$ as a subgroup of $G_J$ when $I\subseteq J$. We let $G_{\varnothing}=\{1\}$. For any $I\subseteq J\subseteq [1..n]$, we let $\pr_I:G_J\rightarrow G_I$ be the natural projection map.
\begin{definition}\label{def:regular}
A subset $A$ of $\bigoplus_{i=1}^n G_i$ is called $(m_0,\ldots,m_{n-1})$-regular if for any $0\le k< n$ and $\overline{x}\in \pr_{[1..k]}(A)$ we have
\[
|\{x\in \pr_{[1..{k+1}]}(A)| \pr_{[1..k]}(x)=\overline{x}\}|=m_k.
\]	
\end{definition}
For a random variable $X$ with values in $\bigoplus_{i=1}^n G_i$, we write $X=(X_1,\ldots,X_n)$ and get random variables $X_i$ with values in $G_i$. 
\begin{lem}\label{lem:random-variables-regular-sets}
Suppose $A\subseteq \bigoplus_{i=1}^n G_i$ is an $(m_1,\ldots,m_n)$-regular subset. Let $X$ be a random variable with respect to the probability counting measure on $A$. Then
\begin{enumerate}
\item $\pr_{[1..k]}(X)$ is a random variable with respect to the probability counting measure on $\pr_{[1..k]}(A)$.
\item For any $(a_1,\ldots,a_n)\in A$, the conditional probability measure 
	\[\bbp(X_k|X_1=a_1,\ldots, X_{k-1}=a_{k-1})\] is a probability counting measure on a set of size $m_k$.	
\end{enumerate}
\end{lem}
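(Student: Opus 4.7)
The plan is to use the defining regularity condition iteratively to compute the size of the fiber of $\pr_{[1..k]}\colon A\to \pr_{[1..k]}(A)$, and then to read off both statements from the definition of the conditional counting measure. I will assume the indexing $(m_0,\ldots,m_{n-1})$ from Definition~\ref{def:regular} (so the statement of the lemma involves a harmless shift).

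First, I would prove the following intermediate claim: for every $0\le k\le n$ and every $\overline{x}\in \pr_{[1..k]}(A)$,
\[
\bigl|\{x\in A\mid \pr_{[1..k]}(x)=\overline{x}\}\bigr| \;=\; m_k m_{k+1}\cdots m_{n-1}.
\]
The proof is by downward induction on $k$. The base case $k=n$ is trivial (the fiber is a single point and the empty product equals $1$). For the inductive step, note that the fibre above $\overline{x}\in \pr_{[1..k]}(A)$ decomposes as
\[
\{x\in A\mid \pr_{[1..k]}(x)=\overline{x}\}=\bigsqcup_{y}\{x\in A\mid \pr_{[1..k+1]}(x)=y\},
\]
where $y$ ranges over $\{y\in \pr_{[1..k+1]}(A)\mid \pr_{[1..k]}(y)=\overline{x}\}$, a set of size $m_k$ by $(m_0,\ldots,m_{n-1})$-regularity. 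Each summand has cardinality $m_{k+1}\cdots m_{n-1}$ by the inductive hypothesis, which gives the claim. Specializing to $k=0$ yields $|A|=m_0m_1\cdots m_{n-1}$, and hence $|\pr_{[1..k]}(A)|=m_0\cdots m_{k-1}$.

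For part~(1), since $X$ is uniform on $A$, for any $\overline{x}\in \pr_{[1..k]}(A)$ the intermediate claim gives
\[
\bbp\bigl(\pr_{[1..k]}(X)=\overline{x}\bigr)=\frac{|\{x\in A\mid \pr_{[1..k]}(x)=\overline{x}\}|}{|A|}=\frac{m_k\cdots m_{n-1}}{m_0\cdots m_{n-1}}=\frac{1}{|\pr_{[1..k]}(A)|},
\]
so $\pr_{[1..k]}(X)$ is the probability counting measure on $\pr_{[1..k]}(A)$.

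For part~(2), fix $(a_1,\ldots,a_n)\in A$ and $1\le k\le n$, and let $F_{k-1}:=\{g\in G_k\mid (a_1,\ldots,a_{k-1},g)\in \pr_{[1..k]}(A)\}$. By the definition of regular set applied with the index $k-1$ (that is, fibres of $\pr_{[1..k]}(A)\to \pr_{[1..k-1]}(A)$ have size $m_{k-1}$), we have $|F_{k-1}|=m_{k-1}$. Using part~(1) applied to levels $k$ and $k-1$, for any $g\in F_{k-1}$ the definition of conditional probability yields
\[
\bbp(X_k=g\mid X_1=a_1,\ldots,X_{k-1}=a_{k-1})=\frac{\bbp(\pr_{[1..k]}(X)=(a_1,\ldots,a_{k-1},g))}{\bbp(\pr_{[1..k-1]}(X)=(a_1,\ldots,a_{k-1}))}=\frac{|\pr_{[1..k-1]}(A)|}{|\pr_{[1..k]}(A)|}=\frac{1}{m_{k-1}},
\]
while for $g\notin F_{k-1}$ the conditional probability is $0$. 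Thus the conditional distribution of $X_k$ is uniform on the set $F_{k-1}$ of size $m_{k-1}$, which is exactly what (2) asserts (after the shift in indexing). No serious obstacle arises; the only care needed is in tracking the offset between the indices used in Definition~\ref{def:regular} and in the statement of the lemma.
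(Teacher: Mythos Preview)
Your proof is correct and complete. The paper does not give a proof of its own here; it simply states that both claims are easy consequences of regularity and refers to \cite[Lemma 22]{SG:sum-product}, so your argument fills in exactly the details the paper omits, via the natural approach of computing fibre sizes by induction and reading off the uniform distributions.
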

\begin{proof}
Both of the above claims are easy consequences of the fact that $A$ is a regular set (See \cite[Lemma 22]{SG:sum-product}).
\end{proof}
The filtration $\{1\}=G_{\varnothing} \subseteq G_{\{1\}}\subseteq \cdots \subseteq G_{[1..i]} \subseteq \cdots \subseteq G_{[1..n]}$ gives us a rooted tree structure, where the vertices at the level $i$ are the elements of $G_{[1..i]}$; and the {\em children} of $(a_1,\ldots,a_i)$ are elements of $(a_1,\ldots,a_i)\oplus G_{i+1}$. To a non-empty subset $A$ of $G_{[1..n]}$, we associate the rooted subtree consisting of paths from the root to the elements of $A$. So a subset $A$ is $(m_0,\ldots,m_{n-1})$-regular precisely when the vertices at the level $i$ of the associated rooted tree of $A$ has exactly $m_i$ children.

 As it has been discussed in \cite[Section 3.2]{Var} by \cite[Lemma 5.2]{BGS} and inequality~\eqref{eq:required-inequality} (see also \cite[A.3]{BG3} and \cite[Section 2.2]{SG:SAI}) we get that there is a $(D_0,\ldots,D_{n-1})$-regular subset $A$ of $S$ such that the following holds.
 \begin{enumerate}
 \item For any $i$, either $D_i>|G_i|^{\delta}$ or $D_i=1$.
 \item $|A|>(\prod_{i=1}^n|G_i|)^{-2\delta} |S|$.	
 \end{enumerate}
 
 \subsection{Scales with no room for improvement} 
 This section is identical to \cite[Section 3.4]{Var}. The change in the assumptions has no effect in this part of the proof. We have decided to include the proofs for the convenience of the reader. 
 
 Let $I_{\rm l}:=\{i\in [0..n-1]|\h D_i\ge |G_i|^{1-1/(3L)}\}$, and $I_{\rm s}:=[0..n-1]\setminus I_{\rm l}$. Suppose $X=(X_1,\ldots,X_n)$ is the random variable with respect to the probability counting measure on $A$. 

\begin{lem}\label{lem:scales-with-no-room-for-improvement}
	In the above setting, $\pr_{I_{\rm l}}(A\cdot A\cdot A)=G_{I_{\rm l}}$.
\end{lem}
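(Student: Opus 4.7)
The plan is to prove the lemma by induction on $|I_{\rm l}|$. Order $I_{\rm l}=\{i_1<i_2<\cdots<i_k\}$, and for each $0\le j\le k$ write $J_j:=\{i_1,\ldots,i_j\}$ and $A^{(j)}:=\pr_{[1..i_j]}(A)\subseteq G_{[1..i_j]}$. The projection $A^{(j)}$ inherits the regular structure of $A$ up to level $i_j$, being $(D_0,\ldots,D_{i_j-1})$-regular in the sense of Definition~\ref{def:regular}, because $\pr_{[1..k+1]}(A^{(j)})=\pr_{[1..k+1]}(A)$ whenever $k+1\le i_j$. Since $\pr_{I_{\rm l}}=\pr_{J_k}\circ\pr_{[1..i_k]}$ and projection commutes with coordinate-wise multiplication in the direct sum, the claim of the lemma is equivalent to $\pr_{J_k}(A^{(k)}\cdot A^{(k)}\cdot A^{(k)})=G_{J_k}$, so I will prove the stronger inductive statement $\pr_{J_j}(A^{(j)}\cdot A^{(j)}\cdot A^{(j)})=G_{J_j}$ for all $j$, with the trivial base case $j=0$.

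For the inductive step $j-1\to j$, I will fix a target $(\overline{g},g_{i_j})\in G_{J_{j-1}}\oplus G_{i_j}=G_{J_j}$ and apply the inductive hypothesis to $A^{(j-1)}$ to choose prefixes $\tilde a,\tilde b,\tilde c\in A^{(j-1)}$ with $\pr_{J_{j-1}}(\tilde a\tilde b\tilde c)=\overline{g}$. The decisive observation, forced by regularity, is that each prefix $\tilde x\in A^{(j-1)}$ admits lifts in $A^{(j)}$ whose $i_j$-th coordinates cover a subset of $G_{i_j}$ of cardinality at least $D_{i_j-1}$: any single completion of $\tilde x$ to a point of $\pr_{[1..i_j-1]}(A)$ already contributes $D_{i_j-1}$ distinct values of $\pr_{i_j}$ in $A^{(j)}$ by regularity (cf.\ Lemma~\ref{lem:random-variables-regular-sets}), and since $i_j\in I_{\rm l}$ we have $D_{i_j-1}\ge|G_{i_j}|^{1-1/(3L)}$.

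I will then invoke Gowers' theorem (Theorem~\ref{thm:Gowers}) on the $L^{-1}$-quasi-random group $G_{i_j}$: the three subsets of $G_{i_j}$ just described---the allowed $i_j$-th coordinates for lifts of $\tilde a$, of $\tilde b$, and of $\tilde c$---each have logarithmic size at least $(1-\tfrac{1}{3L})\log|G_{i_j}|$, so their average exceeds this threshold (strictly, after absorbing the weak-versus-strict gap into a mild tightening of the constant $1/(3L)$ in the definition of $I_{\rm l}$, which costs nothing after the regularization of Section~\ref{ss:regular-subset}). Consequently their threefold product fills $G_{i_j}$, and I can select $i_j$-th coordinates of $a,b,c$ whose product equals $g_{i_j}$ and then complete $\tilde a,\tilde b,\tilde c$ to a triple in $A^{(j)}$ whose $\pr_{J_j}$-image is $(\overline{g},g_{i_j})$, closing the induction. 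The main obstacle I anticipate is purely bookkeeping---reconciling the weak inequality in the definition of $I_{\rm l}$ with Gowers' strict hypothesis, and matching the indexing of the branching parameters $D_i$ to the factors $G_i$; notably, the pairwise non-isomorphism of the $G_i$ (which was crucial for the regularization) plays no role at this step, only the single-factor quasi-randomness matters.
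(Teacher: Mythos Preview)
Your proposal is correct and follows essentially the same inductive argument as the paper: both proceed by induction on the number of indices in $I_{\rm l}$, use the inductive hypothesis to hit a prescribed target in the first $j-1$ coordinates of $I_{\rm l}$, and then exploit regularity to produce three subsets of $G_{i_j}$ of size $D_{i_j-1}\ge |G_{i_j}|^{1-1/(3L)}$ whose triple product fills $G_{i_j}$ by Gowers' theorem. Your bookkeeping is in fact slightly more careful than the paper's (you track the off-by-one in the branching index $D_{i_j-1}$ and flag the weak-versus-strict inequality in the Gowers hypothesis), but the substance is identical.
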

\begin{proof}
(See the beginning of Section 3.4 in \cite{Var}) Let's recall that $A$ is a $(D_0,\ldots,D_{n-1})$-regular set. Suppose $I$ is a subset $[0..n-1]$ such that for any $i\in I$, $D_i>|G_i|^{1-1/(3L)}$. By induction on $I$, we prove that $\pr_I(A\cdot A\cdot A)=G_I$. The base of induction follows from Theorem~\ref{thm:Gowers}. Suppose $I=\{i_1,\ldots,i_{m+1}\}$. By the induction hypothesis, 
\[
\pr_{\{i_1,\ldots,i_m\}}(A\cdot A\cdot A)=G_{I\setminus\{i_{m+1}\}}.
\]  
So for any $(g_{i_1},\ldots,g_{i_m})\in G_{I\setminus\{i_{m+1}\}}$, there are $a_1,a_2,a_3\in A$ such that 
\be\label{eq:first-m-components}
\pr_{I\setminus\{i_{m+1}\}}(a_1a_2a_3)=(g_{i_1},\ldots,g_{i_m}).
\ee 
Let 
\[
A(a_j):=\{a\in A|\h \pr_{[1..i_{m+1}-1]}(a)=\pr_{[1..i_{m+1}-1]}(a_j)\}.
\]
Then $|\pr_{i_{m+1}}(A(a_j))|=D_{i_{m+1}}>|G_i|^{1-1/(3L)}$; and so by Theorem~\ref{thm:Gowers}, we have
\be\label{eq:last-component}
\pr_{i_{m+1}}(A(a_1)A(a_2)A(a_3))=\pr_{i_{m+1}}(A(a_1))\pr_{i_{m+1}}(A(a_2))\pr_{i_{m+1}}(A(a_3))=G_{i_{m+1}}.
\ee
By \eqref{eq:first-m-components} and \eqref{eq:last-component}, we have that 
\[
\pr_I(A\cdot A\cdot A)=G_I;
\]
and claim follows.
\end{proof}
Let $I_{\rm s}:=[0..n-1]\setminus I_{\rm l}$, and for $g,g'\in G_{I_{\rm s}}$, let
\[
d(g,g'):=\sum_{i\in I_{\rm s}, \pr_i(g)\neq \pr_i(g')} \log |G_i|.
\]
It is easy to see that $d(.,.)$ defines a metric on $G_{I_{\rm s}}$. Let
\[
\textstyle
T:=\max \{d(g_{\rm s},1)|\h g_{\rm s}\in \bigcup_{i=1}^3 \pr_{I_{\rm s}}((\prod_{3i} S) \cap (\{1\}\oplus G_{I_{\rm s}}))\}; 
\]
here we rearranging components of $G_{[1..n]}$ and identifying it with $G_{I_{\rm l}}\oplus G_{I_{\rm s}}$. For any $g_{\rm l}\in G_{I_{\rm l}}$, let $\psi(g_{\rm l})\in G_{I_{\rm s}}$ be such that 
\[
(g_{\rm l},\psi(g_{\rm l}))\in A\cdot A\cdot A;
\]
notice that by Lemma~\ref{lem:scales-with-no-room-for-improvement} there is such a $\psi(g_{\rm l})$.
\begin{lem}\label{lem:approximate-hom}
In the above setting $\psi:G_{I_{\rm l}}\rightarrow G_{I_{\rm s}}$ is a $T$-approximate homomorphism; that means for any $g,g'\in G_{I_{\rm l}}$ we have
\[
d(\psi(gg'),\psi(g)\psi(g'))\le T\text{ and, } 
d(\psi(g^{-1}),\psi(g)^{-1})\le T.
\]	
\end{lem}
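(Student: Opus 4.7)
My plan rests on two elementary observations: first, that the metric $d$ on $G_{I_{\rm s}}$ is \emph{bi-invariant} under the group law (because the set of coordinates in which $g$ and $g'$ differ is unchanged after multiplying both on the left or right by a common element), and second, that the element $\psi(g)$ is characterized by the requirement that $(g,\psi(g))$ lie in $A\cdot A\cdot A\subseteq \prod_{3}S$ under the identification $G_{[1..n]}\cong G_{I_{\rm l}}\oplus G_{I_{\rm s}}$. Given these, the argument is a short bookkeeping exercise in the symmetric set $S$ and its powers.

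First I would verify bi-invariance of $d$: for any $h,g,g'\in G_{I_{\rm s}}$ and any $i\in I_{\rm s}$ one has $\pr_i(gh)\neq\pr_i(g'h)$ iff $\pr_i(g)\neq\pr_i(g')$ (and similarly on the left), so $d(gh,g'h)=d(g,g')=d(hg,hg')$. Next, for $g,g'\in G_{I_{\rm l}}$, Lemma~\ref{lem:scales-with-no-room-for-improvement} produces elements $(g,\psi(g))$, $(g',\psi(g'))$, $(gg',\psi(gg'))$ in $A\cdot A\cdot A\subseteq\prod_{3}S$. Multiplying the first two gives $(gg',\psi(g)\psi(g'))\in\prod_{6}S$, and the inverse of the third gives $(gg',\psi(gg'))^{-1}\in\prod_{3}S$ because $S$ is symmetric. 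Multiplying these together yields
\[
(1,\,\psi(g)\psi(g')\psi(gg')^{-1})\in\textstyle\prod_{9}S\cap(\{1\}\oplus G_{I_{\rm s}}).
\]
By the definition of $T$ (taking $i=3$), $d(\psi(g)\psi(g')\psi(gg')^{-1},1)\le T$, and by right-invariance of $d$ this is exactly $d(\psi(gg'),\psi(g)\psi(g'))\le T$.

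For the inverse relation, both $(g^{-1},\psi(g^{-1}))$ and $(g,\psi(g))^{-1}=(g^{-1},\psi(g)^{-1})$ lie in $\prod_{3}S$, so their product
\[
(g^{-1},\psi(g^{-1}))\cdot(g,\psi(g))=(1,\,\psi(g^{-1})\psi(g))
\]
lies in $\prod_{6}S\cap(\{1\}\oplus G_{I_{\rm s}})$. The definition of $T$ (taking $i=2$) gives $d(\psi(g^{-1})\psi(g),1)\le T$, and right-invariance by $\psi(g)^{-1}$ converts this to $d(\psi(g^{-1}),\psi(g)^{-1})\le T$, completing the proof.

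I do not anticipate any serious obstacle here; the lemma is essentially a formal consequence of how $\psi$ was constructed and the bi-invariance of $d$. The only subtlety is to be careful with the exponents on $S$ (one needs $\prod_9 S$ for the homomorphism relation and $\prod_6 S$ for the inverse relation), which is precisely what the maximum over $i=1,2,3$ in the definition of $T$ accommodates.
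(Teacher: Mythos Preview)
Your proof is correct and follows essentially the same approach as the paper's: both arguments multiply the elements $(g,\psi(g)),(g',\psi(g')),(gg',\psi(gg'))\in A\cdot A\cdot A\subseteq\prod_3 S$ (and their inverses, using symmetry of $S$) to produce an element of $\prod_9 S$ (resp.\ $\prod_6 S$) with trivial $G_{I_{\rm l}}$-component, then invoke the definition of $T$ together with bi-invariance of $d$. Your write-up is in fact slightly more careful than the paper's, which records the element as $(1,\psi(gg')\psi(g)^{-1}\psi(g')^{-1})$---an expression whose first coordinate is not obviously $1$ unless one reorders the factors as you did.
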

\begin{proof}
For $g,g'\in G_{I_{\rm l}}$, we have $(g,\psi(g)),(g',\psi(g')),(gg',\psi(gg'))\in A\cdot A\cdot A$; and so 
\[
\textstyle
(1,\psi(gg')\psi(g)^{-1}\psi(g')^{-1})\in \prod_9 S \cap (\{1\}\oplus G_{I_{\rm s}})
\text{ and, }
(1,\psi(g^{-1})\psi(g)) \in \prod_6 S \cap (\{1\}\oplus G_{I_{\rm s}});
\] 	
and claim follows as $d(g,g')$ is $G_{I_{\rm s}}$-bi-invariant. 
\end{proof}
By \cite[Theorem 2.1]{Far}, there is a group homomorphism $\wt{\psi}:G_{I_{\rm l}}\rightarrow G_{I_{\rm s}}$ such that for $g\in G_{I_{\rm l}}$
\be\label{eq:close-to-hom}
d(\psi(g),\wt{\psi}(g))\le 24 T.
\ee
\begin{lem}\label{lem:one-element-in-subgroup}
In the above setting, let $H$ be the graph of $\wt{\psi}$; then for any $g\in S$, there is $I_{\rm s}(g)\subseteq I_{\rm s}$ such that the following holds:
\begin{enumerate}
\item $g\in H G_{I_{\rm s}(g)}$ where $G_{I_{\rm s}(g)}$ is viewed as a subgroup of $G_{[1..n]}$. 
\item $|G_{I_{\rm s}(g)}| \le 2^{25 T}$.	
\end{enumerate}
\end{lem}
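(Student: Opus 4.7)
The plan is a direct computation in the bi-invariant metric $d$ on $G_{I_{\rm s}}$. Writing $g=(g_{\rm l},g_{\rm s})$ under the decomposition $G_{[1..n]}=G_{I_{\rm l}}\oplus G_{I_{\rm s}}$, I will locate $g$ in a coset of the graph $H=\{(h,\wt{\psi}(h))\mid h\in G_{I_{\rm l}}\}$ by an element of $\{1\}\oplus G_{I_{\rm s}}$ of small $d$-length, and then read off $I_{\rm s}(g)$ from the set of coordinates where that element is nontrivial.

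First, by Lemma~\ref{lem:scales-with-no-room-for-improvement} and the definition of $\psi$, we have $(g_{\rm l},\psi(g_{\rm l}))\in A\cdot A\cdot A\subseteq \prod_3 S$. Since $S$ is symmetric, $1\in \prod_2 S$, and hence the correction element
\[
g\cdot(g_{\rm l},\psi(g_{\rm l}))^{-1}=(1,\,g_{\rm s}\psi(g_{\rm l})^{-1})
\]
lies in $\prod_4 S\subseteq \prod_6 S$, and simultaneously in $\{1\}\oplus G_{I_{\rm s}}$. By the very definition of $T$ (taking $i=2$), this forces $d(g_{\rm s}\psi(g_{\rm l})^{-1},1)\le T$. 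Combining with \eqref{eq:close-to-hom}, which gives $d(\psi(g_{\rm l}),\wt{\psi}(g_{\rm l}))\le 24T$, the triangle inequality and bi-invariance of $d$ yield
\[
d(g_{\rm s},\wt{\psi}(g_{\rm l}))=d(g_{\rm s}\wt{\psi}(g_{\rm l})^{-1},1)\le 25T.
\]

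I would then define $I_{\rm s}(g):=\{i\in I_{\rm s}\mid \pr_i(g_{\rm s})\ne \pr_i(\wt{\psi}(g_{\rm l}))\}$. By construction, $x:=\wt{\psi}(g_{\rm l})^{-1}g_{\rm s}$ is trivial at all coordinates outside $I_{\rm s}(g)$, so $x\in G_{I_{\rm s}(g)}$ (viewed as a subgroup of $\{1\}\oplus G_{I_{\rm s}}$). The identity
\[
g=(g_{\rm l},g_{\rm s})=(g_{\rm l},\wt{\psi}(g_{\rm l}))\cdot(1,x)
\]
exhibits $g$ as an element of $H\cdot G_{I_{\rm s}(g)}$, proving part (1). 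For part (2), we compute
\[
\log|G_{I_{\rm s}(g)}|=\sum_{i\in I_{\rm s}(g)}\log|G_i|=d(g_{\rm s},\wt{\psi}(g_{\rm l}))\le 25T,
\]
which, after fixing the base of the logarithm (absorbed into the constant), gives $|G_{I_{\rm s}(g)}|\le 2^{25T}$.

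There is essentially no obstacle beyond bookkeeping; the argument is direct. The conceptual point, which was arranged by the earlier construction, is that $T$ was defined precisely to control the ``diagonal correction'' elements $(1,g_{\rm s}\psi(g_{\rm l})^{-1})$ that arise when comparing an arbitrary $g\in S$ with the element of $A\cdot A\cdot A$ sharing the same $I_{\rm l}$-projection, and that Farah's theorem contributes the single extra term $24T$ needed to pass from the approximate homomorphism $\psi$ to the genuine homomorphism $\wt{\psi}$. The only thing to verify carefully is that the product length one obtains, namely $\prod_4 S$, fits within the scope of $T$'s definition, which it does via $\prod_4 S\subseteq \prod_6 S$.
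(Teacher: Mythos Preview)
Your proof is correct and follows the same approach as the paper's. In fact, you supply the justification for the step the paper leaves implicit, namely why $d(\psi(g_{\rm l}),g_{\rm s})\le T$: you correctly observe that $g\cdot(g_{\rm l},\psi(g_{\rm l}))^{-1}\in \prod_4 S\subseteq \prod_6 S$ (using that $S$ is symmetric so $1\in\prod_2 S$), and that this element has trivial $I_{\rm l}$-component, so the definition of $T$ with $i=2$ applies. The remainder---the triangle inequality with \eqref{eq:close-to-hom}, the definition of $I_{\rm s}(g)$, and the reading off of $|G_{I_{\rm s}(g)}|$ from the metric---matches the paper exactly.
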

\begin{proof}
Suppose $g=(g_{\rm l},g_{\rm s})$ for some $g_{\rm l}\in G_{I_{\rm l}}$ and $g_{\rm s}\in G_{I_{\rm s}}$; then $d(\psi(g_{\rm l}),g_{\rm s})\le T$. By \eqref{eq:close-to-hom}, we have $d(\wt{\psi}(g_{\rm l}),\psi(g_{\rm l}))\le 24 T$; and so 
\be\label{eq:distance-from-H}
d(g_{\rm s},\wt{\psi}(g_{\rm l}))\le 25 T. 
\ee
Let $h:=(g_{\rm l},\wt{\psi}(g_{\rm l}))\in H$; and consider $h^{-1}g=(1,\wt{\psi}(g_{\rm l})^{-1} g_{\rm s})$. Let 
\[
I_{\rm s}(g):=\{j\in I_{\rm s}|\h \pr_j(g_{\rm s})\neq \pr_j(\wt\psi(g_{\rm l}))\};
\]
and so $h^{-1}g\in G_{I_{\rm s}(g)}$. By \eqref{eq:distance-from-H}, we have
\[
\sum_{j\in I_{\rm s}(g)} \log |G_j|\le 25 T,
\]
which implies that $|G_{I_{\rm s}(g)}|\le 2^{25T}$; and the claim follows.
\end{proof}
\begin{lem}\label{lem:finiding-small-height-element-with-large-centralizer}
	In the above setting, under the assumptions that $\delta\ll \vare^2\ll 1$ (as in \eqref{eq:initial-delta}) and an inequality of type \eqref{eq:required-inequality} hold, either $|\prod_3 S|>|G_{[1..n]}|^{1-\vare+\delta}$ or $T\gg \vare^2 \log |G_{[1..n]}|$; that means either $|\prod_3 S|>|G_{[1..n]}|^{1-\vare+\delta}$ or there is 
	\[
	\textstyle
	(1,g_{\rm s})\in \bigcup_{i=1}^3 (\prod_{3i} S) \cap (\{1\}\oplus G_{I_{\rm s}})
	\]
	such that $d(g_{\rm s},1)\gg \vare^2 \log |G_{[1..n]}|$.
\end{lem}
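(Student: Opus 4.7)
The plan is to argue by contrapositive: assuming $T\le c\vare^2\log|G|$ for a sufficiently small absolute constant $c>0$ to be chosen at the end, I will deduce $|\prod_3 S|>|G|^{1-\vare+\delta}$. First I invoke Lemma~\ref{lem:one-element-in-subgroup}: every $g\in S$ lies in $HG_{I_{\rm s}(g)}$ for some $I_{\rm s}(g)\subseteq I_{\rm s}$ with $|G_{I_{\rm s}(g)}|\le 2^{25T}$. Setting $J_0:=\bigcup_{g\in S}I_{\rm s}(g)\subseteq I_{\rm s}$ and $K:=HG_{J_0}$, the normality of $G_{J_0}$ in $G$ (as a direct summand) makes $K$ a subgroup containing $S$.

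Next I apply the hypothesis of Proposition~\ref{propvarjuproduct} to $K$: the identity $\pcal_S(K)=1$ together with $\pcal_S(K)<[G:K]^{-\vare}|G|^\delta$ yields $[G:K]<|G|^{\delta/\vare}$. With $\delta=\vare^5/(8L)$, for small $\vare$ one has $\delta/\vare<\vare-\delta$, giving $|K|>|G|^{1-\vare+\delta}$. Applying the same hypothesis to each intermediate subgroup $HG_{J'}$ with $J'\subseteq J_0$ and counting $HG_{J'}$-cosets meeting $S$ produces the refined bound $[G_{I_{\rm s}}:G_{J_0}]<|G|^{\delta/\vare}$, so $G_{J_0}$ is essentially $G_{I_{\rm s}}$.

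It remains to show that $\prod_3 S$ nearly fills $K$. The inclusion $S\subseteq HG_{J_0}$ forces $\pr_{I_{\rm s}\setminus J_0}(g)=\pr_{I_{\rm s}\setminus J_0}(\wt\psi(\pr_{I_{\rm l}}(g)))$ for every $g\in A$, so the regularity of $A$ compels $D_j=1$ for $j\in I_{\rm s}\setminus J_0$. By Lemma~\ref{lem:scales-with-no-room-for-improvement}, $\pr_{I_{\rm l}}(\prod_3 A)=G_{I_{\rm l}}$; for each $g_{\rm l}\in G_{I_{\rm l}}$, fixing a decomposition $g_{\rm l}=g_1g_2g_3$ with $g_i\in\pr_{I_{\rm l}}(A)$ and varying the $A$-fiber over $g_1$ produces at least $\prod_{j\in J_0}D_j=|A|/|\pr_{I_{\rm l}}(A)|$ distinct lifts of $g_{\rm l}$ in $\prod_3 A$. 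Combining with $|A|\ge|G|^{-2\delta}|S|$ gives
\[
|\prod_3 S|\ge |G_{I_{\rm l}}|\cdot\prod_{j\in J_0}D_j\ge \frac{|G_{I_{\rm l}}|}{|\pr_{I_{\rm l}}(A)|}\cdot|A|\ge|G_{I_{\rm l}}|^{1/(3L)}\,|G|^{-2\delta}|S|.
\]

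The main obstacle is converting this multiplicative estimate into the required absolute lower bound. To this end, apply the hypothesis of Proposition~\ref{propvarjuproduct} to the subgroups $G_{I_{\rm s}}$ and $G_{I_{\rm l}}$, obtaining $|S|\ge|G_{I_{\rm l}}|^\vare|G|^{-\delta}$ and $|S|\ge|G_{I_{\rm s}}|^\vare|G|^{-\delta}$. Together with $|K|=|G_{I_{\rm l}}||G_{J_0}|>|G|^{1-\vare+\delta}$ and $[G_{I_{\rm s}}:G_{J_0}]<|G|^{\delta/\vare}$, a case analysis on the relative sizes of $|G_{I_{\rm l}}|$ and $|G_{I_{\rm s}}|$—with $c$ chosen small enough that the $T$-dependent losses from Lemma~\ref{lem:one-element-in-subgroup} are absorbed by the $|G|^\delta$-factors—yields the desired bound $|\prod_3 S|>|G|^{1-\vare+\delta}$, completing the contrapositive.
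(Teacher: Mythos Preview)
Your approach has a genuine gap: you never make quantitative use of the hypothesis that $T$ is small. Once you pass to $K=HG_{J_0}$ with $J_0=\bigcup_{g\in S}I_{\rm s}(g)$, the bound $|G_{I_{\rm s}(g)}|\le 2^{25T}$ is lost, since the union $J_0$ can be all of $I_{\rm s}$ regardless of $T$. Your inequality $[G:K]<|G|^{\delta/\vare}$ is valid but uses only the coset hypothesis, not $T$; so from that point on you are trying to prove $|\prod_3 S|>|G|^{1-\vare+\delta}$ unconditionally, which is of course false in general. The later steps compound this: the assertion ``$D_j=1$ for $j\in I_{\rm s}\setminus J_0$'' does not follow from regularity, because regularity is with respect to the fixed coordinate ordering $1,\ldots,n$, not with respect to the $I_{\rm l}/I_{\rm s}$ split --- the value $\pr_j(g)$ is determined by $\pr_{I_{\rm l}}(g)$, which may involve coordinates beyond $j$. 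Likewise the fiber-counting over $g_1$ does not give $\prod_{j\in J_0}D_j$ distinct lifts for the same reason, and the final ``case analysis'' is not an argument.

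The paper's proof is much shorter and keeps the $T$-dependence front and center. From Lemma~\ref{lem:one-element-in-subgroup} one has $S\subseteq\bigcup_{I'\subseteq I_{\rm s},\,|G_{I'}|\le 2^{25T}}HG_{I'}$. Each $HG_{I'}$ is a union of at most $|G_{I'}|\le 2^{25T}$ cosets of $H$, so $\pcal_S(HG_{I'})\le 2^{25T}[G:H]^{-\vare}|G|^{\delta}=2^{25T}|G_{I_{\rm s}}|^{-\vare}|G|^{\delta}$. Summing over at most $2^{|I_{\rm s}|}$ such $I'$ and absorbing $2^{|I_{\rm s}|}\le|G_{I_{\rm s}}|^{\vare/2}$ via \eqref{eq:required-inequality} yields $1\le 2^{25T}|G_{I_{\rm s}}|^{-\vare/2}|G|^{\delta}$. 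Under the alternative $|\prod_3 S|\le|G|^{1-\vare+\delta}$ one has $|G_{I_{\rm l}}|\le|\prod_3 S|\le|G|^{1-\vare+\delta}$, hence $|G_{I_{\rm s}}|\ge|G|^{\vare/2}$, and the previous inequality gives $2^{25T}\ge|G|^{\vare^2/4-\delta}\ge|G|^{\vare^2/8}$. The point is to bound the \emph{individual} $HG_{I'}$ rather than their union.
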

This can be interpreted as the existence of an element with {\em small height} and {\em large centralizer}; it has some conceptual similarities with \cite[Proposition 57]{SG:SAI}.  
\begin{proof}[Proof of Lemma~\ref{lem:finiding-small-height-element-with-large-centralizer}]
	By Lemma~\ref{lem:one-element-in-subgroup}, we have
	\[
		S\subseteq \bigcup_{I'\subseteq I_{\rm s}, |G_{I'}|\le 2^{25T}} H G_{I'}.
	\]
	Therefore we have
	\begin{align}
	\notag
	1=&\pcal_{S}(\bigcup_{I'\subseteq I_{\rm s}, |G_{I'}|\le 2^{25T}} H G_{I'})
	\le  
	\sum_{I'\subseteq I_{\rm s}, |G_{I'}|\le 2^{25T}} \pcal_S(H G_{I'})
	\\
	\notag
	\le &
	2^{|I_{\rm s}|} 2^{25T} [G_{[1..n]}:H]^{-\vare}|G_{[1..n]}|^{\delta}
=
	2^{|I_{\rm s}|} 2^{25T} |G_{I_{\rm s}}|^{-\vare} |G_{[1..n]}|^{\delta}
	\\
	\label{eq:lower-bound-union-subgroups}
	\le & 
	2^{25 T} |G_{I_{\rm s}}|^{-\vare/2} |G_{[1..n]}|^{\delta}. 
	&
	(2^{|I_{\rm s}|}\le |G_{I_{\rm s}}|^{\vare/2} 
	\text{by Inequality~\eqref{eq:required-inequality}})
	\end{align}
Let's assume that $|\prod_3 S|\le |G_{[1..n]}|^{1-\vare+\delta}$; then 
\[
\textstyle
|G_{I_{\rm l}}|\le |\prod_3 S|\le |G_{[1..n]}|^{1-\vare+\delta},
\]
which implies
\be\label{eq:scales-with-small-entropy-are-large}
|G_{[1..n]}|^{\vare/2}\le |G_{[1..n]}|^{\vare-\delta}\le |G_{I_{\rm s}}|.
\ee
By \eqref{eq:lower-bound-union-subgroups} and \eqref{eq:scales-with-small-entropy-are-large}, we have
\[
2^{25T}\ge |G_{[1..n]}|^{(\vare/2)^2-\delta} \ge |G_{[1..n]}|^{\vare^2/8};
\]
and the claim follows.
\end{proof}
\begin{lem}\label{lem:lower-bound-number-of-conjugates}
In the above setting, for any $g\in G_{[1..n]}$, we have
\[
|\{sgs^{-1}|\h s\in S\}|\ge |\Cl(g)|^{\vare}|G_{[1..n]}|^{-\delta},
\]
where $\Cl(g)$ is the set of conjugacy classes of $g$ in $G_{[1..n]}$.
\end{lem}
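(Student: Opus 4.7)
The plan is to observe that the map $s \mapsto sgs^{-1}$ from $S$ onto the set of conjugates $\{sgs^{-1}\mid s\in S\}$ has fibers that are precisely the non-empty intersections of $S$ with cosets of the centralizer $C_G(g)$. Indeed $sgs^{-1}=s'gs'^{-1}$ if and only if $s^{-1}s'\in C_G(g)$, i.e.\ $s'\in sC_G(g)$. Consequently
\[
|\{sgs^{-1}\mid s\in S\}| \;\ge\; \frac{|S|}{\max_{s_0\in S}|s_0 C_G(g)\cap S|}.
\]

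Next I would apply the hypothesis of Proposition~\ref{propvarjuproduct} to the subgroup $H=C_G(g)$. The assumed Diophantine bound $\pcal_S(s_0 H)<[G:H]^{-\vare}|G|^{\delta}$ then gives
\[
|s_0 C_G(g)\cap S| \;<\; [G:C_G(g)]^{-\vare}|G|^{\delta}\,|S| \;=\; |\Cl(g)|^{-\vare}|G|^{\delta}\,|S|,
\]
using the standard identification $|\Cl(g)|=[G:C_G(g)]$. Substituting this into the previous inequality yields
\[
|\{sgs^{-1}\mid s\in S\}| \;\ge\; \frac{|S|}{|\Cl(g)|^{-\vare}|G|^{\delta}\,|S|} \;=\; |\Cl(g)|^{\vare}|G_{[1..n]}|^{-\delta},
\]
which is exactly the claimed inequality.

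There is essentially no obstacle here: the statement is a direct consequence of the orbit-stabilizer description of conjugation fibers combined with the Diophantine hypothesis applied to the centralizer. The only thing one should briefly verify is that $C_G(g)$ is indeed a subgroup of $G_{[1..n]}$ (which is obvious) so that the Diophantine bound applies to it, and that the maximum in the denominator is realized over cosets actually meeting $S$ (for empty intersections the fiber contributes nothing, so the bound is only strengthened).
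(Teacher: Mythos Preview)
Your proof is correct and is essentially the same as the paper's: both arguments identify conjugation fibers with cosets of $C_G(g)$, apply the Diophantine hypothesis $\pcal_S(s_0C_G(g))\le |\Cl(g)|^{-\vare}|G|^{\delta}$, and conclude via the elementary counting bound that the number of distinct conjugates (equivalently, the number of cosets of $C_G(g)$ meeting $S$) is at least $|\Cl(g)|^{\vare}|G|^{-\delta}$. The only cosmetic difference is that the paper phrases the counting as $1\le\sum_{\overline{s}}\pcal_S(\overline{s})$ over cosets meeting $S$, while you phrase it as $|\{sgs^{-1}\}|\ge |S|/\max|s_0C_G(g)\cap S|$; these are the same inequality.
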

\begin{proof}
 Because of the bijection between conjugates of $g$ and cosets of the centralizer $C_{G_{[1..n]}}(g)$ of $g$ in $G_{[1..n]}$, we have that 
 \[
 |\{sgs^{-1}|\h s\in S\}|=|\underbrace{\{sC_{G_{[1..n]}}(g)|\h s\in S\}}_{\ccal(g;S)}|.
 \] 
 On the other hand,
 \[
 1=\pcal_S(\bigcup_{s\in S} sC_{G_{[1..n]}}(g))=\pcal_S(\bigcup_{\overline{s}\in \ccal(g;S)}\overline{s})
 \le \sum_{\overline{s}\in \ccal(g;S)} \pcal_S(\overline{s});
 \]
 and by our assumption 
 \[
 \pcal_S(\overline{s})\le [G_{[1..n]}:C_{G_{[1..n]}}(g)]^{-\vare}|G_{[1..n]}|^{\delta}=|\Cl(g)|^{-\vare} |G_{[1..n]}|^{\delta},
 \]
 for any $\overline{s}\in \ccal(g;S)$. Hence we have
 \[
 |\Cl(g)|^{\vare} |G_{[1..n]}|^{-\delta} \le |\ccal(g;S)|; 
 \]
 and the claim follows.
\end{proof}
\begin{prop}\label{proplargeindices} 
In the above setting, either $|\prod_3 S|>|G_{[1..n]}|^{1-\vare+\delta}$ or 
\[
\textstyle
|\prod_{14} S|\ge |G_{[1..n]}|^{\Theta_L(\vare^3)}|G_{I_{\rm l}}|.
\]	
\end{prop}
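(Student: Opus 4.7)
The plan is to assume that the first alternative $|\prod_3 S|>|G_{[1..n]}|^{1-\vare+\delta}$ fails and to derive the second alternative from the low-height, large-centralizer element of $\{1\}\oplus G_{I_{\rm s}}$ supplied by Lemma~\ref{lem:finiding-small-height-element-with-large-centralizer}. Under this assumption, fix $i\in\{1,2,3\}$ and an element $(1,g_{\rm s})\in(\prod_{3i}S)\cap(\{1\}\oplus G_{I_{\rm s}})$ with $d(g_{\rm s},1)\gg\vare^2\log|G_{[1..n]}|$, and let $J:=\{j\in I_{\rm s}:\pr_j(g_{\rm s})\neq 1\}$ be the support of $g_{\rm s}$, so that $\sum_{j\in J}\log|G_j|\gg\vare^2\log|G_{[1..n]}|$. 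Applying Lemma~\ref{lem:lower-bound-number-of-conjugates} to $(1,g_{\rm s})$ then produces a set $T:=\{s(1,g_{\rm s})s^{-1}:s\in S\}\subseteq(\prod_{3i+2}S)\cap(\{1\}\oplus G_{I_{\rm s}})$ of distinct conjugates with $|T|\ge|\Cl(g_{\rm s})|^{\vare}|G_{[1..n]}|^{-\delta}$.

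The first serious step is to bound $|\Cl(g_{\rm s})|$ from below. Having reduced in Section~\ref{ss:regular-subset} to the case that each $G_j$ is simple, the centralizer of $(1,g_{\rm s})$ in $G_{[1..n]}$ decomposes as $G_{I_{\rm l}}\oplus G_{I_{\rm s}\setminus J}\oplus\bigoplus_{j\in J}C_{G_j}(\pr_j(g_{\rm s}))$, and for each $j\in J$ the subgroup $C_{G_j}(\pr_j(g_{\rm s}))$ is a \emph{proper} subgroup of the simple group $G_j$. I claim every proper subgroup $H<G_j$ satisfies $[G_j:H]\ge|G_j|^{1/L}$: the permutation representation of $G_j$ on $G_j/H$ has dimension $[G_j:H]$ and decomposes as the trivial representation plus a nonzero complement of dimension $[G_j:H]-1$ that must contain some nontrivial irreducible constituent of $G_j$, which by the quasirandomness assumption (V2)$_L$ has dimension at least $|G_j|^{1/L}$. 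Multiplying these indices over $j\in J$ yields $|\Cl(g_{\rm s})|\ge\prod_{j\in J}|G_j|^{1/L}\ge|G_{[1..n]}|^{\Theta(\vare^2)/L}$, and hence $|T|\ge|G_{[1..n]}|^{\Theta_L(\vare^3)-\delta}$.

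To finish, I would combine $T$ with the regular set $A$ using the surjectivity $\pr_{I_{\rm l}}(A\cdot A\cdot A)=G_{I_{\rm l}}$ from Lemma~\ref{lem:scales-with-no-room-for-improvement}. For each $g_{\rm l}\in G_{I_{\rm l}}$ fix an element $a(g_{\rm l})\in A\cdot A\cdot A$ with $\pr_{I_{\rm l}}(a(g_{\rm l}))=g_{\rm l}$; since every element of $T$ projects trivially to $G_{I_{\rm l}}$, the map $(t,g_{\rm l})\mapsto t\cdot a(g_{\rm l})$ is an injection of $T\times G_{I_{\rm l}}$ into $T\cdot A\cdot A\cdot A$, yielding $|T\cdot A\cdot A\cdot A|\ge|T|\cdot|G_{I_{\rm l}}|$. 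Because $T\cdot A\cdot A\cdot A\subseteq\prod_{3i+5}S$ with $3i+5\le 14$, and because the injection $g\mapsto gs_0$ for a fixed $s_0\in S$ shows $|\prod_k S|\le|\prod_{k'}S|$ whenever $k\le k'$, we conclude $|\prod_{14}S|\ge|T|\cdot|G_{I_{\rm l}}|\ge|G_{[1..n]}|^{\Theta_L(\vare^3)-\delta}|G_{I_{\rm l}}|$; the choice $\delta=\min\{\vare^5,1\}/(8L)\ll\vare^3/L$ absorbs the $-\delta$ into the $\Theta_L(\vare^3)$ exponent, giving the claim.

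The principal obstacle is the lower bound on $|\Cl(g_{\rm s})|$: one must convert the additive bound $d(g_{\rm s},1)\gg\vare^2\log|G_{[1..n]}|$ into a multiplicative bound on the conjugacy-class size, and this is precisely where the simplicity provided by (V1)$_L$ and the quasirandomness (V2)$_L$ interact through the permutation-representation trick above. Note that assumption (V3)$_L$ plays no role in this final assembly step; it enters only in the earlier portion of the proof of Proposition~\ref{propvarjuproduct}, notably in the Diophantine bootstrap of Proposition~\ref{prop:generic-Diophantine-property} and the scale-by-scale mutation of Proposition~\ref{prop:VarjuMutation}.
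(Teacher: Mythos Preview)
Your proof is correct and follows essentially the same approach as the paper: obtain the element $(1,g_{\rm s})$ from Lemma~\ref{lem:finiding-small-height-element-with-large-centralizer}, bound $|\Cl(1,g_{\rm s})|$ from below using quasirandomness (the paper writes this as $\prod_j[G_j:C_{G_j}(\pr_j g_{\rm s})]\ge 2^{d(g_{\rm s},1)/L}$ without spelling out the permutation-representation step), count $S$-conjugates via Lemma~\ref{lem:lower-bound-number-of-conjugates}, and multiply by the graph $\{a(g_{\rm l}):g_{\rm l}\in G_{I_{\rm l}}\}\subseteq A\cdot A\cdot A$ (which the paper calls $H_\psi$) to recover the factor $|G_{I_{\rm l}}|$. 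The arithmetic $3i+2+3\le 14$ and the absorption of $|G|^{-\delta}$ into the $\Theta_L(\vare^3)$ exponent match the paper exactly.
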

\begin{proof}
If $|\prod_3 S|\le|G_{[1..n]}|^{1-\vare+\delta}$	, then by Lemma~\ref{lem:finiding-small-height-element-with-large-centralizer} there is 
\[
	\textstyle
	(1,g_{\rm s})\in \bigcup_{i=1}^3 (\prod_{3i} S) \cap (\{1\}\oplus G_{I_{\rm s}})
\]
such that $d(g_{\rm s},1)\gg \vare^2 \log |G_{[1..n]}|$. Notice that 
\be\label{eq:conjugacy-class}
|\Cl(1,g_{\rm s})|=[G_{[1..n]}:C_{G_{[1..n]}}(1,g_{\rm s})]=\prod_{i\in I_{\rm s}}[G_i:C_{G_i}(\pr_i g_{\rm s})]\ge 2^{d(g_{\rm s},1)/L}\ge |G_{[1..n]}|^{\Theta_L(\vare^2)}.
\ee
Let us recall that there is a function $\psi:G_{I_{\rm l}}\rightarrow G_{I_{\rm s}}$ such that graph $H_{\psi}$ of $\psi$ is a subset of $\prod_3 S$. Since $\Cl(1,g_{\rm s})\subseteq \{1\}\oplus G_{\rm s}$, we have 
\be\label{eq:growth-after-conjugation}
\textstyle
|\Cl(1,g_{\rm s})H_{\psi}|=|\Cl(1,g_{\rm s})||G_{I_{\rm l}}|.
\ee
By \eqref{eq:conjugacy-class}, \eqref{eq:growth-after-conjugation}, and Lemma~\ref{lem:lower-bound-number-of-conjugates}, we have
\[
\textstyle
|\prod_{14} S|\ge |G_{[1..n]}|^{\Theta_L(\vare^3)}|G_{[1..n]}|^{-\delta} |G_{I_{\rm l}}| \ge |G_{[1..n]}|^{\Theta_L(\vare^3)}|G_{I_{\rm l}}|;
\]
and claim follows.
\end{proof}

\subsection{Combining Diophantine property of a distribution with entropy of another one}
The main goal of this section is to prove Proposition~\ref{prop:generic-Diophantine-property}. So this section is all about a single scale. Roughly speaking we start with two distributions on a finite group that satisfies (V1)$_{L}$-(V3)$_{L}$; we assume one of them has a certain Diophantine property and the other one has an entropy proportional to the entropy of the uniform distribution. We will show lots of certain convolutional distributions have both of these properties at the same time.  

In this section, $G$ is a finite group that satisfies (V1)$_{L}$-(V3)$_{L}$, and $m\le L$ and $m'\le L \log |G|$ are positive integers given in (V3)$_{L}$.

The next lemma says if we have a Diophantine type property for a distribution $\nu$ for subgroups of given complexity, then not many subgroups of the next level of complexity can fail a Diophantine type property of a similar order. 

We notice that because of (V3)$_{L}$-(v) the extra type $\{\cal_i'\}_{i=1}^{m'}$  of subgroups do not cause any problem and Varj\'{u}'s argument works in our setting as well.   

\begin{lem}\label{lem:exceptional-subgroups}
Suppose $G$ is a finite group that satisfies (V3)$_{L}$ and $m\le L$ is a positive integer given in (V3)$_{L}$. Suppose $\nu$ is a probability measure on $G$, $1\le k\le m$ is an integer, and $0<p,p'<1$ with the following properties. 
\begin{enumerate}
\item For any $H\in \bigcup_{i=1}^k \cal_i$ and for any $g\in G$, $\nu(gH)<p$. 
\item $p'>\sqrt{2Lp}$.	
\end{enumerate}
If $k<m$, let 
\[
E_{k+1}(\nu;p,p'):=\{H\in \cal_{k+1}|\h \wt{\nu}\ast \nu(H)>p'\}.
\] 
If $k=m$, for any $i\in [1..m']$, let 
\[
E'_i(\nu;p,p'):=\{H\in \cal'_i|\h \wt\nu\ast \nu(H)>p'\}.
\]
Then $|E_{k+1}(\nu;p,p')|$ and $|E'_i(\nu;p,p')|$ are less than $\sqrt{\frac{2}{Lpp'}}$.
\end{lem}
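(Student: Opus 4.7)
The plan is to convert the hypothesis $\wt\nu\ast\nu(H)>p'$ into concentration of $\nu$ on a single coset of $H$, and then to run a second-moment argument in which property (V3) feeds the Diophantine hypothesis on $\nu$ into a pairwise intersection bound. The argument will proceed in four short steps and will carry over from $E_{k+1}$ to $E_i'$ essentially verbatim.

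First I will exploit the identity $\wt\nu\ast\nu(H)=\sum_c\nu(c)^2$, where $c$ ranges over left cosets of $H$. Combined with $\sum_c\nu(c)^2\le(\max_c\nu(c))\sum_c\nu(c)=\max_c\nu(c)$, this will produce for every $H\in E_{k+1}(\nu;p,p')$ a coset $c_H$ of $H$ with $\nu(c_H)>p'$. Next, enumerating $E_{k+1}(\nu;p,p')$ as $H_1,\ldots,H_N$, I will bound $\nu(c_{H_s}\cap c_{H_t})$ for $s\ne t$: the intersection is either empty or a coset of $H_s\cap H_t$, and property (V3)(iv) will supply $j\le k$ and $H^\sharp\in\cal_j$ with $[H_s\cap H_t:H^\sharp\cap H_s\cap H_t]<L$, so this coset is covered by fewer than $L$ cosets of $H^\sharp$. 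The Diophantine hypothesis on $\nu$ then yields $\nu(c_{H_s}\cap c_{H_t})<(L-1)p<Lp$. The identical bound will hold for pairs from $E_i'(\nu;p,p')$ via (V3)(v), since the resulting $H^\sharp$ still lies in $\bigcup_{r=1}^m\cal_r$.

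The third step is a clean second-moment computation. Let $Y\sim\nu$ and $X:=\#\{s\in[1..N]:Y\in c_{H_s}\}$; then
\[
\bbe[X]=\sum_s\nu(c_{H_s})>Np',\qquad
\bbe[X(X-1)]=\sum_{s\ne t}\nu(c_{H_s}\cap c_{H_t})<N^2 Lp.
\]
Convexity of $x(x-1)$ and Jensen's inequality give $\bbe[X](\bbe[X]-1)\le\bbe[X(X-1)]<N^2 Lp$. If we had $N\ge 2/p'$, then $\bbe[X]>Np'\ge 2$, so $\bbe[X]-1>\bbe[X]/2$, and the previous inequality would force $\bbe[X]^2/2<N^2 Lp$, i.e., $\bbe[X]<N\sqrt{2Lp}$; together with $\bbe[X]>Np'$ this would contradict $p'>\sqrt{2Lp}$. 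Hence $N<2/p'$.

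The final step is a one-line numerical upgrade. Since $0<p'<1$ and $p'>\sqrt{2Lp}$ force $2Lp<1$, one has $2Lp<\sqrt{2Lp}<p'$; squaring $2Lp<p'$ gives $(2/p')^2<2/(Lpp')$, and therefore $N<2/p'<\sqrt{2/(Lpp')}$. The same conclusion for $|E_i'(\nu;p,p')|$ will follow from the parallel argument. I do not foresee any real obstacle: the whole thing is second-moment bookkeeping, and the only substantive use of the group-theoretic structure of $G$ is the pairwise intersection bound in the second step, which is precisely the role of the hierarchy built into (V3)(iv)/(V3)(v) and the only point where the proof could fail if those axioms were weakened.
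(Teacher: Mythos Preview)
Your proof is correct and rests on the same mechanism as the paper's: the intersection axiom (V3)(iv)/(v) turns the Diophantine hypothesis on $\nu$ into a pairwise bound, after which a second-moment/inclusion--exclusion count caps the number of exceptional subgroups. The packaging differs slightly. You first convert $\wt\nu\ast\nu(H)>p'$ into the existence of a single coset $c_H$ with $\nu(c_H)>p'$ via $\wt\nu\ast\nu(H)=\sum_c\nu(c)^2\le\max_c\nu(c)$, and then run a second-moment argument on the cosets $c_{H_s}$ using Jensen's inequality for $x(x-1)$. The paper instead works throughout with $\wt\nu\ast\nu$ on the subgroups themselves, uses the truncated inclusion--exclusion bound $1\ge lp'-\binom{l}{2}Lp$, and analyzes the resulting quadratic in $l$. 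Your route gives $N<2/p'$; theirs gives the nominally sharper $N<2/(p'+Lp/2)$; both are then bounded by $\sqrt{2/(Lpp')}$ via the same AM--GM style inequality. Your coset-extraction step is a pleasant simplification but buys no extra generality: it is the same identity $\wt\nu\ast\nu(H)=\sum_c\nu(c)^2$ that the paper uses implicitly when it deduces $\wt\nu\ast\nu(H\cap H')\le Lp$ from $\nu(g(H\cap H'))\le Lp$.

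One small remark: when you invoke (V3)(iv) to get $H^\sharp\in\cal_j$ with $j\le k$, the case $j=0$ (so $H^\sharp=Z(G)$) is not literally covered by hypothesis~(1) as written, which starts the union at $i=1$. The paper's proof has exactly the same gap, and in the application (Lemma~\ref{lem:inductive-set-up-structural}) the hypothesis is actually verified for $\bigcup_{i=0}^k\cal_i$, so this is a typo in the statement rather than a flaw in either argument.
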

\begin{proof} (See \cite[Towards the end of proof of Lemma 18]{Var})
First we consider the case $k<m$. For two distinct elements $H,H'\in E_{k+1}(\nu;p,p')$, there is $H^\sharp\in \cal_j$ for some $j\le k$ such that $[H\cap H':H^\sharp\cap H\cap H']\le L$. Hence $\nu(g(H\cap H'))\le Lp$, which implies
\be\label{eq:volume-intersection}
\wt\nu\ast \nu(H\cap H')\le Lp.
\ee
For any $1\le l\le |E_{k+1}(\nu;p,p')|$, suppose $H_1,\ldots,H_l$ are distinct elements of $E_{k+1}(\nu;p,p')$; then 
\be\label{eq:inclusion-exclusion}
1\ge \wt\nu\ast \nu(\bigcup_{i=1}^l H_i)
\ge \sum_{i=1}^l \wt\nu\ast \nu(H_i)- \sum_{1\le i<j\le l} \wt\nu\ast \nu(H_i\cap H_j) \ge lp'- {l\choose 2} Lp.
\ee
Let $f(x):=-\frac{Lp}{2}x^2+(p'+\frac{Lp}{2}) x-1$; then by \eqref{eq:inclusion-exclusion} for any $l\in [1..|E_{k+1}(\nu;p,p')|]$, $f(l)\le 0$. We notice that
\begin{align*}
	f\left(\frac{2}{p'+Lp/2}\right)=
	&
	-\frac{Lp}{2}\left(\frac{2}{p'+Lp/2}\right)^2+(p'+Lp/2) \left(\frac{2}{p'+Lp/2}\right)-1
\\
	=&
	-\frac{Lp}{2}\left(\frac{2}{p'+Lp/2}\right)^2+1.
\end{align*}
Since $p'>\sqrt{2Lp}$, we have $p'+Lp/2> \sqrt{2Lp}$, which implies
$\frac{p'+Lp/2}{2}> \sqrt{\frac{Lp}{2}}$. Hence 
\[
f\left(\frac{2}{p'+Lp/2}\right)>0.
\]
By the concavity of $f$, $f(-\infty)=-\infty$, $1\le 2/(p'+Lp/2)$, and the above discussion we deduce that
\[
|E_{k+1}(\nu;p,p')|< \frac{2}{p'+Lp/2}\le \sqrt{\frac{2}{Lpp'}};
\]
and claim follows in this case. 

For the case of $k=m$, we notice that for any two distinct elements $H,H'\in \cal'_i$, there is $H^\sharp\in \cal_j$ for some $j\le m$ such that $[H\cap H':H^\sharp\cap H\cap H']\le L$. So an identical argument as in the previous case works here as well.
\end{proof}

Let us first recall the setting of Proposition~\ref{prop:generic-Diophantine-property}; we will be working in this setting for the rest of this section.
\begin{enumerate}
\item $G$ is a finite group that satisfies (V2)$_L$, (V3)$_{L}$ and $|G|\gg_{\alpha',\beta,L} 1$ (the implied constant will be specified later);
\item $X_1,\ldots,X_{2^{m+1}}$ are independent random variables such that $H_{\infty}(X_i)\ge \alpha' \log |G|$; 
\item  for any $l$-tuple $\vecy:=(y_1,\ldots,y_l)$, $X_{\vecy}:=X_1y_1X_2 \cdots y_{l}X_{l+1}$; in addition we let \[X'_{\vecy}:=X_{l+2}y_1X_{l+3}\cdots y_{l}X_{2l+2};\] 
\item $Y_1,\ldots,Y_{2^{m+1}-1}$ are i.i.d. random variables with values in $G$ such that for any $H$ in $\bigcup_{i=1}^m \cal_i$ and $g\in G$, $\bbp(Y_1\in gH)\le [G:H]^{-\beta}$; and moreover for any subgroup $H$ with order at least $|G|^{\alpha}$ and any $g\in G$, we have the same inequality; that means $\bbp(Y_1\in gH)\le [G:H]^{-\beta}$.
\item $\beta\ge 4\alpha$; in fact it is enough to assume $(1-\frac{1}{L})\beta\ge \alpha$.
\end{enumerate}
\begin{lem}\label{lem:inductive-set-up-structural}
In the above setting, let $\beta_0:=\min(\frac{\beta}{5L},\frac{\alpha'}{2})$, $\beta_k:=\frac{\beta_0}{8^k}$ and $p_k:=(2^k-1)|G|^{-\beta/(2L)}$; then for any $1\le k\le m$ 
\[
\textstyle
\bbp((Y_1,\ldots,Y_{2^k-1})=\vecy 
\text{ such that } 
\exists H\in \bigcup_{j=0}^k \cal_j, \exists g\in G, \bbp(X_{\vecy}\in gH)\ge |G|^{-\beta_k})\le p_k.
\]	
\end{lem}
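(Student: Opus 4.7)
The proof is by induction on $k$. In the base case $k=1$ we have $X_{\vecy}=X_1 Y_1 X_2$ and must bound the bad event for $H\in\cal_0\cup\cal_1$. For $H=Z(G)\in\cal_0$, the bound $|Z(G)|<L$ together with $H_\infty(X_i)\ge\alpha'\log|G|$ gives $\bbp(X_1 y X_2\in gZ(G))\le L|G|^{-\alpha'}<|G|^{-\beta_1}$ for every $y\in G$, using that $\beta_1\le \alpha'/16$, so this class contributes nothing. For $H\in\cal_1$, exchanging the order of expectation via Fubini gives $\bbe_{Y_1}[\bbp(X_1 Y_1 X_2\in gH)]=\bbe_{X_1,X_2}[\bbp_{Y_1}(Y_1\in X_1^{-1}gHX_2^{-1})]$, and since $X_1^{-1}gHX_2^{-1}$ is a coset of the conjugate $X_2 H X_2^{-1}\in\cal_1$, the Diophantine hypothesis on $Y_1$ produces the bound $[G:H]^{-\beta}\le |G|^{-\beta/L}$. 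Markov applied to the inequality $\wt{\nu_{Y_1}}\ast\nu_{Y_1}(H)\ge(\max_g\nu_{Y_1}(gH))^2$, where $\nu_y$ denotes the distribution of $X_1 y X_2$, combined with Lemma~\ref{lem:exceptional-subgroups} to restrict the relevant subgroups to a small family, then yields the base-case bound $p_1=|G|^{-\beta/(2L)}$.

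For the inductive step $k\to k+1$, split the tuple as $\vecy=(\vecy_L,y_{2^k},\vecy_R)$ with $\vecy_L=(Y_1,\dots,Y_{2^k-1})$ and $\vecy_R=(Y_{2^k+1},\dots,Y_{2^{k+1}-1})$, so that $X_{\vecy}=X_{\vecy_L}\,y_{2^k}\,X'_{\vecy_R}$, where $X_{\vecy_L}$ uses $X_1,\dots,X_{2^k}$ and $X'_{\vecy_R}$ uses the independent variables $X_{2^k+1},\dots,X_{2^{k+1}}$. Applying the inductive hypothesis independently to $\vecy_L$ and $\vecy_R$ shows that with probability at least $1-2p_k$ both halves are good, so that the distributions $\mu_L,\mu_R$ of $X_{\vecy_L}$ and $X'_{\vecy_R}$ satisfy $\mu_L(gH),\mu_R(gH)<|G|^{-\beta_k}$ for every $g\in G$ and every $H\in\bigcup_{j\le k}\cal_j$. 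Conditional on this event, for $H\in\bigcup_{j\le k}\cal_j$ we have
\[
(\mu_L\ast\delta_{y_{2^k}}\ast\mu_R)(gH)=\bbe_{A\sim\mu_L}[\mu_R(y_{2^k}^{-1}A^{-1}gH)]\le \max_{g'}\mu_R(g'H)<|G|^{-\beta_k}<|G|^{-\beta_{k+1}},
\]
independent of $y_{2^k}$, so no bad event in these classes arises.

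The main case is $H\in\cal_{k+1}$. Apply Lemma~\ref{lem:exceptional-subgroups} to $\mu_R$ with $p=|G|^{-\beta_k}$ and $p'=|G|^{-2\beta_{k+1}}$; the hypothesis $p'>\sqrt{2Lp}$ reduces to $\beta_k/2-2\beta_{k+1}=\beta_k/4>\log(\sqrt{2L})/\log|G|$, which holds for $|G|$ large. This controls the exceptional set $\mathcal{B}_R=\{H\in\cal_{k+1}:\wt{\mu_R}\ast\mu_R(H)>p'\}$ by $|\mathcal{B}_R|\le\sqrt{2/(Lpp')}$, and any bad $H$ must lie in $\mathcal{B}_R$, since $\max_{g'}\mu_R(g'H)\ge|G|^{-\beta_{k+1}}$ forces $\wt{\mu_R}\ast\mu_R(H)\ge|G|^{-2\beta_{k+1}}$. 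For each such $H$, the Diophantine property of $Y_{2^k}$ applied via Fubini (with $Y_{2^k}$ sandwiched in a coset of a conjugate of $H\in\cal_{k+1}\subset\bigcup_i\cal_i$) gives $\bbe_{Y_{2^k}}[(\mu_L\ast\delta_{Y_{2^k}}\ast\mu_R)(gH)]\le [G:H]^{-\beta}\le|G|^{-\beta/L}$, whence Markov applied to $\max_g\nu_Y(gH)^2\le\wt\nu_Y\ast\nu_Y(H)$, combined with a union bound over $\mathcal{B}_R$ and the quasi-randomness bound $[G:H]\ge|G|^{1/L}$, produces total bad probability at most $|G|^{-\beta/(2L)}$. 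Summing yields $p_{k+1}\le 2p_k+|G|^{-\beta/(2L)}=(2^{k+1}-1)|G|^{-\beta/(2L)}$, closing the induction. The main obstacle lies in the $\cal_{k+1}$ step: a naive union bound over cosets of $H$ is too lossy, so the key trick is to replace $\max_g\nu_Y(gH)$ by $\wt\nu_Y\ast\nu_Y(H)$ so that the Diophantine property of $Y_{2^k}$ controls a single quantity, while Lemma~\ref{lem:exceptional-subgroups} restricts the dangerous subgroups to the polynomial-sized family $\mathcal{B}_R$; the choices $\beta_0=\min(\beta/(5L),\alpha'/2)$ and $\beta_{k+1}=\beta_k/8$ are calibrated precisely so that these estimates interleave.
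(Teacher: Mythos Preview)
Your overall architecture---split $\vecy=(\vecy_L,y_{2^k},\vecy_R)$, apply the induction hypothesis to both halves, and use Lemma~\ref{lem:exceptional-subgroups} to cut the relevant $H\in\cal_{k+1}$ down to a small family---matches the paper. But the step where you pass from the Fubini estimate to a bound on the bad $Y_{2^k}$-probability has a genuine gap.

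Your Fubini computation gives $\bbe_{Y_{2^k}}[\nu_{Y_{2^k}}(gH)]\le |G|^{-\beta/L}$ for each \emph{fixed} pair $(g,H)$. To conclude, you need to bound $\bbp_{Y_{2^k}}\bigl(\max_g \nu_{Y_{2^k}}(gH)\ge |G|^{-\beta_{k+1}}\bigr)$. The inequality $\max_g\nu_Y(gH)^2\le \wt\nu_Y\ast\nu_Y(H)$ you invoke is correct, but applying Markov to it would require a bound on $\bbe_{Y_{2^k}}[\wt\nu_{Y_{2^k}}\ast\nu_{Y_{2^k}}(H)]=\bbe_{Y_{2^k}}\bigl[\sum_{gH}\nu_{Y_{2^k}}(gH)^2\bigr]$, and this does not follow from the per-coset first-moment bound (nor from a union bound over the $[G:H]$ cosets, which is far too lossy). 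The same gap appears in your base case for $H\in\cal_1$.

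The paper closes this gap differently, and this is where the normalizer clause of (V3)$_L$-(iv) enters (which you never use). From $\nu_{y}(gH)\ge|G|^{-\beta_{k+1}}$ one extracts a single coset representative $g_{j_0}$ with \emph{both} $\mu_L(gHg_{j_0})\ge\tfrac12|G|^{-\beta_{k+1}}$ and $\mu_R(y^{-1}g_{j_0}^{-1}H)\ge\tfrac12|G|^{-\beta_{k+1}}$. This places the conjugate $g_{j_0}^{-1}Hg_{j_0}$ in the exceptional set $E_{k+1}(\mu_L;\cdot)$ \emph{and} $y^{-1}g_{j_0}^{-1}Hg_{j_0}y$ in $E_{k+1}(\wt\mu_R;\cdot)$. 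Thus the bad event forces $Y_{2^k}$ to conjugate some $H_1$ from one small list to some $H_2$ from another, i.e.\ $Y_{2^k}\in g'N_G(H_1)$ for some $g'$. The Diophantine hypothesis on $Y_{2^k}$ is then applied to the \emph{normalizer} via (V3)$_L$-(iv), giving $\bbp(Y_{2^k}\in g'N_G(H_1))\le L|G|^{-\beta/L}$; the double union bound over the two exceptional sets costs only $|G|^{\beta_k+2\beta_{k+1}}$, and the arithmetic $-\beta/L+\beta_k+2\beta_{k+1}\le -3\beta/(4L)$ closes the induction. Using both exceptional sets simultaneously is precisely what eliminates the supremum over cosets that your argument cannot handle.
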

\begin{proof}
(See \cite[Proof of Lemma 18]{Var}) We proceed by induction on $k$. In order to deal with the base of induction in the same venue as in the induction step, we consider the case of $k=0$ as well; in the sense that we show why we have  
$\bbp(X_1\in gZ(G))<|G|^{-\beta_0}$ for any $g\in G$.

Since $H_{\infty}(X)\ge \alpha'\log|G|$ and $|Z(G)|\le L$, we have 
$\bbp(X_1\in gZ(G))<L |G|^{-\alpha'}\le |G|^{-\alpha'/2}$ (the second inequality holds as $|G|\gg_{\alpha',L} 1$); and this implies the case of $k=0$. 

Next we focus on the induction step; let
\be\label{eq:def-exceptional-left}
\mathcal{E}_k:=\{\vecy \in \bigoplus_{i=1}^{2^k-1} G|\h \exists H\in \bigcup_{i=0}^k \cal_i, \exists g\in G, \bbp(X_{\vecy} \in gH)\ge |G|^{-\beta_k}\}, 
\ee
and 
\be\label{eq:def-exceptional-right}
\mathcal{E}'_k:=\{\vecy \in \bigoplus_{i=1}^{2^k-1} G|\h \exists H\in \bigcup_{i=0}^k \cal_i, \exists g\in G, \bbp(X'_{\vecy} \in gH)\ge |G|^{-\beta_k}\}.
\ee
By the induction hypothesis, we have that these are {\em exceptional sets}:
\be\label{eq:induction-hypo-exceptional-sets}
\bbp((Y_1,\ldots,Y_{2^k-1})\in \mathcal{E}_k)\le p_k
\text{ and }
\bbp((Y_{2^k+1},\ldots,Y_{2^{k+1}-1})\in \mathcal{E}'_k)\le p_k.
\ee
Suppose $\vecy:=(\vecy_{\rm l},y,\vecy_{\rm r})\in \mathcal{E}_{k+1}$ where $\vecy_{\rm l}$ and $\vecy_{\rm r}$ are the left and the right $2^k-1$ components of $\vecy$, respectively, and $y\in G$. Then 
$X_{\vecy}=X_{\vecy_{\rm l}}yX'_{\vecy_{\rm r}}$ and there are $H\in \bigcup_{i=0}^{k+1}\cal_i$ and $g\in G$ such that
\be\label{eq:being-exceptional}
|G|^{-\beta_{k+1}}\le \bbp(X_{\vecy_{\rm l}}yX'_{\vecy_{\rm r}}\in gH)
=\sum_{j=1}^{[G:H]}\bbp(X_{\vecy_{\rm l}}\in gHg_j)\bbp(X'_{\vecy_{\rm r}}\in y^{-1}g_j^{-1}H),
\ee
where $\{g_j\}_{j=1}^{[G:H]}$ is a set of right coset representatives of $H$. Let 
\[
I_{\rm l}:=\{j\in [1..[G:H]]| \bbp(X_{\vecy_{\rm l}}\in gHg_j)\le \bbp(X'_{\vecy_{\rm r}}\in y^{-1}g_j^{-1}H)\}
\]
 and 
 \[
 I_{\rm r}:=\{j\in [1..[G:H]]| \bbp(X_{\vecy_{\rm l}}\in gHg_j)> \bbp(X'_{\vecy_{\rm r}}\in y^{-1}g_j^{-1}H)\}.
 \]
  Let $q_{\rm l}:=\max_{j\in I_{\rm l}} \bbp(X_{\vecy_{\rm l}}\in gHg_j)$ and $q_{\rm r}:=\max_{j\in I_{\rm r}} \bbp(X'_{\vecy_{\rm r}}\in y^{-1}g_j^{-1}H)\}$; then
\begin{align*}
\sum_{j\in I_{\rm l}}\bbp(X_{\vecy_{\rm l}}\in gHg_j)\bbp(X'_{\vecy_{\rm r}}\in y^{-1}g_j^{-1}H)& \le q_{\rm l}, \text{ and}
\\
\sum_{j\in I_{\rm r}}\bbp(X_{\vecy_{\rm l}}\in gHg_j)\bbp(X'_{\vecy_{\rm r}}\in y^{-1}g_j^{-1}H)& \le q_{\rm r}.
\end{align*}
Therefore by \eqref{eq:being-exceptional}, we have
\[
\frac{1}{2}|G|^{-\beta_{k+1}}\le \max(q_{\rm l},q_{\rm r}),
\]
which implies that there is $j_0$ such that 
\be\label{eq:cosets-with-large-prob}
\frac{1}{2}|G|^{-\beta_{k+1}}\le \bbp(X_{\vecy_{\rm l}}\in gHg_{j_0})
\text{ and }
\frac{1}{2}|G|^{-\beta_{k+1}} \le \bbp(X'_{\vecy_{\rm r}}\in y^{-1}g_{j_0}^{-1}H).
\ee
For a random variable $U$ with values in $G$, let $\wt{U}$ be a random variable independent of $U$ with a distribution similar to $U^{-1}$. Then by \eqref{eq:cosets-with-large-prob} we have
\be\label{eq:subgroups-large-prob}
\bbp(\wtx_{\vecy_{\rm l}}X_{\vecy_{\rm l}}\in g_j^{-1}Hg_j)\ge \frac{1}{4}|G|^{-2\beta_{k+1}}
\text{ and }
\bbp(X'_{\vecy_{\rm r}}\wtx'_{\vecy_{\rm r}}\in y^{-1}g_j^{-1}Hg_jy)\ge \frac{1}{4}|G|^{-2\beta_{k+1}}.
\ee
It $\vecy_{\rm l}\not\in \mathcal{E}_k$, then $\bbp(X_{\vecy_{\rm l}}\in \overline{g}\overline{H})<|G|^{-\beta_k}$ for any $\overline{H}\in \bigcup_{i=0}^k \cal_i$ and any $\overline{g}\in G$. This implies that
\be\label{eq:exceptional-subgroup-left}
g_j^{-1}Hg_j \in E_{k+1}(\lambda_{\vecy_{\rm l}}; |G|^{-\beta_k}, \frac{1}{4}|G|^{-2\beta_{k+1}}),
\ee
where $\lambda_{\vecy_{\rm l}}$ is the distribution of the random variable $X_{\vecy_{\rm l}}$ and $E_{k+1}$ is the set defined in Lemma~\ref{lem:exceptional-subgroups}. By a similar argument, if $\vecy_{\rm r}\not\in \mathcal{E}'_k$, then 
\be\label{eq:exceptional-subgroup-right}
y^{-1}g_j^{-1}Hg_jy \in E_{k+1}(\wt{\lambda'}_{\vecy_{\rm r}}; |G|^{-\beta_k}, \frac{1}{4}|G|^{-2\beta_{k+1}}),
\ee
where $\wt{\lambda'}_{\vecy_{\rm r}}$ is the distribution of the random variable $\wtx'_{\vecy_{\rm r}}$. So far by \eqref{eq:exceptional-subgroup-left}, \eqref{eq:exceptional-subgroup-right}, 
 and we have
\begin{align}
\notag
\bbp((\vecy_{\rm l},y,\vecy_{\rm r})\in \mathcal{E}_{k+1})\le
&
 \bbp(\vecy_{\rm l}\in \mathcal{E}_k)+\bbp(\vecy_{\rm r}\in \mathcal{E}'_k)+
	\bbp((\vecy_{\rm l},y,\vecy_{\rm r})\in \mathcal{E}_{k+1}), \vecy_{\rm l}\not\in \mathcal{E}_k, \vecy_{\rm r}\not\in \mathcal{E}'_{k})
\\
\notag
\le &
2p_k+\sum_{\vecy_{\rm l}\not\in \mathcal{E}_k, \vecy_{\rm r}\not\in\mathcal{E}'_k} \bbp((Y_1,\ldots,Y_{2^k-1})=\vecy_{\rm l})\bbp((Y_{2^k+1},\ldots,Y_{2^{k+1}-1})=\vecy_{\rm r})
\\
\label{eq:initial-upper-bound-prob}
&	\left(\sum_{H_1,H_2}\bbp(Y_{2^k}^{-1}H_1Y_{2^k}=H_2)\right)
\end{align}
where $H_1$ ranges in $E_{k+1}(\lambda_{\vecy_{\rm l}};|G|^{-\beta_k},\frac{1}{4}|G|^{-2\beta_{k+1}})$ and $H_2$ ranges in 
		$E_{k+1}(\wt{\lambda'}_{\vecy_{\rm r}}; |G|^{-\beta_k}, \frac{1}{4}|G|^{-2\beta_{k+1}})$. For a given $H_1$ and $H_2$ that are conjugate of each other and are in $\cal_i$ for some $i$ there is $g'\in G$ such that
\[
\bbp(Y_{2^k}^{-1} H_1 Y_{2^k}=H_2)=\bbp(Y_{2^k}\in g'N_G(H_1));
\]
and by our assumption there is $H^\sharp\in \cal_j$ for some $j$ such that $N_G(H_1)\preceq_L H^\sharp$. Hence
\be\label{eq:conjugation-prob}
\bbp(Y_{2^k}^{-1} H_1 Y_{2^k}=H_2)\le L [G:H^\sharp]^{-\beta}\le L|G|^{-\beta/L}.
\ee
By \eqref{eq:initial-upper-bound-prob}, \eqref{eq:conjugation-prob}, and Lemma~\ref{lem:exceptional-subgroups}, we have
\begin{align}
\notag
\bbp((\vecy_{\rm l},y,\vecy_{\rm r})\in \mathcal{E}_{k+1})\le
&
2p_k+\sum_{\vecy_{\rm l}\not\in \mathcal{E}_k, \vecy_{\rm r}\not\in\mathcal{E}'_k} \bbp((Y_1,\ldots,Y_{2^k-1})=\vecy_{\rm l})\bbp((Y_{2^k+1},\ldots,Y_{2^{k+1}-1})=\vecy_{\rm r})
\\
\notag
& \left( L|G|^{-\beta/L}\frac{2}{L|G|^{-\beta_k}|G|^{-2\beta_{k+1}}/4}\right)
\\
\label{eq:second-upper-bound-prob}
\le 
&
2p_k+8|G|^{-\frac{\beta}{L}+\beta_k+2\beta_{k+1}}.
\end{align}
By \eqref{eq:second-upper-bound-prob}, to prove the claim it is enough to show that $2p_k+8|G|^{-\frac{\beta}{L}+\beta_k+2\beta_{k+1}}\le p_{k+1}$. We notice that $p_{k+1}-2p_k=|G|^{-\beta/(2L)}$, and 
\[
-\frac{\beta}{L}+\beta_k+2\beta_{k+1}=-\frac{\beta}{L}+\frac{1}{8^k}\left(1+\frac{1}{4}\right)\beta_0\le -\frac{3\beta}{4L}.
\]
Hence it is enough to show $8|G|^{-\frac{3\beta}{4L}}\le |G|^{-\frac{\beta}{2L}}$, which clearly holds for $|G|\gg_{\beta,L} 1$.
\end{proof}
\begin{lem}\label{lem:Diophantine-subfield-type}
	In the above setting, let $p:=(2^{m+1}-1)|G|^{-\beta/(2L)}$, $\beta_0:=\min(\frac{\beta}{5L},\frac{\alpha'}{2})$, and $\beta':=\frac{\beta_0}{8^{m+1}}$; then 
\[
\textstyle
\bbp((Y_1,\ldots,Y_{2^{m+1}-1})=\vecy 
\text{ s.t. } 
\exists H\in \bigcup_{j=0}^{m'} \cal'_j, \exists g\in G, \bbp(X_{\vecy}\in gH)\ge |G|^{-\beta'})\le p.
\]	
\end{lem}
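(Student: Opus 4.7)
The lemma performs exactly one additional level of the recursion of Lemma~\ref{lem:inductive-set-up-structural}, now passing from control against the structural classes $\cal_j$ ($j\le m$) to control against the subfield-type classes $\cal'_j$. I split $\vecy=(\vecy_{\rm l},y,\vecy_{\rm r})$ with $\vecy_{\rm l},\vecy_{\rm r}\in\bigoplus_{i=1}^{2^m-1}G$ and $y\in G$, so that $X_{\vecy}=X_{\vecy_{\rm l}}\,y\,X'_{\vecy_{\rm r}}$. By Lemma~\ref{lem:inductive-set-up-structural} at $k=m$, the events $\{\vecy_{\rm l}\in\mathcal{E}_m\}$ and $\{\vecy_{\rm r}\in\mathcal{E}'_m\}$ each have probability at most $p_m$, and on the complementary event I replay the dichotomy argument from the induction step: a Cauchy--Schwarz split over cosets of $H$ furnishes an index $j_0$ with
\[
\bbp(\wtx_{\vecy_{\rm l}}X_{\vecy_{\rm l}}\in g_{j_0}^{-1}Hg_{j_0})\ge\tfrac14|G|^{-2\beta'}\quad\text{and}\quad \bbp(X'_{\vecy_{\rm r}}\wtx'_{\vecy_{\rm r}}\in y^{-1}g_{j_0}^{-1}Hg_{j_0}y)\ge\tfrac14|G|^{-2\beta'}.
\]
Since $\vecy_{\rm l}\notin\mathcal{E}_m$ and $\vecy_{\rm r}\notin\mathcal{E}'_m$, the distributions $\lambda_{\vecy_{\rm l}}$ and $\wt{\lambda'}_{\vecy_{\rm r}}$ satisfy the Diophantine hypothesis of Lemma~\ref{lem:exceptional-subgroups} at level $|G|^{-\beta_m}$ against every $\overline H\in\bigcup_{i\le m}\cal_i$, and the $k=m$ case of that lemma (which was set up precisely to bound the number of subfield-type exceptional subgroups from a structural Diophantine hypothesis) gives $|E'_i(\lambda_{\vecy_{\rm l}};|G|^{-\beta_m},\tfrac14|G|^{-2\beta'})|\ll|G|^{(\beta_m+2\beta')/2}$, and similarly on the right; the required hypothesis $p'>\sqrt{2Lp}$ reduces to $\beta_m/2>2\beta'$, which holds because $\beta_m=8\beta'$.

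The genuinely new ingredient is the conjugation bound $\bbp(Y_{2^m}^{-1}H_1Y_{2^m}=H_2)=\bbp(Y_{2^m}\in g'N_G(H_1))$ for $H_1,H_2\in\cal'_i$. By part (v) of (V3)$_L$, $[N_G(H_1):H_1]\le L$, hence $[G:N_G(H_1)]\ge[G:H_1]/L$. Since $N_G(H_1)$ is not itself on the tracked list $\bigcup_{i\le m}\cal_i$, I cannot invoke the list part of the Diophantine hypothesis on $Y_1$; instead I use the size-based part. The entropy bound $H_\infty(X_{\vecy})\ge\alpha'\log|G|$ (inherited from the $X_i$'s via Lemma~\ref{lem:product-entropy-trivial-bound} and independence) combined with $\bbp(X_{\vecy}\in gH)\ge|G|^{-\beta'}$ forces $|H|\ge|G|^{\alpha'-\beta'}\ge|G|^{\alpha'/2}\ge|G|^{\alpha}$, so the size-based Diophantine hypothesis applies to $N_G(H_1)\supseteq H_1$ and yields $\bbp(Y_{2^m}\in g'N_G(H_1))\le L^\beta[G:H_1]^{-\beta}\le L^\beta|G|^{-\beta/L}$, where the final bound uses quasi-randomness of the simple group $G$ to guarantee $[G:H_1]\ge|G|^{1/L}$ for every proper subgroup.

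Collecting, the exceptional probability conditional on $\vecy_{\rm l}\notin\mathcal{E}_m$, $\vecy_{\rm r}\notin\mathcal{E}'_m$ is at most $m'\cdot|G|^{\beta_m+2\beta'}\cdot L^\beta|G|^{-\beta/L}\ll L^{\beta+1}(\log|G|)\cdot|G|^{\beta_m+2\beta'-\beta/L}$; using $\beta_m+2\beta'=\tfrac{5\beta_0}{4\cdot 8^m}\le\tfrac{\beta}{4L}$, this is at most $|G|^{-\beta/(2L)}$ once $|G|\gg_{\alpha',\beta,L}1$, so the total bound is $2p_m+|G|^{-\beta/(2L)}=p$. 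The main technical obstacle---the only step not already present in the inductive machinery of Lemma~\ref{lem:inductive-set-up-structural}---is that $N_G(H_1)$ for a subfield-type $H_1$ is not in a tracked class, so the Diophantine bound on its cosets has to be derived from the large-size hypothesis on $Y_1$, which in turn is made available only by combining the entropy of the $X_i$'s with the contradictory assumption $\bbp(X_{\vecy}\in gH)\ge|G|^{-\beta'}$.
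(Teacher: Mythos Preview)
Your argument follows the same architecture as the paper's proof: split $\vecy=(\vecy_{\rm l},y,\vecy_{\rm r})$, use the $k=m$ case of Lemma~\ref{lem:inductive-set-up-structural} to control the events $\vecy_{\rm l}\in\mathcal{E}_m$ and $\vecy_{\rm r}\in\mathcal{E}'_m$, extract via the coset decomposition a conjugate $g_{j_0}^{-1}Hg_{j_0}$ lying in the exceptional sets $E'_i(\cdot;|G|^{-\beta_m},\tfrac14|G|^{-2\beta'})$ of Lemma~\ref{lem:exceptional-subgroups}, and then bound the conjugation probability $\bbp(Y_{2^m}^{-1}H_1Y_{2^m}=H_2)=\bbp(Y_{2^m}\in g'N_G(H_1))$ using $[N_G(H_1):H_1]\le L$ together with a Diophantine input on $Y_{2^m}$. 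The final arithmetic $\beta_m+2\beta'\le\beta/(4L)$ and $2p_m+|G|^{-\beta/(2L)}=p$ is identical.

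The one substantive variation is how you justify that the size-based Diophantine hypothesis on $Y_{2^m}$ applies. The paper splits into two cases: Case~1, $|H_1|\ge|G|^{\alpha}$, handled exactly as you do; and Case~2, $|H_1|<|G|^{\alpha}$, handled by the crude bound $\bbp(Y_{2^m}\in\bar g H_1)\le|H_1|\cdot|G|^{-\beta}\le|G|^{\alpha-\beta}\le|G|^{-\beta/L}$ (this uses $(1-1/L)\beta\ge\alpha$). You instead eliminate Case~2 by observing that $H_\infty(X_{\vecy})\ge\alpha'\log|G|$ together with $\bbp(X_{\vecy}\in gH)\ge|G|^{-\beta'}$ forces $|H_1|=|H|\ge|G|^{\alpha'-\beta'}\ge|G|^{\alpha'/2}$, so Case~2 never occurs. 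This is a pleasant simplification.

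There is one small gap: your step ``$|G|^{\alpha'/2}\ge|G|^{\alpha}$'' requires $\alpha\le\alpha'/2$, but the \emph{setting} listed just before Lemma~\ref{lem:inductive-set-up-structural} only records $\beta\ge4\alpha$ (equivalently $(1-1/L)\beta\ge\alpha$); it asserts no relation between $\alpha$ and $\alpha'$. The inequality $\alpha<\alpha'/2$ does appear explicitly among the hypotheses of Proposition~\ref{prop:VarjuMutation}, and in the eventual application one has $\alpha=0$, so nothing is lost downstream---but as a proof of the lemma \emph{as stated} you are importing an extra assumption. Either add $\alpha\le\alpha'/2$ to the standing hypotheses, or keep the paper's Case~2 to close the argument under the weaker condition $\beta\ge4\alpha$ alone.
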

\begin{proof}
We follow an identical argument as in the proof of Lemma~\ref{lem:inductive-set-up-structural}. Let 
\be\label{eq:exceptional-subfield-type}
\mathcal{E}':=
\{\vecy\in \bigoplus_{i=1}^{2^{m+1}-1} G|\h \exists H\in \bigcup_{i=1}^{m'} \cal'_i, \exists g\in G, \bbp(X_{\vecy}\in gH)\ge |G|^{-\beta'}\}.
\ee
Suppose $\vecy:=(\vecy_{\rm l},y,\vecy_{\rm r})\in\mathcal{E}'$ where $\vecy_{\rm l}$ and $\vecy_{\rm r}$ are the left and the right $2^m-1$ components of $\vecy$, respectively, and $y\in G$. Then $X_{\vecy}=X_{\vecy_{\rm l}}yX'_{\vecy_{\rm r}}$, and there are $1\le i\le m'$, $H\in\cal'_i$, and $g\in G$ such that $|G|^{-\beta'}\le \bbp(X_{\vecy_{\rm l}}yX'_{\vecy_{\rm r}}\in gH)$. As in the proof of Lemma~\ref{lem:inductive-set-up-structural}, there is $g'\in G$ such that
\be\label{eq:subgroups-large-prob-2}
\bbp(\wtx_{\vecy_{\rm l}}X_{\vecy_{\rm l}}\in g'^{-1}Hg')\ge \frac{1}{4}|G|^{-2\beta'}
\text{ and }
\bbp(X'_{\vecy_{\rm r}}\wtx'_{\vecy_{\rm r}}\in y^{-1}g'^{-1}Hg'y)\ge \frac{1}{4}|G|^{-2\beta'}.
\ee 
If $\vecy_{\rm l}\not\in \mathcal{E}_m$ where $\mathcal{E}_m$ is defined in \eqref{eq:def-exceptional-left}, then by Lemma~\ref{lem:inductive-set-up-structural} for any $\overline{H}\in \bigcup_{i=0}^m \cal_i$ and any $\overline{g}\in G$ we have $\bbp(X_{\vecy}\in \overline{g}\overline{H})<|G|^{-\beta_m}$ where $\beta_m=\frac{\beta_0}{8^m}$. This implies that
\be\label{eq:among-exceptional-subgroups}
g'^{-1}Hg'\in E'_i(\lambda_{\vecy_{\rm l}};|G|^{-\beta_m},\frac{1}{4}|G|^{-2\beta'}),
\ee 
where $E'_i$ is the set defined in Lemma~\ref{lem:exceptional-subgroups}. Similarly if $\vecy_{\rm r}\not\in \mathcal{E}'_m$ where $\mathcal{E}'_m$ is defined in \eqref{eq:def-exceptional-right}, then 
\be\label{eq:among-exceptional-subgroups-right}
y^{-1}g'^{-1}Hg'y\in E'_i(\wt{\lambda'}_{\vecy_{\rm r}};|G|^{-\beta_m},\frac{1}{4}|G|^{-2\beta'}).
\ee
Following an identical argument as in the proof of Lemma~\ref{lem:inductive-set-up-structural}, we get that
\begin{align}
\notag
\bbp((\vecy_{\rm l},y,\vecy_{\rm r})\in \mathcal{E}')\le
&
 \bbp(\vecy_{\rm l}\in \mathcal{E}_m)+\bbp(\vecy_{\rm r}\in \mathcal{E}'_m)+
	\bbp((\vecy_{\rm l},y,\vecy_{\rm r})\in \mathcal{E}'), \vecy_{\rm l}\not\in \mathcal{E}_m, \vecy_{\rm r}\not\in \mathcal{E}'_m)
\\
\notag
\le &
2p_m+\sum_{\vecy_{\rm l}\not\in \mathcal{E}_m, \vecy_{\rm r}\not\in\mathcal{E}'_m} \bbp((Y_1,\ldots,Y_{2^m-1})=\vecy_{\rm l})\bbp((Y_{2^m+1},\ldots,Y_{2^{m+1}-1})=\vecy_{\rm r})
\\
\notag
&	\left(\sum_{i=1}^{m'}\sum_{H_1,H_2}\bbp(Y_{2^m}^{-1}H_1Y_{2^m}=H_2)\right)
\end{align}
where $p_m$ is given in Lemma~\ref{lem:inductive-set-up-structural}, $H_1$ ranges in $E'_i(\lambda_{\vecy_{\rm l}};|G|^{-\beta_m},\frac{1}{4}|G|^{-2\beta'})$, and $H_2$ ranges in 
		$E_{i}'(\wt{\lambda'}_{\vecy_{\rm r}}; |G|^{-\beta_m}, \frac{1}{4}|G|^{-2\beta'})$ for the given $i$. We notice that for a given $H_1$ and $H_2$ that are conjugate of each other and are in $\cal'_i$, there is $g''\in G$ such that
\[
\bbp(Y_{2^m}^{-1} H_1 Y_{2^m}=H_2)=\bbp(Y_{2^m}\in g''N_G(H_1));
\]
and by our assumption $[N_G(H_1):H_1]\le L$. Hence 
\[
\bbp(Y_{2^m}^{-1} H_1 Y_{2^m}=H_2)\le L\max_{\overline{g}\in G} \bbp(Y_{2^m}\in \overline{g}H_1).
\]
Now we consider two cases based on whether $|H_1|\ge |G|^{\alpha}$ or not. 

{\bf Case 1.} $|H_1|\ge |G|^{\alpha}$. 

In this case, by our assumption, 
\[
\max_{\overline{g}\in G} \bbp(Y_{2^m}\in \overline{g}H_1)\le [G:H]^{-\beta}\le |G|^{-
\frac{\beta}{L}};
\]
and so by an identical analysis as in the proof of Lemma~\ref{lem:inductive-set-up-structural} and our assumption that $m'\le \log |G|$, we deduce that
\be\label{eq:initial-upper-bound-subfield}
\bbp((\vecy_{\rm l},y,\vecy_{\rm r})\in \mathcal{E}')
\le 2p_m+8L(\log |G|) |G|^{-\frac{\beta}{L}+\beta_m+2\beta'}. 
\ee
By \eqref{eq:initial-upper-bound-subfield} to prove the claim in this case, it is enough to show $
2p_m+8L(\log |G|) |G|^{-\frac{\beta}{L}+\beta_m+2\beta'}\le p.
$ We notice that $p-2p_m=|G|^{-\beta/(2L)}$, and 
$
-\frac{\beta}{L}+\beta_m+2\beta'\le -\frac{3\beta}{4L}.
$
Hence it is enough to show $8L(\log|G|) |G|^{-\frac{3\beta}{4L}}\le |G|^{-\frac{\beta}{2L}}$, which clearly holds for $|G|\gg_{\beta,L} 1$.

{\bf Case 2.} $|H_1|<|G|^{\alpha}$.

In this case, we have 
\[
\max_{\overline{g}\in G} \bbp(Y_{2^m}\in \overline{g}H_1)\le |G|^{-\beta}|H| \le|G|^{-\beta}|G|^{\alpha}\le |G|^{-\frac{\beta}{L}},
\]
where the last inequality holds as $(1-\frac{1}{L})\beta\ge \alpha$. Now we can follow the same analysis as in the first case; and the claim follows.
\end{proof}
\begin{proof}[Proof of Propodition~\ref{prop:generic-Diophantine-property}]
First we notice that we can and will let $\cal_{m+1}:=\cal_m$ and get the claim of Lemma~\ref{lem:inductive-set-up-structural} for $k=m+1$ as well. Hence by Lemma~\ref{lem:inductive-set-up-structural} and Lemma~\ref{lem:Diophantine-subfield-type} we get
\[
\bbp(\vecy\in \mathcal{E}_{m+1}\cup \mathcal{E}')\le p_{m+1}+p'=2(2^{m+1}-1)|G|^{-\frac{\beta}{2L}}\le |G|^{-\frac{\beta}{4L}},
\] 	
where $\mathcal{E}_{m+1}$ and $\mathcal{E}'$ are defined in \eqref{eq:exceptional-subgroup-left} and \eqref{eq:exceptional-subfield-type}, respectively, and the last inequality holds for $|G|\gg_{L,\beta} 1$. 

Suppose $\vecy\not\in \mathcal{E}_{m+1}\cup \mathcal{E}'$. For any proper subgroup $H$ of $G$ proper, there is $H^\sharp\in \bigcup_{i=0}^m\cal_i \cup \bigcup_{j=1}^{m'} \cal'_j$ such that $[H:H\cap H^{\sharp}]\le L$. Then for any $g\in G$
\[
\bbp(X_{\vecy}\in gH)\le L \max_{g'\in G} \bbp(X_{\vecy}\in g'H^\sharp)\le L|G|^{-\beta'}\le |G|^{-\beta'/2},
\] 
where the last inequality holds for $|G|\gg_{\beta,L} 1$.
\end{proof}

\subsection{Gaining conditional entropy: proof of Proposition~\ref{prop:VarjuMutation}} 
By the definition of the conditional R\'{e}nyi entropy, we have
 \begin{align}
 \notag
 H_2(X_1Y_1\cdots Y_{2^{m+1}-1}X_{2^{m+1}}X_{2^{m+1}+1}|Y_1,\ldots,Y_{2^{m+1}-1}) &
 = \hspace{3cm} 
 \\
 \label{eq:def-conditional-entropy}
 \sum_{\vecy\in \bigoplus_{i=1}^{2^{m+1}-1}G}  \bbp ((Y_1,\ldots,Y_{2^{m+1}-1})&=\vecy) 
H_2(X_{\vecy}X_{2^{m+1}+1}).
 \end{align}
Let 
\[
\mathcal{E}'':=\{\vecy \text{ such that } X_{\vecy} \text{ is not of } (0,\beta'/2)\text{-Diophantine type}\}
\]
where $\beta':=\frac{1}{8^{m+1}}\min(\frac{\beta}{5L},\frac{\alpha'}{2})$; and so by Proposition~\ref{prop:generic-Diophantine-property} we have
\be\label{eq:measure-exceptional-set}
\bbp((Y_1,\ldots,Y_{2^{m+1}-1})\in \mathcal{E}'')\le |G|^{-\frac{\beta}{4L}}.
\ee
For $\vecy\in \mathcal{E}''$, we use the trivial bound given in Lemma~\ref{lem:product-entropy-trivial-bound}
\be\label{eq:trivial-bound-for-exceptional-set}
H_2(X_{\vecy}X_{2^{m+1}+1})\ge \max_{i=1}^{2^{m+1}+1}H_2(X_i)\ge \min_{i=1}^{2^{m+1}+1} H_2(X_i)=:h_{\rm min};
\ee
we notice that $H_2(Xg)=H_2(X)$ for any random variable $X$ with values in $G$ and $g\in G$. For $\vecy\not\in \mathcal{E}''$, we have that 
\begin{enumerate}
\item $X_{\vecy}$ is of $(0,\beta'/2)$-Diophantine type.
\item $H_2(X_{\vecy})\ge \max_{i=1}^{2^{m+1}} H_2(X_i)\ge \max_{i=1}^{2^{m+1}} H_\infty(X_i)\ge \alpha' \log |G|$ by Lemma~\ref{lem:properties-entropy}, Lemma~\ref{lem:product-entropy-trivial-bound}, and the fact that $H_2(X_iy_i)=H_2(X_i)$ for any $i$. 
\end{enumerate}
Then either $H_2(X_{\vecy})>(1-\frac{\alpha''}{2})\log |G|$ or by Lemma~\ref{lem:GrowthOfRenyiEntropySingleScale}
\[
H_2(X_{\vecy}X_{2^{m+1}+1})\ge \frac{H_2(X_{\vecy})+H_2(X_{2^{m+1}+1})}{2}+\gamma_0 \log |G|
\]
for some positive $\gamma_0$ which depends only on $\alpha',\alpha'',\beta,$ and the function $\delta_0$. Since \[\max_{i=1}^{2^{m+1}+1} H_2(X_i)\le (1-\alpha'')\log |G|,\] in either case we get 
\be\label{eq:gaining-entropy-outside-exceptional-set}
H_2(X_{\vecy}X_{2^{m+1}+1})\ge h_{\rm min}+\gamma_0 \log |G|.
\ee
In what follows, let $p_{\vecy}:=\bbp((Y_1,\ldots,Y_{2^{m+1}-1})=\vecy)$ for simplicity. By \eqref{eq:def-conditional-entropy}, \eqref{eq:trivial-bound-for-exceptional-set}, and \eqref{eq:gaining-entropy-outside-exceptional-set}, we have
\begin{align*}
H_2(X_1Y_1\cdots Y_{2^{m+1}-1}&X_{2^{m+1}}X_{2^{m+1}+1}|Y_1,\ldots,Y_{2^{m+1}-1}) 
  \hspace{3cm} 
 \\
= & 
\sum_{\vecy \in \mathcal{E}''} p_{\vecy} h_{{\rm min}} +
\sum_{\vecy\not\in \mathcal{E}''} p_{\vecy} (h_{{\rm min}}+\gamma_0 \log |G|)
\\
= & h_{{\rm min}}+\gamma_0 \bbp((Y_1,\ldots,Y_{2^{m+1}-1})\not\in \mathcal{E}'')\log |G|
\\
\ge & 
h_{{\rm min}}+ \gamma_0 (1-|G|^{-\frac{\beta}{4L}})\log |G| &(\text{By } \eqref{eq:measure-exceptional-set})
\\
\ge &h_{{\rm min}}+\frac{\gamma_0}{2} \log |G|,
\end{align*}
for $|G|\gg_{L,\beta,\alpha',\alpha'',\delta_0} 1$; and the claim follows.
\subsection{Scales with room for improvement} In this section, we use the gain of conditional entropy (given in Proposition~\ref{prop:VarjuMutation}) at levels where we have room for improvement to prove a growth statement in the multi-scaled setting of Proposition~\ref{propvarjuproduct}. In order to use Proposition~\ref{prop:VarjuMutation}, we need to have an auxiliary random variable with some Diophantine property. We recall a result of Varj\'{u} that provides us with such a random variable.

\begin{lem}\label{lem:Varj-aux-random-variable}\cite[Lemma 17]{Var}
Suppose $\{G_i\}_i$ is a sequence of finite groups that are $L^{-1}$-quasi-random. Suppose $0<\vare<1$, $0<\delta<\vare/(8L)$, and $S\subseteq G:=\bigoplus_{i=1}^n G_i$ is such that for any proper subgroup $H$ of $G$ and $g\in G$ we have
\[
\pcal_S(gH)\le [G:H]^{-\vare}|G|^{\delta}.
\] 
Then there are $B\subseteq S$ and $J_{\rm g}\subseteq [1..n]$ (think about it as the set of {\em good} indexes) with the following properties. Let $Y:=(Y^{(1)},\ldots,Y^{(n)})$ be the random variable with the uniform distribution on $B$, and $Y^{(i)}$ be the induced random variable with values in $G_i$.
\begin{enumerate}
\item For $i\in J_{\rm g}$, $Y^{(i)}$ is of $(0,\frac{\vare}{2L})$-Diophantine type.
\item Let $J_{\rm b}:=[1..n]\setminus J_{\rm g}$ (think about it as the set of {\em bad} indexes); then $|G_{J_{\rm b}}|\le |G|^{\frac{\delta}{\vare/(2L)}}$, where $G_{J_{\rm b}}:=\bigoplus_{i\in J_{\rm b}} G_i$.
\end{enumerate}
\end{lem}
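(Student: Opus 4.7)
I propose to construct $B$ and $J_b$ by a greedy iterative procedure and then combine a two-sided volume estimate with quasi-randomness to control $|G_{J_b}|$. First I initialize $B:=S$ and $J_b:=\varnothing$. At each step I check whether there exists $i\in[1..n]\setminus J_b$, a proper subgroup $H_i<G_i$, and $g_i\in G_i$ with $\pi_i[\pcal_B](g_iH_i)>[G_i:H_i]^{-\vare/(2L)}$; if so, I restrict $B\leftarrow B\cap\pi_i^{-1}(g_iH_i)$, append $i$ to $J_b$, and record the pair $(H_i,g_i)$; otherwise I halt. Since $J_b$ strictly grows and is contained in $[1..n]$, the procedure terminates in at most $n$ steps. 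By the halting criterion, for each $i\in J_g:=[1..n]\setminus J_b$ the projection $\pi_i[\pcal_B]$ is of $(0,\vare/(2L))$-Diophantine type, yielding conclusion (1). Moreover each index is added to $J_b$ at most once, so exactly one pair $(H_i,g_i)$ is associated to each $i\in J_b$.

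Next I plan to use a two-sided estimate on $|B|/|S|$. Set $H_{\mathrm{tot}}:=\bigoplus_{i\in J_b}H_i\oplus\bigoplus_{i\in J_g}G_i$; then $B\subseteq gH_{\mathrm{tot}}$ for some $g\in G$ and $[G:H_{\mathrm{tot}}]=\prod_{i\in J_b}[G_i:H_i]$. Multiplying the shrinkage factors from the individual restrictions gives
\[
\frac{|B|}{|S|}\ge\prod_{i\in J_b}[G_i:H_i]^{-\vare/(2L)}=[G:H_{\mathrm{tot}}]^{-\vare/(2L)}.
\]
On the other hand, assuming $J_b\ne\varnothing$ (otherwise conclusion (2) is trivial), the subgroup $H_{\mathrm{tot}}$ is proper, and the hypothesis on $S$ yields
\[
\frac{|B|}{|S|}\le\pcal_S(gH_{\mathrm{tot}})\le[G:H_{\mathrm{tot}}]^{-\vare}|G|^{\delta}.
\]
Comparing these two bounds and using $L\ge1$ (so that $1-1/(2L)\ge1/2$), I obtain $[G:H_{\mathrm{tot}}]\le|G|^{2\delta/\vare}$.

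To pass from an index bound to a bound on $|G_{J_b}|$ itself I invoke the $L^{-1}$-quasi-randomness of each $G_i$. The permutation representation of $G_i$ on $G_i/H_i$ is $[G_i:H_i]$-dimensional; by transitivity of the $G_i$-action the trivial irreducible appears with multiplicity exactly one, so the complementary summand of dimension $[G_i:H_i]-1$ contains a nontrivial irreducible of $G_i$, which by quasi-randomness has dimension at least $|G_i|^{1/L}$. Therefore $[G_i:H_i]\ge|G_i|^{1/L}+1$, and in particular $|G_i|\le[G_i:H_i]^L$ for each $i\in J_b$. Multiplying,
\[
|G_{J_b}|=\prod_{i\in J_b}|G_i|\le\prod_{i\in J_b}[G_i:H_i]^L=[G:H_{\mathrm{tot}}]^L\le|G|^{2L\delta/\vare}=|G|^{\delta/(\vare/(2L))},
\]
which is conclusion (2).

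The delicate point will be to ensure that each index is added to $J_b$ exactly once, so that the lower and upper bounds on $|B|/|S|$ involve the same quantity $[G:H_{\mathrm{tot}}]$; the algorithm enforces this by ignoring further violations at indices already in $J_b$, even though a restriction at one coordinate may create new violations at other coordinates, which the loop will subsequently absorb. It is a priori unclear whether $\pi_j[\pcal_B]$ remains Diophantine at indices $j\in J_g$ after later restrictions, but the halting condition directly guarantees this for the final $B$; this is the key conceptual point that makes the whole argument work.
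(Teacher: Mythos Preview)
Your proof is correct and is essentially the argument of \cite[Lemma 17]{Var}, to which the paper simply defers without reproducing the details. The greedy coset-restriction procedure, the two-sided estimate on $|B|/|S|$, and the use of quasi-randomness to convert the index bound $[G:H_{\mathrm{tot}}]\le |G|^{2\delta/\vare}$ into the bound $|G_{J_{\rm b}}|\le |G|^{2L\delta/\vare}$ all match Varj\'{u}'s proof, and your final paragraph correctly identifies and dispatches the one subtlety, namely that the halting condition is tested against the \emph{final} $B$.
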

\begin{proof}
See \cite[Proof of Lemma 17]{Var}.	
\end{proof}
Let us recall some of our assumptions and earlier results that will be used in the remaining of this section. 
\begin{enumerate}
\item $\{G_i\}_{i=1}^{\infty}$ is a family of pairwise non-isomorphic finite groups that satisfy assumptions (V1)$_L$-(V3)$_L$ and (V4)$_{\delta_0}$. 
\item $0<\vare<1$ and $\delta:=\vare^5/(8L)$.
\item Suppose $|G_i|\gg_{\vare,L} 1$ such that Proposition~\ref{prop:VarjuMutation} holds for the variables $\alpha':=\delta$, $\alpha'':=1/(3L)$, $\beta:=\vare/(2L)$, $L$, and $\delta_0$; we are allowed to make this assumption thanks to Lemma~\ref{lem:required-inequality}. Let $\gamma$ be the constant given by Proposition~\ref{prop:VarjuMutation} for the same set of variables. 
\item $S\subseteq G:=\bigoplus_{i=1}^n G_i$ such that for any proper subgroup $H$ of $G$ and $g\in G$
\[
\pcal_S(gH)\le [G:H]^{-\vare}|G|^{\delta}.
\]
\item Let $A\subseteq S$ be the $(D_0,\ldots,D_{n-1})$-regular subset that is given at the end of Section~\ref{ss:regular-subset}; that means $D_i$ is either 1 or at least $|G_i|^{\delta}$ and $|A|>|G|^{-2\delta} |S|$.
\item Let $I_{\rm l}:=\{i\in [0..n-1]| D_i>|G_i|^{1-1/(3L)}\}$ and $I_{\rm s}:=[0..n-1]\setminus I_{\rm l}$.
\item Let $B\subseteq S$ be given by Lemma~\ref{lem:Varj-aux-random-variable}; and let $J_{\rm g}$ and $J_{\rm b}$ be given the sets given in same lemma.
\end{enumerate}
In the above setting, we let $X_i:=(X^{(1)}_i,\ldots,X^{(n)}_i)$ be i.i.d. random variables with distribution $\pcal_A$ for $1\le i\le 2^{m+1}+1$, and $Y_i:=(Y^{(1)}_i,\ldots,Y^{(n)}_i)$ be i.i.d. random variables with distribution $\pcal_B$ for $1\le i\le 2^{m+1}-1$. For $\vecy=(\vecy^{(1)},\ldots,\vecy^{(n)})\in \bigoplus_{i=1}^{2^{m+1}-1} G$, we let
	\[
	p_{\vecy}:=\bbp((Y_1,\ldots,Y_{2^{m+1}-1})=\vecy),
	\text{ and }
	p_{\vecy^{(i)}}:=\bbp((Y^{(i)}_1,\ldots,Y^{(i)}_{2^{m+1}-1})=\vecy^{(i)})
	\]
	and 
	\begin{align*}
	X_{\vecy}:= & X_1y_1X_2\ldots y_{2^{m+1}-1}X_{2^{m+1}} \\
	= &(X^{(1)}_1y^{(1)}_1X^{(1)}_2\cdots y^{(1)}_{2^{m+1}-1}X^{(1)}_{2^{m+1}},
	\ldots,
	X^{(n)}_1y^{(n)}_1X^{(n)}_2\cdots y^{(n)}_{2^{m+1}-1}X^{(n)}_{2^{m+1}}).
	\end{align*}
\begin{lem}\label{lem:main-gain-scales-with-room}
	In the above setting, we have
	\[
\sum_{\vecy\in \bigoplus_{j=1}^{2^{m+1}-1} G} p_{\vecy} H(X_{\vecy} X_{2^{m+1}+1})\ge 
\log |S|+\gamma \log |G_{I_{\rm s}}|-\gamma \frac{\delta}{\vare/(3L)} \log |G|.
	\]
\end{lem}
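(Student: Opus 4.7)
The strategy is to recognize the left-hand side as the conditional Shannon entropy
\[
H(X_{1}Y_{1}X_{2}\cdots Y_{2^{m+1}-1}X_{2^{m+1}}X_{2^{m+1}+1}\mid Y_{1},\ldots,Y_{2^{m+1}-1}),
\]
denote the argument by $Z=(Z^{(1)},\ldots,Z^{(n)})$, and expand via the chain rule (Lemma~\ref{lem:properties-entropy}(1)) coordinate by coordinate:
\[
H(Z\mid \vecy) = \sum_{i=1}^{n} H\!\left(Z^{(i)}\,\big|\, Z^{(1..i-1)},\, Y_{1},\ldots,Y_{2^{m+1}-1}\right).
\]
By Lemma~\ref{lem:properties-entropy}(4) each summand only grows if we further condition on all earlier $X$-coordinates $X_{j}^{(1..i-1)}$ for $1\le j\le 2^{m+1}+1$ (these determine $Z^{(1..i-1)}$). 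The advantage of this extra conditioning is that, by the regularity of $A$ and Lemma~\ref{lem:random-variables-regular-sets}, given $X_{j}^{(1..i-1)}=a$ the conditional law of $X_{j}^{(i)}$ is uniform on a set of size $D_{i-1}$, hence independent across $j$; combined with the independence of the $Y_{j}^{(i)}$'s this places us in a genuine single-scale situation on the factor $G_{i}$.

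For each scale $i$ I will split into cases depending on $D_{i-1}$. If $D_{i-1}=1$, the conditional random variable is deterministic and the contribution is zero, matching $\log D_{i-1}$. Otherwise $D_{i-1}\ge |G_{i}|^{\delta}$, giving the \emph{initial entropy} hypothesis $H_{\infty}(X_{j}^{(i)}\mid W_{i-1})\ge \delta\log|G_{i}|$ required by Proposition~\ref{prop:VarjuMutation}. If additionally $i\in I_{\rm l}$ (no room for improvement), I just use the trivial lower bound $\log D_{i-1}$ from Lemma~\ref{lem:product-entropy-trivial-bound}. If $i\in I_{\rm s}\cap J_{\rm g}$, then $D_{i-1}\le |G_{i}|^{1-1/(3L)}$ gives the \emph{room for improvement} hypothesis with $\alpha''=1/(3L)$, and Lemma~\ref{lem:Varj-aux-random-variable}(1) supplies the $(0,\vare/(2L))$-Diophantine property of $Y_{1}^{(i)}$ with $\beta=\vare/(2L)$. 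Since any proper $H\le G_{i}$ (including $H\in\bigcup\cal_{j}$) then satisfies $\bbp(Y_{1}^{(i)}\in gH)\le [G_{i}:H]^{-\beta}$, Proposition~\ref{prop:VarjuMutation} applies with $\alpha'=\delta$ and yields
\[
H\!\left(Z^{(i)}\,\big|\,\text{all earlier coords}\right)\ \ge\ \log D_{i-1}+\gamma\log|G_{i}|,
\]
where $\gamma$ is the constant fixed at the start of the section (using that $|G_{i}|\gg_{\vare,L}1$ thanks to Lemma~\ref{lem:required-inequality}).

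Summing these scale-by-scale bounds over $i\in[1..n]$ yields
\[
\sum_{\vecy} p_{\vecy} H(X_{\vecy}X_{2^{m+1}+1})\ \ge\ \sum_{i} \log D_{i-1}\ +\ \gamma\log |G_{I_{\rm s}\cap J_{\rm g}}|\ =\ \log|A|+\gamma\log|G_{I_{\rm s}\cap J_{\rm g}}|.
\]
Since $|A|\ge |G|^{-2\delta}|S|$ from the regularization step, and since Lemma~\ref{lem:Varj-aux-random-variable}(2) gives $|G_{J_{\rm b}}|\le |G|^{2L\delta/\vare}$, I bound
\[
\log|G_{I_{\rm s}\cap J_{\rm g}}| \ge \log|G_{I_{\rm s}}| - \log|G_{J_{\rm b}}| \ge \log|G_{I_{\rm s}}| - \tfrac{2L\delta}{\vare}\log|G|,
\]
and then absorb the additional $-2\delta\log|G|$ term coming from $\log|A|$ into the error by loosening $2L\delta/\vare$ to $3L\delta/\vare$ (which is permissible once $\gamma$ is chosen at the start of the section to satisfy, e.g., $\gamma L/\vare \ge 2$, or by weakening $\gamma$ slightly if needed). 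This gives exactly the claimed inequality.

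The main technical obstacle is the bookkeeping at step two: at each scale one must verify carefully that after conditioning on $(X_{j}^{(1..i-1)}, Y_{j}^{(1..i-1)})_{j}$, the resulting random variables $X_{j}^{(i)}$ are independent and each is uniform on a \emph{fixed} set of cardinality $D_{i-1}$, so that the hypotheses of Proposition~\ref{prop:VarjuMutation} are met for every conditioning tuple simultaneously (not merely on average). The Diophantine assumption on $Y_{1}^{(i)}$ from Lemma~\ref{lem:Varj-aux-random-variable} is an unconditional statement on the coordinate marginal, so it transfers without difficulty, but one has to be careful that the $Y_{j}^{(i)}$ retain their i.i.d. and Diophantine properties after the conditioning on all earlier $Y$-coordinates — this follows because $B$ is just a fixed subset of $G$ and the $Y_{j}$ are i.i.d. uniform on $B$, so the coordinates across different $j$ are independent. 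Once the hypotheses are verified uniformly in the conditioning, the rest is the arithmetic bookkeeping indicated above.
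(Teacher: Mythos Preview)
Your proposal is correct and follows essentially the same approach as the paper: expand the conditional Shannon entropy coordinate by coordinate via the chain rule, further condition on all earlier $X$-coordinates (using Lemma~\ref{lem:properties-entropy}(4)), invoke the regularity of $A$ to get uniform conditionals of size $D_{i-1}$, apply Proposition~\ref{prop:VarjuMutation} at scales in $I_{\rm s}\cap J_{\rm g}$ with $D_{i-1}\neq 1$ and the trivial bound elsewhere, and finish with the bookkeeping $\log|A|\ge\log|S|-2\delta\log|G|$ and $\log|G_{J_{\rm b}}|\le(2L\delta/\vare)\log|G|$. One small remark: your parenthetical ``or by weakening $\gamma$ slightly if needed'' does not actually help absorb the $-2\delta\log|G|$ term, since the constraint $2\le\gamma L/\vare$ becomes harder, not easier, when $\gamma$ is decreased; the paper simply uses that $\vare$ is small compared to $L$, which is implicit in the standing assumption $\delta\ll\vare^2\ll 1$.
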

\begin{proof}
By Lemma~\ref{lem:properties-entropy}, we have
\begin{align}
\notag
H(X_{\vecy}X_{2^{m+1}+1})= 
&\sum_{i=1}^n H(X^{(i)}_{\vecy^{(i)}}X^{(i)}_{2^{m+1}+1}|\pr_{[1..i-1]}(X_{\vecy}X_{2^{m+1}+1}))
\\
\label{eq:initial-inequality-average-conditional-entropy}
\ge
&
\sum_{i=1}^n H(X^{(i)}_{\vecy^{(i)}}X^{(i)}_{2^{m+1}+1}|\{\pr_{[1..i-1]}(X_j)\}_{j=1}^{2^{m+1}+1})
\end{align}
By \eqref{eq:initial-inequality-average-conditional-entropy}, we get
\begin{align}
\notag
\sum_{\vecy\in \bigoplus_{j=1}^{2^{m+1}-1} G} p_{\vecy} H(X_{\vecy} X_{2^{m+1}+1})\ge &
\sum_{i=1}^n \sum_{\vecy\in \bigoplus_{j=1}^{2^{m+1}-1} G} p_{\vecy} H(X^{(i)}_{\vecy^{(i)}}X^{(i)}_{2^{m+1}+1}|\{\pr_{[1..i-1]}(X_j)\}_{j=1}^{2^{m+1}+1})
\\
\label{eq:second-inequality-conditional-entropy}
=
&
\sum_{i=1}^n \sum_{\vecy^{(i)}\in \bigoplus_{j=1}^{2^{m+1}-1} G_i} p_{\vecy^{(i)}} H(X^{(i)}_{\vecy^{(i)}}X^{(i)}_{2^{m+1}+1}|\{\pr_{[1..i-1]}(X_j)\}_{j=1}^{2^{m+1}+1})	
\end{align}
We notice that for a given $1\le i\le n$, we have
\begin{align}
\notag
h_i:=\sum_{\vecy^{(i)}\in \bigoplus_{j=1}^{2^{m+1}-1} G_i} p_{\vecy^{(i)}} H(X^{(i)}_{\vecy^{(i)}}X^{(i)}_{2^{m+1}+1}|
& \{\pr_{[1..i-1]}(X_j)\}_{j=1}^{2^{m+1}+1})=
\\
\label{eq:connection-previous-result-conditional-entropy}
 H(X^{(i)}_1Y^{(i)}_1X^{(i)}_2\cdots Y^{(i)}_{2^{m+1}-1} X^{(i)}_{2^{m+1}} X^{(i)}_{2^{m+1}+1}|
 &
\{\pr_{[1..i-1]}(X_j)\}_{j=1}^{2^{m+1}+1}, \{Y^{(i)}_j\}_{j=1}^{2^{m+1}-1}).
\end{align}
Hence, if $i\in J_{\rm g}\cap I_{\rm s}$ and $D_i\neq 1$, by Proposition~\ref{prop:VarjuMutation}, we have
\be\label{eq:gained-conditional-entropy}
h_i\ge \log D_i+ \gamma \log |G_i|.
\ee
By Lemma~\ref{lem:product-entropy-trivial-bound}, we have the trivial bound
\be\label{eq:trivial-bound-entropy}
h_i\ge \log D_i,
\ee
for any $i$. By \eqref{eq:second-inequality-conditional-entropy}, \eqref{eq:connection-previous-result-conditional-entropy}, \eqref{eq:gained-conditional-entropy}, and \eqref{eq:trivial-bound-entropy}, we get
\be\label{eq:total}
\sum_{\vecy\in \bigoplus_{j=1}^{2^{m+1}-1} G} p_{\vecy} H(X_{\vecy} X_{2^{m+1}+1})\ge
\log |A|+\gamma \log |G_{J_{\rm g}\cap I_{\rm s}}|.
\ee
We notice that
\be\label{eq:adding-bad-indexes}
\log |G_{I_s}| = \log |G_{J_{\rm g}\cap I_{\rm s}}|+\log |G_{J_{\rm b}\cap I_{\rm s}}|
\le \log |G_{J_{\rm g}\cap I_{\rm s}}| + \log |G_{J_{\rm b}}|
\le \log |G_{J_{\rm g}\cap I_{\rm s}}| + \frac{\delta}{\vare/(2L)} \log |G|,
\ee
and $\log |A|\ge \log |S|-2\delta \log |G|$. Hence by \eqref{eq:total} and \eqref{eq:adding-bad-indexes}, we have
\[
\sum_{\vecy\in \bigoplus_{j=1}^{2^{m+1}-1} G} p_{\vecy} H(X_{\vecy} X_{2^{m+1}+1})\ge 
\log |S|+ \gamma \log |G_{I_s}| - \gamma \delta (2+\frac{2L}{\vare}) \log |G|; 
\]
and the claim follows.
\end{proof}
\begin{cor}\label{cor:multi-scaled-expansion-scales-with-room}
	In the above setting, we have
	\[
	\textstyle 
	\log |\prod_{2^{m+2}} S|\ge \log |S|+\gamma \log |G_{I_{\rm s}}|-\gamma \frac{\delta}{\vare/(3L)} \log |G|.
	\]
\end{cor}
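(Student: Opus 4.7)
The corollary should follow essentially mechanically from Lemma~\ref{lem:main-gain-scales-with-room} by translating the lower bound on average Shannon entropy into a lower bound on the support size of the relevant product random variable. Concretely, I would set
\[
Z := X_1 Y_1 X_2 Y_2 \cdots Y_{2^{m+1}-1} X_{2^{m+1}} X_{2^{m+1}+1},
\]
so $Z$ is a product of $(2^{m+1}+1)$ factors drawn (with replacement) from $A \subseteq S$ and $(2^{m+1}-1)$ factors drawn from $B \subseteq S$, for a total of $2^{m+2}$ factors from $S$. Hence $\mathrm{supp}(Z) \subseteq \prod_{2^{m+2}} S$, and so
\[
\log \big| \textstyle\prod_{2^{m+2}} S \big| \;\ge\; H_0(Z) \;\ge\; H(Z),
\]
using part (3) of Lemma~\ref{lem:properties-entropy}.

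Next, I would apply parts (1), (2), and (4) of Lemma~\ref{lem:properties-entropy} to pass to a conditional entropy in the variables $Y_1,\ldots,Y_{2^{m+1}-1}$: since conditioning reduces entropy,
\[
H(Z) \;\ge\; H(Z \mid Y_1,\ldots,Y_{2^{m+1}-1})
\;=\; \sum_{\vecy \in \bigoplus_{j=1}^{2^{m+1}-1} G} p_{\vecy}\, H(X_{\vecy} X_{2^{m+1}+1}),
\]
where the equality is just unfolding the definition of conditional Shannon entropy together with the fact that $X_1,\ldots,X_{2^{m+1}+1}$ are independent of $Y_1,\ldots,Y_{2^{m+1}-1}$, so that conditioning on $Y_j = y_j$ literally replaces the random word by $X_{\vecy}X_{2^{m+1}+1}$.

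Finally, plugging in the bound of Lemma~\ref{lem:main-gain-scales-with-room} produces
\[
\log \big| \textstyle\prod_{2^{m+2}} S \big| \;\ge\; \log |S| + \gamma \log |G_{I_{\rm s}}| - \gamma \tfrac{\delta}{\vare/(3L)} \log |G|,
\]
which is exactly the statement of Corollary~\ref{cor:multi-scaled-expansion-scales-with-room}. There is no real obstacle here; the genuine work has all been done in Lemma~\ref{lem:main-gain-scales-with-room} (which in turn relied on the single-scale entropy gain of Proposition~\ref{prop:VarjuMutation} and the Diophantine propagation of Proposition~\ref{prop:generic-Diophantine-property}). The only points to double-check are the bookkeeping that the total word length is $2^{m+2}$ and that all factors indeed come from $S$ (so that the support lies inside $\prod_{2^{m+2}} S$ and not a larger product set), together with the standard fact $H_0 \geq H$ which transfers the entropy estimate into a support-size, hence a product-set, estimate.
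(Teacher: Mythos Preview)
Your proposal is correct and follows essentially the same route as the paper. The only minor difference is that the paper uses an averaging step---since the left-hand side of Lemma~\ref{lem:main-gain-scales-with-room} is a weighted average, there exists a single $\vecy\in B\times\cdots\times B$ with $H(X_{\vecy}X_{2^{m+1}+1})$ at least the stated lower bound, and then one bounds $H(X_{\vecy}X_{2^{m+1}+1})\le H_0(X_{\vecy}X_{2^{m+1}+1})\le \log|\prod_{2^{m+2}}S|$ for that fixed $\vecy$---whereas you instead keep the $Y_j$ random and use $H(Z)\ge H(Z\mid Y_1,\ldots,Y_{2^{m+1}-1})$ to reach the same conclusion; both arguments are equally valid and equally short.
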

\begin{proof}
By Lemma~\ref{lem:main-gain-scales-with-room}, there is $\vecy\in B\times \cdots \times B$ such that 
\[
H(X_{\vecy}X_{2^{m+1}+1})\ge \log |S|+\gamma \log |G_{I_{\rm s}}|-\gamma \frac{\delta}{\vare/(3L)} \log |G|.
\]
On the other hand, $H(X_{\vecy}X_{2^{m+1}+1})\le H_0(X_{\vecy}X_{2^{m+1}+1})\le \log |\prod_{2^{m+2}} S|$; and the claim follows.
\end{proof}

\subsection{Multi-scale product result: proof of Proposition~\ref{propvarjuproduct}} 
In this section, we still work in the setting listed in the previous section, and finish the proof of Proposition~\ref{propvarjuproduct}. 

By Proposition~\ref{proplargeindices}, we either have $|\prod_3 S|\ge |G|^{1-\vare+\delta}$ in which case the claim follows or
\be\label{eq:first-bound}
\textstyle
\log |\prod_{14} S| \ge \log |G_{I_{\rm l}}|+\Theta_L(\vare^3) \log |G|.
\ee   
By Corollary~\ref{cor:multi-scaled-expansion-scales-with-room}, we have
\be\label{eq:second-bound}
\textstyle 
	\log |\prod_{2^{m+2}} S|\ge \log |S|+\gamma \log |G_{I_{\rm s}}|-\Theta_L(\gamma \vare^4) \log |G|.
\ee 
By \eqref{eq:first-bound} and \eqref{eq:second-bound}, we get
\[
\textstyle
(1+\gamma) \log |\prod_{2^{m+2}} S| \ge \log |S|+\gamma \log |G|.
\]
As $\log |S|\le (1-\vare)\log |G|$, we deduce
\[
\textstyle
(1+\gamma)\log |\prod_{2^{m+2}} S| \ge \log |S|+\frac{\gamma}{1-\vare} \log |S|\ge (1+\gamma) \log |S|+\gamma\vare \log |S|.
\]
Hence by \cite[Lemma 2.2]{Hel1} we get
\[
\textstyle
(1+\gamma)(2^{m+2}-2)(\log |\prod_3 S|-\log |S|)\ge (1+\gamma)(\log |\prod_{2^{m+2}} S|-\log |S|)\ge \gamma\vare \log |S|;
\]
and the claim follows.

\section{Super-approximation: proof of Theorem~\ref{t:SpectralGap}}~\label{sec:finishing-proof-SA}	
As it has been pointed out by Bradford (see~\cite[Theorem 1.14]{Bra}) Varj\'{u} has already proved a multi-scale version of Bourgain-Gamburd's result which in combination with Proposition~\ref{propvarjuproduct} can be formulated as follows (see~\cite[Sections 3 and 5]{Var}). 
\begin{thm}\label{thm:VarjuBourgainGamburdMachine}
Suppose $L$ is a positive integer, $\delta_0:\bbr^+\rightarrow \bbr^+$, and $\{G_i\}_{i=1}^{\infty}$ is a family of finite groups that satisfy V(1)$_L$-V(3)$_L$ and V(4)$_{\delta_0}$. Suppose $\overline{\Omega}$ is a symmetric generating set of $G:=\bigoplus_{i=1}^n G_i$. Suppose there are $\eta>0$, $C_0$, and $l<C_0\log |G|$ such that for any proper subgroup $H$ of $G$, we have $\pcal_{\overline{\Omega}}^{(2l)}(H)\le [G:H]^{-\eta}$. Then
\[
1-\lambda(\pcal_{\overline{\Omega}};G)\gg_{L,\delta_0,\eta,C_0,|\overline{\Omega}|} 1.
\]	
\end{thm}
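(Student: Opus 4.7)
The plan is to run the standard Bourgain--Gamburd spectral-gap mechanism in the multi-scale setting, using Proposition~\ref{propvarjuproduct} as the product-theorem input and the given $l$-step Diophantine hypothesis as the escape-from-subgroups input.

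First I would reduce to an $L^2$-flattening statement via a Sarnak--Xue-type trace bound. Since $\overline{\Omega}$ is symmetric, $\mu_k:=\pcal_{\overline{\Omega}}^{(k)}$ is symmetric and every $\pi_\rho(\mu_k)$ is Hermitian, yielding
\[
\lambda(\pcal_{\overline{\Omega}};G)^{2k}\cdot\min_{\rho\neq\mathbf{1}}\dim\rho\;\le\;\Tr\bigl((T_{\overline{\Omega}}^{\circ})^{2k}\bigr)\;\le\;|G|\,\|\mu_k\|_2^2.
\]
With (V2)$_L$ giving $\min_{\rho\neq\mathbf{1}}\dim\rho\ge|G|^{1/L}$, it suffices to produce some $k^*=O_{L,\delta_0,\eta,C_0}(\log|G|)$ with $\|\mu_{k^*}\|_2^2\le|G|^{-1+\varepsilon}$ for a fixed $\varepsilon<1/L$. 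I would also extend the Diophantine hypothesis to all scales $k\ge l$ and to all cosets: the symmetry identity $\mu_{2l}(H)=\sum_i\mu_l(Hg_i)^2$ (summing over right cosets) gives $\mu_l(gH)\le[G:H]^{-\eta/2}$, and writing $\mu_k=\mu_{k-l}\ast\mu_l$ propagates this to every $k\ge l$.

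The core of the argument is iterated flattening: for every $k\ge l$ with $\|\mu_k\|_2^2\ge|G|^{-1+\varepsilon}$, I claim $\|\mu_{2k}\|_2^2\le|G|^{-c_0}\|\mu_k\|_2^2$ for a fixed $c_0=c_0(L,\delta_0,\eta)>0$. Supposing failure with $K:=|G|^{c_0/2}$, Lemma~\ref{lemmaBGapproxsub} produces a symmetric approximate subgroup $A$ with $|A|\asymp\|\mu_k\|_2^{-2}$, $|A\cdot A\cdot A|\le K^R|A|$, and $(\mu_k\ast\widetilde{\mu_k})(a)\ge K^{-R}|A|^{-1}$ on $A$. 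Summing this lower bound over $a\in A\cap gH$ and combining with $(\mu_k\ast\widetilde{\mu_k})(gH)\le\max_h\mu_k(Hg^{-1}h)\le[G:H]^{-\eta/2}$ gives $\pcal_A(gH)\le K^R[G:H]^{-\eta/2}=[G:H]^{-\eta/2}|G|^{Rc_0/2}$. Fixing $c_0$ small enough so that this matches the hypothesis of Proposition~\ref{propvarjuproduct} at $\vare':=\eta/4$, and noting that $\|\mu_k\|_2^2\ge|G|^{-1+\varepsilon}$ forces $|A|\le|G|^{1-\varepsilon/2}$, the proposition concludes $|A\cdot A\cdot A|\gg|A|^{1+\delta'}$ for some $\delta'=\delta'(\eta,L,\delta_0)>0$. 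Confronted with $|A\cdot A\cdot A|\le|G|^{Rc_0/2}|A|$ and $|A|\gg|G|^{\varepsilon}$, this is a contradiction whenever $c_0<2\varepsilon\delta'/R$.

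Iterating the flattening through $k=l,2l,4l,\dots$ reaches the floor $\|\mu_{k^*}\|_2^2\le|G|^{-1+\varepsilon}$ after $O(1/c_0)$ doublings at scale $k^*=O_{L,\delta_0,\eta,C_0}(\log|G|)$, completing the proof via the Sarnak--Xue bound above. The main obstacle will be the delicate bookkeeping required to simultaneously satisfy Proposition~\ref{propvarjuproduct}'s three constraints -- the $L^2$-floor $|A|<|G|^{1-\vare'}$, the matching of input and output $\delta$, and the Diophantine exponent $\vare'$ -- against Lemma~\ref{lemmaBGapproxsub}'s output; once $c_0$ is fixed sufficiently small relative to $\eta,L,\delta_0$ (in particular smaller than $2\varepsilon\delta'(\eta/4)/R$), the iteration runs uniformly across all scales, and the implied constant in the final spectral-gap estimate depends on $L,\delta_0,\eta,C_0,|\overline{\Omega}|$ as claimed.
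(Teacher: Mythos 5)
The paper does not actually prove this theorem: it is quoted as a reformulation of Varj\'{u}'s multi-scale Bourgain--Gamburd machine (see \cite[Sections 3 and 5]{Var}, via \cite[Theorem 1.14]{Bra}) combined with Proposition~\ref{propvarjuproduct}, so your task was essentially to reconstruct Varj\'{u}'s argument. Your overall architecture -- trace bound, propagation of the Diophantine hypothesis to cosets and to all scales $k\ge l$, and iterated $L^2$-flattening via Lemma~\ref{lemmaBGapproxsub} fed into Proposition~\ref{propvarjuproduct} -- is the intended one, and the single-scale flattening step, including the bookkeeping among $K$, $\vare'$ and $\delta(\vare')$, is essentially correct.

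There is, however, a genuine gap in the endgame. You assert that (V2)$_L$ gives $\min_{\rho\neq\mathbf{1}}\dim\rho\ge|G|^{1/L}$ for $G=\bigoplus_{i=1}^n G_i$. This is false: a nontrivial irreducible representation of a direct sum is a tensor product of irreducibles of the factors, at least one of which is nontrivial, so the minimum is only $\min_i|G_i|^{1/L}$, which can be vastly smaller than any fixed power of $|G|$ (one tiny factor next to a huge one). Consequently the final inequality $\lambda^{2k^*}\cdot\min_i|G_i|^{1/L}\le|G|\,\|\mu_{k^*}\|_2^2\le|G|^{\varepsilon}$ yields nothing, and this failure of quasirandomness for the product is precisely what makes the multi-scale version of the theorem nontrivial. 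The repair, which is the content of \cite[Section 5]{Var}: group the nontrivial irreducibles $\rho$ by the set $J\subseteq[1..n]$ of coordinates on which they are nontrivial, use $\dim\rho\ge|G_J|^{1/L}$ together with the fact that the contribution of such $\rho$ to the trace is controlled by $|G_J|\,\|\pr_J[\mu_{k^*}]\|_2^2$, and hence run the flattening iteration simultaneously for every projection $\pr_J[\mu_k]$. This is available because your Diophantine input descends to each quotient $G_J$ (a proper subgroup $H'\le G_J$ pulls back to the proper subgroup $\pr_J^{-1}(H')$ of the same index, and $\pr_J[\mu_{2l}](H')=\mu_{2l}(\pr_J^{-1}(H'))\le[G_J:H']^{-\eta}$) and because each subfamily $\{G_i\}_{i\in J}$ still satisfies (V1)$_L$--(V4)$_{\delta_0}$; but without this extra layer your final step does not close. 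A minor additional point: the lower bound $|A|\gg|G|^{c}$ needed to contradict $|A\cdot A\cdot A|\le K^R|A|$ should be justified by applying the escape hypothesis to the trivial subgroup, which gives $\|\mu_k\|_2^2\le\mu_{2l}(\{1\})\le|G|^{-\eta}$ for $k\ge l$.
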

  Let us recall that by the discussion in Section~\ref{ss:storngapproximation}, we can and will assume
  \be\label{eq:group-structure}
  \pi_f(\Gamma)\simeq \bigoplus_{\ell|f, \ell \text{ irred.}} \gcal_{\ell}(K(\ell))
  \ee
  where $K(\ell)$ is the finite field $\bbf_{p_0}[t]/\langle \ell\rangle$, $\gcal_{\ell}$ is an absolutely almost simple, simply connected, $K(\ell)$-group, and the absolute type of all $\gcal_{\ell}$'s are the same. 
  
  By Proposition~\ref{propmainescape}, there is a symmetric subset $\Omega'$ of $\Gamma$, a square-free polynomial $r_1$, and positive numbers $c_0$ and $\delta$ such that for any $f\in S_{r_1,c_0}$ (that means $f$ and $r_1$ are coprime and the degree of any irreducible factor of $f$ does not have a prime factor less than $c_0$), any purely structural subgroup $H$ of $\pi_f(\Gamma)$ and $l\gg_{\Omega} \deg f$, we have 
  \be\label{eq:escape-purely-structural-and-generation}
  \pi_f(\langle \Omega'\rangle)=\pi_f(\Gamma),\text{ and } \pcal^{(l)}_{\pi_f(\Omega')}(H)\le [\pi_f(\Gamma):H]^{-\delta};
  \ee
  moreover $\Omega'=\Omega'_0\sqcup {\Omega'_0}^{-1}$ and $\Omega'_0$ freely generates a subgroup of $\Gamma$. By \eqref{eq:escape-purely-structural-and-generation}, to prove Theorem~\ref{t:SpectralGap}, it is enough to prove 
\[
1-\lambda(\pcal_{\pi_f(\Omega')};\pi_f(\Gamma))\gg_{\Omega} 1.
\]
By \eqref{eq:group-structure}, \eqref{eq:escape-purely-structural-and-generation}, and Theorem~\ref{thm:VarjuBourgainGamburdMachine}, to prove Theorem~\ref{t:SpectralGap}, it is enough to prove the following.
\begin{enumerate}
\item There are $L$ and $\delta_0$ such that $\gcal_{\ell}(K(\ell))$ satisfies V(1)$_L$-V(3)$_L$, and V(4)$_{\delta_0}$ if $\ell$ is an irreducible polynomial that does not divide $r_1$.	
\item There are $\eta>0$, $C_0$, and $c_0'\ge c_0$ such that for any $f\in S_{r_1,c_0'}$ and any proper subgroup $H$ of $\pi_f(\Gamma)$ we have $\pcal_{\pi_f(\Omega')}^{(2l)}(H)\le [\pi_f(\Gamma):H]^{\eta}$ for some $l<C_0 \deg f$.
\end{enumerate}
 In the rest of this section, we will prove these items.
 
 \subsection{Verifying Varj\'{u}'s assumptions V(1)$_L$-V(3)$_L$, and V(4)$_{\delta_0}$ for $\gcal_{\ell}(K(\ell))$} Since $\gcal_{\ell}$'s are 
absolutely almost simple, simply connected, $K(\ell)$-groups, and all of them have the same absolute type, by \cite{LS-quasi}, they satisfy V(1)$_L$ and V(2)$_L$ for some positive integer $L$. By the groundbreaking results \cite[Corollary 2.4]{BGT} and \cite[Theorem 4]{PS}, there is a function $\delta_0$ such that $\gcal_{\ell}(K(\ell))$'s satisfy V(4)$_{\delta_0}$.

Now we introduce the families of subgroups $\cal_i$ and $\cal'_j$, and prove that they satisfy V(3)$_L$ for some positive integer $L$ that is independent of irreducible polynomials $\ell$'s.

By Theorem~\ref{thm:FinalRefinementOfLP}, for a structural subgroup $H$ of $\gcal_{\ell}(K(\ell))$, there is a proper subgroup $\bbh$ of $\gcal_{\ell}$ with complexity $O_{\Gamma}(1)$ such that $H\subseteq \bbh(K(\ell))$. Since the complexity of $\bbh$ is $O_{\Gamma}(1)$, $[\bbh(K(\ell)):\bbh^{\circ}(K(\ell))]\ll_{\Gamma}1$ and the complexity of $\bbh^{\circ}$ is also bounded by a function of $\Gamma$, where $\bbh^{\circ}$ is the connected component of the identity of $\bbh$ in the Zariski topology. For $0\le i< \dim \bbg$, initially we let 
\[
\cal_i:=\{\bbh(K(\ell))|\h \bbh \lneq \gcal_{\ell}, \dim \bbh=i, 
\bbh=\bbh^{\circ} \text{, its complexity is bounded as above}\}
\]
Next for smaller dimension subgroups, we allow slightly larger complexity to include the connected components of the intersections of larger dimension connected proper subgroups. Then by Theorem~\ref{thm:FinalRefinementOfLP} a subgroup $H$ of $\gcal_\ell(K(\ell))$ is a structural subgroup if and only if there is $H^\sharp$ in $\cal_i$ for some $i$ such that $H\preceq_L H^\sharp$ where $L:=O_{\Gamma}(1)$. Moreover $\cal_i$'s satisfy the condition (V3)$_L$,(i)-(ii).
 
By Theorem~\ref{thm:FinalRefinementOfLP}, we know that if $H$ is a proper subgroup of subfield type of $\gcal_{\ell}(K(\ell))$, then there is a subfield $F_H$ and an $F_H$-model $\bbg_H$ of $\Ad(\gcal_{\ell})$ such that \[[\bbg_H(F_H),\bbg_H(F_H)]\subseteq \Ad(H)\subseteq \bbg_H(F_H).\] This implies that $H\preceq \wt{\bbg}_H(F_H)$ where $\wt{\bbg}_H$ is a simply-connected cover of $\bbg_H$. 

\begin{lem}~\label{lem:subfield-type-subgroups-up-to-conjugation}
For a subgroup $H$ of $\gcal_{\ell}(K(\ell))$, let ${\rm Con}(H)$ be the set of all the conjugates of $H$ in $\gcal_{\ell}(K(\ell))$. For a subfield $F$ of $K(\ell)$, let
\[
n_F:=|\{{\rm Con}(\wt{\bbg}(F))|\h \wt{\bbg}\text{ is an } F\text{-model of } \gcal_{\ell}\}|.
\] 
Then $n_F\ll_{\bbg} 1$.
\end{lem}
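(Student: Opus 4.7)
The plan is to decompose $n_F$ into two quantities, each $\ll_\bbg 1$: (i) the number of $F$-isomorphism classes of $F$-models of $\gcal_\ell$, and (ii) for each such class, the number of $\gcal_\ell(K(\ell))$-conjugacy classes of subgroups of the form $\wt{\bbg}(F) \subseteq \gcal_\ell(K(\ell))$ arising from different $K(\ell)$-identifications of $\wt{\bbg}\otimes_F K(\ell)$ with $\gcal_\ell$.

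For step (i), fix a reference $F$-model $\wt{\bbg}_0$ of $\gcal_\ell$ (existence via descent of a Chevalley twist over the appropriate finite field). Then $F$-models of $\gcal_\ell$ are classified up to $F$-isomorphism by the pointed set $H^1(\Gal(K(\ell)/F), \Aut(\wt{\bbg}_0)(K(\ell)))$. Applying non-abelian Galois cohomology to the $F$-group sequence
\[
1 \to \wt{\bbg}_0^{\mathrm{ad}} \to \Aut(\wt{\bbg}_0) \to \mathrm{Out}(\wt{\bbg}_0) \to 1,
\]
together with Lang's theorem applied to every inner twist of the connected $F$-group $\wt{\bbg}_0^{\mathrm{ad}}$ (so that each twisted $H^1$ vanishes), yields an injection of pointed sets $H^1(F, \Aut(\wt{\bbg}_0)) \hookrightarrow H^1(F, \mathrm{Out}(\wt{\bbg}_0))$. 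Since $\mathrm{Out}(\wt{\bbg}_0)$ is a finite group whose order depends only on the root system of $\gcal_\ell$ (and hence of $\bbg$), and $\Gal(K(\ell)/F)$ is cyclic, the target has cardinality $\ll_\bbg 1$.

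For step (ii), parametrize pairs (model, embedding) by cocycles $c \in Z^1(\Gal(K(\ell)/F), \Aut(\gcal_\ell)(K(\ell)))$, where $c$ encodes the twisted Frobenius whose fixed points are the chosen embedded copy of $\wt{\bbg}(F)$. Two cocycles are cohomologous (i.e., yield $F$-isomorphic models) iff they differ by twisted conjugation by some $\alpha \in G := \Aut(\gcal_\ell)(K(\ell))$, while they yield $\gcal_\ell(K(\ell))$-conjugate subgroups iff $\alpha$ can additionally be taken in the image $I$ of the natural map $\gcal_\ell(K(\ell)) \to G$. A standard double-coset bound shows that the number of $I$-orbits inside each cohomology class is at most $[G:I]$, and this index is controlled via the sequences $1 \to Z(\gcal_\ell) \to \gcal_\ell \to \gcal_\ell^{\mathrm{ad}} \to 1$ and $1 \to \gcal_\ell^{\mathrm{ad}} \to \Aut(\gcal_\ell) \to \mathrm{Out}(\gcal_\ell) \to 1$ over $K(\ell)$, bounding $[G : I]$ by $|H^1(K(\ell), Z(\gcal_\ell))| \cdot |\mathrm{Out}(\gcal_\ell)(K(\ell))|$. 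Both quantities depend only on the type of $\gcal_\ell$, hence are $\ll_\bbg 1$.

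The main obstacle is the non-abelian character of the cohomology in step (i): injectivity of $H^1(F, \Aut(\wt{\bbg}_0)) \to H^1(F, \mathrm{Out}(\wt{\bbg}_0))$ as pointed sets requires that every twisted kernel be trivial, not just the starting one, which forces a uniform application of Lang's theorem to every inner $F$-form of $\wt{\bbg}_0^{\mathrm{ad}}$. Once this uniform vanishing is in place, the remainder is routine, and combining the bounds from steps (i) and (ii) gives $n_F \le |H^1(F, \Aut(\gcal_\ell))| \cdot [G:I] \ll_\bbg 1$.
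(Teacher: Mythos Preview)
Your proof is correct and follows essentially the same two-step decomposition as the paper, but the execution of each step is genuinely different.

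For step (i), the paper argues directly: over a finite field every absolutely almost simple simply connected group is quasi-split (Lang), and for a fixed absolute root system there are at most two quasi-split $F$-forms (the split one and possibly one outer form), so the number of $F$-isomorphism classes is $\le 2$. You instead run the non-abelian cohomology sequence and push into $H^1(F,\mathrm{Out})$, which gives the same kind of bound but through more machinery.

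For step (ii), the paper does \emph{not} use a cocycle/double-coset count. Instead it invokes Proposition~\ref{prop:conjugates-subfield-type-subgroups}: if $g\in\gcal_\ell(\overline{K(\ell)})$ conjugates $\wt\bbg_0(F)$ into $\gcal_\ell(K(\ell))$, then already $\Ad(g)\in\Ad(\gcal_\ell)(K(\ell))$; since $[\Ad(\gcal_\ell)(K(\ell)):\Ad(\gcal_\ell(K(\ell)))]\ll_\bbg 1$, the conjugacy classes collapse. Your argument replaces this rationality statement by the generic bound ``number of $I$-orbits in a $G$-orbit is $\le[G:I]$'' together with the standard estimates $[G:I]\le |H^1(K(\ell),Z(\gcal_\ell))|\cdot|\mathrm{Out}(\gcal_\ell)(K(\ell))|$.

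Both routes work. The paper's route is more economical in this context because Proposition~\ref{prop:conjugates-subfield-type-subgroups} is already proved and is needed anyway for Corollary~\ref{cor:intersection-conjugate-subfield-type} (which feeds into (V3)$_L$-(v)). Your route is cleaner as a stand-alone argument, avoids the somewhat ad hoc Lie-algebra calculation behind Proposition~\ref{prop:conjugates-subfield-type-subgroups}, and would transplant more readily to other settings. One small remark: your parenthetical ``existence via descent of a Chevalley twist'' for the reference $F$-model $\wt\bbg_0$ is unnecessary---if no $F$-model exists then $n_F=0$ and there is nothing to prove.
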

\begin{proof}
Since $\wt\bbg$ has the same absolute type as $\gcal_{\ell}$, up to $F$-isomorphism there are only two choices; either $\wt\bbg$ is the unique simply connected $F$-split group of the given absolute type or it is the unique quasi-split outer form of the given absolute type defined over $F$. So without loss of generality, we fix an $F$-model $\wt\bbg_0$ of $\gcal_{\ell}$ and we want to show that
\[
|\{{\rm Con}(\wt{\bbg}(F))|\h \wt{\bbg}\simeq \wt{\bbg}_0\text{ as } F\text{-groups}\}|\ll_{\bbg} 1.
\] 
Notice that if $\wt\bbg$ is an $F$-model of $\gcal_{\ell}$ which is $F$-isomorphic to $\wt{\bbg}_0$, then there is an $F$-isomorphism $\phi:\wt\bbg_0\xrightarrow{\simeq} \wt{\bbg}$ which induces an automorphism of $\gcal_{\ell}$ after base change. Since $[\Aut(\gcal_{\ell}):\Ad(\gcal_{\ell})]\ll 1$, without loss of generality we can and will assume that $\phi$ induces and inner automorphism of $\gcal_{\ell}$. Hence we can and will assume that there is $g\in \gcal_{\ell}(\overline{K(\ell)})$ such that $g\wt{\bbg}_0(F)g^{-1}=\wt{\bbg}(F)\subseteq \wt{\bbg}_0(K(\ell))$. Therefore by Proposition~\ref{prop:conjugates-subfield-type-subgroups}, $\Ad(g)\in \Ad(\wt{\bbg}_0)(K(\ell))=\Ad(\gcal_{\ell})(K(\ell))$. Since $[\Ad(\gcal_{\ell})(K(\ell)):\Ad(\gcal_{\ell}(K(\ell)))]\ll_{\bbg} 1$, the claim follows.
\end{proof}
We let $\cal_i'$'s be the sets of conjugacy classes of the groups of the form $\wt\bbg(F)$ where $F$ is a proper subfield of $K(\ell)$ and $\wt\bbg$ is an $F$-model of $\gcal_{\ell}$. By Lemma~\ref{lem:subfield-type-subgroups-up-to-conjugation}, there are at most $L\log |\gcal_{\ell}(K(\ell))|$ where $L$ just depends on the absolute type of $\gcal_{\ell}$'s. By Corollary~\ref{cor:intersection-conjugate-subfield-type}, $\cal_i'$'s satisfy property (V3)$_L$-(v); and our claim follows.
\subsection{Escaping proper subgroups} The main goal of this short section is to show that Proposition~\ref{propmainescape} is good enough to show the needed escaping from an arbitrary proper subgroup of $\pi_f(\Gamma)$ for $f\in S_{r_1,c_0}$. 
\begin{lem}\label{lem:final-escaping-proper-subgroups}
In the setting described at the beginning of Section~\ref{sec:finishing-proof-SA}	, there are $\eta>0$, $C_0$, and $c_0'\ge c_0$ such that for any $f\in S_{r_1,c_0'}$ and any proper subgroup $H$ of $\pi_f(\Gamma)$ we have 
\[
\pcal_{\pi_f(\Omega')}^{(2l)}(H)\le [\pi_f(\Gamma):H]^{-\eta}
\]
for some $l<C_0\deg f$.
\end{lem}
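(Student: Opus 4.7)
The plan is to decompose any proper subgroup $H\le \pi_f(\Gamma)$ according to the Larsen--Pink dichotomy of Theorem~\ref{thm:FinalRefinementOfLP} and reduce the escape estimate to the purely structural case already handled by Proposition~\ref{propmainescape}, leveraging the fact that when every prime factor of each $\deg \ell$ exceeds a large constant $c_0'$, any subfield-type projection $\pi_\ell(H)$ is forced to be very small relative to $\pi_\ell(\Gamma)$.

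First I would classify the irreducible factors of $f$ by inspecting $\pi_\ell(H)$: set $I_0:=\{\ell\mid f:\pi_\ell(H)=\pi_\ell(\Gamma)\}$, and split the remaining factors via Theorem~\ref{thm:FinalRefinementOfLP} into $I_s$ (structural projections) and $I_{sf}$ (subfield projections). Write $f_s:=\prod_{\ell\in I_s}\ell$ and $f_{sf}:=\prod_{\ell\in I_{sf}}\ell$. Since each $\pi_\ell(\pi_{f_s}(H))=\pi_\ell(H)$ is structural for $\ell\in I_s$, the group $\pi_{f_s}(H)\le \pi_{f_s}(\Gamma)$ is purely structural in the sense of Proposition~\ref{propmainescape}, and $H\subseteq \pi_{f_s}^{-1}(\pi_{f_s}(H))$. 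Applying Proposition~\ref{propmainescape} with $f_s$ in place of $f$ yields, for $l\gg_\Omega \deg f\ge \deg f_s$,
\[
\pcal_{\pi_f(\Omega')}^{(2l)}(H)\;\le\;\pcal_{\pi_{f_s}(\Omega')}^{(2l)}\bigl(\pi_{f_s}(H)\bigr)\;\le\;[\pi_{f_s}(\Gamma):\pi_{f_s}(H)]^{-\delta_0}.
\]

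Next I would relate the structural index $[\pi_{f_s}(\Gamma):\pi_{f_s}(H)]$ back to the total index $[\pi_f(\Gamma):H]$. Lemma~\ref{lemsubgroupproductform} gives $\prod_{\ell\mid f}[\pi_\ell(\Gamma):\pi_\ell(H)]\ge [\pi_f(\Gamma):H]^{c}$ for some $c=c(\bbg)>0$. For $\ell\in I_{sf}$, because the smallest prime divisor of $\deg \ell$ is $\ge c_0'$, the largest proper subfield $F_\ell$ of $K(\ell)\simeq \bbf_{p^{\deg\ell}}$ satisfies $|F_\ell|\le |K(\ell)|^{1/c_0'}$; by Theorem~\ref{thm:FinalRefinementOfLP} this forces $|\pi_\ell(H)|\ll_\bbg |F_\ell|^{\dim \bbg}\ll |\pi_\ell(\Gamma)|^{1/c_0'}$ and hence $[\pi_\ell(\Gamma):\pi_\ell(H)]\ge |\pi_\ell(\Gamma)|^{1-1/c_0'-o(1)}$. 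Feeding these bounds into the Lemma~\ref{lemsubgroupproductform} inequality and using $\pi_{f_s}(H)\subseteq \bigoplus_{\ell\in I_s}\pi_\ell(H)$, a chain of elementary manipulations should yield
\[
[\pi_{f_s}(\Gamma):\pi_{f_s}(H)]\;\ge\;[\pi_f(\Gamma):H]^{c-O(1/c_0')}\;\ge\;[\pi_f(\Gamma):H]^{c/2}
\]
once $c_0'$ is taken sufficiently large in terms of $c$, $\delta_0$, and $\bbg$. Then the lemma follows with $\eta:=c\delta_0/2$, and $C_0$ is absorbed from the implied constant in Proposition~\ref{propmainescape}.

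The delicate step, and the main obstacle, will be the regime where the $I_{sf}$ factors contribute most of $[\pi_f(\Gamma):H]$: here the naive combination of the above inequalities is too lossy, and one must exploit the simultaneous smallness $|\pi_\ell(H)|\ll |\pi_\ell(\Gamma)|^{1/c_0'}$ across all $\ell\in I_{sf}$ jointly (rather than factor by factor), together with the pinching
\[
|\pi_{f_{sf}}(\Gamma)|^{1-1/c_0'}\;\le\;\prod_{\ell\in I_{sf}}[\pi_\ell(\Gamma):\pi_\ell(H)]\;\le\;|\pi_{f_{sf}}(\Gamma)|,
\]
to ensure a positive exponent survives. This is precisely the step where the hypothesis on the prime factors of $\deg \ell$ (largeness of $c_0'$) becomes indispensable.
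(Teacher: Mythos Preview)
Your decomposition into $I_0$, $I_s$, $I_{sf}$ and the use of Proposition~\ref{propmainescape} on the structural piece $f_s$ is exactly how the paper begins. However, there is a genuine gap in the second half of your argument: the inequality
\[
[\pi_{f_s}(\Gamma):\pi_{f_s}(H)]\;\ge\;[\pi_f(\Gamma):H]^{c-O(1/c_0')}
\]
is simply false in general. Take the extreme case $I_s=\varnothing$, where every non-full projection is of subfield type. Then the left side equals $1$, while the right side exceeds $1$ since $H$ is proper. No ``chain of elementary manipulations'' can repair this: you have only one escape mechanism (Proposition~\ref{propmainescape}), and it says nothing when there are no structural factors. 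Your final paragraph correctly identifies this regime as the obstacle but offers no actual mechanism; the pinching inequalities you write down are just size bounds on the subfield projections, not escape estimates.

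The paper closes this gap by using a \emph{second}, independent escape argument for the subfield piece $f_{sf}$. Since $\Omega'_0$ freely generates a free group, Kesten's theorem gives a uniform $l^\infty$ bound $\|\pcal_{\pi_{f_{sf}}(\Omega')}^{(2l_0)}\|_\infty\le |\pi_{f_{sf}}(\Gamma)|^{-c_1}$ for some $l_0\ll_\Omega \deg f$. Combined with the smallness $|H_{sf}|\le |\pi_{f_{sf}}(\Gamma)|^{1/c_0'}$ that you already noted, this yields $\pcal_{\pi_{f_{sf}}(\Omega')}^{(2l_0)}(H_{sf})\le |\pi_{f_{sf}}(\Gamma)|^{-c_1/2}$ once $c_0'>2/c_1$. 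The paper then takes the minimum of the structural and subfield bounds, dominates it by the square root of their product, and invokes Lemma~\ref{lemsubgroupproductform} to conclude. The missing idea in your proposal is precisely this Kesten-based $l^\infty$ estimate; without it the subfield-dominated regime cannot be handled.
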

\begin{proof}
We assume that $f\in S_{r_1,c_0'}$ for a sufficiently large $c_0'\ge c_0$ (to be specified later). We split the set of irreducible divisors of $f$ into three disjoint sets:
\[
D_1(f;H):=\{\ell|f \text{ s.t. } \ell\text{ is irreducible, } \pi_\ell(H) \text{ is a structural subgroup}\},
\]
\[
D_2(f;H):=\{\ell|f \text{ s.t. } \ell\text{ is irreducible, } \pi_\ell(H) \text{ is a subfield type subgroup}\}, \text{ and }
\]
\[
D_3(f;H):=\{\ell|f \text{ s.t. } \ell\text{ is irreducible, } \pi_\ell(H)=\pi_\ell(\Gamma)\}.
\]
Let $f_i:=\prod_{\ell\in D_i(f;H)} \ell$ and $H_i:=\prod_{\ell\in D_i(f;H)} \pi_\ell(H)$; then by Lemma~\ref{lemsubgroupproductform}  we have that 
\be\label{eq:approximating-independent-subgroups}
[\pi_{f_1}(\Gamma):H_1][\pi_{f_2}(\Gamma):H_2]=[\pi_f(\Gamma):H_1\oplus H_2\oplus H_3] \ge [\pi_f(\Gamma):H]^{1/L}.
\ee
By Proposition~\ref{propmainescape}, we have 
\be\label{eq:escaping-purely-structural-part}
\pcal_{\pi_{f_1}(\Omega')}^{(2l)}(H_1)\le [\pi_{f_1}(\Gamma):H_1]^{-\delta_0},
\ee
where $\delta_0$ is a positive number which just depends on $\Omega$. On the other hand, by Kesten's result on random walks in a free group (see~\cite[Theorem 3]{Kes}), we have that, for some $\ell_0\ll_{\Omega} \deg f_2$ and $c_1>0$, we have
\[
\| \pcal_{\pi_{f_2}(\Omega')}^{(2l_0)}\|_{\infty} \le |\pi_{f_1}(\Gamma)|^{-c_1};
\]
and so 
\be\label{eq:escaping-purely-subfield-type-1}
\pcal_{\pi_{f_2}(\Omega')}^{(2l_0)}(H_2)\le |\pi_{f_1}(\Gamma)|^{-c_1}|H_2|\le |\pi_{f_1}(\Gamma)|^{-c_1}\prod_{\ell|f_2, \text{irr. }} |\pi_{\ell}(H_2)|\le |\pi_{f_1}(\Gamma)|^{-c_1}
|\pi_{f_2}(\Gamma)|^{1/c'_0}.
\ee
So if $c_0'>2/c_1$, then by \eqref{eq:escaping-purely-subfield-type-1} implies that
\be\label{eq:escaping-purely-subfield-type-2}
\pcal_{\pi_{f_2}(\Omega')}^{(2l_0)}(H_2)\le |\pi_{f_1}(\Gamma)|^{-c_1/2}.
\ee
By \eqref{eq:escaping-purely-subfield-type-2} and the fact that $\Omega'$ is a symmetric set, we have $\pcal_{\pi_{f_2}(\Omega')}^{(l_0)}(gH_2)\le |\pi_{f_1}(\Gamma)|^{-c_1/4}$; and so for any $l\ge l_0$, we have
\be\label{eq:escaping-purely-subfield-type}
\pcal_{\pi_{f_2}(\Omega')}^{(l)}(H_2)\le |\pi_{f_1}(\Gamma)|^{-c_1/4}.
\ee
By \eqref{eq:escaping-purely-structural-part} and \eqref{eq:escaping-purely-subfield-type}, we deduce that
\begin{align}
\notag
\pcal_{\pi_{f}(\Omega')}^{(2l)}(H)\le
&
\min(\pcal_{\pi_{f_1}(\Omega')}^{(2l)}(H_1),\pcal_{\pi_{f_2}(\Omega')}^{(2l)}(H_2))
\\
\notag
\le 
&
(\pcal_{\pi_{f_1}(\Omega')}^{(2l)}(H_1)\pcal_{\pi_{f_2}(\Omega')}^{(2l)}(H_2))^{1/2}
\\
\label{eq:almost-finished}
\le 
&
|\pi_{f_1f_2}(\Gamma)|^{-\min(\delta_0,c_1/4)}\le [\pi_f(\Gamma):H_1\oplus H_2\oplus H_3]^{-\min(\delta_0,c_1/4)}.
\end{align}
By \eqref{eq:almost-finished} and \eqref{eq:approximating-independent-subgroups}, we get
\[
\pcal_{\pi_{f}(\Omega')}^{(2l)}(H) \le [\pi_f(\Gamma):H]^{-\min(\delta_0/L,c_1/(4L))};
\]
and the claim follows.
\end{proof}




\end{document}